\documentclass{amsart}

\usepackage{amsmath,amssymb}
\usepackage{graphicx}
\usepackage{rotating,subfigure}
\usepackage{xcolor}
\usepackage{star}

\usepackage[colorlinks,
linkcolor=blue,
anchorcolor=blue,
citecolor=blue
]{hyperref}

\newcommand {\cov}{\textnormal {cov}}
\newcommand {\var}{\textnormal {var}}
\newcommand \ri{\mathrm{i}}
\newcommand \fd{\mathfrak{d}}

\begin{document}



\title[Spectral analysis of GL from high-dimensional noisy data]{Impact of signal-to-noise ratio and bandwidth on graph Laplacian spectrum from high-dimensional noisy point cloud} 

\author{Xiucai Ding}

\address{Department of Statistics, University of California, Davis, CA, USA} 
\email{xcading@ucdavis.edu}.

\author{Hau-Tieng Wu}
\address{Department of Mathematics and Department of Statistical Science, Duke University, Durham, NC, USA}
\email{hauwu@math.duke.edu}

\maketitle

\begin{abstract}
We systematically study the spectrum of kernel-based graph Laplacian (GL) constructed from high-dimensional and noisy random point cloud in the nonnull setup. The problem is motived by studying the model when the clean signal is sampled from a manifold that is embedded in a low-dimensional Euclidean subspace, and corrupted by high-dimensional noise. We quantify how the signal and noise interact over different regions of signal-to-noise ratio (SNR), and report the resulting peculiar spectral behavior of GL. In addition, we explore the impact of chosen kernel bandwidth on the spectrum of GL over different regions of SNR, which lead to an adaptive choice of kernel bandwidth that coincides with the common practice in real data. This result paves the way to a theoretical understanding of how practitioners apply GL when the dataset is noisy.
\end{abstract}

\section{Introduction}

Spectral algorithms are popular and have been widely applied in unsupervised machine learning, like eigenmap \cite{Belkin_Niyogi:2003}, locally linear embedding (LLE) \cite{Roweis_Saul:2000}, diffusion map (DM) \cite{Coifman_Lafon:2006}, to name but a few. A common ground of these algorithms is graph Laplacian (GL) and its spectral decomposition that have been extensively discussed in the spectral graph theory \cite{MR1421568}. 
Up to now, due to its wide practical applications, there have been rich theoretical results discussing the spectral behavior of GL under the manifold model when the point cloud is clean. For example, the pointwise convergence \cite{Belkin_Niyogi:2003,MR2332434, gine2006empirical,SINGER2006128}, the $L^2$ convergence without rate \cite{belkin2007convergence,MR2396807,Singer_Wu:2012}, the $L^2$ convergence with rate \cite{MR4130541}, the $L^\infty$ convergence with rate \cite{dunson2019diffusion}, etc. We refer readers to the cited literature therein for more information. Also see, for example \cite{koltchinskii2000random,JMLR:v7:braun06a}, for more relevant results. 
However, to our knowledge, limited results are known about the GL spectrum when the dataset is contaminated by noise \cite{Steinerburger2016,elkaroui20102,el2016graph,shen2020scalability}, particularly when the noise is high-dimensional \cite{elkaroui20102,el2016graph,shen2020scalability}. 
Specifically, in the high-dimensional setup, \cite{elkaroui20102,el2016graph} report controls on the operator norm between the clean GL and noisy GL in some specific signal-to-noise (SNR) regions. However, a finer and more complete description of the spectrum is still lacking. The main focus of this work is extending existing results so that the spectral behavior of GL is better depicted when the dataset is contaminated by high-dimensional noise.

GL is not uniquely defined, and there are several possible constructions from a given point cloud. For example, it can be constructed by the idea of local barycentric coordinates like that in LLE \cite{Wu_Wu:2017} or by taking landmarks like that in Roseland \cite{shen2020scalability}, the kernel can be asymmetric \cite{Coifman_Lafon:2006}, the metric can be non-Euclidean \cite{talmon2013empirical}, etc. In this paper, we focus ourselves on a specific setup; that is, GL is constructed from a random point cloud $\mathcal{X}:=\{\xb_i\}_{i=1}^n\subset \mathbb{R}^p$ via a symmetric kernel with the usual Euclidean metric.
To be more specific, from $\mathcal{X}$, construct the {\em affinity matrix} (or {\em kernel matrix}) $\Wb\in \mathbb{R}^{n\times n}$ by  
\begin{equation}\label{eq_defnw}
\Wb(i,j)=\exp\left(-\upsilon\frac{\|\xb_i-\xb_j \|^2}{h}\right), \ 1 \leq i, j \leq n, 
\end{equation}
where $\upsilon>0$ is the chosen parameter and $h \equiv h(n)$ is 
the chosen {\em bandwidth}. In other words, we focus on kernels of the exponential type, that is, $f(x) = \exp(-\upsilon x)$, to simplify the discussion. 
Then, define the {\em transition matrix} 
\begin{equation}\label{eq_noramlizedmatrix}
\Ab=\Db^{-1}\Wb,
\end{equation}  
where $\Db$ is the {\em degree matrix}, which is a diagonal matrix with diagonal entries defined as 
\begin{equation}\label{eq_degreematrix}
\Db(i,i)=\sum_{j=1}^n \Wb(i,j), \ i=1,2,\cdots, n. 
\end{equation}
Note that $\Ab$ is row-stochastic. The (normalized) GL is defined as 
\[
\Lb:=\frac{1}{h}(\mathbf{I}-\Ab)\,. 
\]
Since $\Lb$ and $\Ab$ are related by an {isotropic} shift and {universal} scaling, we focus on studying the spectral distributions of $\Wb$ and $\Ab$ in the rest of the paper. With the eigendecomposition of $\Ab$, we could proceed with several data analysis missions. For example, spectral clustering is carried out by combining top few non-trivial eigenvectors and $k$-mean \cite{MR2409803}, embedding datasets to a low-dimensional Euclidean space by eigenvectors and/or eigenvalues reduces the dataset dimension \cite{Belkin_Niyogi:2003,Coifman_Lafon:2006}, etc. Thus, the spectral behavior of $\Ab$ is the key to fully understanding these algorithms.

\subsection{Mathematical setup}\label{sec:Intro math setup}
We now specify the high-dimensional noisy model and the problem we are concerned with in this work.
Suppose $\mathcal{X}$ is independent and identically {distributed} (i.i.d.)  following a sub-Gaussian random vector 
\begin{equation}\label{eq_modelsteptwo} 
\xb_i=\zb_i+\yb_i, \ 1 \leq i \leq n\,,
\end{equation}
where $\zb_i$ is a random vector with mean 0 and $\cov(\zb_i)=\text{diag}\{\lambda_1,\cdots, \lambda_d,0,\cdots,0\}$, $\lambda_1\geq\lambda_2\geq\ldots\geq\lambda_d>0$, and $\mathbf{y}_i$ is sub-Gaussian with independent entries,
\begin{equation}\label{eq_yassum1}
\mathbb{E}(\mathbf{y}_i)=\mathbf{0} \ \mbox{ and } \ \cov(\yb_i)=\mathbf{I}_p\,.
\end{equation}
We also assume that 
$\zb_i$ and $\yb_i$ are independent.
As a result, $\xb_i$ is a sub-Gaussian random vector with mean 0 and covariance
\begin{equation}\label{eq_sigdefn}
\Sigma=\text{diag}\{\lambda_1+1,\cdots, \lambda_d+1,1,\cdots, 1\}\,,
\end{equation}
where $d\in \mathbb{N}$.  In this model, $\zb_i$ and $\yb_i$ represent the signal and noise part of the point cloud respectively.
To simplify the discussion, we also assume that $\mathbf{z}_{i1},\ldots,\mathbf{z}_{id}$ are continuous random variables.
Denote $\mathcal{Y}:=\{\mathbf{y}_i\}_{i=1}^n \in \mathbb{R}^p$ and $\mathcal{Z}:=\{\zb_i\}_{i=1}^n$. 

We adopt the {high-dimensional setting} and assume that 
\begin{equation}\label{eq_ratio}
\gamma \leq c_n:=\frac{n}{p} \leq \gamma^{-1}
\end{equation}
for some constant $0<\gamma \leq 1$; in other words, we focus on the {\em large $p$ and large $n$} setup. We also assume that $d$ is independent of $n$. Note that  \eqref{eq_modelsteptwo} is closely related to the commonly applied {\em spiked covariance model} \cite{johnstone2001}.

Next, we consider the following setup to control the signal strength: 
\begin{equation}\label{eq_lambdadefinition}
  \lambda_i \asymp n^{\alpha_i}, \ 0 \leq \alpha_i < \infty  \ \text{are some constants}.  
\end{equation}
Thus, $\lambda_i$ represents the {\em signal strength} in the $i$-th component of the signal. On the other hand, the {\em total noise} in the dataset $\mathcal{X}$ is $\texttt{tr}(\cov(\yb_i))=p$, and we call $\lambda_i/p$ the {\em signal to noise ratio} (SNR) associated with the $i$-th component of the signal.

We denote the matrix  $\Wb_1 \in \mathbb{R}^{n \times n}$ such that 
\begin{equation}\label{eq_singlematrix}
\Wb_1(i,j)=\exp\left(-\upsilon\frac{\| \zb_i-\zb_j \|_2^2}{h}\right)\,.
\end{equation}
Note that compared with $\Wb$ defined in \eqref{eq_defnw}, $\Wb_1$ is the affinity matrix constructed from the clean signal $\mathcal{Z}$. 
Moreover, let $\Wb_y$ be the affinity matrix associated with $\mathcal{Y},$ which represents the noise part. Their associated transition matrices $\Ab_1$ and $\Ab_y$ are defined similarly as in (\ref{eq_noramlizedmatrix}). 
{We will show in part (1) of Theorem \ref{thm_informativeregion}, part (2) of Theorem \ref{thm_adaptivechoiceofc} and Corollary \ref{coro_adaptivechoiceofc}  that under \eqref{eq_modelsteptwo}, when the SNR is above some threshold and the bandwidth is properly chosen, we can separate the signal and noise parts in the sense that $\Wb$ is close to
\begin{equation}\label{eq_separationsinglenoise}
\Wb_1 \circ \Wb_y 
\end{equation}
 with high probability, where $\circ$ stands for the Hadamard product. The closeness is quantified using the normalized operator norm; that is, $n^{-1}\| \Wb-\Wb_1 \circ \Wb_y\|=o(1)$ with high probability. In fact, using the definition of $\Wb_c$ below in (\ref{eq_defncrossingterm}), we immediately see that in general we have $\Wb=\Wb_1 \circ \Wb_y \circ \Wb_c.$ When the SNR is relatively large and the bandwidth is chosen properly, we show that with high probability $\max_{i,j}|\Wb_c(i,j)-1|=o(1)$, and Lemma \ref{lem_hardamardproductbound} leads to the claim.  }

The main problem we ask in this work is studying the relationship between the spectra of $\Wb_1$ and $\Wb$ under different SNR regions and bandwidths; that is, how do noise and chosen bandwidth impact the spectrum of the affinity matrix and how does the noisy spectrum deviate from that of the clean affinity matrix. By combining  existing understandings of $\Wb_1$ under suitable models, like manifolds \cite{MR4130541,dunson2019diffusion}, we obtain a finer understanding of the commonly applied GL.

\subsection{Relationship with the manifold model}\label{section:intro manifold model}

We claim that this seemingly trivial spiked covariance model \eqref{eq_modelsteptwo} overlaps with the commonly considered nonlinear manifold model.
Consider the case that the manifold is embedded into a subspace of fixed dimension in $\mathbb{R}^p$. With some mild assumptions about the manifold, and the fact that the Euclidean distance of $\mathbb{R}^p$ is invariant to rotation, the noisy manifold model can be studied by the spiked covariance model satisfying \eqref{eq_modelsteptwo}-\eqref{eq_lambdadefinition}. We defer details to Section \ref{sec_reducedproblem} to avoid distraction. 
We shall however emphasize that it does not mean that we could understand the manifold structure by studying the spiked covariance model. The problem we study in this paper is the relationship between the noisy and clean affinity matrices; that is, how does the noise impact the GL. The problem of exploring the manifold structure from a {\em clean} dataset via the GL is a different one. See, for example, \cite{gine2006empirical,MR4130541,dunson2019diffusion}. Therefore, by studying the relationship between the noisy GL and clean GL via studying the model where the data is sampled from a sum of two sub-Gaussian random variables, we could further explore the manifold structure from {\em noisy} dataset.

\subsection{Some related works} 
The focus of the current paper is to study the GL spectrum under the {nonnull case for the point cloud $\mathcal{X}=\{\xb_i\}_{i=1}^n$ described in (\ref{eq_modelsteptwo})}, and connect it to the GL spectrum under the null case; that is, when the GL is constructed from {the pure noise point cloud $\mathcal{Y}=\{\yb_i\}_{i=1}^n$}. The eigenvalues of $\Wb$, a high-dimensional Euclidean distance kernel random matrix, in the null case have been studied in several works. 
In the {pioneering} work \cite{elkaroui2010}, the author studied the spectrum of $\Wb$ under the null case assuming the bandwidth $h=p$, and  
showed that the complicated kernel matrix can be well approximated by a low rank perturbed Gram matrix; see \cite[Theorem 2.2]{elkaroui2010} or  \eqref{eq_kdtau} below for more details. 
It was concluded that studying $\Wb$ in the null case is closely related to studying the principal component analysis (PCA) of the dataset with a low rank perturbation. The results in \cite{elkaroui2010} were extended to more general kernels beyond the exponential kernel function, and $\mathcal{Y}$ can be anisotropic with some moment assumptions; for example, more general kernels with Gaussian noise was reported in \cite{cheng2013spectrum}, the Gaussian assumption was removed in \cite{do2013spectrum,MR3044473}, the convergence rates of individual eigenvalues of $\Wb$ were provided in \cite{DW1}, etc.

However, much less is known for the nonnull case (\ref{eq_modelsteptwo}). To our knowledge, the most relevant works are \cite{elkaroui20102,el2016graph}. By assuming $h=p$ and $\mathbb{E}\| \zb_i\|_2^2 \asymp p$, in \cite{elkaroui20102} the author showed that $n^{-1} \Wb$ could be well approximated by $n^{-1}\exp(-2 \upsilon) \Wb_1$, which connected the noisy observation and the clean signal. In fact, the results of \cite[Theorem 2.1]{elkaroui20102} were established for a class of smooth kernels and the noise could be anisotropic. In a recent work \cite{AR}, the authors established the concentration inequality of $\| \Wb-\mathbb{E} \Wb \|$ and used it to study spectral clustering.

We mention that other types of kernel random matrices related to (\ref{eq_defnw}) have also been studied in the literature. For example, the inner-product type kernel random matrices of the form $f(\xb_i^\top \xb_j),$ where $f$ is some kernel function, has drawn attention among researchers. In \cite{elkaroui2010}, the author showed that under some regularity assumptions, the inner-product kernel matrices could also be approximated by a Gram matrix with low rank perturbations, which had been generalized in \cite{cheng2013spectrum,do2013spectrum,MR3044473,Fan2018,9022455,2015arXiv150405880P}. 
The other example is the Euclidean random matrices of the form $F(\xb_i-\xb_j)$ arising from physics, where $F$ is some measurable function. Especially, the empirical spectral distribution has been extensively studied in, for example, \cite{MR1724455, MR2462254,MR3338323}, among others.     

Finally, our present work is also in the line of research regarding the robustness of GL. There have been efforts in different settings along this direction. For example, the perturbation of the eigenvectors of GL was studied in \cite{MEYER2014326}, the consistency and robustness of spectral clustering were analyzed in \cite{AR,MR2396807,10.1145/3097983.3098156}, and the robustness of DM was studied in \cite{Steinerburger2016,shen2020scalability}, among others.

\subsection{An overview of our results}

Our main contribution is a systematic treatment of the spectrum of GL constructed from a high-dimensional random point cloud corrupted by high-dimensional noise; that is, we consider the nonnull setup (\ref{eq_modelsteptwo}). We establish a connection between the spectra of noisy and clean affinity matrices with different choices of bandwidth and different SNRs by extensively expanding the results reported in \cite{elkaroui20102} with tools from random matrix theory. 
More specifically, we allow the signal strength to diverge with $n$ so that the relative strength of signal and noise is captured, and characterize the spectral distribution of the noisy affinity matrix by studying how the signal and noise interact and how different bandwidths impact the spectra of $\Wb$ and $\Ab.$ Motivated by our theoretical results, we propose an adaptive bandwidth selection algorithm with theoretical support. The proposed method utilizes some certain quantile of pairwise distance as in (\ref{eq_bandwithchoice}), where the quantile can be selected using our proposed Algorithm \ref{alg:choice}. We provide detailed results when $d=1$ in (\ref{eq_modelsteptwo}), and discuss how to extend the results to $d>1$ and why some cases are challenging when $d>1$.
Our result, when combined with existing manifold learning results like \cite{gine2006empirical,MR4130541,dunson2019diffusion} and others, pave the way to a better understanding of how GL-based algorithms behave in practice, and a theoretical support for the commonly applied bandwidth scheme, when $\{\zb_i\}_{i=1}^n$ is distributed on a low-dimensional, compact and smooth manifold.

We now provide a heuristic explanation of our results assuming $d=1$ and when $h=p$. In Section \ref{section main result}, when $\alpha<1,$ we show that the noisy kernel affinity matrix $\Wb$ provides limited useful information for the clean affinity matrix $\Wb_1$ in (\ref{eq_singlematrix}). Specifically, similar to the null setting, $\Wb$ can be well approximated by a Gram matrix with a finite rank perturbation; see Section \ref{sec_result_fixedepsilon} for more details.
When $\alpha \geq 1,$ $\Wb$ becomes closer to $\Wb_1$ with some universal scaling and isotropic shift (c.f. (\ref{eq_wba1})). Note that some related results have been established for $\alpha=1$ in \cite{elkaroui20102}. 
We show that the convergence rate is adaptive to $\alpha$, and provide {a quantification of how eigenvalues of the noisy affinity matrix $\Wb$ converge}.
We mention that {when $\alpha>1$,} the classic bandwidth choice $h=p$ needs to be modified to reflect the underlying signal structure. Specifically, {if $h=p$ and $\alpha>1$, we have} $\Wb \approx \mathbf{I}$. These results are stated in Theorem \ref{thm_informativeregion}.  Similar results hold for the transition matrix $\Ab$, and they are reported in Section \ref{sec_transitionmatrixab}.

Motivated by the fact that $\Wb$ becomes trivial when $\alpha>1$ is large, in Section \ref{section_result_2ndchoice}, we focus on the case $h=p+\lambda.$ When $\alpha<1,$ the results are analogous to the setting $h=p.$ When $\alpha \geq 1,$ the spectrum of $\Wb$ will be dramatically different. {Specifically, when $\alpha=1$, $\Wb$ is close to $\Wb_1$ with some universal scaling and isotropic shift (c.f. (\ref{eq_wa2})), and when $\alpha>1,$ $\Wb$ is close to $\Wb_1$ without any scaling and shift.} 
Moreover, besides the top $\log(n)$ eigenvalues of $\Wb$, the other eigenvalues are trivial; see Theorem \ref{thm_adaptivechoiceofc} for more details.

In practice, $\lambda$ is unknown and not easy to estimate especially when $\{\zb_i\}$ are sampled from a nonlinear geometric object. {For practical implementation, we propose a bandwidth selection algorithm in Section \ref{sec_bandwidthselectionalgo}. 
It turns out that the proposed bandwidth selection algorithm can bypass the challenge of estimating $\lambda$, and the result coincides with that determined by the ad hoc bandwidth selection method commonly applied by practioners. See \cite{shnitzer2019recovering,lin2021wave} for example, among many others. Note that the bandwidth issue is also discussed in \cite{AR}. To our knowledge, our result is the first step toward a theoretical understanding of that common practice.}

Finally, we point out technical ingredients of our proof. We focus on the discussion of the bandwidth $h=p$ and take $d=1$ for an example.  
First, when $\lambda$ is bounded, i.e., $\alpha=0,$ the spectrum of the $\Wb$ has been studied in \cite{elkaroui2010} using an entrywise expansion to the order of two (with a third order error). When $0<\alpha<0.5,$ the above strategy can be adapted straightforwardly with some modifications. When $0.5 \leq \alpha<1,$ we need to expand the entries to a higher order depending on $\alpha$. With this expansion, we show that except for the Gram matrix, the other parts are either fixed rank or negligible; see Sections \ref{sec_sub_subsuper} and \ref{sec_sub_slowly} for more details. Second, when $\alpha \geq 1,$ the entrywise expansion strategy fails and we need to conduct a more careful analysis. Particularly, thanks to the Hadamard product representation of $\Wb$ in (\ref{eq_separationsinglenoise}), we need to analyze the spectral norm of $\Wb_1$ carefully; see Lemma \ref{Lemma: W1 bound}, which has its own interest. Third, to explore the number of informative eigenfunctions, we need to investigate the individual eigenvalues of $\Wb_1$ using Mehler's formula (c.f. (\ref{eq_melherformula})); see Section  \ref{sec_sub_fast} for more details.

The paper is organized in the following way. The main results are described in Section \ref{section main result} for the bandwidth $h \asymp p$ and in Section \ref{section_result_2ndchoice} for the bandwidth $h \asymp (\lambda+p)$ and an adaptive bandwidth selection algorithm. The numerical studies are reported in Section \ref{section numerical studies}. The paper ends with the discussion and conclusion in Section \ref{sec: discussion and conclusion}. The background and necessary results for the proof are listed in Section \ref{section preliminary results} and the proofs of the main results are given in Sections \ref{sec_proofs2} and \ref{sec_proofs3}. 

\vspace{3pt}

\noindent {\bf Conventions.} We systematically use the following notations. For a smooth function $f(x)$,  denote $f^{(k)}(x), k=0,1,2,\cdots,$ to be the $k$-th  derivative of $f(x).$ For two sequences $a_n$ and $b_n$ depending on $n,$ the notation $a_n=O(b_n)$ means that $|a_n| \leq C|b_n|$ for some constant $C>0,$ and $a_n=o(b_n)$ means that $|a_n| \leq c_n |b_n|$ for some positive sequence $c_n \downarrow 0$ as $n \rightarrow \infty.$ We also use the notation $a_n \asymp b_n$ if $a_n=O(b_n)$ and $b_n=O(a_n).$

\section{Main results (I): classic bandwidth choice $h \asymp p$}\label{section main result}

In this section, we state the main results regarding the spectra of $\Wb$ and $\Ab$ when the bandwidth satisfies $h \asymp p.$ For definiteness, without loss of generality, we assume that $h=p.$  Such a choice has appeared in many works in kernel methods and manifold learning, e.g., \cite{elkaroui2010,DW1,do2013spectrum,Liao,el2015graph,el2016graph}. 
Since our focus is the nonlinear interaction of the signal and noise in the kernel method, in what follows, we focus on reporting the results for $d=1$ and omit the subscripts in (\ref{eq_lambdadefinition}).  We refer the readers to Remark  \ref{rmk_multipled} {and Section \ref{sec_generalizationrevision2}} below for a discussion on the setting when $d>1.$

\subsection{Some definitions}
We start with introducing the {notion of} {\em stochastic domination} \cite{MR3119922}. It makes precise statements of the form ``$\mathsf{X}^{(n)}$ is bounded with high probability by $\mathsf{Y}^{(n)}$ up to small powers of $n$''. 

\begin{definition} [Stochastic domination]\label{defn_stochasdomi} Let
 \begin{align*}
 \mathsf{X}&\,=\big\{\mathsf{X}^{(n)}(u):  n \in \mathbb{N}, \ u \in \mathsf{U}^{(n)}\big\}, \ \\  \mathsf{Y}&\,=\big\{\mathsf{Y}^{(n)}(u):  n \in \mathbb{N}, \ u \in \mathsf{U}^{(n)}\big\},
 \end{align*}
be two families of nonnegative random variables, where $\mathsf{U}^{(n)}$ is a possibly $n$-dependent parameter set. We say that $\mathsf{X}$ is {\em stochastically dominated} by $\mathsf{Y}$, uniformly in the parameter $u$, if for all small $\upsilon>0$ and large $ D>0$, there exists $n_0(\upsilon, D)\in \mathbb{N}$ so that we have 
\begin{equation*}
\sup_{u \in \mathsf{U}^{(n)}} \mathbb{P} \Big( \mathsf{X}^{(n)}(u)>n^{\upsilon}\mathsf{Y}^{(n)}(u) \Big) \leq n^{- D},
\end{equation*}   
for a sufficiently large $n \geq  n_0(\upsilon, D)$. We interchangeably use the notation $\mathsf{X}=O_{\prec}(\mathsf{Y})$ {or $\mathsf{X} \prec \mathsf{Y}$}  if $\mathsf{X}$ is stochastically dominated by $\mathsf{Y}$, uniformly in $u$, when there is no danger of confusion.
In addition, we say that an $n$-dependent event $\Omega \equiv \Omega(n)$ holds {\em with high probability} if for a $D>1$, there exists $n_0=n_0(D)>0$ so that
\begin{equation*}
\mathbb{P}(\Omega) \geq 1-n^{-D},
\end{equation*}
when $n \geq n_0.$ 

\end{definition}

For any constant $c \in \mathbb{R},$ denote $T_c$ to be the shifting operator that shifts a probability measure $\mu$ defined on $\mathbb{R}$ by $c$; that is
\begin{equation}\label{eq_shiftoperator}
T_c \mu(I):=\mu(I-c),
\end{equation} 
where $I \subset \mathbb{R}$ is a measurable subset. Till the end of the paper, for any symmetric $n \times n$ matrix $\Bb,$ the eigenvalues are ordered in the decreasing fashion so that $\lambda_1(\Bb) \geq \lambda_2(\Bb) \geq \cdots \geq \lambda_n(\Bb).$

\begin{definition}\label{Definition typical locations}
For a given probability measure $\mu$ and $n\in \mathbb{N}$, define the $j$-th \emph{typical location} of  $\mu$ as $\gamma_\mu(j)$; that is,  
\begin{equation}\label{eq_typical}
\int_{\gamma_\mu(j)}^{\infty} \mu(\mathrm{d} x)=\frac{j}{n}\,,
\end{equation}
where $j=1,\ldots,n$. $\gamma_\mu(j)$ is also called the $j/n$-quantile of $\mu$.
\end{definition}

Let $\Yb \in \mathbb{R}^{p \times n}$ be the data matrix associated with the point cloud $\mathcal{Y}$; that is, the $i$-th column $\Yb$ is $\yb_i$. Denote the empirical spectral distribution (ESD) of $\Qb=\frac{\sigma^2}{p}\Yb^\top \Yb$ as 
\begin{equation*}
\mu_{\Qb}(x)=\frac{1}{n} \sum_{i=1}^n \mathbf{1}_{\{\lambda_i(\Qb)\leq x\}}, \ x \in \mathbb{R}. 
\end{equation*}
Here, $\sigma>0$ stands for the standard deviation of the scaled noise $\sigma \yb_i$. It is well-known that in the high-dimensional region (\ref{eq_ratio}), $\mu_{\Qb}$ has the same asymptotic properties \cite{Knowles2017} as the so-called Marchenko-Pastur (MP) law \cite{MP}, denoted as $\mu_{c_n,\sigma^2}$, satisfying
\begin{equation}\label{eq_mp}
\mu_{c_n,\sigma^2}(I)=
(1-c_n)_{+} \chi_{I}(0)+\zeta_{c_n,\sigma^2}(I)\,,
\end{equation}
where  $\chi_I$ is the indicator function and $(a)_+:=0$ when $a\leq 0$ and $(a)_+:=a$ when $a>0$, 
\begin{equation}
\mathrm{d} \zeta_{c_n,\sigma^2}(x)=\frac{1}{2\pi \sigma^2} \frac{\sqrt{(\lambda_+-x)(x-\lambda_-)}}{c_nx} \mathrm{d} x\,, 
\end{equation}
$\lambda_{+}=(1+ \sigma^2 \sqrt{c_n})^2$ and $\lambda_{-}=(1- \sigma^2 \sqrt{c_n})^2$.

Denote 
\begin{equation}\label{eq_defntau}
\tau \equiv \tau (\lambda):=2 \left( \frac{\lambda}{p}+1 \right),
\end{equation}
and for any kernel function $f(x)$, define
\begin{equation}\label{eq_varsigmalambda}
\varsigma \equiv \varsigma(\lambda): =f(0)+2f'(\tau)-f(\tau)\,. 
\end{equation}
Till the end of this paper, we will use $\tau$ and $\varsigma$ for simplicity, unless we need to specify the values of $\tau$ and $\varsigma$ at certain points of $\lambda$. As mentioned below \eqref{eq_defnw}, we focus on $f(x)=\exp(-\upsilon x)$ throughout the paper unless otherwise specified. Recall the shift operator defined in (\ref{eq_shiftoperator}). Denote 
\begin{equation}\label{eq_nudefinition}
\nu_\lambda:=T_{\varsigma(\lambda)} \mu_{c_n,-2f'(\tau(\lambda))}, 
\end{equation}
where $\mu_{c_n,-2f'(\tau(\lambda))}$ is the MP law defined in (\ref{eq_mp}) by replacing $\sigma^2$ with $-2f'(\tau(\lambda))$.

\subsection{Spectrum of kernel affinity matrices: low signal-to-noise region $0 \leq \alpha <1$}\label{sec_result_fixedepsilon}
In this subsection, we present the results {when the SNR is small in the sense that $0 \leq  \alpha<1$ in Theorems \ref{thm_affinity matrix} and \ref{lem_affinity_slowly}. In this setting, there does not exist a natural connection between the spectrum of $\Wb$ and the signal part $\Wb_1$ in (\ref{eq_singlematrix}). More specifically, even though the spectrum of $\Wb$ can be studied, $\| \Wb-\Wb_1 \|$ is not close to zero. }

In the first result, we consider the case when $\lambda$ is bounded from above; that is, $\alpha=0$. 
In this bounded region, as in the null case studied in \cite{elkaroui2010, DW1} (see Lemma \ref{lem_karoui} for more details),  the spectrum is {governed by} the MP law, except for a few outlying eigenvalues. These eigenvalues either come from the kernel function expansion that will be detailed in the proof, or the Gram matrix {$\mathbf{Q}_x=\frac{1}{p}\Xb^\top \Xb,$ where $\Xb \in \mathbb{R}^{p \times n}$ is  the data matrix associated with the noisy observation $\xb_i$ defined in (\ref{eq_modelsteptwo})} when the signal is above some threshold. This result is not surprising, since the signal is asymptotically negligible compared with the noise.

\begin{theorem}[Bounded region]\label{thm_affinity matrix} Suppose \eqref{eq_defnw} and \eqref{eq_modelsteptwo}-\eqref{eq_lambdadefinition} hold true, $d=1$ and $h=p$. Moreover, we assume that $\lambda$ is a fixed constant. Set
\begin{equation*}
\mathsf{S}:=
\begin{cases}
3, & \lambda \leq \sqrt{c_n};  \\
4, & \lambda>\sqrt{c_n}. 
\end{cases}
\end{equation*}
For any given small $\epsilon>0,$ when $n$ is sufficiently large, for some constant $C>0,$ with probability at least $1-O(n^{-1/2})$, we have
\begin{equation}\label{eq_formrigidity}
\left| \lambda_{i}(\Wb)-\gamma_{\nu_0}(i) \right| \leq Cn^{-1/4}, \  \mathsf{S} < i \leq  (1-\epsilon) n, 
\end{equation}
where $\nu_0$ is defined in (\ref{eq_nudefinition}). 
\end{theorem}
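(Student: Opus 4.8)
The plan is to follow the entrywise expansion strategy of \cite{elkaroui2010} but track carefully how the signal $\zb_i$ contributes when $\lambda$ is bounded. First I would write, for $i \neq j$,
\[
\|\xb_i - \xb_j\|^2 = \|\yb_i - \yb_j\|^2 + 2(\yb_i-\yb_j)^\top(\zb_i-\zb_j) + \|\zb_i-\zb_j\|^2,
\]
and note that since $\lambda$ is a fixed constant and $\zb_i$ is one-dimensional (as $d=1$), $\|\zb_i-\zb_j\|^2 = (\mathbf{z}_{i1}-\mathbf{z}_{j1})^2 = O_\prec(1)$, while $\|\yb_i-\yb_j\|^2/p = 2 + O_\prec(p^{-1/2})$ and the cross term $(\yb_i-\yb_j)^\top(\zb_i-\zb_j)/p = O_\prec(p^{-1/2})$ by sub-Gaussian concentration (conditioning on $\zb$ and using that $\zb$ has bounded moments). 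Hence the argument of the exponential, $-\upsilon\|\xb_i-\xb_j\|^2/p$, concentrates around $-2\upsilon = -\upsilon\tau(0) + o(1)$ uniformly in $i,j$, and I would Taylor-expand $f(x)=e^{-\upsilon x}$ around $x=\tau = \tau(0) = 2$. Because $f$ is analytic and its derivatives decay, expanding to order $\mathsf{S}$ (three or four terms depending on whether $\lambda$ exceeds $\sqrt{c_n}$) suffices to make the remainder $o(n^{-1/4})$ in operator norm after the standard bookkeeping.

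Next I would organize the resulting decomposition of $\Wb$ into (i) a diagonal piece $f(0)\mathbf{I}$ (from $i=j$), (ii) the leading off-diagonal constant $f(\tau)(\mathbf{1}\mathbf{1}^\top - \mathbf{I})$, which is rank one up to the identity shift, (iii) the first-order term proportional to $f'(\tau)$ times the centered squared-distance matrix, whose dominant part is $-2f'(\tau)$ times $\mathbf{Q}_x - \text{diag}$ — this is where the Gram matrix $\mathbf{Q}_x = \frac1p \Xb^\top\Xb$ and hence the MP law $\mu_{c_n,-2f'(\tau)}$ enters — and (iv) higher-order terms that I would argue are either finite-rank (coming from $\mathbb{E}$ of the quadratic/cubic corrections, which involve $\|\zb_i\|^2$-type quantities that are $O(1)$ and concentrate, contributing $\le \mathsf{S}$ extra outliers) or negligible in operator norm (the genuinely fluctuating parts of orders $\ge 2$, controlled by moment bounds on sub-Gaussian quadratic forms à la Hanson–Wright). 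Collecting (i)–(iii) gives $\Wb = \varsigma(0)\mathbf{I} + (-2f'(\tau))\mathbf{Q}_x + (\text{finite rank}) + (\text{error})$, and since $\mathbf{Q}_x$ differs from $\mathbf{Q} = \frac{\sigma^2}{p}\Yb^\top\Yb$ (with $\sigma^2 = 1+\lambda/p \to 1$, or more precisely absorbing the bounded signal as another finite-rank spike) by a finite-rank term, its ESD is governed by the MP law $\mu_{c_n, -2f'(\tau)}$ up to $\le \mathsf{S}$ outliers.

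Then I would invoke the rigidity/eigenvalue-location estimates for sample covariance matrices (as packaged in Lemma \ref{lem_karoui} and the MP-law facts recalled around \eqref{eq_mp}), which give $|\lambda_i(\mathbf{Q}_x) - \gamma_{\mu_{c_n,-2f'(\tau)/(-2f'(\tau))}}(i)|$ controlled in the bulk, hence after the shift by $\varsigma(0)$ and scaling, $|\lambda_i(\Wb) - \gamma_{\nu_0}(i)| = O_\prec(\text{rigidity rate})$ for $i$ in the bulk. The index threshold $\mathsf{S}$ is precisely the number of finite-rank perturbations produced by the expansion (three generic ones plus one more when $\lambda > \sqrt{c_n}$ so that the signal spike separates from the MP bulk), and by Weyl's inequality interlacing these finite-rank perturbations shift each eigenvalue index by at most $\mathsf{S}$, so the estimate holds for $\mathsf{S} < i \le (1-\epsilon)n$; the cutoff at $(1-\epsilon)n$ avoids the hard left edge where $c_n < 1$ forces a point mass at $0$ and the rigidity deteriorates. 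The probability $1 - O(n^{-1/2})$ and the $n^{-1/4}$ rate come from the weakest link, namely the $O_\prec(p^{-1/2})$ concentration of the cross and quadratic terms combined with the number of pairs $(i,j)$; one loses a square root because the second-order entrywise fluctuations enter $\Wb$ at size $p^{-1/2}$ and, summed in operator norm over a rank-$n$ matrix, contribute $n^{1/2}p^{-1/2} \cdot (\text{log factor})$, which after the $1/h = 1/p$ normalization of the ambient problem and the bulk rigidity rate yields $n^{-1/4}$.

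The main obstacle I expect is the bookkeeping of the order-$\ge 2$ terms in the expansion: showing that their \emph{fluctuating} components have $o(n^{-1/4})$ operator norm while their \emph{expectation} components contribute only finitely many ($\le \mathsf{S}$) outlying eigenvalues and otherwise collapse into the identity shift. This requires Hanson–Wright-type bounds uniform over all $n^2$ entries, careful separation of the $\zb$-dependent and $\yb$-dependent parts of each quadratic form (the cross terms $(\yb_i-\yb_j)^\top(\zb_i-\zb_j)$ are the delicate ones, since conditionally on $\zb$ they are Gaussian-like but the conditioning must be handled so the union bound over pairs stays affordable), and then an argument — via Weyl interlacing or the min-max characterization — that the rank of the "expectation" correction is bounded by $\mathsf{S}$. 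Distinguishing the case $\lambda \le \sqrt{c_n}$ (spike buried in the bulk, $\mathsf{S}=3$) from $\lambda > \sqrt{c_n}$ (BBP-type transition, spike detaches, $\mathsf{S}=4$) is exactly the extra care the statement's definition of $\mathsf{S}$ encodes, and verifying that in the first case the spike does \emph{not} create an extra bulk-displacing outlier is the subtle endpoint of that analysis.
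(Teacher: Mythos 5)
Your proposal follows essentially the same route as the paper's proof: a third-order entrywise Taylor expansion of the kernel around $\tau\approx 2$, a decomposition $\Wb=\varsigma\mathbf{I}_n-\frac{2f'(\tau)}{p}\Xb^\top\Xb+(\text{rank at most }3)+(\text{small operator-norm error})$ in which the Hadamard square $\Pb_x\circ\Pb_x$ is the delicate term (handled in the paper by the El Karoui--type Hadamard-product lemma, Lemma \ref{lem_hardmard}, rather than by Hanson--Wright, and it is precisely this step that produces the $n^{-1/4}$ rate and the $1-O(n^{-1/2})$ probability), followed by spiked sample-covariance rigidity (Lemma \ref{lem_gramsummary}, BBP transition at $\lambda=\sqrt{c_n}$) and Weyl interlacing. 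The only correction is your early sentence tying the expansion order to $\mathsf{S}$: the expansion is third order in both cases, and $\mathsf{S}$ counts only the outlying eigenvalues (three from the finite-rank kernel-effect matrix $\mathsf{O}$, plus one more when the Gram-matrix spike detaches for $\lambda>\sqrt{c_n}$), exactly as you state correctly later in the proposal.
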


\begin{remark}\label{rmk_generalkernel}
In Theorem \ref{thm_affinity matrix}, we focus on reporting the bulk eigenvalues of $\Wb.$ In this case, the outlying eigenvalues are mainly from the kernel function expansion, which we call the ``kernel effect'' hereafter, and {the resulting} Gram matrix. For example, as we will see in the proofs in Section \ref{sec_proofs2},  we have that $\lambda_1(\Wb)=n\exp(-\tau \upsilon)+o_{\prec}(1)$ and $\lambda_2(\Wb)=\| \mathrm{Sh}_1(\tau)+\mathrm{Sh}_2(\tau) \|+o_{\prec}(1),$ where $\mathrm{Sh}_1(\tau)$ and $\mathrm{Sh}_2(\tau)$ are defined in (\ref{eq_sho}). 
Moreover, we mention that Theorem \ref{thm_affinity matrix} holds for a more general kernel function as in \cite{elkaroui2010, DW1}; that is, \eqref{eq_defnw} is replaced by
\begin{equation}\label{eq_defnwQ}
\Wb(i,j)=f\left(\frac{\|\xb_i-\xb_j \|^2}{h}\right), \ 1 \leq i, j \leq n, 
\end{equation}  
where $f \in C^3(\mathbb{R})$ is monotonically decreasing, bounded and $f(2)>0$. Moreover, we remark that in (\ref{eq_formrigidity}) we can relax $\epsilon>0$ to $\epsilon=0$ with an additional assumption that $|c_n-1| \geq \tau$ for some constant $\tau>0,$ which is a standard assumption in the random matrix theory literature guaranteeing that the smallest eigenvalue of the Gram matrix is bounded from below; for instance, see the monograph \cite{erdos2017dynamical}. Finally, we mention that in the pure noise setting, i.e., $\lambda=0,$ the asymptotics of the spectrum of $\Wb$ (equivalently, $\Wb_y$ in this setting) has been established in \cite{DW1} using a different approach, where the results hold for $\epsilon=0$ regardless of the ratio of $n/p.$ However,  in \cite{DW1}, the bound is weaker than what we show here; that is, the rate is $n^{-1/9}$ when $i$ is bounded, and the rate $n^{-1/4}$ only appears when $i \asymp n.$ 
\end{remark}

In the second result, we study the case when the signal strength $\lambda$ diverges with $n$ but slowly; that is, $\lambda=\lambda(n)  \asymp n^\alpha$, where $0<\alpha < 1.$   In this region, since $\alpha<1$, the signal is still weaker than the noise, and again it is asymptotically negligible. Thus, it is not surprising to see that the noise information dominates and the spectral distribution is almost like the MP law, except for the first few eigenvalues. Again, like in the bounded region, these finite number of outlying eigenvalues come from the interaction of the nonlinear kernel and the Gram matrix.


\begin{theorem}[Slowly divergent region]\label{lem_affinity_slowly}
Suppose \eqref{eq_defnw} and \eqref{eq_modelsteptwo}-\eqref{eq_lambdadefinition} hold true, $d=1$ and $h=p$. For any given small $\epsilon>0,$ when $n$ is sufficiently large,  we have that {the following holds} with probability at least $1-O(n^{-1/2})$: 
\begin{enumerate}
\item[(1)] When $0<\alpha < {0.5-\epsilon},$ for some constant $C>0,$ we have that:  
\begin{align}\label{eq_formrigidity22}
& | \lambda_{i}(\Wb)  -\gamma_{\nu_0}(i) | \leq C  \max\left\{n^{-1/4}, n^{\epsilon}\frac{\lambda}{\sqrt{n}} \right\},  \ 4<i \leq (1-\epsilon) n.  
\end{align}

\item[(2)] When ${0.5-\epsilon} \leq \alpha <1,$ denote
\begin{equation}\label{eq_defnd}
\fd \equiv \fd(\alpha):=\left\lceil \frac{1}{1-\alpha} \right\rceil+1. 
\end{equation} 
For some constant $C>0,$ there exists some integer $\mathsf{K}$ satisfying
\begin{equation*}
4 \leq \mathsf{K} \leq C4^\fd,
\end{equation*} 
so that with high probability, for all $\mathsf{K} < i \leq (1-\epsilon) n,$ we have that 
\begin{align}\label{eq_formrigidity2}
| \lambda_{i}(\Wb)-&\gamma_{\nu_0}(i) | \leq  
C\max\left\{p^{\mathcal{B}(\alpha) }, \frac{\lambda}{p} \right \},
\end{align}
where $\mathcal{B}(\alpha)<0$ is defined as 
\begin{equation}\label{eq_defnmathcalbalpha}
\mathcal{B}(\alpha)={(\alpha-1)\left\lceil \frac{1}{1-\alpha} \right\rceil +\alpha}\,.
\end{equation}
\end{enumerate}
\end{theorem}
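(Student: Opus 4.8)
\textbf{Proof plan for Theorem \ref{lem_affinity_slowly}.}
The overall strategy is to reduce the problem, via an entrywise Taylor expansion of the kernel $f(x)=\exp(-\upsilon x)$ around the ``typical'' squared-distance value $\tau$, to a perturbation of a sample covariance type matrix whose spectrum is governed by the MP law, and then to control the perturbation terms carefully enough that the residuals are of the advertised order $\max\{n^{-1/4}, n^\epsilon \lambda/\sqrt n\}$ (resp.\ $\max\{p^{\mathcal{B}(\alpha)},\lambda/p\}$). The first step is to record that, because $\xb_i=\zb_i+\yb_i$ with $\cov(\zb_i)$ rank one and $\lambda\asymp n^\alpha$, for $i\neq j$ we have $\|\xb_i-\xb_j\|^2/p = \tau + \xi_{ij}$ with $\xi_{ij}$ a fluctuation one controls by standard sub-Gaussian concentration: the noise-noise cross term $-\frac2p\yb_i^\top\yb_j = O_\prec(n^{-1/2})$, the signal contribution $(\mathbf{z}_{i1}-\mathbf{z}_{j1})^2/p = O(\lambda/p)$, and the signal-noise cross term $O_\prec(\sqrt{\lambda}/p)$; the diagonal is treated separately since $\|\xb_i-\xb_i\|=0$ forces a rank-one ``kernel effect'' correction $\lambda_1(\Wb)= n e^{-\tau\upsilon}+o_\prec(1)$ as in Remark \ref{rmk_generalkernel}. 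Then one Taylor expands $f(\tau+\xi_{ij})$ to order $\mathfrak{d}(\alpha)$: the zeroth order gives the rank-one matrix $f(\tau)\mathbf{1}\mathbf{1}^\top$, the linear term in the noise cross part reconstructs (after shifting) the matrix $-2f'(\tau)\Qb$ whose ESD is $\mu_{c_n,-2f'(\tau)}$, higher-order noise-only terms and all terms involving the signal are collected into an error matrix $\Eb$, and the shift $\varsigma(\lambda)$ absorbs the diagonal and the constant pieces so that the reference measure is exactly $\nu_0 = T_{\varsigma(0)}\mu_{c_n,-2f'(\tau(0))}$.

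For part (1), where $0<\alpha<0.5-\epsilon$, the signal is so weak that one only needs the same second-order expansion used in \cite{elkaroui2010} (with a third-order error), already available through Lemma \ref{lem_karoui}; the only new contribution is to track the signal-dependent terms, each of which is either of rank $O(1)$ (hence irrelevant for eigenvalues with index $>4$, by Weyl/Cauchy interlacing) or has operator norm $O_\prec(n^\epsilon\lambda/\sqrt n)$ — the latter because the dominant signal error is a matrix with entries $\asymp (\lambda/p)\cdot(\text{bounded})$ whose normalized norm, after isolating its rank-one mean part, is $O_\prec(\lambda/p \cdot \sqrt{n/p})=O_\prec(\lambda/\sqrt n)$ by the sub-Gaussian bilinear concentration and a standard $\|\cdot\|\le\|\cdot\|_F$ or moment-method bound. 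Combining this with the known rigidity/eigenvalue-sticking estimate for the MP law (again Lemma \ref{lem_karoui}, in the interior $4<i\le(1-\epsilon)n$) gives \eqref{eq_formrigidity22} via Weyl's inequality $|\lambda_i(\Wb)-\gamma_{\nu_0}(i)|\le \|\Wb-\Wb_{\mathrm{ref}}\| + |\lambda_i(\Wb_{\mathrm{ref}})-\gamma_{\nu_0}(i)|$.

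For part (2), where $0.5-\epsilon\le\alpha<1$, the second-order expansion is no longer enough: the naive error from truncating the Taylor series at order $k$ in the noise cross term is of order $(n^{-1/2})^{k+1}\cdot n$ at the matrix level (roughly $n^{1-(k+1)/2}$), so one must push the expansion to order $\mathfrak{d}(\alpha)=\lceil 1/(1-\alpha)\rceil+1$ before the noise-only tail becomes negligible; simultaneously, the signal terms, each carrying a factor of a power of $\lambda/p$, contribute matrices that are either of rank bounded by a power of $4$ (explaining the bound $\mathsf K\le C4^{\mathfrak d}$ — each order of expansion multiplies the rank budget by a bounded factor) or have norm $O_\prec(p^{\mathcal B(\alpha)})$, where $\mathcal B(\alpha)=(\alpha-1)\lceil 1/(1-\alpha)\rceil+\alpha$ is exactly the exponent one lands on after the optimal number of expansion steps. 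The bookkeeping here — identifying precisely which mixed monomials in the noise cross term, the signal term, and their products are low-rank versus small-norm, and checking the two competing error rates $p^{\mathcal B(\alpha)}$ and $\lambda/p$ — is the main obstacle; it requires a careful combinatorial organization of the expansion (the same ``Sections \ref{sec_sub_subsuper} and \ref{sec_sub_slowly}'' analysis advertised in the introduction) together with operator-norm bounds for each homogeneous piece obtained via the moment method or a graphical/Hadamard-product argument (Lemma \ref{lem_hardamardproductbound}), rather than any single clean inequality. Once every piece is shown to be either finite-rank (absorbed into the index shift $\mathsf K$) or $O_\prec(\max\{p^{\mathcal B(\alpha)},\lambda/p\})$ in norm, Weyl's inequality together with MP rigidity again yields \eqref{eq_formrigidity2}.
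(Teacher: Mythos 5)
Your plan follows essentially the same route as the paper's proof: an entrywise Taylor expansion around $\tau$ whose order is taken adaptively to be $\fd(\alpha)=\lceil 1/(1-\alpha)\rceil+1$, with the truncation/signal terms shown to be either finite-rank (rank budget growing like a bounded power per order, giving $\mathsf{K}\le C4^{\fd}$) or of operator norm $O_\prec(\max\{p^{\mathcal{B}(\alpha)},\lambda/p\})$ via sub-Gaussian concentration, Gershgorin and the Hadamard-product bound, after which the bulk is matched to the shifted MP law through the Gram-matrix rigidity of Lemma \ref{lem_gramsummary} and Weyl's inequality — exactly the structure of Sections \ref{sec_sub_subsuper}--\ref{sec_sub_slowly}. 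Two small caveats: in part (1) you cannot quote Lemma \ref{lem_karoui} directly (it is stated for bounded $\lambda$; the paper rederives the expansion using Lemma \ref{lem_concentrationslowlydivergent}), and the combinatorial bookkeeping you defer in part (2) (the decomposition $\Lb_x^{\circ k}=\sum_l\binom{k}{l}\Ob_x^{\circ l}\circ(-\Pb_x)^{\circ(k-l)}$ with each piece approximated by $\mathsf{F}_1^{\circ l}\circ\mathsf{T}^{\circ(k-l)}$ up to $O_\prec(\lambda/p)$) is precisely the content of the paper's argument, so your outline is correct but incomplete at that step.
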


As in Theorem \ref{thm_affinity matrix}, since the outlying eigenvalues are impacted by both the kernel effect and signals, we focus on reporting the bulk eigenvalues. Similar to the discussion in Remark \ref{rmk_generalkernel}, the outlying eigenvalues can be figured out from the proof in Section \ref{sec_proofs2}. Moreover, the number of the outlying eigenvalues is adaptive to $\alpha.$ As can be seen from Theorem \ref{lem_affinity_slowly}, {for any small $\epsilon>0$, when $0<\alpha<0.5-\epsilon,$} the results are similar to (2) of Theorem \ref{thm_affinity matrix} except for the convergence rates. When {$\alpha \geq 0.5-\epsilon, $} there will be more, but finite, outlying eigenvalues, which comes from a high order expansion in the proof. Finally, we mention that Theorem \ref{lem_affinity_slowly} holds for a more general kernel function like that in \eqref{eq_defnwQ}.

We remark that when  $\alpha<1$ and $\lambda>\sqrt{c_n},$ the non-bulk eigenvalues (i.e., outlying eigenvalues) ``seem'' to be useful for understanding the signal. 
For example, we may potentially utilize the number of outliers to determine if the signal strength is stronger than $\sqrt{c_n}$. 
However, to our best knowledge, there exists limited literature on utilizing this information via GL since $\mathbf{W}$ ($\mathbf{A}$ respectively) is not close to $\mathbf{W}_1$ ($\mathbf{A}_1$ respectively) in this region, except some relevant work in \cite{el2015graph} when $1/2< \alpha<1$. 
Recall that some of the non-bulk eigenvalues are generated by the kernel effect instead of the signals. As commented in Remark \ref{rmk_generalkernel}, while it is true that when $\lambda$ is bounded the kernel effect can be quantified, when $\lambda$ diverges, we loss the capability to distinguish these outliers. 
Especially, when $1/2 \leq \alpha<1,$ although we show in (2) of Theorem \ref{lem_affinity_slowly} that the number of non-bulk eigenvalues is finite and depends on the signal strength, we do not know the exact number of outliers and the relation between the signals and outliers. Therefore, it is challenging to recover the signals using this information. For example, when the signal is supported on a linear subspace so that there are multiple spikes, to our knowledge it is challenging to estimate the dimension via GL.

We shall emphasize that the GL approach is very different from PCA. Note that for the $d$-spiked model, only the $d$ largest eigenvalues could possibly detach from the bulk and can be located when PCA is applied. However, for the GL approach, even a single spike can lead to multiple outlying eigenvalues as in (2) of Theorem \ref{lem_affinity_slowly}. This shows that even for a simple $1$-dim linear manifold that is realized as a $1$-dim {\em linear} subspace in $\mathbb{R}^p$, the nonlinear method via GL is very different from the linear method via PCA. More details on this aspect can be found in Section \ref{sec: discussion and conclusion}. 
The problem becomes more challenging if the subspace is nonlinear, even under the assumption that the manifold model can be reduced to this low rank spiked model in Section \ref{sec_reducedproblem} that we focus on in this paper.

Finally, we point out that when $1/2 < \alpha <1,$ although it is challenging to obtain precise information for its clean signal counterpart from a direct application of the standard GL via $\Wb$ (c.f. Theorem \ref{lem_affinity_slowly}) or $\Ab$ (c.f. Corollary \ref{thm_normailizedaffinitymatrix}), we could consider a variant of GL via the transition matrix $\Ab$ by zeroing-out the diagonal elements of $\Wb$ proposed in \cite{el2016graph}. It has been shown in \cite{el2016graph} that the zeroing-out strategy could help the analysis of noisy datasets. We refer the readers to Appendix \ref{sec_generalizationrevision1} for more details.

\begin{remark}
When $\alpha<1,$ although it is still unclear how to use the bulk of eigenvalues from the noisy observation to extract information for the clean signal counterpart, it is a starting point for understanding the existence of a strong signal (i.e., $\alpha \geq 1$). For example, when $\alpha<1,$ Theorems \ref{thm_affinity matrix} and \ref{lem_affinity_slowly} demonstrate that most of the eigenvalues follow the MP law so that except for a finite number of outliers, two consecutive eigenvalues should be close to each other by a distance of $o(1)$. Under the alternative (i.e., $\alpha \geq 1$), since the noisy GL is close to the clean GL as shown later in Section \ref{sec_informativeregion} below, two consecutive eigenvalues can be separated by a distance of constant order.
\end{remark}

\subsection{Spectrum of kernel affinity matrices: high signal-to-noise region $\alpha \geq 1$}\label{sec_informativeregion} In this subsection, we present the results such that the spectra of $\Wb$ and $\Wb_1$ in (\ref{eq_singlematrix}) can be connected after properly scaling when the SNR is relatively large, i.e., $\alpha \geq 1$. 
We first prepare some notations. Denote 
\begin{align}\label{eq_wba1}
\Wb_{a_1}:=\exp(-2\upsilon) \Wb_1+(1-\exp(-2\upsilon))\mathbf{I}_n.
\end{align}
Clearly, $\Wb_{a_1}$ is closely related to $\Wb_1$ via  a scaling and an isotropic shift and $\Wb_1$ contains only the signal information. 
On the other hand, note that $\Wb_{a_1}=[\exp(-2\upsilon) \boldsymbol{1}\boldsymbol{1}^\top+(1-\exp(-2\upsilon))\mathbf{I}_n]\circ \Wb_1$, where the matrix $\exp(-2\upsilon) \boldsymbol{1}\boldsymbol{1}^\top+(1-\exp(-2\upsilon))\mathbf{I}_n$ comes from the first order Taylor expansion of $\Wb_y$. We introduce another affinity matrix that will be used when $\alpha$ is too large so that the bandwidth $h \asymp p$ is relatively small compared with the signal strength. Denote $\Wb_{c} \in \mathbb{R}^{n \times n}$
\begin{equation}\label{eq_defncrossingterm}
\Wb_{c}(i,j)=\exp\left(-2\upsilon \frac{(\zb_i-\zb_j)^\top(\yb_i-\yb_j) }{h} \right),
\end{equation}
and 
\begin{equation}\label{eq_defntildewa1}
\widetilde{\Wb}_{a_1}:=\Wb_{a_1} \circ \Wb_c.
\end{equation}
Note that $\widetilde{\Wb}_{a_1}$ differs from $\Wb_{a_1}$ by the matrix $\Wb_c.$ It will be used when $\alpha \geq 2.$

Our main result for this SNR region is Theorem \ref{thm_informativeregion} below. For some constant $C \equiv C(\alpha)>0,$ denote  
\begin{equation}\label{eq_defntalpha}
\mathsf{T}_{\alpha}:=
\begin{cases}
C \log n, & \alpha=1; \\
Cn^{\alpha-1}, & 1<\alpha < 2. 
\end{cases}
\end{equation}

\begin{theorem}\label{thm_informativeregion}
Suppose \eqref{eq_defnw} and \eqref{eq_modelsteptwo}-\eqref{eq_lambdadefinition} hold true, $d=1$ and $h=p$.
\begin{enumerate}
\item When $1 \leq \alpha < 2,$ we have that 
\begin{equation}\label{eq_informativeequationone}
\left\| \frac{1}{n} \Wb-\frac{1}{n} \Wb_{a_1} \right\| \prec n^{-1/2}. 
\end{equation}
Moreover, for some universal large constant $D>2,$ we have that for $i \geq \mathsf{T}_{\alpha}$ in (\ref{eq_defntalpha})
\begin{equation}\label{eq_informativeequationtwo}
\left|\lambda_i(\Wb_{a_1})-(1-\exp(-2\upsilon)) \right|\prec n^{-D}. 
\end{equation}
\item When $\alpha \geq 2,$ we have that 
\begin{equation}\label{eq_informativeequationoneone}
\left\| \frac{1}{n} \Wb-\frac{1}{n} \widetilde{\Wb}_{a_1} \right\| \prec n^{-\alpha/2}+n^{-3/2}. 
\end{equation}
Furthermore, when $\alpha$ is larger and satisfies
\begin{equation}\label{eq_conditionexponential}
\alpha>\frac{2}{t}+1
\end{equation}  
for a constant $t \in (0,1)$, 
we have that with probability $1-O(n^{1-t(\alpha-1)/2}),$  for some constant $C>0$, 
\begin{equation}\label{equationreduced_11}
\left\|\widetilde{\Wb}_{a_1}-\mathbf{I}_n \right\| \leq C n \exp(-\upsilon (\lambda/p)^{1-t} ),
\end{equation}
and consequently 
\begin{equation}\label{eq_c1exp}
\left\| \Wb-\mathbf{I}_n \right\| \leq n \exp(-\upsilon (\lambda/p)^{1-t} ).   
\end{equation}  
\end{enumerate}
\end{theorem}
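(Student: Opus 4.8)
\emph{Proof proposal.} The plan is to exploit the exact Hadamard factorization $\Wb = \Wb_1 \circ \Wb_y \circ \Wb_c$ (from $\|\xb_i-\xb_j\|^2 = \|\zb_i-\zb_j\|^2 + \|\yb_i-\yb_j\|^2 + 2(\zb_i-\zb_j)^\top(\yb_i-\yb_j)$), together with four ingredients. First, $\Wb_1$ and $\Wb_1\circ\Wb_y$ are positive semidefinite with unit diagonal (Gaussian kernels and Schur's theorem), so $\|(\Wb_1\circ\Wb_y)\circ\Bb\|\le\|\Bb\|$ for symmetric $\Bb$ by Lemma \ref{lem_hardamardproductbound}. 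Second, completing the square, $(\Wb_1\circ\Wb_c)(i,j) = \exp(-\upsilon(x_{i1}-x_{j1})^2/p)\exp(\upsilon(y_{i1}-y_{j1})^2/p)$ with $x_{i1}=\mathbf{z}_{i1}+y_{i1}$; since $\max_{i\ne j}(y_{i1}-y_{j1})^2 = O_\prec(\log n)$ the second factor equals $\mathbf{1}\mathbf{1}^\top$ up to an $O_\prec(\log n/p)$ entrywise error, hence $\Wb_1\circ\Wb_c = \Wb_w + O_\prec(\log n)$ in operator norm, where $\Wb_w := [\exp(-\upsilon(x_{i1}-x_{j1})^2/p)]$ is PSD with unit diagonal and is a kernel matrix of the same type as $\Wb_1$. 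Third, the sharp spectral norm bound $\|\Wb_1\| = O_\prec(n^{(3-\alpha)/2}\vee 1)$ of Lemma \ref{Lemma: W1 bound} (and likewise for $\Wb_w$), which quantifies how the effective bandwidth shrinks against the signal spread and is what makes the exponents below telescope. Fourth, the null-case expansion of $\Wb_y$ around $\Wb_{y,\mathrm{app}}:=\exp(-2\upsilon)\mathbf{1}\mathbf{1}^\top + (1-\exp(-2\upsilon))\mathbf{I}_n$ from Lemma \ref{lem_karoui}, giving $\Wb_y-\Wb_{y,\mathrm{app}} = -\upsilon\exp(-2\upsilon)(\mathbf{a}\mathbf{1}^\top+\mathbf{1}\mathbf{a}^\top)+\mathbf{R}_y$, where $\mathbf{a}:=((\|\yb_i\|^2-p)/p)_i$ has $\|\mathbf{a}\|=O_\prec(1)$, $\|\mathbf{a}\|_\infty=O_\prec(p^{-1/2})$, and $\mathbf{R}_y$ — whose leading part is the Gram-type matrix $\mathbf{G}$, $\mathbf{G}(i,j)=p^{-1}\yb_i^\top\yb_j\mathbf{1}_{i\ne j}$ — satisfies $\|\mathbf{R}_y\|=O_\prec(1)$; consequently $\|\Wb_y-\Wb_{y,\mathrm{app}}\|=O_\prec(\sqrt n)$.

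For part (1) I would use $\Wb_{a_1}=\Wb_{y,\mathrm{app}}\circ\Wb_1$ to write $\Wb-\Wb_{a_1} = \Wb_1\circ(\Wb_y-\Wb_{y,\mathrm{app}}) + (\Wb_1\circ\Wb_y)\circ(\Wb_c-\mathbf{1}\mathbf{1}^\top)$. The first term is $\le\|\Wb_y-\Wb_{y,\mathrm{app}}\|=O_\prec(\sqrt n)$ by Schur. For the second, Taylor-expand $\Wb_c(i,j)-1$ in $u_{ij}:=(\mathbf{z}_{i1}-\mathbf{z}_{j1})(y_{i1}-y_{j1})/p=O_\prec(n^{\alpha/2-1})=o(1)$; Hadamard-multiplying each monomial $u_{ij}^k$ into the PSD unit-diagonal matrix $\Wb_1\circ\Wb_y$ and expanding decomposes it into $O(4^k)$ terms $D_1(\Wb_1\circ\Wb_y)D_2$ with $\|D_1\|\|D_2\|\le(\max_i|\mathbf{z}_{i1}|)^k(\max_i|y_{i1}|)^k=O_\prec(n^{\alpha k/2})$, so the degree-one part has operator norm $\lesssim p^{-1}n^{\alpha/2}\|\Wb_1\circ\Wb_y\|=O_\prec(p^{-1}n^{\alpha/2}n^{(3-\alpha)/2})=O_\prec(n^{1/2})$ — the exponents $\alpha/2$ and $(3-\alpha)/2$ summing to $3/2$ for every $\alpha$ — while the higher-degree terms sum to something smaller by a convergent geometric series. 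Dividing by $n$ gives \eqref{eq_informativeequationone}. For \eqref{eq_informativeequationtwo}, since $\Wb_{a_1}=\exp(-2\upsilon)\Wb_1+(1-\exp(-2\upsilon))\mathbf{I}_n$ it suffices to show $\lambda_i(\Wb_1)\prec n^{-D}$ for $i\ge\mathsf{T}_\alpha$; I would obtain this from Mehler's formula \eqref{eq_melherformula}: rescaling the signal to unit variance turns $\Wb_1$ into the empirical matrix of the analytic kernel $\exp(-\upsilon(\lambda/p)(s-t)^2)$, whose integral-operator eigenvalues decay geometrically with ratio $r_n$ obeying $1-r_n\asymp p/\lambda\asymp n^{-(\alpha-1)}$ (bounded away from $1$ at $\alpha=1$); truncating at $\mathsf{T}_\alpha$ modes, bounding the empirical matrix of the remainder by $\sum_{k>\mathsf{T}_\alpha}\mu_k\|\phi_k(\mathbf{z})\|^2$ with $\|\phi_k(\mathbf{z})\|^2=O_\prec(n)$ (uniform Hermite bounds, sub-Gaussian signal), and applying Weyl's inequality yields the claim with $\mathsf{T}_\alpha\asymp\log n$ for $\alpha=1$ and $\mathsf{T}_\alpha\asymp n^{\alpha-1}$ for $1<\alpha<2$.

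For part (2) the same algebra gives $\Wb-\widetilde{\Wb}_{a_1} = (\Wb_1\circ\Wb_c)\circ(\Wb_y-\Wb_{y,\mathrm{app}})$. Replacing $\Wb_1\circ\Wb_c$ by $\Wb_w$ (up to $O_\prec(\log n)$) and inserting the expansion of $\Wb_y-\Wb_{y,\mathrm{app}}$: the rank-two term becomes $D_{\mathbf{a}}\Wb_w+\Wb_w D_{\mathbf{a}}$ up to a constant, of operator norm $\le 2\|\mathbf{a}\|_\infty\|\Wb_w\|=O_\prec(p^{-1/2}(n^{(3-\alpha)/2}\vee1))=O_\prec(n^{1-\alpha/2}\vee n^{-1/2})$; the Gram-type term is $\Wb_w\circ\mathbf{G}=(\Wb_w-\mathbf{I}_n)\circ\mathbf{G}$ (the diagonal of $\mathbf{G}$ vanishes), for which I would use a Bernstein-type matrix concentration: writing $\mathbf{G}=p^{-1}\sum_{k=1}^p(\mathbf{v}_k\mathbf{v}_k^\top-\mathrm{diag})$ over the $p$ independent noise coordinates, the centered sum $p^{-1}\sum_k D_{\mathbf{v}_k}(\Wb_w-\mathbf{I}_n)D_{\mathbf{v}_k}$ has operator norm $O_\prec(p^{-1/2}\|\Wb_w-\mathbf{I}_n\|)=O_\prec(n^{1-\alpha/2}\vee n^{-1/2})$; the remaining quadratic and higher terms of $\Wb_y-\Wb_{y,\mathrm{app}}$ are handled the same way and are of smaller order. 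Dividing by $n$ gives \eqref{eq_informativeequationoneone}. For the exponential-decay regime $\alpha>2/t+1$ I would use an anti-concentration estimate from continuity of the signal, $\mathbb{P}(\min_{i\ne j}|x_{i1}-x_{j1}|\le\delta)\lesssim n^2\delta/\sqrt{\lambda}$; choosing $\delta$ so that this probability is $O(n^{1-t(\alpha-1)/2})$ forces $\min_{i\ne j}(x_{i1}-x_{j1})^2/p\gtrsim(\lambda/p)^{1-t}$ off an exceptional event, whence every off-diagonal entry of $\Wb_1\circ\Wb_c$ is $\le(1+o(1))\exp(-\upsilon(\lambda/p)^{1-t})$ and every off-diagonal entry of $\Wb$ is dominated by it while both diagonals are $1$; the crude bound $\|\Bb\|\le n\max_{ij}|\Bb(i,j)|$ applied to $\widetilde{\Wb}_{a_1}-\mathbf{I}_n=\exp(-2\upsilon)(\Wb_1\circ\Wb_c-\mathbf{I}_n)$ and to $\Wb-\mathbf{I}_n$ then gives \eqref{equationreduced_11} and \eqref{eq_c1exp}.

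The step I expect to be the main obstacle is the operator-norm control of the Gram-type term $(\Wb_w-\mathbf{I}_n)\circ\mathbf{G}$ for $2<\alpha<5$: neither the Schur bound ($O_\prec(1)$) nor the Hilbert--Schmidt bound ($O_\prec(n^{(3-\alpha)/4})$) reaches the target $n^{1-\alpha/2}\vee n^{-1/2}$, so one genuinely needs a concentration argument that simultaneously uses the short-bandwidth (near-diagonal) structure of $\Wb_w$ and the independence across the $p$ noise coordinates. The other delicate point is proving and correctly deploying Lemma \ref{Lemma: W1 bound}: the bound $\|\Wb_1\|=O_\prec(n^{(3-\alpha)/2}\vee1)$ is precisely what converts the naive cross-term estimate (which alone would give only $O_\prec(n^{\alpha/2-1})$) into the sharp $O_\prec(n^{-1/2})$ throughout $1\le\alpha<2$, and drives the $n^{-\alpha/2}$ rate for $\alpha\ge2$.
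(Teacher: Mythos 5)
Your part (1) and the exponential regime of part (2) follow essentially the paper's route (the same decomposition of $\Wb-\Wb_{a_1}$ into a cross term and a noise-linearization term, controlled by the Hadamard-product lemma together with the $\|\Wb_1\|$ bound of Lemma \ref{Lemma: W1 bound}; Mehler expansion, truncation and Weyl for \eqref{eq_informativeequationtwo}; anti-concentration of the signal plus Gershgorin for \eqref{equationreduced_11}--\eqref{eq_c1exp}), so those parts are sound, if more laborious than necessary in the treatment of $\Wb_c$. The genuine gap is your handling of \eqref{eq_informativeequationoneone} for $2<\alpha<5$: the ``main obstacle'' you flag is illusory, and the matrix-Bernstein detour you leave unproven is not needed. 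Throughout you only use the Schur/PSD form of the Hadamard bound, $\|A\circ B\|\le \max_i A(i,i)\,\|B\|$, which indeed gives the useless $O_\prec(1)$ for $(\Wb_w-\Ib_n)\circ \mathbf{G}$. But Lemma \ref{lem_hardamardproductbound} is the entrywise-nonnegative form: for symmetric $\Mb$ with nonnegative entries, $\|\Mb\circ\Eb\|\le \max_{i,j}|\Eb(i,j)|\,\|\Mb\|$, so you may put the small \emph{max entry} on the noise factor and the small \emph{operator norm} on the kernel factor. Apply it once with $\Mb=\Wb_c\circ\Wb_1$ (nonnegative entries; by your own $\Wb_w$ reduction and the Lemma \ref{Lemma: W1 bound} argument applied to the coordinate $x_{i1}=\mathbf{z}_{i1}+y_{i1}$, whose variance is $\lambda+1\asymp n^{\alpha}$, it satisfies $\|\Wb_c\circ\Wb_1\|\prec n^{(3-\alpha)/2}\vee n^{\epsilon}$) and $\Eb=\Wb_y-\Cb_0$, where $\Cb_0:=\exp(-2\upsilon)\mathbf{1}\mathbf{1}^\top+(1-\exp(-2\upsilon))\mathbf{I}_n$ is your $\Wb_{y,\mathrm{app}}$ (zero diagonal, off-diagonal entries $O_\prec(n^{-1/2})$ by Lemma \ref{lem_concentrationinequality}). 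This yields $\|\Wb-\widetilde{\Wb}_{a_1}\|\prec n^{-1/2}\left(n^{(3-\alpha)/2}\vee n^{\epsilon}\right)$, i.e.\ \eqref{eq_informativeequationoneone} after dividing by $n$, with no decomposition of $\Wb_y-\Cb_0$ into rank-two, Gram and higher parts and no concentration over the $p$ noise coordinates; the same role-swap also settles the specific term you worried about, since $\|(\Wb_w-\Ib_n)\circ\mathbf{G}\|\le \max_{i,j}|\mathbf{G}(i,j)|\,\|\Wb_w-\Ib_n\|\prec n^{-1/2}\|\Wb_w\|$. This is exactly the paper's one-line argument for part (2), and it is the same argument you already use for the cross term in part (1); as written, your proof of \eqref{eq_informativeequationoneone} is incomplete precisely on the range where you defer to the unspecified Bernstein bound.

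A secondary caution on \eqref{eq_informativeequationtwo}: your bookkeeping ``geometric eigenvalue decay with $1-r_n\asymp p/\lambda$ plus $\|\phi_k(\mathbf{z})\|^2=O_\prec(n)$ uniformly'' does not quite deliver the stated threshold. With purely geometric decay and uniformly controlled eigenvector norms, the tail $\sum_{k>T}\mu_k\|\phi_k(\mathbf{z})\|^2\le n^{-D}$ forces $T\gtrsim n^{\alpha-1}\log n$ when $1<\alpha<2$, not $Cn^{\alpha-1}$; moreover the uniform bound on $\|\phi_k(\mathbf{z})\|^2$ is not free, since $\mathbf{z}_{i1}$ is only sub-Gaussian (so there is no orthonormality of the $\phi_k$ under the sampling law to lean on) and the Gaussian-measure eigenfunctions are not uniformly bounded in $k$. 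The paper sidesteps both issues by working directly with the Mehler coefficients $t_0^m/m!$ and the vectors $\Hb_m$ on the event $\max_i|z_i|^2\le C\log n$: the factorial decay simultaneously beats the $(C\log n)^m$ growth of $\|\Hb_m\|_2^2$ and the $n^{C\beta}$ weight factor once $m\ge C_0 n^{\alpha-1}$, which is what gives $\mathsf{T}_{\alpha}=Cn^{\alpha-1}$ exactly as stated. If you keep your integral-operator phrasing, you should track these two factors explicitly rather than absorbing them into ``uniform Hermite bounds.''
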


The scaling $n^{-1}$ in (\ref{eq_informativeequationone}) is commonly used in many manifold learning and machine learning literature \cite{AR,JMLR:v7:braun06a,elkaroui20102,koltchinskii2000random,MR2600634,MR2396807}. 
On one hand, (1) of Theorem \ref{thm_informativeregion} shows that once the SNR is ``relatively large'' ($1 \leq \alpha<2$), we may access the spectrum of the clean affinity matrix $\Wb_1$ via the noisy affinity matrix $\Wb$ as is described in (\ref{eq_informativeequationone}), and the clean affinity matrix may contain useful information of the signal. In this case, the signal is strong enough to compete with the noise so that we are able to recover the ``top few'' eigenvalues of the kernel matrix associated with the clean data via $\Wb_{a_1}.$ Especially, (\ref{eq_informativeequationtwo}) implies that we should focus on the top $\mathsf{T}_{\alpha}$ eigenvalues of $\Wb$, since the remaining eigenvalues are not informative. This coincides with what practitioners usually do in data analysis.
Note that $\mathsf{T}_{\alpha}$ increases with $\alpha$, which fits our intuition, since the SNR becomes larger.

On the other hand, we find that the classic bandwidth choice $h \asymp p $ is not a good choice when the SNR is ``too large'' ($\alpha\geq 2$). First, (2) of Theorem \ref{thm_informativeregion} states that when $\alpha \geq 2,$ since the bandwidth is too small compared with the signal strength, the noisy affinity matrix will be close to a mixture of signal and noise. Especially, when the signals are stronger in the sense of (\ref{eq_conditionexponential}), we will not be able to obtain information from the noisy affinity matrix. This can be understood as follows.  Since the signal is far stronger than the noise, equivalently we could say that the signal is contaminated by ``small'' noise. However, since the bandwidth is set to $h=p$, which is ``small'' compared with the signal strength, the exponential decay of the kernel forces each point to be ``blind'' to see other points. In this case, $\Wb_{i,j}$ is close to $0$ when $i\neq j$, and the affinity matrix $\Wb$ is close to an {identity} matrix, which leads to 
\eqref{eq_c1exp}.
As we will see in Theorem \ref{sec_mainresultssecondcase}, all these issues will be addressed using a different bandwidth. For the readers' convenience, in Figure \ref{Illustration of phase transition}, we use a simulation to summarize the phase transitions  observed in Theorems \ref{thm_affinity matrix}, \ref{lem_affinity_slowly} and \ref{thm_informativeregion}. For numerical accuracy of our established theorems, we refer the readers to Section \ref{sec_simu_accuracy} below for more details.

\begin{figure*}[!ht]
\centering
\includegraphics[trim=70 0 80 0, clip, width=\textwidth]{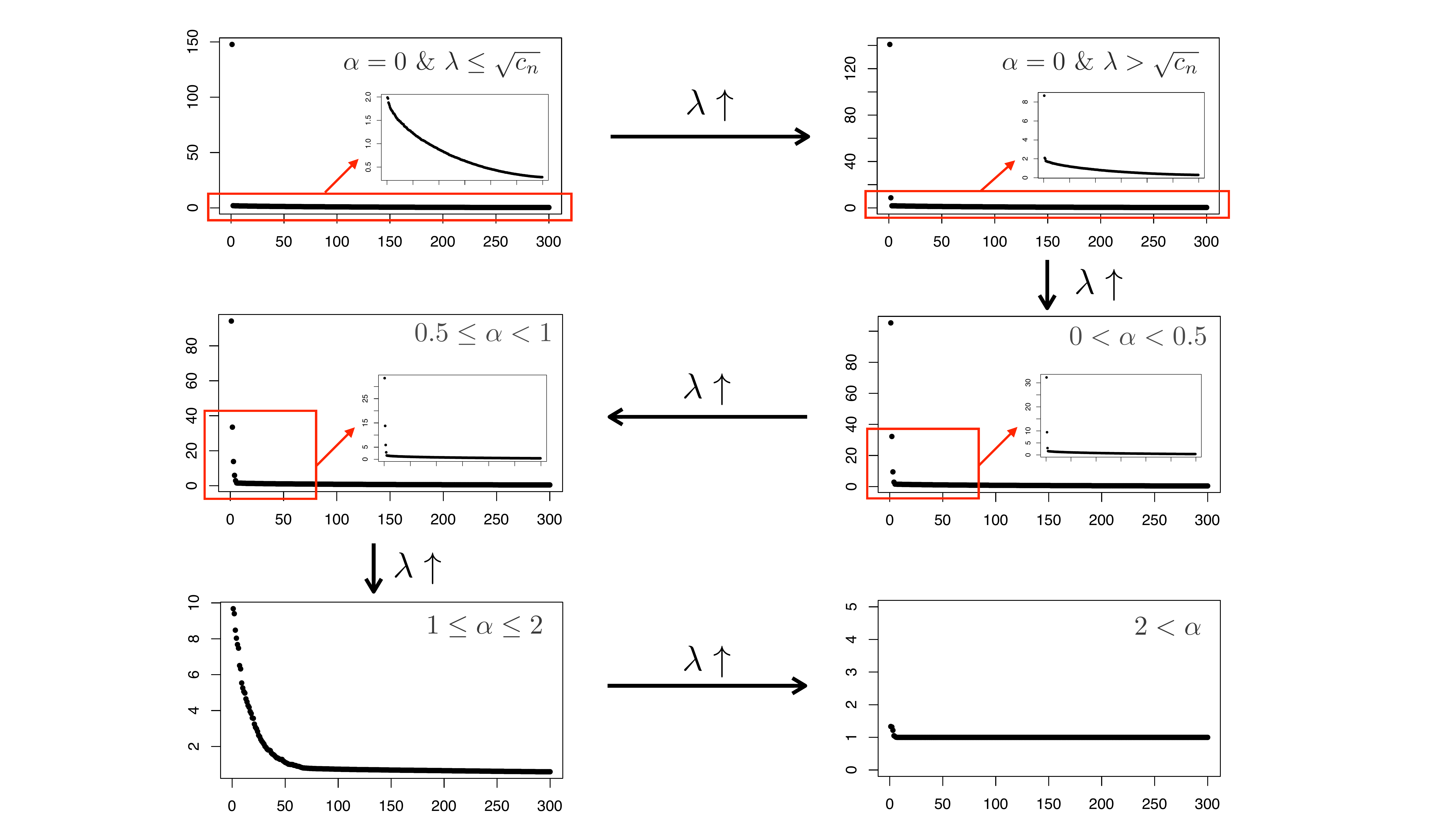}
\caption{An illustration of the phase transition phenomenon of the affinity matrix spectrum. The noise $\yb_i$, $i=1,\ldots,n$, is i.i.d. sampled from $\mathcal{N}(0,I_p)$ and the clean data $\xb_i$ is i.i.d. sampled from $\mathcal{N}(0,\lambda)$, where $\lambda>0$. We take $n=p=300$. The kernel is $f(x)=\exp(-x/2)$ and the bandwidth is $h=p$. For each $\lambda$, the 300 eigenvalues are plotted in the descending order. In the bounded and slowly divergent regions, the second to the 300-th eigenvalues are zoomed in for a better visualization.
\label{Illustration of phase transition}}
\end{figure*}

Technically, in the previous results when $\alpha<1$, the kernel function $f(x)$ only contributes to the measure $\nu_0$ via \eqref{eq_nudefinition} and its decay rate does not play a role in the conclusions.  However, once the signal becomes stronger, the kernel decay rate plays an essential role. We focus on $f(x)=\exp(-\upsilon x)$ to simplify the discussion in this paper, and postpone the discussion of general kernels to our future work.

\begin{remark}\label{rmk_partone}
When $1 \leq \alpha <2$ in Theorem \ref{thm_informativeregion}, we have shown that besides the top $\mathsf{T}_{\alpha}$ eigenvalues of $\Wb_{a_1}$, the remaining eigenvalues of $\Wb_{a_1}$ are trivial. Since the error bound in (\ref{eq_informativeequationtwo}) is much smaller than the one in (\ref{eq_informativeequationone}), the smaller eigenvalues of $\Wb$ may fluctuate more and have a non-trivial ESD. The ESD of $\Wb$ is best formulated using the Stieltjes transform \cite{MR2567175}. Here we provide a remark on the approximation of the Stieltjes transforms. Denote $\Wb_{b_1}$ as 
\begin{align}\label{eq_defnwxa2}
\Wb_{b_1}:=\Big( & \frac{2\upsilon\exp(-2\upsilon)}{p} \Yb^\top \Yb +2 \upsilon \exp(-4 \upsilon)\mathbf{I}_n \Big) \circ \Wb_1\,.
\end{align}
Note that compared with $\Wb_{a_1}$, the matrix $\frac{2\upsilon\exp(-2\upsilon)}{p} \Yb^\top \Yb+2 \upsilon \exp(-4 \upsilon)\mathbf{I}_n$ in $\Wb_{b_1}$ comes from a higher order Taylor expansion of $\Wb_y$.
Let $m_{\Wb}(z)$ and $m_{\Wb_{b_1}}(z)$ be the Stieltjes transforms of $\Wb$ and $\Wb_{b_1}$ respectively. In Section \ref{sec_rmkjust}, we show that
\begin{equation}\label{eq_stieltjestransformlimit}
\sup_{z \in \mathcal{D}}|m_{\Wb}(z)-m_{\Wb_{b_1}}(z)| \prec \frac{1}{n^{1-\alpha/2} \eta^2}\,,
\end{equation}
where $\mathcal{D}$ is the domain of spectral parameters defined as 
\begin{align}\label{defn_domaind}
\mathcal{D} := \mathcal{D}(1/4,\mathsf{a}):=\Big\{ z=& E+\ri \eta: \mathsf{a} \leq E \leq \frac{1}{\mathsf{a}}, \nonumber \\
 & n^{-1/2+\alpha/4+\mathsf{a}} \leq \eta \leq \frac{1}{\mathsf{a}} \Big\},
\end{align}
and $0<\mathsf{a}<1$ is some fixed (small) constant. This result helps us further peek into the intricate relationship between the clean and noisy affinity matrices. However, it does not provide information about each single eigenvalue.

\end{remark}

\begin{remark}\label{rmk_multipled}
In the above theorems, we focus on reporting the results for the case $d=1$ in (\ref{eq_modelsteptwo}). We now discuss how to generalize our results to $d>1$. There are two major cases. The first case is when all signal strengths are in the same SNR region, and the second case is when the signal strengths might be in different SNR regions.   
We start from the first case, and there are four regions we shall discuss.

\begin{enumerate}

\item When all $\alpha_i$, $1 \leq i \leq d$, are very large in the sense that they satisfy the condition (2) of Theorem \ref{thm_informativeregion}, following the same argument, we can immediately conclude that the results stated in (2) in Theorem \ref{thm_informativeregion} still hold true by setting $\alpha:=\max_i \alpha_i$ in (\ref{eq_conditionexponential}).

\item When $\alpha_i >1$, $1 \leq i \leq d$, satisfy condition (1) of Theorem \ref{thm_informativeregion}, the results of (\ref{eq_informativeequationone}) and (\ref{eq_informativeequationoneone}) of Theorem \ref{thm_informativeregion} hold with some changes. Indeed, $n^{-\alpha/2}$ in (\ref{eq_informativeequationoneone}) should be replaced by $\sum_{l=1}^d n^{-\alpha_l/2}$ and $Cn^{\alpha-1}$ in (\ref{eq_defntalpha}) should be replaced by $ C n^{\min_l \alpha_l-1}$. Moreover, in this setup, since the noisy affinity matrix will be close to a matrix depending on the clean affinity matrix $\Wb_{1}$, where $\Wb_1(i,j)=\exp\left(-\frac{\| \zb_i-\zb_j \|_2^2}{h} \right)$, which in general does not follow the MP law, the spectrum of $\Wb$ vary according to the specific values of $\lambda_i$, $1 \leq i \leq d$. More discussion with simulation of this setup with $d=2$ can be found in Section \ref{sec_generalizationrevision2}. 

\item When $\alpha_i = 1$, $i=1,\ldots,d$, this is the region our argument cannot be directly applied and we need a substantial generalization of the proof. Especially, our proof essentially relies on the Mehler's formula in Section \ref{sec_mehler} that has only been proved for $d=1$ to our knowledge. Nevertheless, we believe it is possible to generalize this formula to $d>1$ following the arguments of \cite{FOATA1978367}.  

\item When $0 \leq \alpha_i<1$, $1 \leq i \leq d,$ we could directly generalize the result to $d>1$ with all the key ingredients,  i.e., Lemmas \ref{lem_concentrationinequality}--\ref{lem_karoui}, in Section \ref{sec_affinitymatrixpreliminaryresults}. Specifically, to extend Theorem \ref{thm_affinity matrix} concerning $\alpha_i=0$, $1 \leq i \leq d$, denote $r=0$ if $\lambda_i \leq \sqrt{c_n}$ for all $1 \leq i \leq d$, $r=d$ if $\lambda_i > \sqrt{c_n}$ for all $1 \leq i \leq d$, or $1\leq r<d$ if $\lambda_1 \geq \cdots \geq  \lambda_{r}>\sqrt{c_n} \geq \lambda_{r+1} \geq \cdots \lambda_d$. The proof of Theorem \ref{thm_affinity matrix} still holds by updating $\mathsf{S}=3+r.$ In fact, according  to (\ref{eq_spikedpartofmatrix}), the $\mathsf{O}$ matrix therein is of rank three and the Gram matrix $\Xb^\top \Xb$ can generate $r$ outliers according to  Lemma \ref{lem_gramsummary}. 
Theorem \ref{lem_affinity_slowly} still holds when $d>1$. Following its proof,  when $0<\alpha_i<0.5-\epsilon$ for all $1 \leq i \leq d,$ the first part of the theorem still holds when $i>d+r$ and the rate $\lambda/\sqrt{n}$ should be replaced by $\sum_{l=1}^d \lambda_l /\sqrt{n}.$ In fact, in this setting, all $\lambda_i>\sqrt{c_n}$  and all will generate outliers. Moreover, when all $\alpha_i \geq 0.5-\epsilon,$ by replacing (\ref{eq_defnd}) with $\sum_{l=1}^d \fd_l$, where $\fd_l=\left\lceil \frac{1}{1-\alpha_l} \right\rceil+1,$ we conclude that the second part of the theorem still holds true by replacing $\lambda/p$ with $\sum_{l=1}^d \lambda_l/p$ and $p^{\mathcal{B}(\alpha)}$  with $\sum_{l=1}^d p^{\mathcal{B}(\alpha_l)}$. We emphasize that in the above settings, as $\tau$ defined in (\ref{eq_defntau}) satisfies that $\tau \rightarrow 2$ as $n \rightarrow \infty$, the bulk eigenvalues can be characterized by the same MP law as in (\ref{eq_formrigidity}) and (\ref{eq_formrigidity2}). However, the number of the outliers  may depend on the signal strength $\lambda_i$, $1 \leq i \leq d.$ See Section \ref{sec_generalizationrevision2} for more discussion and simulation with $d=2$.
\end{enumerate}
The second case includes various combinations, and some of them might be challenging. We discuss only one setup assuming the signal strengths fall in two different regions; that is, there exists $1<r<d$ such that
\begin{equation*}
\alpha_1 \geq \alpha_2 \geq \alpha_r\geq 1>\cdots>\alpha_{r+1} \geq \cdots \alpha_d\,.
\end{equation*}
Theorem \ref{thm_informativeregion} still holds by replacing $\alpha$ by $\max_i\alpha_i$ and $\Wb_1$ in (\ref{eq_wba1}) by $\mathsf{W}_1$, where $ \bm{z}_i=({\zb}_{i1}, \cdots, {\zb}_{ir},0, \cdots, 0)$ and 
$\mathsf{W}_1(i,j)=\exp\left(-\frac{\| \bm{z}_i-\bm{z}_j \|_2^2}{h} \right)$,
i.e., the clean affinity matrix is defined by only using those components with large SNRs. That is to say, the spectrum of $\Wb$ for the value of $d$ is close to the clean affinity matrix for the value of $r$ with the same signals $\lambda_1 \geq \lambda_2 \geq \cdots \geq \lambda_r.$ The detailed statements and proofs are similar to the case $d=1$ except for extra notional complication. We defer more discussion of this setup with $d=2$ to Section \ref{sec_generalizationrevision2}. 

In short, there are still several open challenges when $d>1$, and we will explore them in our future work.

\end{remark}

\subsection{Spectrum of transition matrices}\label{sec_transitionmatrixab}
{In this subsection, we state the main results for the transition matrix $\Ab$ defined in (\ref{eq_noramlizedmatrix}).}
Even though there is an extra normalization step by the degree matrix $\Db$, we will see that most of the spectral studies of $\Ab$ boil to those of $\Wb.$ 
In what follows, we provide the counterparts of the results in Sections \ref{sec_result_fixedepsilon} and \ref{sec_informativeregion} for $\Ab.$ Similar discussions as those in Remark \ref{rmk_partone} and Remark \ref{rmk_multipled} also hold. 

For $\Wb_{a_1}$ defined in (\ref{eq_wba1}), denote $\Ab_{a_1}=\Db_{a_1}^{-1} \Wb_{a_1}$ similar to the definition (\ref{eq_noramlizedmatrix}). Similarly, for $\widetilde{\Wb}_{a_1}$ defined in (\ref{eq_defntildewa1}), denote $\widetilde{\Ab}_{a_1}=\widetilde{\Db}_{a_1}^{-1}\widetilde{\Wb}_{a_1}.$

\begin{corollary}\label{thm_normailizedaffinitymatrix}
Suppose \eqref{eq_defnw}-\eqref{eq_lambdadefinition} hold true, $d=1$, $h=p$ and $f(x)=\exp(-\upsilon x)$. When $0 \leq \alpha <1,$ the results of Theorems \ref{thm_affinity matrix} and \ref{lem_affinity_slowly} hold for the eigenvalues of $n\Ab$ by replacing $\Wb$ with $n\Ab$ and the measure $\nu_0$ with 
$\check\nu_0=T_{\varsigma(0)} \mu_{c_n, -2f'(\tau(0))/( f(\tau(\lambda)))}$.
When $1\leq  \alpha<2,$ for (1) of Theorem \ref{thm_informativeregion}, the counterpart of (\ref{eq_informativeequationone}) reads as 
\begin{equation*}
\| \Ab-\Ab_{a_1} \| \prec n^{\frac{\alpha-2}{2}}, 
\end{equation*}
and the counterpart of (\ref{eq_informativeequationtwo}) is 
\begin{equation*}
\lambda_i(\Ab_{a_1}) \prec n^{\frac{\alpha-3}{2}}. 
\end{equation*}
Moreover, when $\alpha \geq 2,$ for (2) of Theorem \ref{thm_informativeregion}, the counterpart of (\ref{eq_informativeequationoneone}) reads as
\begin{equation*}
\| \Ab-\widetilde{\Ab}_{a_1} \| \prec n^{-\frac{1}{2}}. 
\end{equation*}
The rest parts hold by replacing $\widetilde{\Wb}_{a_1}$ and $\Wb$ with $\widetilde{\Ab}_{a_1}$ and $\Ab,$ respectively. 
\end{corollary}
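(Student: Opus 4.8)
The plan is to deduce every assertion about $\Ab=\Db^{-1}\Wb$ from the corresponding assertion about $\Wb$ already established, the degree matrix $\Db$ being the only new object to analyze. For the eigenvalue (rigidity) statements I will work with the symmetric matrix $\Db^{-1/2}\Wb\Db^{-1/2}$, which shares the spectrum of $\Ab$ and to which Weyl's inequality and Cauchy interlacing apply; for the operator-norm statements no symmetrization is needed and one works with $\Ab$ directly. The control of $\Db$ will come from the same concentration tools used for $\Wb$ (Lemma~\ref{lem_concentrationinequality}, the El Karoui-type expansion of Lemma~\ref{lem_karoui}) together with the approximations in Theorem~\ref{thm_informativeregion}.

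For $0\le\alpha<1$ we have $\lambda/p\to0$, hence $\tau\to2$ and $\Wb(i,j)\to f(\tau)$ for $i\neq j$; the concentration used for $\Wb$ then gives $\Db(i,i)=nf(\tau)+O_\prec(\sqrt n)$ uniformly, so $\Db=nf(\tau)(\mathbf{I}_n+\mathbf E)$ with $\mathbf E$ diagonal and $\|\mathbf E\|\prec n^{-1/2}$. I would expand $n\,\Db^{-1/2}\Wb\Db^{-1/2}=f(\tau)^{-1}(\mathbf{I}_n+\mathbf E)^{-1/2}\Wb(\mathbf{I}_n+\mathbf E)^{-1/2}$ in powers of $\mathbf E$: the leading term is $f(\tau)^{-1}\Wb$; the corrections $\mathbf E\Wb,\Wb\mathbf E$ are split along the El Karoui decomposition of $\Wb$ into a $\boldsymbol{1}\boldsymbol{1}^\top$-plus-finite-rank part (whose product with $\mathbf E$ stays of bounded rank, so by Cauchy interlacing it shifts at most finitely many eigenvalues and is absorbed by enlarging $\mathsf S$ or $\mathsf K$ by a constant) and a bulk part of operator norm $O(1)$ (whose product with $\mathbf E$ is $O_\prec(n^{-1/2})$ in operator norm). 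Since $n^{-1/2}$ sits within the rigidity rates of Theorems~\ref{thm_affinity matrix} and \ref{lem_affinity_slowly}, and since the bulk eigenvalue locations of $f(\tau)^{-1}\Wb$ are those prescribed by $\check\nu_0$ --- rescaling the MP component of $\nu_0$ by the positive constant $f(\tau)^{-1}$, together with $\tau\to2$, yields $\check\nu_0$ --- those theorems transfer verbatim to the eigenvalues of $n\Ab$.

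For $\alpha\ge1$ I would start from Theorem~\ref{thm_informativeregion}: $\|n^{-1}\Wb-n^{-1}\Wb_{a_1}\|\prec n^{-1/2}$ when $1\le\alpha<2$, and $\|n^{-1}\Wb-n^{-1}\widetilde{\Wb}_{a_1}\|\prec n^{-\alpha/2}+n^{-3/2}$ when $\alpha\ge2$, together with the entrywise information behind it (off-diagonally $\Wb$ factors as $\Wb_1$ times $\exp(-2\upsilon)(1+O_\prec(n^{-1/2}))$ coming from $\Wb_y$, while both matrices have diagonal $f(0)=1$). This gives the \emph{relative} estimate $\Db(i,i)=\Db_{a_1}(i,i)(1+O_\prec(n^{-1/2}))$ (resp.\ with $\widetilde{\Db}_{a_1}$), whence $\|\Db^{-1}-\Db_{a_1}^{-1}\|\prec n^{-1/2}\|\Db^{-1}\|$. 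Writing $\Ab-\Ab_{a_1}=\Db^{-1}(\Wb-\Wb_{a_1})+(\Db^{-1}-\Db_{a_1}^{-1})\Wb_{a_1}$ and splitting the vertices into those of typical degree $\asymp n^{(3-\alpha)/2}$ and a sparse exceptional set of vertices with atypically small degree --- on which the rows of $\Wb-\Wb_{a_1}$ are themselves negligible because the kernel there is essentially diagonal --- I would obtain $\|\Ab-\Ab_{a_1}\|\prec n^{(\alpha-2)/2}$ and, the same way, $\|\Ab-\widetilde{\Ab}_{a_1}\|\prec n^{-1/2}$. The regime $\alpha\ge2$ is cleanest because $\Db(i,i)\ge\Wb(i,i)=1$ keeps $\|\Db^{-1}\|$ and $\|\widetilde{\Db}_{a_1}^{-1}\|$ bounded, and $\widetilde{\Wb}_{a_1}\approx\mathbf{I}_n$ forces $\widetilde{\Ab}_{a_1}=\widetilde{\Db}_{a_1}^{-1}\widetilde{\Wb}_{a_1}\approx\mathbf{I}_n$, giving the analogues of \eqref{equationreduced_11}--\eqref{eq_c1exp} for $\Ab$. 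For the smallness of the remaining eigenvalues when $1\le\alpha<2$ I would use $\Wb_{a_1}=(1-\exp(-2\upsilon))\mathbf{I}_n+\exp(-2\upsilon)\Wb_1$, so the symmetrization of $\Ab_{a_1}$ equals $(1-\exp(-2\upsilon))\Db_{a_1}^{-1}+\exp(-2\upsilon)\Db_{a_1}^{-1/2}\Wb_1\Db_{a_1}^{-1/2}$; by \eqref{eq_informativeequationtwo} the second summand is a matrix of rank at most $\mathsf T_\alpha$ plus an $O_\prec(n^{-D})$ operator-norm remainder, while the first is diagonal with typical entries $\asymp n^{(\alpha-3)/2}$, so Weyl's inequality yields $\lambda_i(\Ab_{a_1})\prec n^{(\alpha-3)/2}$ for $i\ge\mathsf T_\alpha$.

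The main obstacle is the uniform control of $\Db$ in the range $1\le\alpha<2$, and in particular a matching \emph{lower} bound on $\min_i\Db(i,i)$: the $O(\sqrt{\log n})$-extreme realizations of $\zb_i$ have degrees far below the typical $n^{(3-\alpha)/2}$, so $\|\Db^{-1}\|$ is not $O_\prec(n^{(\alpha-3)/2})$ on the nose and the crude bound $\|\Db^{-1}(\Wb-\Wb_{a_1})\|\le\|\Db^{-1}\|\,\|\Wb-\Wb_{a_1}\|$ loses. The remedy is the vertex-splitting above: one must show, via a union bound over the sub-Gaussian tails of $\{\zb_i\}$, that these exceptional vertices are few and that on them $\Wb-\Wb_{a_1}$ has negligible row norms, so their effect on the operator norm and on the bulk eigenvalues is absorbed --- again by Cauchy interlacing --- into the top $\mathsf T_\alpha$ indices already declared trivial. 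The regimes $\alpha<1$ and $\alpha\ge2$ avoid this difficulty, the former because $\Db(i,i)\asymp n$ with Gaussian-type concentration, the latter because the self-loop already pins $\Db(i,i)$ from below at the correct order.
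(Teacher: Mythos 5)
Your overall route coincides with the paper's: for $0\le\alpha<1$ you use degree concentration $\Db\approx nf(\tau)\mathbf{I}_n$, pass to the symmetrization $\Db^{-1/2}\Wb\Db^{-1/2}$, and absorb the finite-rank pieces of the El Karoui expansion by Weyl/interlacing, which is exactly the paper's argument; for $\alpha\ge1$ you use the same splitting $\Ab-\Ab_{a_1}=\Db^{-1}(\Wb-\Wb_{a_1})+\Db^{-1}(\Db_{a_1}-\Db)\Db_{a_1}^{-1}\Wb_{a_1}$ as in \eqref{eq_keydecomposition}. However, two concrete problems remain in your treatment of $1\le\alpha<2$. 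First, your entrywise input is incorrect: you claim that off the diagonal $\Wb=\Wb_1\cdot e^{-2\upsilon}\left(1+O_{\prec}(n^{-1/2})\right)$, hence $\Db(i,i)=\Db_{a_1}(i,i)\left(1+O_{\prec}(n^{-1/2})\right)$, but this ignores the cross factor $\Wb_c(i,j)=\exp\left(-2\upsilon(\zb_i-\zb_j)^\top(\yb_i-\yb_j)/p\right)$, whose deviation from $1$ is of order $n^{\alpha/2-1}$ (see \eqref{eq_crosstermbound}) and dominates $n^{-1/2}$ for $\alpha>1$. The correct relative rate is $n^{\alpha/2-1}$, as in \eqref{eq_concentrationfirstorder}; this still yields the target $n^{(\alpha-2)/2}$ for the degree-difference term (the paper exploits precisely the ratio $|\Db(i,i)-\Db_{a_1}(i,i)|/\Db(i,i)\prec n^{\alpha/2-1}$, which is insensitive to how small the degree is), but your stated estimate is false as written.

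Second, and more seriously, your handling of $\Db^{-1}(\Wb-\Wb_{a_1})$ is only a sketch with a quantitative hole. You rightly identify that the issue is a lower bound on degrees, but the vertex-splitting remedy is not substantiated: for sub-Gaussian $z_i$ the set of vertices whose degree falls below $n^{(3-\alpha)/2-\epsilon}$ is in general polynomially large, not a sparse exceptional set, and the entrywise smallness of the exceptional rows only gives row norms of order $n^{\alpha/2-1}$ after dividing by the degree; combining these over a polynomially large set (by Frobenius or row/column bounds) overshoots the claimed $\|\Ab-\Ab_{a_1}\|\prec n^{(\alpha-2)/2}$, and you give no argument converting the rowwise smallness into the operator-norm bound. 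The paper closes this step differently: it derives the uniform bound $\|\Db^{-1}\|\prec n^{(\alpha-3)/2}$ by the same clustering argument as in Lemma \ref{Lemma: W1 bound} (taking $\delta=(3-\alpha)/2$, so that the $\asymp n^{(3-\alpha)/2}$ points with $|z_i|\le n^{-(1-\delta)}$ contribute order-one kernel values), and then bounds $\|\Db^{-1}\|\,\|\Wb-\Wb_{a_1}\|\prec n^{(\alpha-3)/2}\cdot n^{1/2}$ using \eqref{eq_informativeequationone}. The same degree bound is also what your argument needs for $\lambda_i(\Ab_{a_1})\prec n^{(\alpha-3)/2}$ (your diagonal term $(1-e^{-2\upsilon})\Db_{a_1}^{-1}$ must be bounded in operator norm, not just at typical entries), and for $\|\Ab-\widetilde{\Ab}_{a_1}\|\prec n^{-1/2}$ when $2\le\alpha<3$, where the self-loop bound $\Db(i,i)\ge1$ alone only yields $n^{1-\alpha/2}$. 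Either prove the uniform degree lower bound as the paper does, or complete the exceptional-vertex argument with explicit control of the size of the exceptional set and of its contribution to the operator norm.
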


\section{Main results (II): a different bandwidth choice $h \asymp (p+\lambda)$}\label{section_result_2ndchoice}
{As discussed after Theorem \ref{thm_informativeregion}, when the SNR is large, the classic bandwidth choice $h \asymp p$ is too small compared with the signal. For example, according to (2) of Theorem \ref{thm_informativeregion}, we cannot obtain any information about the clean signal when $\alpha \geq 2$ if $h \asymp p.$
To address this issue, we consider a different bandwidth $h \asymp (p+\lambda)$, where $\lambda$ is the signal strength. We show that this signal dependent bandwidth will result in a meaningful spectral convergence result, which is stated in Theorem \ref{thm_adaptivechoiceofc}.} As in Section \ref{section main result}, we focus on the setting $d=1$, and set $\lambda:=\lambda_1$. The discussion for the setting $d>1$ is similar to that in Remark \ref{rmk_multipled}, which we only state the difference here. When $0 \leq \alpha_i \leq 1,$ $i=1,\ldots,d$, we have $h \asymp \sum_{l=1}^d \lambda_l +p \asymp p$, so all the arguments in (3) and (4) of Remark \ref{rmk_multipled} directly apply. When $\alpha_i>1$, $1 \leq i \leq d$, which corresponds to the strong signal cases (1) and (2) in Remark \ref{rmk_multipled}, following the proof of (\ref{eq_largealpharesultadap}) below, a similar argument of case (2) of Remark \ref{rmk_multipled} still apply.
For definiteness, below we state our results for the spectra of $\Wb$ and $\Ab$ assuming $h=p+\lambda$ in Section \ref{sec_mainresultssecondcase}. 
Since $\lambda$ is usually unknown in practice, in Section \ref{sec_bandwidthselectionalgo}, we propose a bandwidth selection algorithm for practical implementation. With this algorithm, even without the knowledge of the signal strength, we can still get meaningful spectral results. See Corollary \ref{coro_adaptivechoiceofc}.

\subsection{Spectra of affinity and transition matrices}\label{sec_mainresultssecondcase} 

In this subsection, we state the results for the spectra of $\Wb$ and $\Ab$ when $h=\lambda+p$. Denote 
\begin{align}\label{eq_wa2}
\Wb_{a_2} =\exp\left(-\frac{2p\upsilon}{h} \right) \Wb_1 +\left(1-\exp\left(-\frac{2p\upsilon}{h} \right)\right) \mathbf{I}_n,  
\end{align}
where $\Wb_1$ is constructed using the bandwidth $h=\lambda+p.$

\begin{theorem}\label{thm_adaptivechoiceofc}
Suppose \eqref{eq_defnw}-\eqref{eq_lambdadefinition} hold true, $d=1$ and $h=\lambda+p$. The following results hold.
\begin{enumerate}
\item When $0 \leq \alpha<1,$ Theorems \ref{thm_affinity matrix} and \ref{lem_affinity_slowly}  hold  with $\nu_0$ replaced by 
\begin{equation*}
\tilde\nu_0=T_{\varsigma_h} \mu_{c_n,\eta}\,, 
\end{equation*}
where 
\begin{align}\label{eq_varsigmah}
&\eta:=\frac{2p \upsilon \exp(-2 p \upsilon/h)}{h},  \\
 & \varsigma_h := \varsigma_h(\tau):=1-\frac{2 \upsilon p}{h}\exp(-\upsilon \tau p/h)-\exp(-\upsilon \tau p/h). \nonumber
\end{align}

\item When $\alpha \geq 1,$ we have that 
\begin{equation}\label{eq_closesecondbandwidth}
\left\| \frac{1}{n} \Wb-\frac{1}{n} \Wb_{a_2} \right\| \prec n^{-1/2}, 
\end{equation}
and for some large constants $D>2$ and $C>0,$ we have that when $i \geq C \log n$ 
\begin{align}\label{eq_closebandwidthtwo}
& \left|\lambda_i(\Wb_{a_2})-(1-\exp(-2p \upsilon/(p+\lambda)))\right|\prec n^{-D}. 
\end{align}
Moreover, when $\alpha>1,$ we have that 
\begin{equation}\label{eq_largealpharesultadap}
\left\| \frac{1}{n} \Wb-\frac{1}{n} \Wb_{1} \right\| \prec n^{-1/2}+n^{1-\alpha}.
\end{equation} 
\end{enumerate} 
Finally, similar results hold for the transition matrix $\Ab$ by replacing $\frac{1}{n} \Wb$, $\frac{1}{n} \Wb_{a_2}$ and $\frac{1}{n} \Wb_{1}$ in \eqref{eq_closesecondbandwidth}-\eqref{eq_largealpharesultadap} by $\Ab$, $\Ab_{a_2}$ and $\Ab_1$ respectively, where $\Ab_{a_2}$ and $\Ab_1$ are defined by plugging $\Wb_{a_2}$ and $\Wb_1$ into \eqref{eq_noramlizedmatrix}. 
\end{theorem}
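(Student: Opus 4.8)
\textbf{Proof strategy for Theorem \ref{thm_adaptivechoiceofc}.}

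The plan is to reduce everything to the already-available machinery for the classic bandwidth $h \asymp p$, by tracking how the rescaling $h = \lambda + p$ rather than $h = p$ changes the Taylor expansion of the entries $\Wb(i,j)$. First I would write, for $i \neq j$,
\begin{equation*}
\frac{\|\xb_i - \xb_j\|^2}{h} = \frac{\|\zb_i-\zb_j\|^2}{h} + \frac{\|\yb_i-\yb_j\|^2}{h} + \frac{2(\zb_i-\zb_j)^\top(\yb_i-\yb_j)}{h},
\end{equation*}
and exploit that $\|\yb_i-\yb_j\|^2/h = \|\yb_i-\yb_j\|^2/(p+\lambda)$ concentrates around $2p/h = 2p/(p+\lambda)$ with fluctuations of order $\sqrt{p}/h$, by the sub-Gaussian concentration inequalities (Lemma \ref{lem_concentrationinequality} and its relatives in Section \ref{sec_affinitymatrixpreliminaryresults}). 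For part (1), when $0 \le \alpha < 1$ we have $\lambda = o(p)$, so $h = p(1+o(1))$ and $2p\upsilon/h \to 2\upsilon$; the expansion is then entrywise identical to the one used to prove Theorems \ref{thm_affinity matrix} and \ref{lem_affinity_slowly} up to the replacement $\tau \mapsto \tau p/h$ and the corresponding recomputation of $\varsigma$ and the noise variance $-2f'(\cdot)$, which is exactly what produces $\tilde\nu_0 = T_{\varsigma_h}\mu_{c_n,\eta}$ with $\eta$ and $\varsigma_h$ as in \eqref{eq_varsigmah}. So part (1) is essentially a bookkeeping exercise: rerun the proofs of Sections \ref{sec_proofs2} verbatim with the rescaled argument.

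For part (2), when $\alpha \ge 1$ the cross term and the signal term are no longer negligible, and I would use the Hadamard factorization $\Wb = \Wb_1 \circ \Wb_y \circ \Wb_c$. The key point is that with $h = \lambda + p$ the signal term $\|\zb_i - \zb_j\|^2/h \asymp \lambda/(\lambda+p) \asymp 1$ when $\alpha \ge 1$, so $\Wb_1$ is genuinely nontrivial (unlike the $h=p$, $\alpha \ge 2$ situation where $\Wb \approx \mathbf{I}$). I would first show $n^{-1}\|\Wb_y - [\exp(-2p\upsilon/h)\boldsymbol{1}\boldsymbol{1}^\top + (1-\exp(-2p\upsilon/h))\mathbf{I}_n]\| \prec n^{-1/2}$ by a first-order Taylor expansion of the exponential combined with the concentration of $\|\yb_i-\yb_j\|^2$ and the operator-norm bound on the fluctuation matrix (a Gram-type matrix, controlled via the MP law / Lemma \ref{lem_karoui}); then invoke Lemma \ref{lem_hardamardproductbound} to pass from $\Wb_y$ to $\Wb_1 \circ \Wb_y$, using the crucial bound $n^{-1}\|\Wb_1\| = O(1)$ from Lemma \ref{Lemma: W1 bound} (equivalently Lemma \ref{Lemma: W1 bound}). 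Next I would control $\Wb_c$: since $(\zb_i-\zb_j)^\top(\yb_i-\yb_j)/h$ has typical size $\sqrt{\lambda p}/h = \sqrt{\lambda p}/(\lambda+p)$, when $\alpha = 1$ this is $\asymp 1/\sqrt{n}$ and when $\alpha > 1$ it is $\asymp n^{(1-\alpha)/2}$ (up to $n^\epsilon$), so $\max_{i,j}|\Wb_c(i,j)-1| \prec n^{-1/2} + n^{(1-\alpha)/2}$, and another application of Lemma \ref{lem_hardamardproductbound} gives $n^{-1}\|\Wb - \Wb_{a_2}\| \prec n^{-1/2}$ for $\alpha \ge 1$, yielding \eqref{eq_closesecondbandwidth}, while for $\alpha > 1$ the extra term $1 - \exp(-2p\upsilon/h) \asymp p/(\lambda+p) \asymp n^{1-\alpha}$ in $\Wb_{a_2}$ is itself of order $n^{1-\alpha}$, so $n^{-1}\|\Wb_{a_2} - \Wb_1\| \prec n^{1-\alpha}$, giving \eqref{eq_largealpharesultadap}. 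Finally, \eqref{eq_closebandwidthtwo} follows from the eigenvalue decay of $\Wb_1$: by Mehler's formula (Section \ref{sec_mehler}, equation \eqref{eq_melherformula}) the eigenvalues of the $d=1$ Gaussian-type kernel $\Wb_1$ decay geometrically, so $\lambda_i(\Wb_1) \prec n \cdot r^i$ for some $r<1$ once $i \ge C\log n$ with $C$ large, hence $\lambda_i(\Wb_{a_2}) = \exp(-2p\upsilon/h)\lambda_i(\Wb_1) + (1-\exp(-2p\upsilon/h))$ differs from $1-\exp(-2p\upsilon/h)$ by $\prec n^{-D}$; note the geometric decay makes the threshold logarithmic rather than polynomial in $n$, unlike $\mathsf{T}_\alpha$ in \eqref{eq_defntalpha} for $1<\alpha<2$.

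The transition-matrix statements follow by the standard argument (as in Corollary \ref{thm_normailizedaffinitymatrix}): the degree matrix $\Db$ concentrates, $n^{-1}\Db(i,i) = \exp(-2p\upsilon/h)\,n^{-1}\Db_1(i,i) + \cdots$ with fluctuation $\prec n^{-1/2}$ uniformly in $i$, and since $\Db^{-1}$ is diagonal with entries bounded above and below, $\|\Ab - \Ab_{a_2}\| = \|\Db^{-1}\Wb - \Db_{a_2}^{-1}\Wb_{a_2}\|$ is controlled by $\|\Db^{-1} - \Db_{a_2}^{-1}\|\,\|\Wb\| + \|\Db_{a_2}^{-1}\|\,\|\Wb - \Wb_{a_2}\|$, each term of the advertised size after the $n^{-1}$ normalization is absorbed into $\Db$. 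The main obstacle I anticipate is the operator-norm bound $n^{-1}\|\Wb_1\| = O(1)$ when $\alpha \ge 1$: unlike the $\alpha<1$ case where $\Wb_1$ is close to $\boldsymbol{1}\boldsymbol{1}^\top$ plus a small perturbation, here $\Wb_1$ is a bona fide nonlinear kernel matrix whose spectral norm is not a priori $O(n)$ for general continuous distributions of $\zb_{i1}$, so one must either invoke Lemma \ref{Lemma: W1 bound} in full strength or impose the Mehler-based control; this is precisely the step flagged in the introduction as having independent interest, and it is where the argument genuinely goes beyond a rescaling of the $h=p$ analysis.
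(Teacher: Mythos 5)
Your overall route is the same as the paper's: part (1) by rewriting the kernel as $g(x)=f(px/h)$ with $p/h\asymp 1$ and rerunning the $h=p$ arguments, and part (2) by the Hadamard factorization $\Wb=\Wb_1\circ\Wb_y\circ\Wb_c$, a first-order Taylor expansion of $\Wb_y$, Lemma \ref{lem_hardamardproductbound}, the Mehler expansion (which applies here for all $\alpha\ge 1$ because $\beta=\lambda/h\asymp 1$, i.e.\ Case I of the proof of \eqref{eq_informativeequationtwo}) for \eqref{eq_closebandwidthtwo}, and the elementary comparison $\Wb_{a_2}-\Wb_1=(\exp(-2p\upsilon/h)-1)\Wb_1+(1-\exp(-2p\upsilon/h))\Ib_n$ for \eqref{eq_largealpharesultadap}. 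However, there is one concrete quantitative error in your control of $\Wb_c$. Since $d=1$, the cross term is $(\zb_i-\zb_j)^\top(\yb_i-\yb_j)=\sqrt{\lambda}(z_i-z_j)(\yb_{i1}-\yb_{j1})=O_\prec(\sqrt{\lambda})$, not $O_\prec(\sqrt{\lambda p})$ as you use; hence $\max_{i,j}|\Wb_c(i,j)-1|\prec \sqrt{\lambda}/h\asymp n^{-\alpha/2}\le n^{-1/2}$ for all $\alpha\ge 1$, which is exactly what the paper (via Corollary \ref{coro_adaptivechoiceofc}) exploits. Your stated bound $n^{-1/2}+n^{(1-\alpha)/2}$ (which is also internally inconsistent with your own $\sqrt{\lambda p}/(\lambda+p)$ evaluation at $\alpha=1$) would only yield $n^{-1}\|\Wb-\Wb_{a_2}\|\prec n^{(1-\alpha)/2}$ in the range $1<\alpha<2$, falling short of the advertised rate in \eqref{eq_closesecondbandwidth}; with the corrected $d=1$ estimate the step goes through.

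A secondary remark: the step you flag as the ``main obstacle,'' namely $n^{-1}\|\Wb_1\|=O(1)$, is not an obstacle in this bandwidth regime. Since the entries of $\Wb_1$ lie in $[0,1]$, the Gershgorin (row-sum) bound gives $\|\Wb_1\|\le n$ unconditionally, and this trivial bound is all that is needed for \eqref{eq_closesecondbandwidth} and \eqref{eq_largealpharesultadap} once the entrywise factors $\max_{i,j}|\Wb_y(i,j)-\widetilde{\Cb}_0(i,j)|\prec n^{-1/2}$ and $\max_{i,j}|\Wb_c(i,j)-1|\prec n^{-\alpha/2}$ are in place; the refined Lemma \ref{Lemma: W1 bound} is genuinely needed only in the $h=p$ setting of Theorem \ref{thm_informativeregion}, where one must beat the trivial $O(n)$ bound. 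Your sketch for the transition matrix is at the same level of detail as the paper (which defers to the argument of Corollary \ref{thm_normailizedaffinitymatrix}), though a complete write-up would also need a lower bound on the degrees $\Db(i,i)$ of order $n$ in this regime.
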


With this bandwidth, we have addressed the issue we encountered in (2) of Theorem \ref{thm_informativeregion}. Specifically, according to Theorem \ref{thm_adaptivechoiceofc}, we find that when $\alpha<1,$ the noise dominates and $h=\lambda+p \asymp p$ does not lead to any essential difference in the spectrum compared with that from the fixed bandwidth $h=p.$ 
On the other hand, when $\alpha \geq 1$, such a bandwidth choice contributes significantly to the spectrum. Especially, compared to (\ref{eq_defntalpha}), $\Wb_{a_2}$ only has $O(\log n)$ nontrivial eigenvalues. Combined with \eqref{eq_closesecondbandwidth}, by properly choosing the bandwidth, once the SNR is relatively large, the noisy affinity matrix $\Wb$ captures the spectrum of the clean affinity matrix $\Wb_1$ via $\Wb_{a_2}$. In addition, when $\alpha>1$, we can replace $\Wb_{a_2}$  with $\Wb_1$ directly. Finally, we mention that in the small SNR region $1/2<\alpha \leq 1,$ modifying the transition matrix $\Ab$ by zeroing out the diagonal terms before normalization \cite{el2016graph} could be useful. For more discussions, we refer the readers to Section \ref{sec_generalizationrevision1}.

\subsection{An adaptive choice of bandwidth} \label{sec_bandwidthselectionalgo}

While the above result connects the spectra of noisy affinity matrices and those of clean affinity matrices, in general $\lambda$ is unknown. 
In this subsection, we provide an adaptive choice of $h$ depending on the dataset without providing an estimator for $\lambda.$ Such a choice will enable us to recover the results of Theorem \ref{thm_adaptivechoiceofc}.

Given some constant $0<\omega<1,$  we choose $h \equiv h(\omega)$ according to 
\begin{equation}\label{eq_bandwithchoice}
\int_{0}^h \mathrm{d} \mu_{\mathtt{dist}}= \omega\,,
\end{equation} 
where $\mu_{\mathtt{dist}}$ is the  empirical distribution of the pairwise distance $\{\|\mathbf{x}_i-\mathbf{x}_j \|_2^2\}, \ i \neq j.$ 
In this subsection, when there is no danger of confusion, we abuse the notation and denote the affinity and transition matrices conducted using $h$ from (\ref{eq_bandwithchoice}) as $\Wb$ and $\Ab$ respectively, and define $\Wb_1$ in (\ref{eq_singlematrix}) and $\Wb_{a_2}$ in (\ref{eq_wa2}) with $h$ in (\ref{eq_bandwithchoice}).

As we show in the proof, when $0\leq \alpha<1$, $h \asymp p$ (see (\ref{eq_claimhp}) in the proof), and when $\alpha\geq 1$, $h \asymp \lambda\asymp p+\lambda$ (see (\ref{eq_claimlambadasecond}) in the proof). In this sense, the following corollary recovers the results of Theorem \ref{thm_adaptivechoiceofc}, while the choice of bandwidth is practical.

\begin{corollary}\label{coro_adaptivechoiceofc}
Suppose \eqref{eq_defnw}-\eqref{eq_lambdadefinition} hold true and $d=1$.  For any $0<\omega<1$, let $h$ be the bandwidth chosen according to (\ref{eq_bandwithchoice}). Then we have Theorem \ref{thm_adaptivechoiceofc} holds true. 
\end{corollary}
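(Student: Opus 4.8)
\textbf{Proof proposal for Corollary \ref{coro_adaptivechoiceofc}.}
The plan is to reduce the statement entirely to Theorem \ref{thm_adaptivechoiceofc} by showing that the data-driven bandwidth $h=h(\omega)$ defined through the quantile equation \eqref{eq_bandwithchoice} satisfies, with high probability, $h\asymp p$ when $0\leq\alpha<1$ and $h\asymp\lambda+p$ when $\alpha\geq 1$; once these two scaling relations are in hand, $h$ differs from the bandwidth used in Theorem \ref{thm_adaptivechoiceofc} only by a bounded multiplicative factor, and one checks that all the quantities entering the conclusions of Theorem \ref{thm_adaptivechoiceofc} (the shifted MP law $\tilde\nu_0$, the scaling $\exp(-2p\upsilon/h)$, the matrices $\Wb_{a_2},\Ab_{a_2}$) depend on $h$ only through ratios like $p/h$ and $\lambda/h$, which are stable under such a perturbation. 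So the real content is the deterministic-up-to-high-probability control of the empirical quantile of the squared pairwise distances.

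First I would analyze the empirical distribution $\mu_{\mathtt{dist}}$ of $\{\|\xb_i-\xb_j\|_2^2\}_{i\neq j}$. Writing $\xb_i-\xb_j=(\zb_i-\zb_j)+(\yb_i-\yb_j)$ and expanding, $\|\xb_i-\xb_j\|_2^2=\|\zb_i-\zb_j\|_2^2+\|\yb_i-\yb_j\|_2^2+2(\zb_i-\zb_j)^\top(\yb_i-\yb_j)$. For the noise term, $\|\yb_i-\yb_j\|_2^2=2p+O_\prec(\sqrt p)$ by sub-Gaussian concentration (this is exactly the kind of estimate underlying Lemma \ref{lem_karoui} / the $h=p$ analysis, cf. \cite{elkaroui2010}); the cross term is $O_\prec(\sqrt{p\lambda})$ since $\zb_i-\zb_j$ has norm $\asymp\sqrt\lambda$ and is independent of $\yb_i-\yb_j$; and $\|\zb_i-\zb_j\|_2^2\asymp\lambda$ with the $\lambda_1,\dots,\lambda_d$ continuous so the signal part has a nondegenerate (scaled) law. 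Hence, uniformly over the pairs, $\|\xb_i-\xb_j\|_2^2=2p+\|\zb_i-\zb_j\|_2^2+O_\prec(\sqrt p+\sqrt{p\lambda})$. When $\alpha<1$ the signal contribution $\|\zb_i-\zb_j\|_2^2\asymp\lambda=o(p)$ is swamped, so $\mu_{\mathtt{dist}}$ concentrates near $2p$ and any fixed $\omega$-quantile $h(\omega)$ lies in $[c p, C p]$ with high probability, giving $h\asymp p$. When $\alpha>1$ the roles reverse: $2p=o(\lambda)$, the empirical law of $\|\zb_i-\zb_j\|_2^2/\lambda$ converges to a fixed continuous distribution supported away from $0$ (using that the $\zb$-coordinates are continuous and $d$ is fixed), so its $\omega$-quantile is $\asymp\lambda$, whence $h\asymp\lambda\asymp\lambda+p$. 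The borderline $\alpha=1$ has both terms of order $p$ and again $h\asymp p\asymp\lambda+p$. This is precisely the content of the claimed displays \eqref{eq_claimhp} and \eqref{eq_claimlambadasecond} referenced in the text.

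With $h\asymp p$ (resp. $h\asymp\lambda+p$) established, I would then invoke Theorem \ref{thm_adaptivechoiceofc}, checking that the error rates are unaffected: in the regime $0\leq\alpha<1$ the measure $\tilde\nu_0=T_{\varsigma_h}\mu_{c_n,\eta}$ with $\eta=\tfrac{2p\upsilon}{h}\exp(-2p\upsilon/h)$ and $\varsigma_h$ as in \eqref{eq_varsigmah} is built only from $p/h\asymp 1$ and $\tau$, and since $h$ is a bona fide (random but high-probability bounded) bandwidth in the admissible range, the rigidity estimates \eqref{eq_formrigidity}, \eqref{eq_formrigidity22}, \eqref{eq_formrigidity2} go through verbatim after conditioning on the high-probability event $\{h\asymp p\}$; in the regime $\alpha\geq 1$, the bounds \eqref{eq_closesecondbandwidth}--\eqref{eq_largealpharesultadap} only use $h\asymp\lambda+p$, so they transfer directly, as do the transition-matrix versions. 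A small point to be careful about: $h$ is a function of the whole point cloud, so it is not independent of the entries of $\Wb$; I would handle this by noting that all estimates in Theorem \ref{thm_adaptivechoiceofc} are in fact uniform over $h$ in a range $[c p,Cp]$ (resp. $[c(\lambda+p),C(\lambda+p)]$) — this uniformity is already implicit in the stochastic-domination formulation — so one may take a union bound / $\sup_h$ over this range and then plug in the random $h$.

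The main obstacle I anticipate is exactly this interplay between the randomness of $h$ and the matrices $\Wb,\Ab$: making the ``uniform over admissible $h$'' statement precise requires either a net argument over $h\in[cp,Cp]$ together with a Lipschitz bound of $\lambda_i(\Wb)$ in $h$ (straightforward since $\partial_h\Wb(i,j)=\upsilon\|\xb_i-\xb_j\|^2 h^{-2}\Wb(i,j)$ is controlled on the high-probability event), or a direct re-reading of the proof of Theorem \ref{thm_adaptivechoiceofc} to confirm the constants do not degenerate near the endpoints of the range. A secondary technical nuisance is the $\alpha>1$ case, where one needs the empirical law of $\|\zb_i-\zb_j\|_2^2/\lambda_1$ to have a quantile bounded away from $0$; this uses that the leading coordinate $\zb_{i1}$ is continuous and that $\lambda_1\geq\cdots\geq\lambda_d$ with $d$ fixed, so a fixed fraction of pairs have $\|\zb_i-\zb_j\|_2^2\gtrsim\lambda_1$ — a Paley–Zygmund-type lower bound suffices, though one must make sure it is uniform in $n$.
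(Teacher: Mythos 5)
Your overall strategy is the same as the paper's: first pin down the order of the quantile bandwidth $h$ from \eqref{eq_bandwithchoice} in each SNR regime, then transfer the proof of Theorem \ref{thm_adaptivechoiceofc} (for $0\le\alpha<1$ exactly as you do, by writing the kernel as $g(x)=f(px/h)$ with $p/h\asymp 1$ and rerunning Theorems \ref{thm_affinity matrix} and \ref{lem_affinity_slowly}). The divergence is in the $\alpha\ge 1$ step. The paper does \emph{not} prove $h\asymp\lambda+p$: it only establishes the log-loose sandwich $C_1(\lambda\log^{-1}n+p)\le h\le C_2\lambda\log^2 n$ in \eqref{eq_claimlambadasecond} (via counting signal coordinates with $|z_i|\le\log^{-1/2}n$, in the spirit of the argument in Lemma \ref{Lemma: W1 bound}), and then pays for this by re-entering the Mehler-formula analysis of \eqref{eq_informativeequationtwo} with $\beta=\lambda/h$ allowed to range between $1/(C\log^2 n)$ and $C\log n$, checking that the extra $\log^2 n$ factors are negligible against \eqref{eq_controloneerror}; for the counterpart of \eqref{eq_closesecondbandwidth} it uses only the trivial bound $\|\Wb_1\|=O(n)$ combined with $\max_{i,j}|\Wb_c(i,j)-1|\prec n^{-\alpha/2}$. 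Your route instead aims for the sharp statement $h\asymp\lambda+p$ with $\omega$-dependent constants, which makes $\beta\asymp1$ and lets Theorem \ref{thm_adaptivechoiceofc} transfer essentially verbatim — a cleaner reduction, but it shifts the burden onto an empirical-quantile concentration/anti-concentration step for $\{(z_i-z_j)^2\}$. Two cautions there: your justification ``the limiting law is supported away from $0$'' is not literally true (the law of $(z_i-z_j)^2$ has mass arbitrarily close to $0$); what you actually need, and can get, is that it has no atom at $0$ plus a U-statistic/bounded-difference concentration of the empirical CDF, and this requires the normalized signal law to be fixed in $n$ (or a uniform anti-concentration bound for $z_i-z_j$), a hypothesis the paper's log-window argument deliberately avoids leaning on. Finally, your explicit treatment of the data-dependence of $h$ (a net over the admissible range together with Lipschitz control of the kernel in $h$) addresses a point the paper passes over silently by conditioning on the high-probability event $\{h\asymp p\}$ or \eqref{eq_claimlambadasecond}; this is a legitimate and arguably more careful way to close that loop, and since all bounds hold with probability $1-n^{-D}$ for arbitrary $D$, the union bound over a polynomial net indeed goes through.
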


Since Corollary \ref{coro_adaptivechoiceofc} recovers the results of Theorem \ref{thm_adaptivechoiceofc}, the same comments after Theorem \ref{thm_adaptivechoiceofc} and the discussions about manifolds in Subsection \ref{section manifold model} hold for Corollary \ref{coro_adaptivechoiceofc}. 
We comment that in practice, usually researchers choose the 25\% or 50\% percentile of all pairwise distances, or those distances of nearest neighbor pairs, as the bandwidth; see, for example, \cite{shnitzer2019recovering,lin2021wave}. Corollary \ref{coro_adaptivechoiceofc} provides a theoretical justification for this commonly applied ad hoc bandwidth selection method.

Next, we discuss how to choose $\omega$ in practice. 
Based on the obtained theoretical results, when $\alpha \geq 1,$ the outliers stand for the signal information (c.f. (\ref{eq_wa2})), except those associated with the kernel effect. Thus, we propose  Algorithm \ref{alg:choice} to choose $\omega$ adaptively which seeks for a bandwidth so that the affinity matrix has the most number of outliers.

\begin{algorithm}[ht]
\caption{Adaptive choice of $\omega$}
\label{alg:choice}
\begin{enumerate}   
\item  Take fixed constants $\omega_L< \omega_U,$ where $0<\omega_L, \omega_U<1,$ e.g., $\omega_L=0.05$ and $\omega_U=0.95$. For some large integer $T$, we construct a partition of the intervals $[\omega_L, \omega_U],$ denoted as $\mathcal{P}=(\omega_0, \omega_1, \cdots, \omega_T)$, where $\omega_i=\omega_L+\frac{i}{T}(\omega_U-\omega_L)$.

\item For the sequence of quantiles $\{\omega_i\}_{i=0}^T,$ calculate the associated bandwidths according to (\ref{eq_bandwithchoice}), denoted as $\{h_i\}_{i=0}^T.$ 
\item For each $0 \leq i \leq T,$ calculate the eigenvalues of the affinity matrices $\Wb_i$ which is conducted using the bandwidth $h_i.$ Denote the eigenvalues of $\Wb_i$ in the decreasing order as $\{\lambda_{k}^{(i)}\}_{i=1}^n.$ 

\item For a given threshold $\mathsf{s}>0$ satisfying $\mathsf{s} \rightarrow 0$ as $n \rightarrow \infty,$ denote
\begin{equation*}
\mathsf{k}(\omega_i): =\max_{1 \leq k \leq n-1}\left\{ k\bigg|\,\frac{\lambda_k^{(i)}}{\lambda_{k+1}^{(i)}} \geq 1+\mathsf{s} \right\}.
\end{equation*}

\item Choose the quantile $\omega$ such that 
\begin{equation}\label{eq_choiceomegalarger}
\omega=\max\Big[\underset{\omega_i}{\mathrm{arg max}} \ \mathsf{k}(\omega_i)\Big]\,.
\end{equation}
\end{enumerate}
\end{algorithm}

{Note that we need a threshold $\mathsf{s}$ in step 4 of Algorithm \ref{alg:choice}. We suggest to adopt the resampling method  established in \cite[Section 4]{PASSEMIER2014173} and \cite[Section 4.1]{DYS}. This method provides a choice of $\mathsf{s}$ to distinguish the outlying eigenvalues and bulk eigenvalues given the ratio $c_n=p/n$. 
The main rationale supporting this approach is that the bulk eigenvalues are close to each other (c.f. Remark \ref{rmk_partone}) and hence the ratios of the two consecutive eigenvalues will be close to one. }
Moreover, in step 5 of the algorithm, when there are multiple $\omega_i$ that achieve the argmax, we choose the largest one for the purpose of robustness.

Next, we numerically illustrate how the chosen $\omega$ by Algorithm \ref{alg:choice} depends on $\alpha$.
Consider the nonlinear manifold $S^1$, the canonical $1$-dim sphere, isometrically embedded in the first two axes of $\mathbb{R}^p$ and scaled by $\sqrt{\lambda}$, where $\lambda>0$; that is,
$\bz_i:=\sqrt{\lambda}[\cos \theta_{i}, \sin \theta_i, 0, \cdots, 0]^\top\in \mathbb{R}^p$, 
where $\theta_i$ is uniformly sampled from $[0,2\pi]$. Next, we add Gaussian white noise to $\bz_i$ via
$\mathbf{x}_i=\bz_i+\mathbf{y}_i \in \mathbb{R}^p,$ where $\mathbf{y}_i \sim \mathcal{N}(\mathbf{0}, \mathbf{I})$, $i=1,2,\cdots,n, $ are noise independent of $\xi_i.$ 
We consider this example since its topology is nontrivial and we know the ground truth. 
In Figure \ref{fig_omegachoice}, we record the chosen $\omega$ for different $\alpha$ from $\Wb$. When $\alpha$ is small, (i.e. $\alpha<1$), since the bandwidth choice will not essentially influence the transition, the algorithm will offer a large quantile in light of (\ref{eq_choiceomegalarger}). When $\lambda$ is large (i.e., $\alpha>2$), we get a small quantile. Intuitively, the larger selected bandwidth when $\alpha$ gets small can be understood as the algorithm trying to combat the noise with a larger bandwidth. {In Figure \ref{fig_omegachoice}, we also record the chosen $\omega$ for different $\alpha$ from $\Ab$, where we simply replace the role of $\Wb$ by $\Ab$. We can see the same result as that from $\Wb$.}
Finally, note that the choices of $\omega$ are irrelevant of the aspect ratio $c_n$ in (\ref{eq_ratio}). This finding suggests that the bandwidth selection is not sensitive to the ambient space dimension.

\begin{figure*}[!ht]
\centering
	\includegraphics[width=6cm]{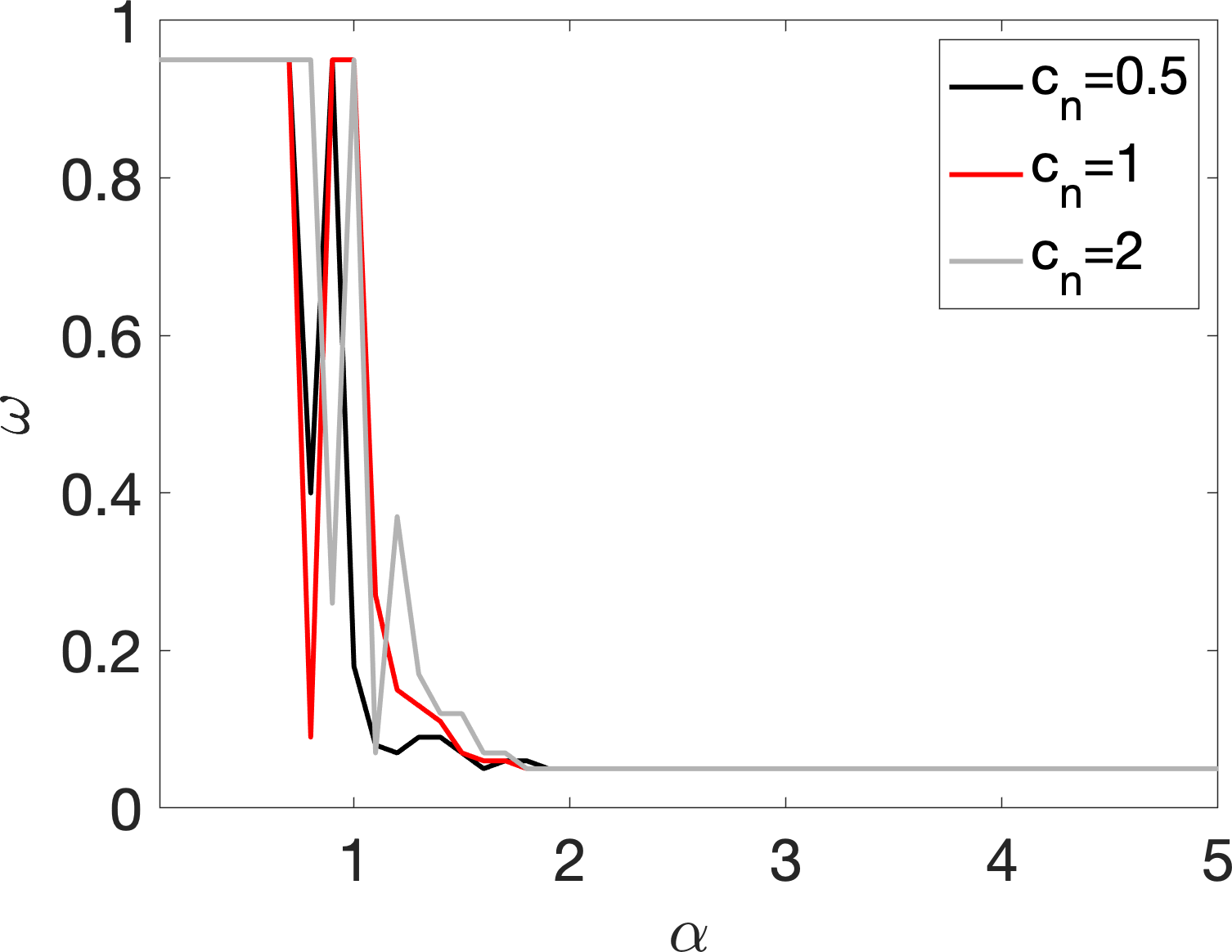}
	\includegraphics[width=6cm]{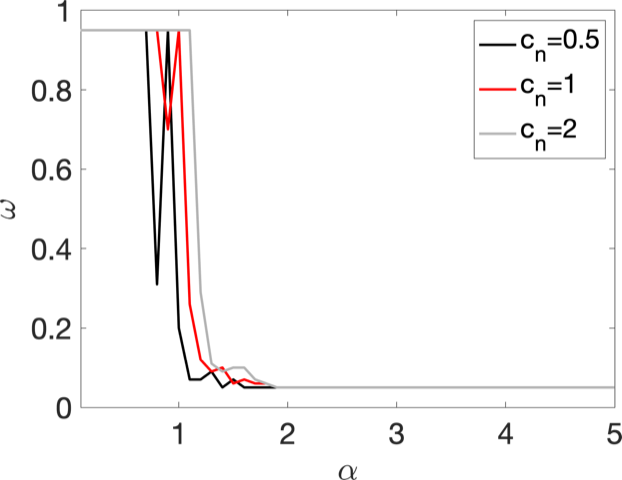}
	\caption{Adaptive choices of $\omega$ by the affinity matrix $\mathbf{W}$ and the transition matrix $\mathbf{A}$ are shown on the left and right subplots respectively. In the simulation, we use the Gaussian random vectors with $\lambda=n^{\alpha}$ under the setting $n=300$ for different values of $c_n$ in (\ref{eq_ratio}). The kernel function is $f(x)=\exp(-x/2)$ and the bandwidth is  chosen adaptively according to (\ref{eq_bandwithchoice}), where $\omega$ is chosen using Algorithm \ref{alg:choice} with $T=91$, $\omega_L=0.05$, and $\omega_U=0.95$.  $\mathsf{s}=0.24, 0.17, 0.12$ {are chosen using the resampling method as in \cite{PASSEMIER2014173} }for $c_n=0.5,1, 2,$ respectively.}\label{fig_omegachoice}
	\end{figure*}

\subsection{Connection with the manifold learning}\label{section manifold model}

To discuss the connection with the manifold learning, we focus on Theorem \ref{thm_adaptivechoiceofc} and $\alpha>1$, and hence Corollary \ref{coro_adaptivechoiceofc}, which states that via replacing $\frac{1}{n} \Wb$, $\frac{1}{n} \Wb_{a_2}$ and $\frac{1}{n} \Wb_{1}$ in \eqref{eq_closesecondbandwidth}-\eqref{eq_largealpharesultadap} by $\Ab$, $\Ab_{a_2}$ and $\Ab_1$, the relationship between the eigenvalues of $\Ab$ and $\Ab_1$ is established when $\alpha>1$.
We know that all except the top $C\log(n)$ eigenvalues of $\Wb$ are trivial according to \eqref{eq_closebandwidthtwo}, and by Weyl's inequality, the top $C\log(n)$ eigenvalues of $\Ab$ and $\Ab_1$ differ by $n^{-1/2}+n^{1-\alpha}$.
On the other hand, the eigenvalues of $\Ab_1$ have been extensively studied in the literature. Below, take the result in \cite{koltchinskii2000random} as an example. 
Suppose the clean data $\mathcal{Z}$ is sampled from a $m$-dim closed (compact without boundary) and smooth manifold, which is embedded in a $d$-dim subspace in $\mathbb{R}^p$, following a proper sampling condition on the sampling density function $\mathsf p$ (See Section \ref{sec_reducedproblem} for details of this setup). 
To link our result to that shown in \cite{koltchinskii2000random}, note that $\|\zb_i-\zb_j\|^2$ is of order $\lambda$ by assumption so that the selected bandwidth is of the same order as that of $\|\zb_i-\zb_j\|^2$. Thus, since $\lambda/(p+\lambda)\asymp 1$ when $n\to \infty$, the eigenvalues of $\Ab_1$ converge to the eigenvalues of the integral operator 
\[
Th(x)=\int_M \exp\left(-\upsilon\|x-y\|^2_2\right)h(y)\mathsf p(y)dV(y)\,, 
\]
where $h$ is a smooth function defined on $M$ and $dV$ is the volume density. 
By combining the above facts, we conclude that under the high-dimensional noise setup, when the SNR is sufficiently large and the bandwidth is chosen properly, we could properly obtain at least the top few eigenvalues of the associated integral kernel from the noisy transition matrix $\Ab$. Since our focus in this paper is not manifold learning itself but how the high-dimensional noise impacts the spectrum of GL, for more discussions and details about manifold learning, we refer readers to \cite{dunson2019diffusion} and the citations therein.

{\section{Numerical studies}\label{section numerical studies}
In this section, we conduct Monte Carlo simulations to illustrate the accuracy and usefulness of our results and proposed algorithm. In Section \ref{sec_simu_accuracy}, we conduct numerical simulations to illustrate the accuracy of our established theorems for various values of $c_n=0.5, 1,2.$ We also show the impact of $n.$ 
In Section \ref{sec_simu_algoeff}, we examine the usefulness of our proposed Algorithm \ref{alg:choice} and compare it with some methods in the literature with a linear manifold and a nonlinear manifold.

\subsection{Accuracy of our asymptotic results}\label{sec_simu_accuracy}

In this subsection, we conduct numerical simulations to examine the accuracy of the established results. For simplicity, we focus on checking the results in Section \ref{section main result} when $h=p,$ which is the key part of the paper. Similar discussions can be applied to the results in Section \ref{section_result_2ndchoice} when $h=\lambda+p.$ 

In Figure \ref{fig_accuray05lowsnrregime}, we study the low SNR setting when $0 \leq \alpha<1$ as in Section \ref{sec_result_fixedepsilon}, particularly the closeness of  bulk eigenvalues of $\Wb$ and the quantiles of the MP law $\nu_0$ shown in (\ref{eq_formrigidity}) and (\ref{eq_formrigidity2}). We also show that even for a relative small value of $n=200,$ our results are reasonably accurate for various values of $c_n=0.5,1,2.$ 
Then we study the region when $\alpha \geq 1$ as in Section \ref{sec_informativeregion}. In Figure \ref{fig_accuray05intermediateregime}, we study the SNR region when $1 \leq \alpha<2$ and check (\ref{eq_informativeequationone}). Moreover, in Figure \ref{fig_accuray05largeregime}, we examine the accuracy of (\ref{eq_c1exp}) when $\alpha$ is very large. Again, we find that our results are accurate even for a relatively small value of $n=200$ under different settings of $c_n=0.5,1,2.$

Since our results are stated in the asymptotic sense when $n$ is sufficiently large, in Figure \ref{fig_dimensionanalysis}, we examine how the value of $n$ impact our results. For various values of $c_n$ and SNRs, we find that our results are reasonable accurate once $n \geq 100$. We see that when the SNR becomes large, our results for small $n$, like $n<100$, are more accurate. 

We point out that for a better visualization, we report the results for the sorted eigenvalues instead of the histogram in the above plots regarding eigenvalues. The main reason is that we focus on each individual eigenvalue rather than the global empirical distribution, which has been known in the literature. The simulation results are based on only one trial which emphasizes the concentration with high probability as established in our main results. For the visualization of histogram, we have to run several simulations and look at the average. Since this is not the main focus of the paper, we only generate one such plot in Figure \ref{fig_accuray05lowsnrregimehist} based on 1,000 trials.

 \begin{figure*}[!ht]
\includegraphics[width=4cm]{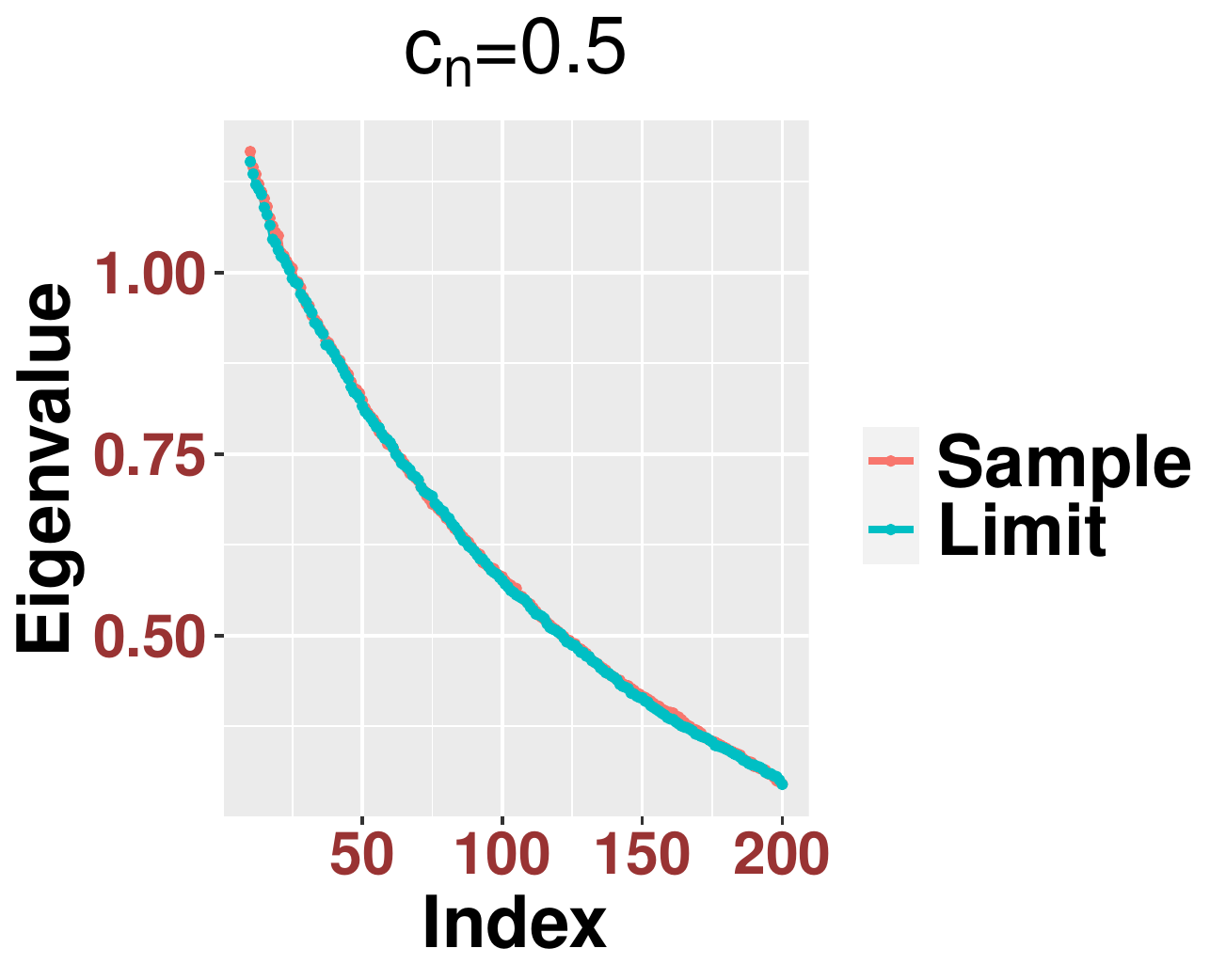}
\includegraphics[width=4cm]{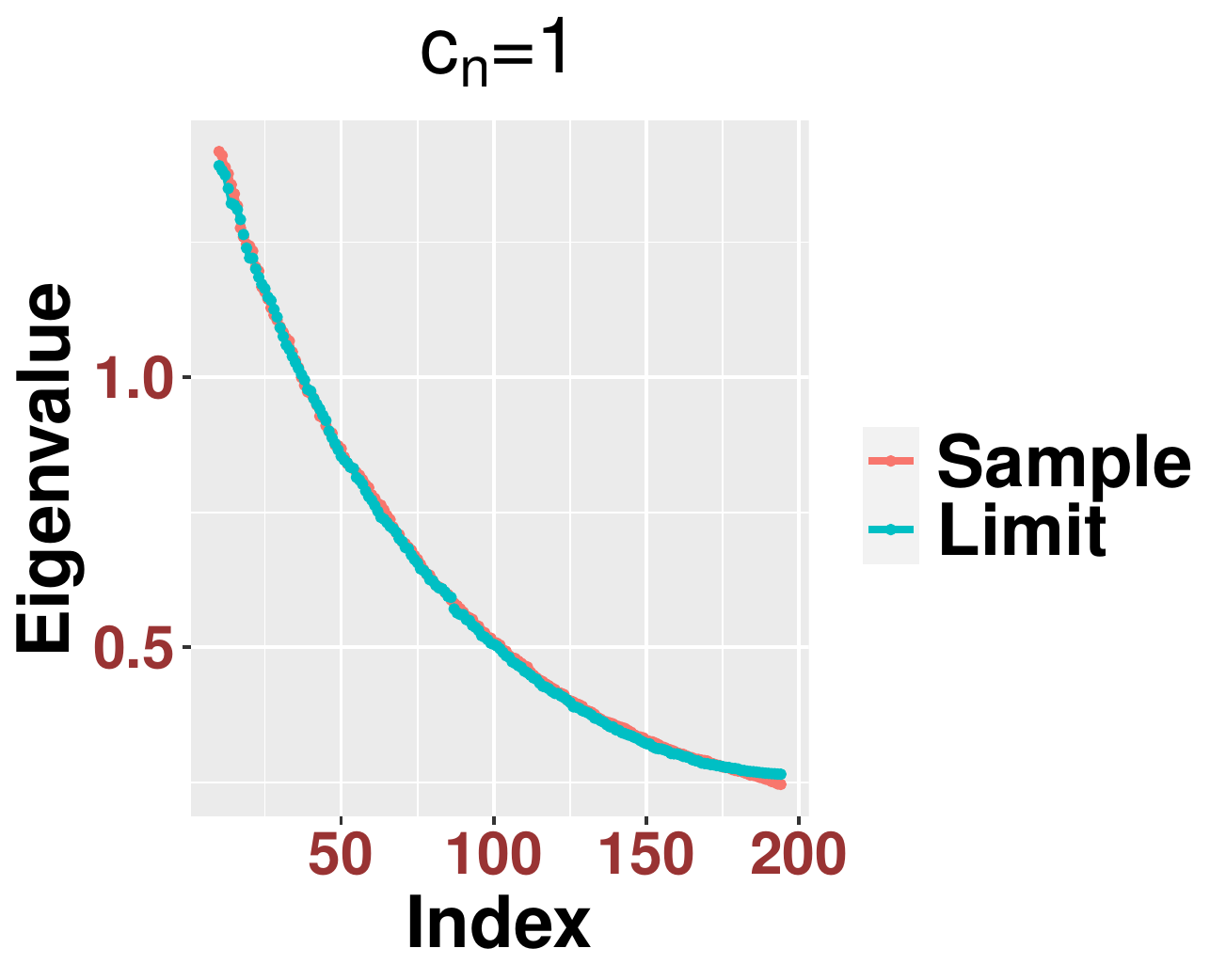}
	\includegraphics[width=4cm]{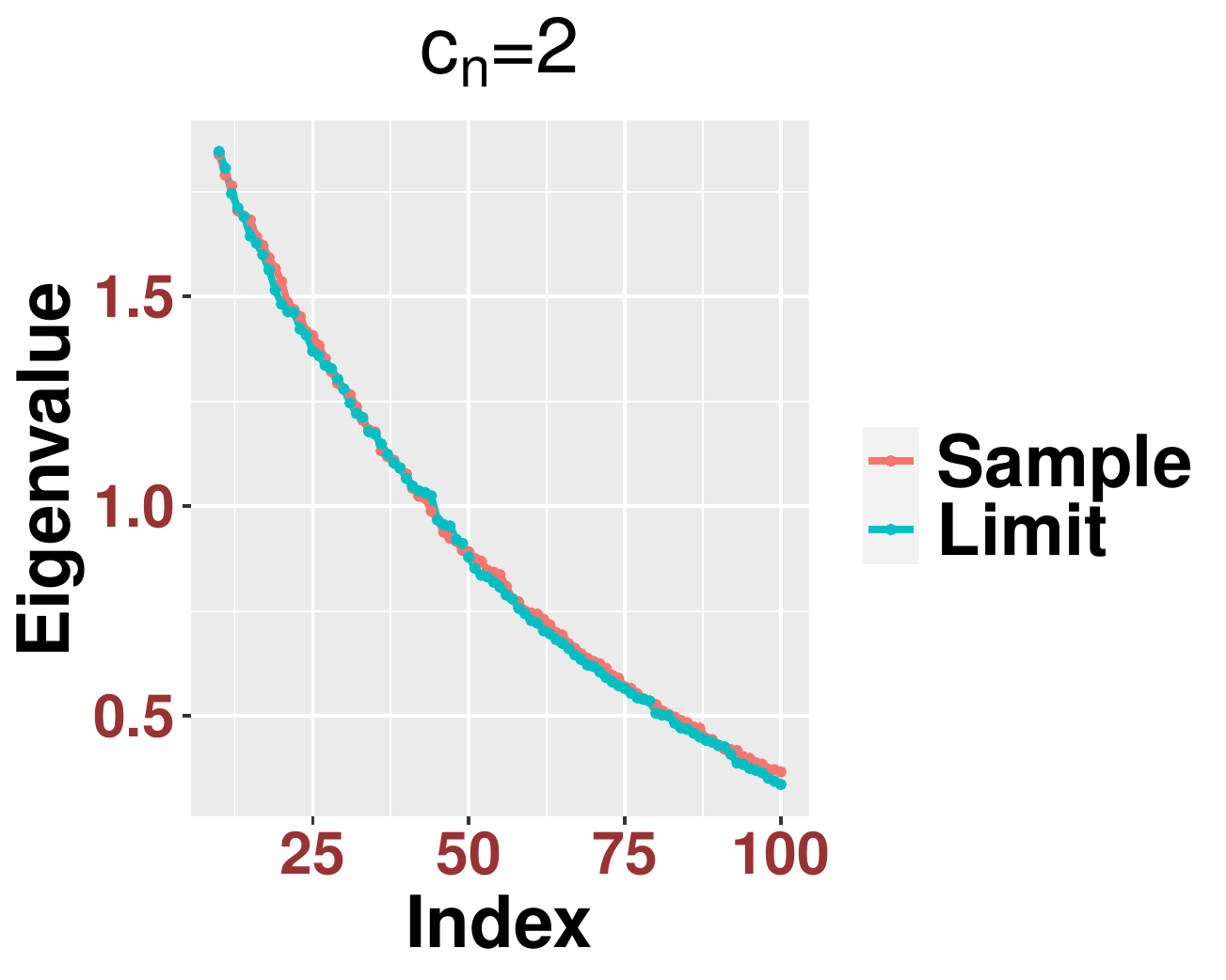}
	\caption{Low SNR setting. Here $\lambda=p^{0.2}$, $n=200$ with $c_n=n/p, h=p$ and the random variables are standard Gaussian. To examine the bulk eigenvalues, we start from the 10th eigenvalue of $\Wb$, i.e., $\lambda_i(\Wb)$, where $i \geq 10.$ For the legend, the sample means the eigenvalues of $\Wb$ and the limit means the quantiles (c.f. (\ref{eq_typical})) of the MP law $\nu_0$ defined in (\ref{eq_nudefinition}).} \label{fig_accuray05lowsnrregime}
\end{figure*}

 \begin{figure*}[!ht]
\includegraphics[width=4cm]{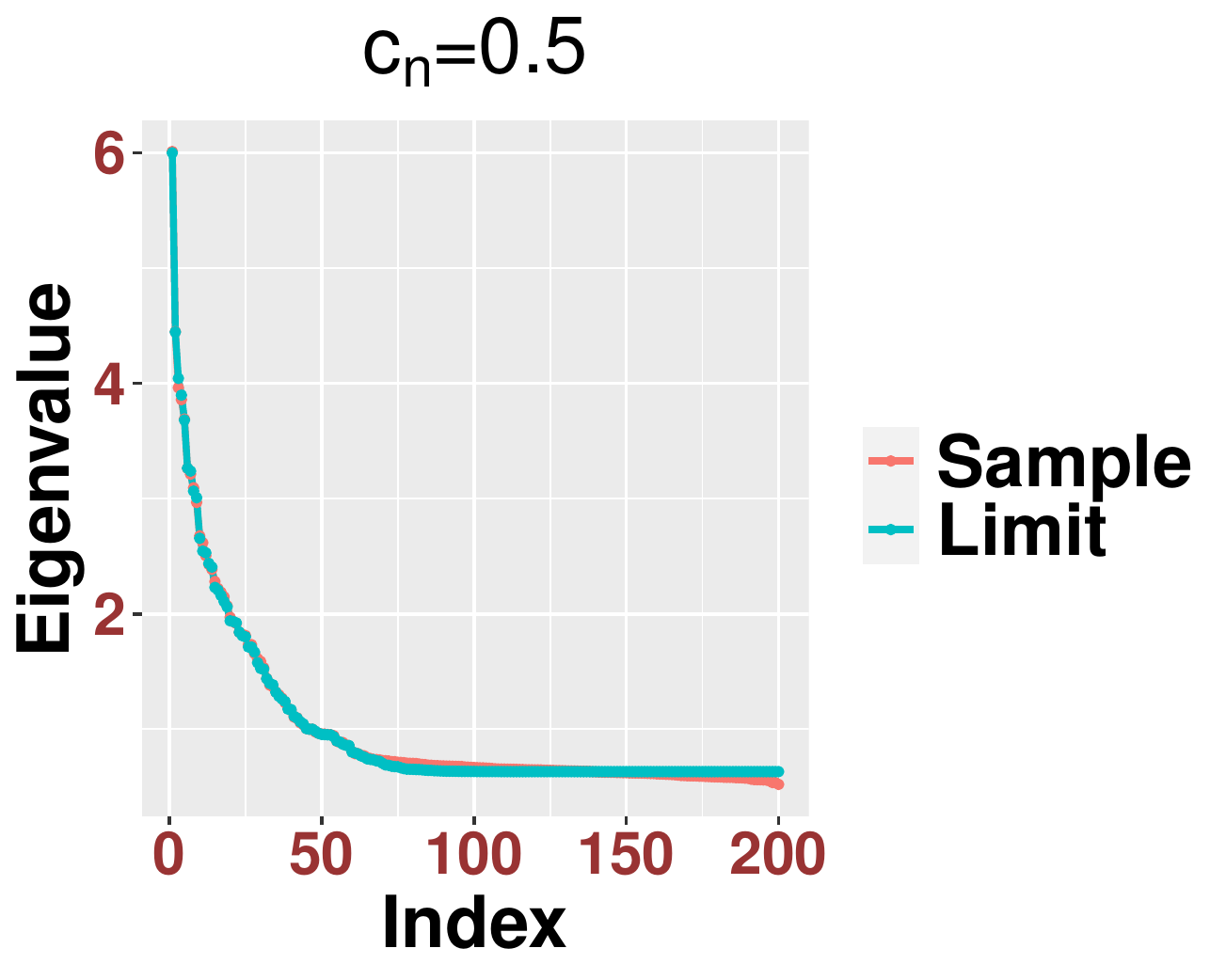}
\includegraphics[width=4cm]{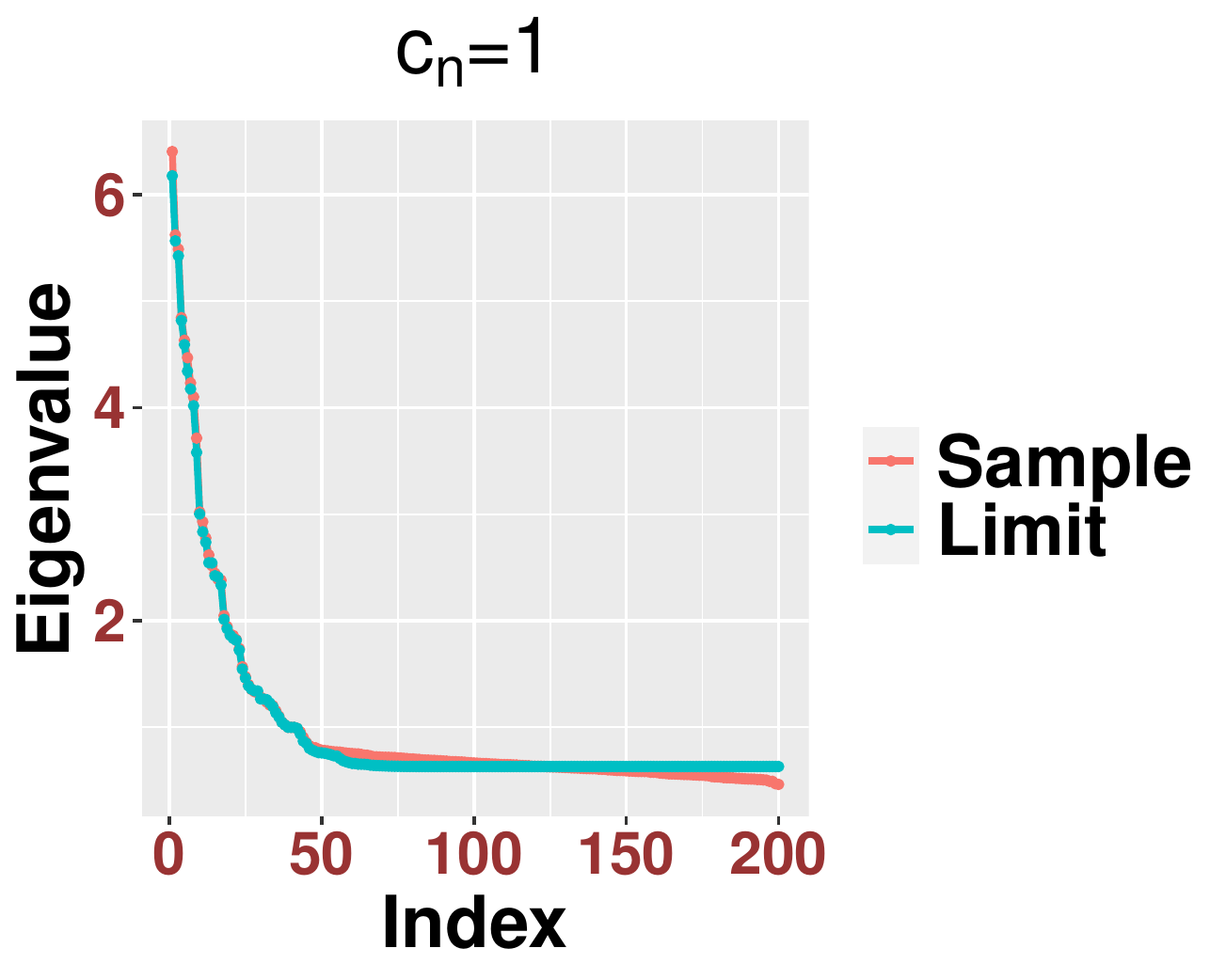}
	\includegraphics[width=4cm]{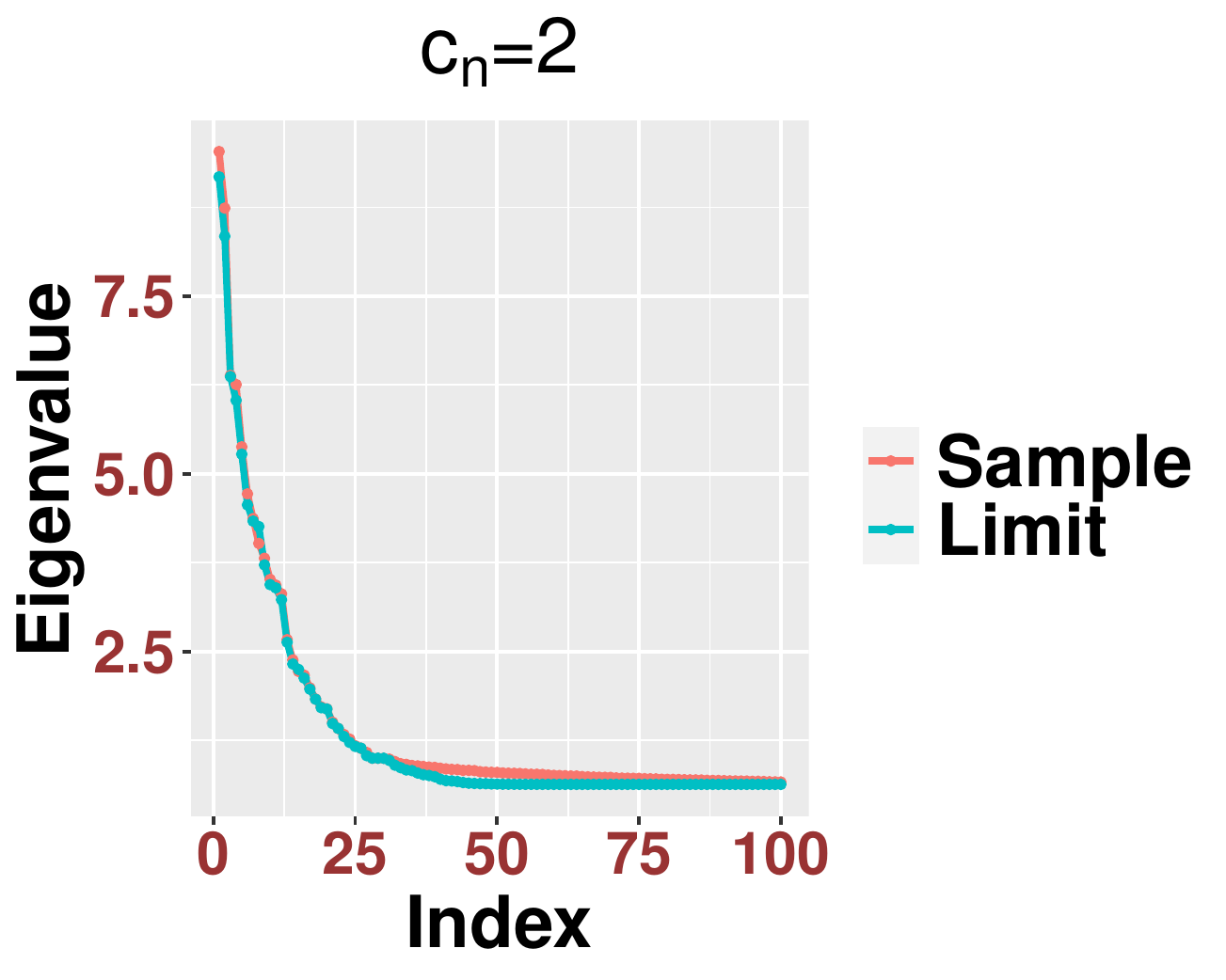}
	\caption{Moderate SNR region.  Here $\lambda=p^{1.9}$, $n=200$ with $c_n=n/p, h=p$ and the random variables are standard Gaussian. For the legend, the sample means the eigenvalues of $\Wb$ and the limit means the eigenvalues of $\Wb_{a_1}$ as in (\ref{eq_informativeequationone}). } \label{fig_accuray05intermediateregime}
\end{figure*}

 \begin{figure*}[ht]
\includegraphics[width=4cm]{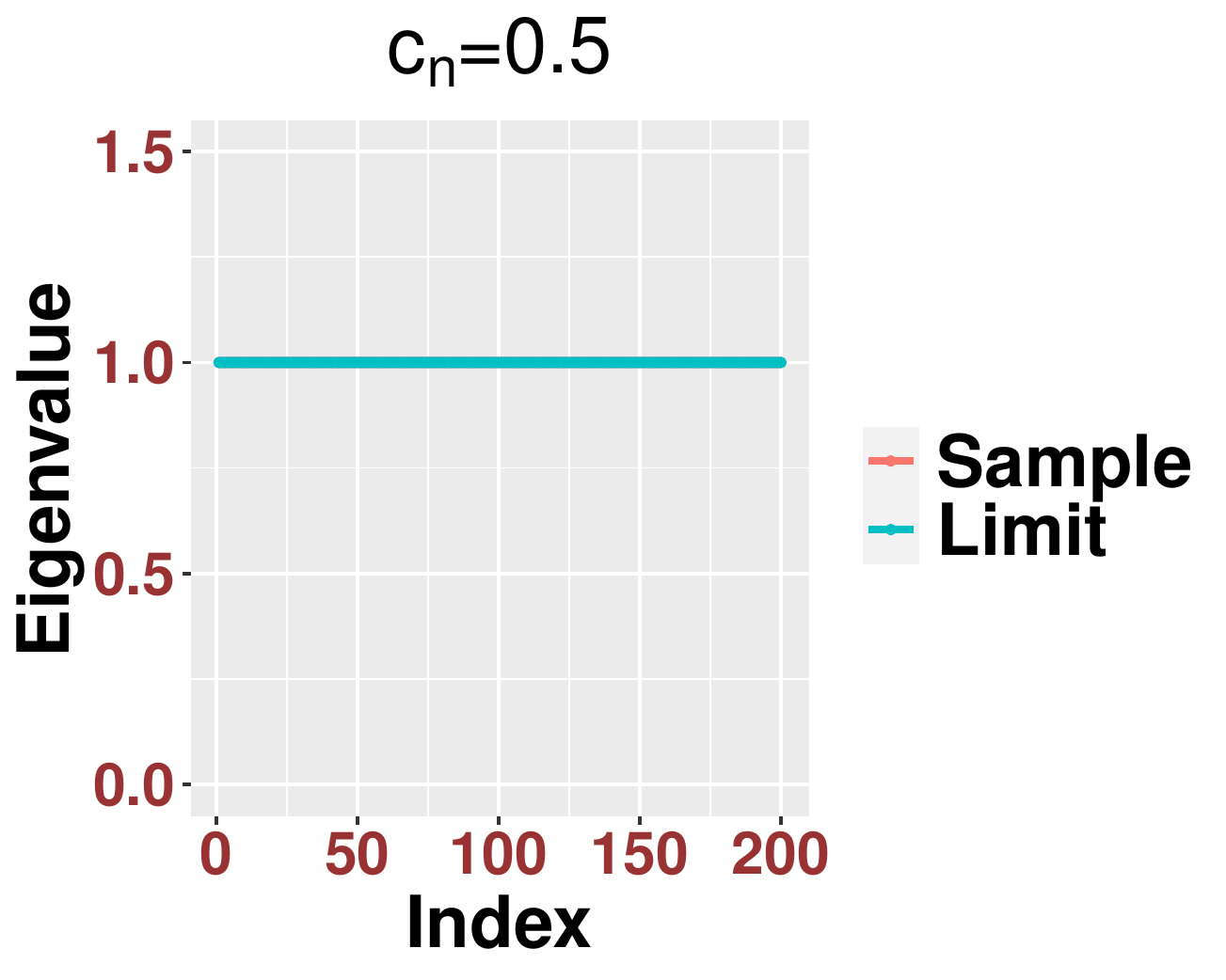}
\includegraphics[width=4cm]{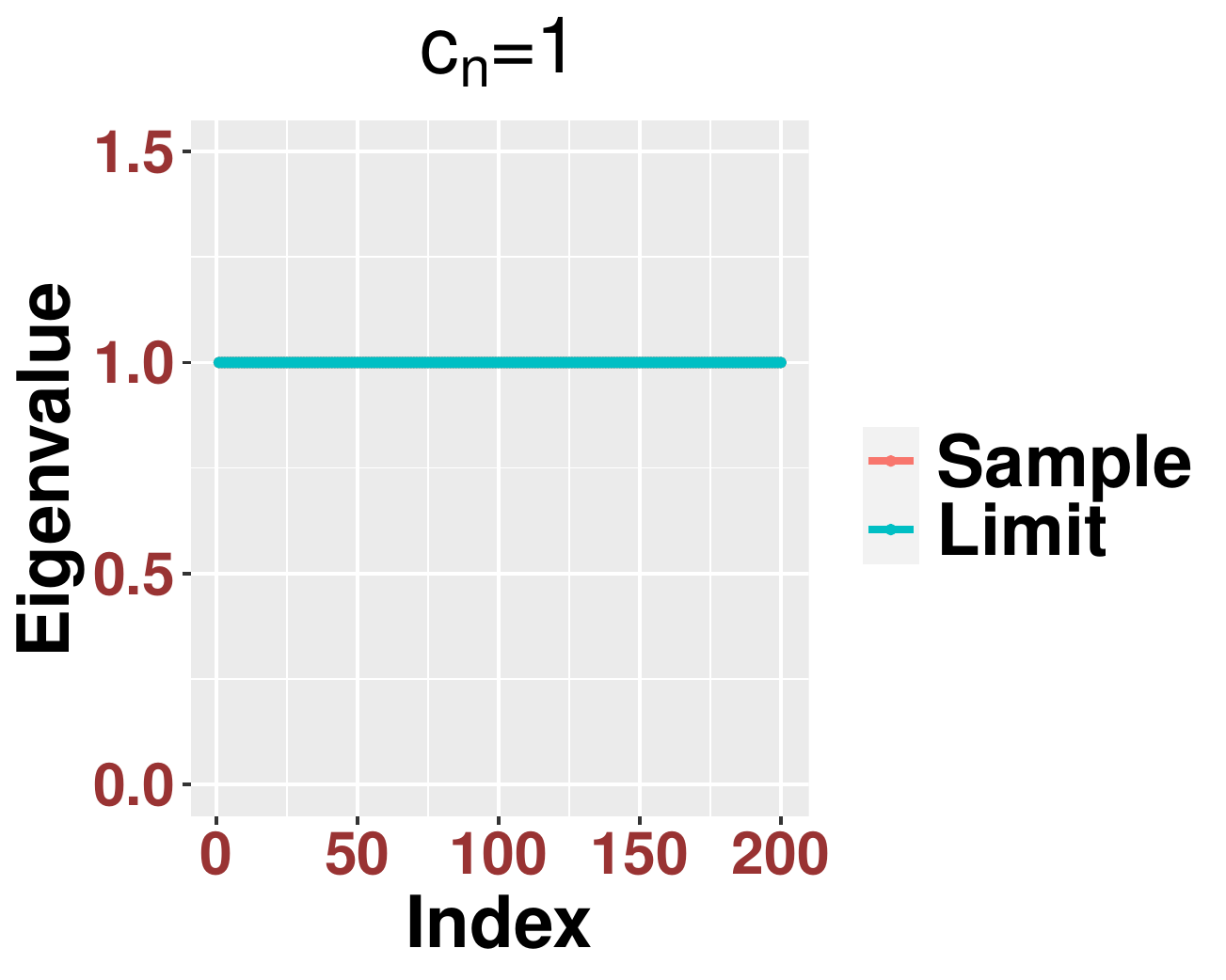}
	\includegraphics[width=4cm]{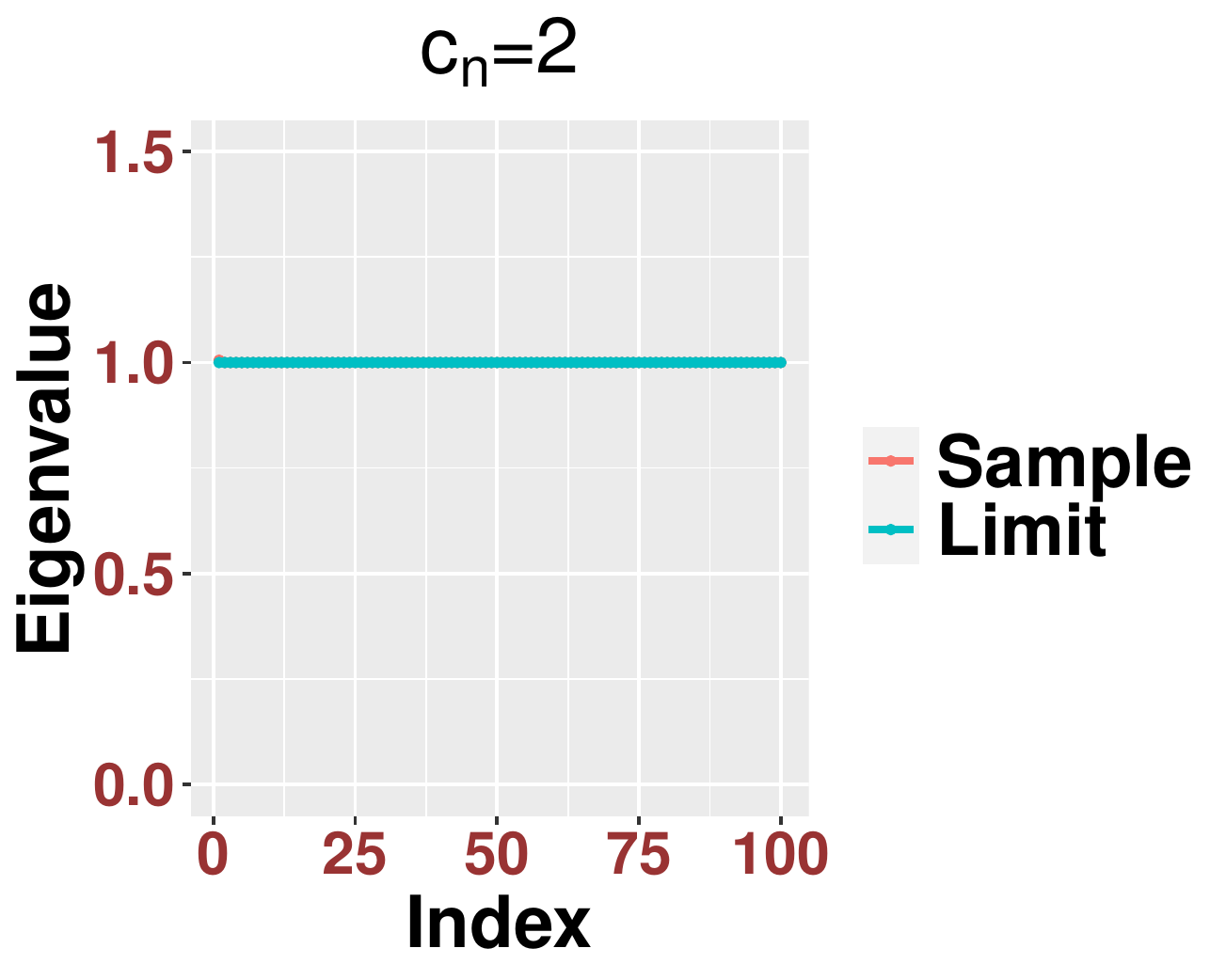}
	\caption{Large SNR setting.  Here $\lambda=p^{5}$, $n=200$ with $c_n=n/p, h=p$ and the random variables are standard Gaussian. For the legend, the sample means the eigenvalues of $\Wb$ and the limit means unity as predicted in (\ref{eq_c1exp}).} \label{fig_accuray05largeregime}
\end{figure*}

 \begin{figure*}[!ht]
\includegraphics[width=4cm]{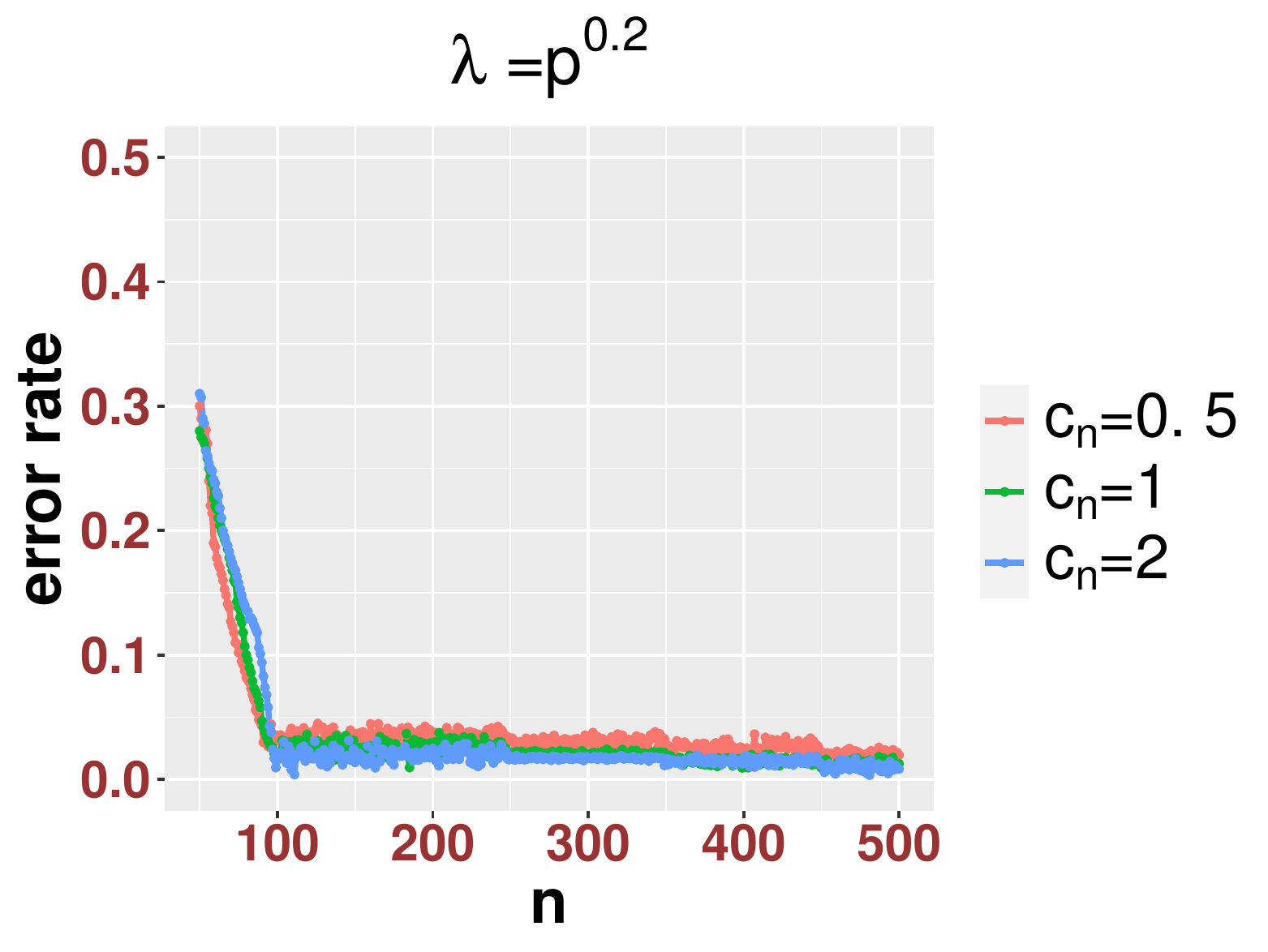}
\includegraphics[width=4cm]{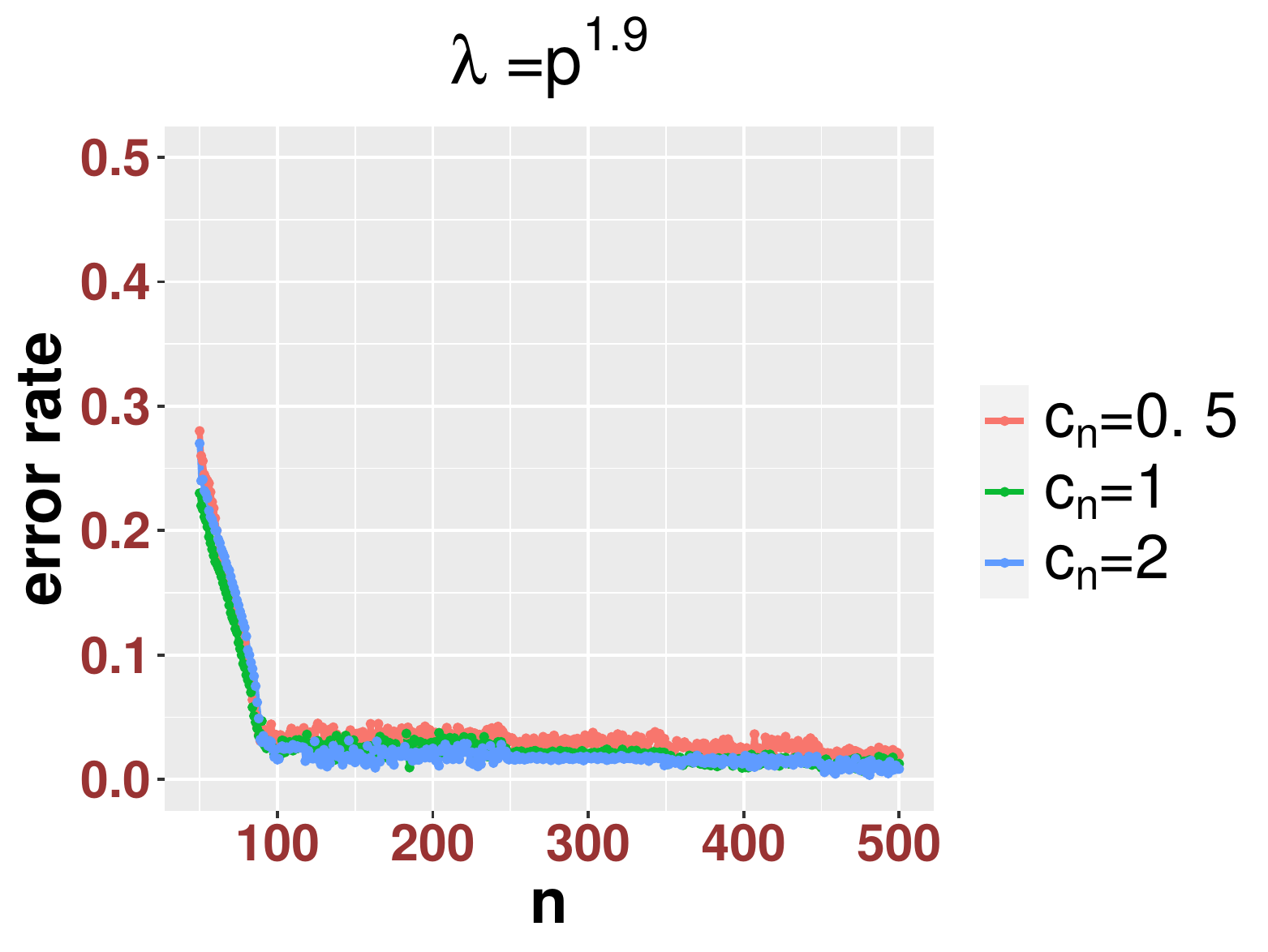}
	\includegraphics[width=4cm]{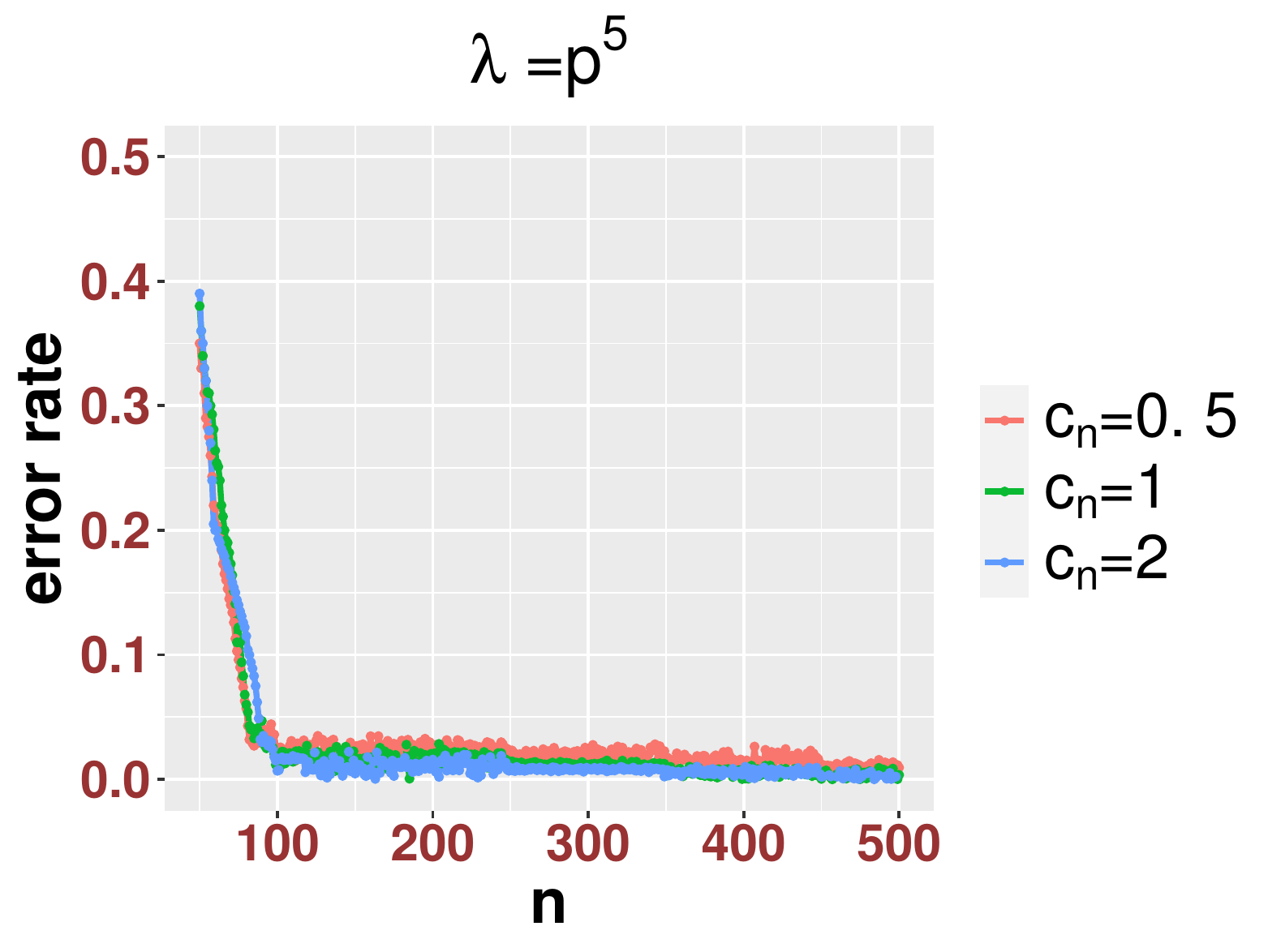}
	\caption{Dimension analysis on $n$. We examine how the values of $n$ influence the results as in Figures \ref{fig_accuray05lowsnrregime}--\ref{fig_accuray05largeregime}. For the error rate, when the SNR is small that $\lambda=p^{0.2},$ the error rate is denoted as $\sup_{i \geq 10}|\lambda_i(\Wb)-\gamma_{\nu_0}(i)|.$ When the SNR is large that $\alpha \geq 1,$ the error rate is denoted as the operator norm of difference between $\Wb$ and its corresponding limit matrix. Here $c_n=n/p, h=p$ and the random variables are standard Gaussian. We conclude that once $n$ is  relatively large say, $n \geq 100,$ our results will be reasonably accurate. We also observe that when the SNRs are large, it seems that we will need smaller values of $n$ for the purpose of accuracy. } \label{fig_dimensionanalysis}
\end{figure*}

\begin{figure*}[!ht]
\includegraphics[width=4cm]{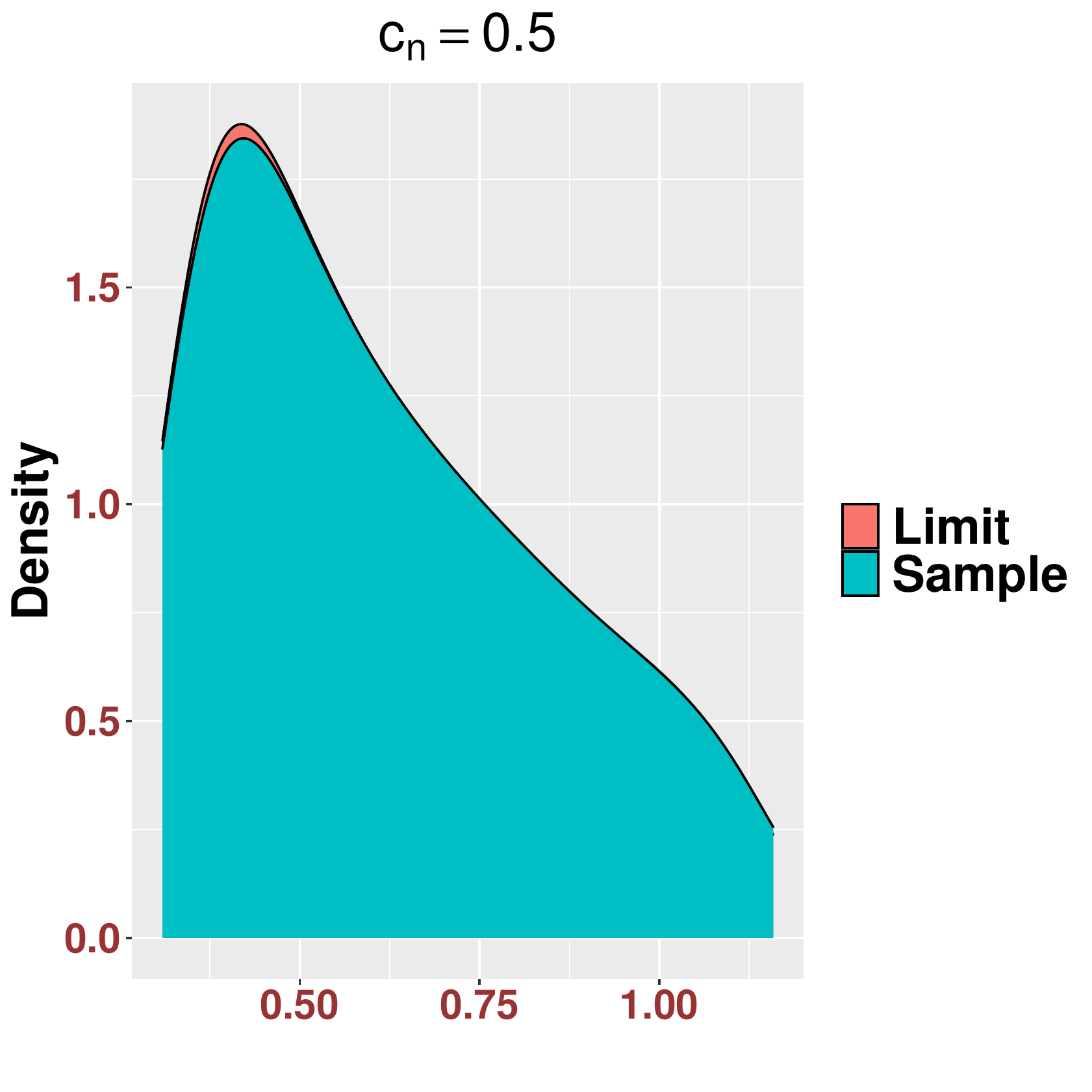}
\includegraphics[width=4cm]{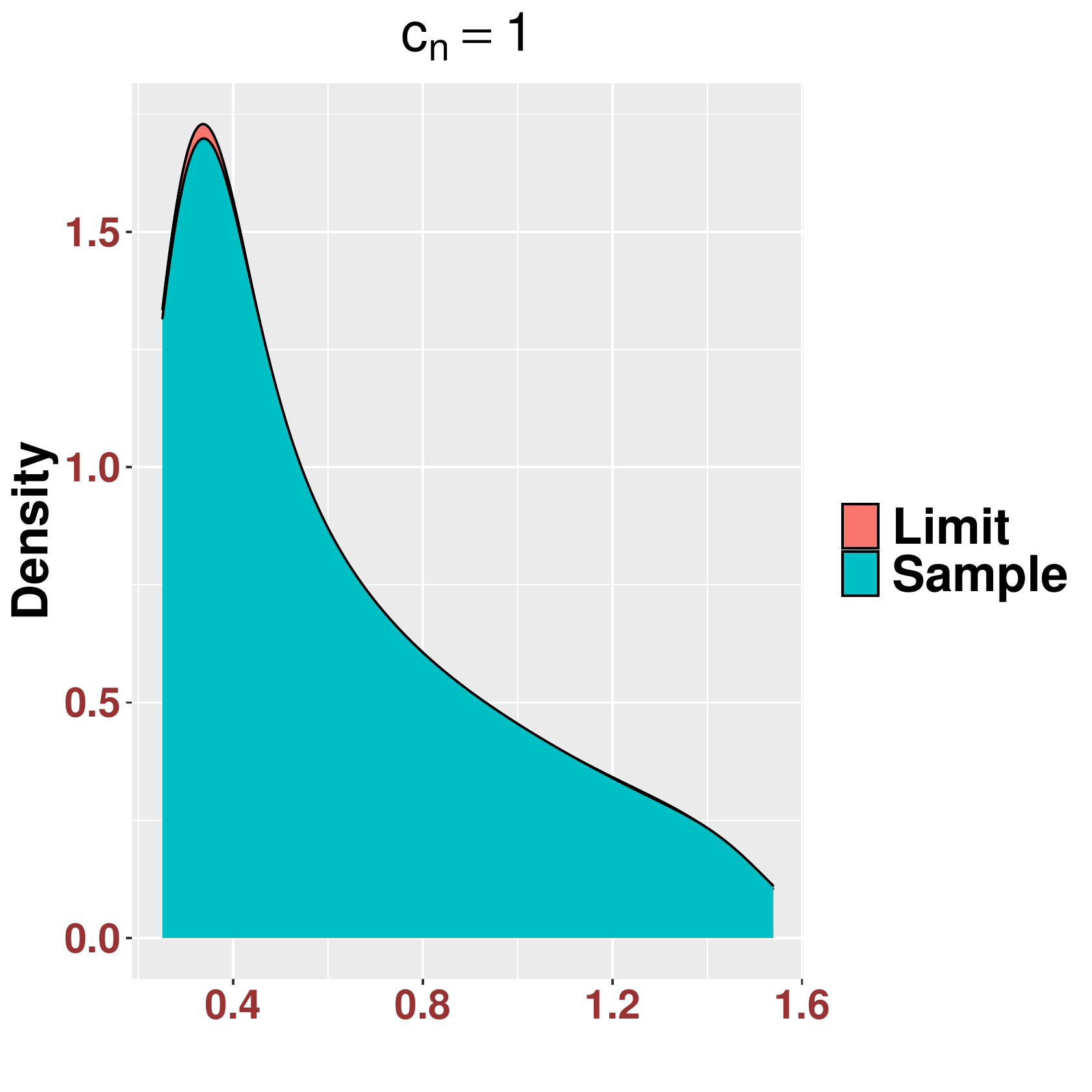}
	\includegraphics[width=4cm]{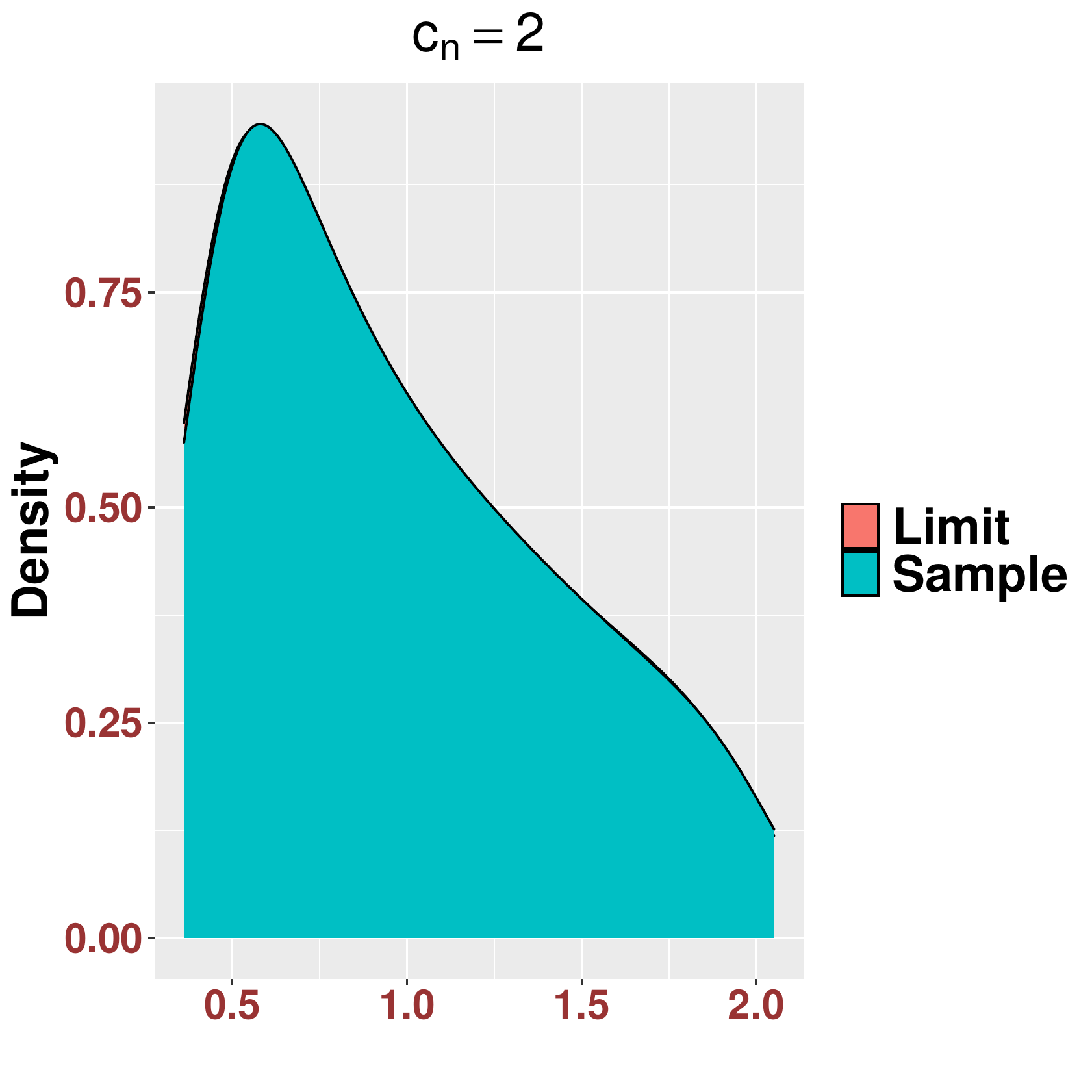}
	\caption{Histogram of low SNR setting. The settings are the same as in the caption of Figure \ref{fig_accuray05lowsnrregime}. The simulations are based on 1,000 repetitions. In the plots, we have removed the point mass. } \label{fig_accuray05lowsnrregimehist}
\end{figure*}

\subsection{Efficiency of our proposed bandwidth selection algorithm (Algorithm \ref{alg:choice})}\label{sec_simu_algoeff}

In this subsection, we show the usefulness of our proposed Algorithm \ref{alg:choice} and compare it with some existing methods in the literature. We consider two manifolds of different dimensions, and compare
the following four setups. (1) The clean affinity matrix $\Wb_1$ with the bandwidth $h=\lambda+p$; (2) $\Wb$  constructed using our Algorithm \ref{alg:choice}; (3) $\Wb$  constructed using the bandwidth via (\ref{eq_bandwithchoice}) with $\omega=0.5,$ which has been used in \cite{shnitzer2019recovering,lin2021wave}; (4)  $\Wb$ using the bandwidth $h=p$ as in \cite{elkaroui2010,DW1,do2013spectrum,Liao,el2015graph,el2016graph}. 
{Since the accuracy of the eigenvalue has been discussed in Section \ref{sec_simu_accuracy}, in what follows,} while we do not explore eigenvectors of GL in this paper, we demonstrate the eigenvectors of $\Wb$  and compare them with those of $\Wb_1$ constructed from the clean signal $\{\xi_i\}_{i=1}^n$ to further understand the impact of bandwidth selection.

We start with an 1-dimensional smooth and closed manifold $M_1$, which is parametrized by $\Phi:\,u\mapsto aR[2\cos(u),\,3(1-0.8e^{-8\cos(u)^2})\cos(2\pi(u/(2\pi))^2),\, \\(1-0.8e^{-8\cos(u)^2})\sin(u),\,0,\cdots, 0]^\top \in \mathbb{R}^p$, where $R\in O(p)$, $a>0$ and $u\in (0,2\pi]$. In other words, the 1-dimensional manifold $M_1$ is embedded in a 3-dim Euclidean space in $\mathbb{R}^p$. Now, sample independently  and uniformly $n$ points from $\mathtt{Uniform}(0,2\pi)$, $u_1,\ldots,u_n$, and hence $n$ points $\xi_i=\Phi(u_i)$. 
Next, we add Gaussian white noise to $\xi_i$ via {
$\mathbf{x}_i=\xi_i+\mathbf{y}_i \in \mathbb{R}^p,$ where $\mathbf{y}_i \sim \mathcal{N}(\mathbf{0}, \mathbf{I})$, $i=1,2,\cdots,n,$} are noise independent of $\xi_i.$ 
The results of embedding this manifold by the top three trivial eigenvectors are shown in Figure \ref{fig_1dim_comparisonone}. We could see that with the proposed bandwidth selection algorithm, the embedding of the noisy data is closer to that from the clean data.
To further quantify this closeness, we view the eigenvectors from $\Wb_1$ as the truth, and compare these with the eigenvectors of $\Wb$ with different bandwidths by evaluating the root mean square (RMSE). Note that the freedom of eigenvector sign is handled by taking the smaller value of $\|v^{(c)}_j-v^{(w)}_j\|_2$ and $\|v^{(c)}_j+v^{(w)}_j\|_2$, where $v^{(c)}_j$ if the $j$-th eigenvector of $\Wb_1$ associated with the $j$-th largest eigenvalue and $v^{(w)}_j$ is the $j$-th eigenvector of $\Wb$ associated with the $j$-th largest eigenvalue. We repeat the random sample for 300 times, and plot the errobars with mean$\pm$standard deviation in Figure \ref{fig_1dim_comparisonRMSE}. It is clear that with the adaptive bandwidth selection algorithm, the top several eigenvectors of $\mathbf{W}$ are close to those of $\mathbf{W}_1$.

 \begin{figure*}[t]
\includegraphics[width=1\textwidth]{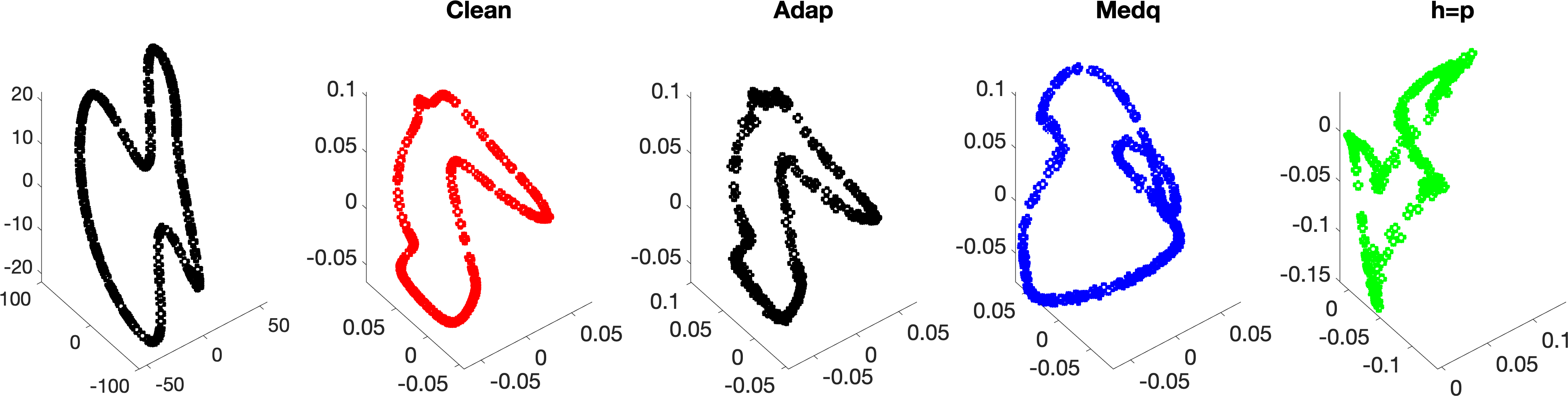}	
	\caption{Comparison of different bandwidth choices for $c_n=0.5.$ We consider the one-dim manifold $M_1$ with $a=20\sqrt{p}$ and take $n=400.$ For the title in each subplot, Clean means the matrix $\Wb_1,$ Adap means using the bandwidth according to our proposed Algorithm \ref{alg:choice}, Medq means using the bandwidth via (\ref{eq_bandwithchoice}) with $\omega=0.5$ and $h=p$ means using the bandwidth $h=p.$} \label{fig_1dim_comparisonone}
\end{figure*}

\begin{figure*}[ht]
\includegraphics[width=0.32\textwidth]{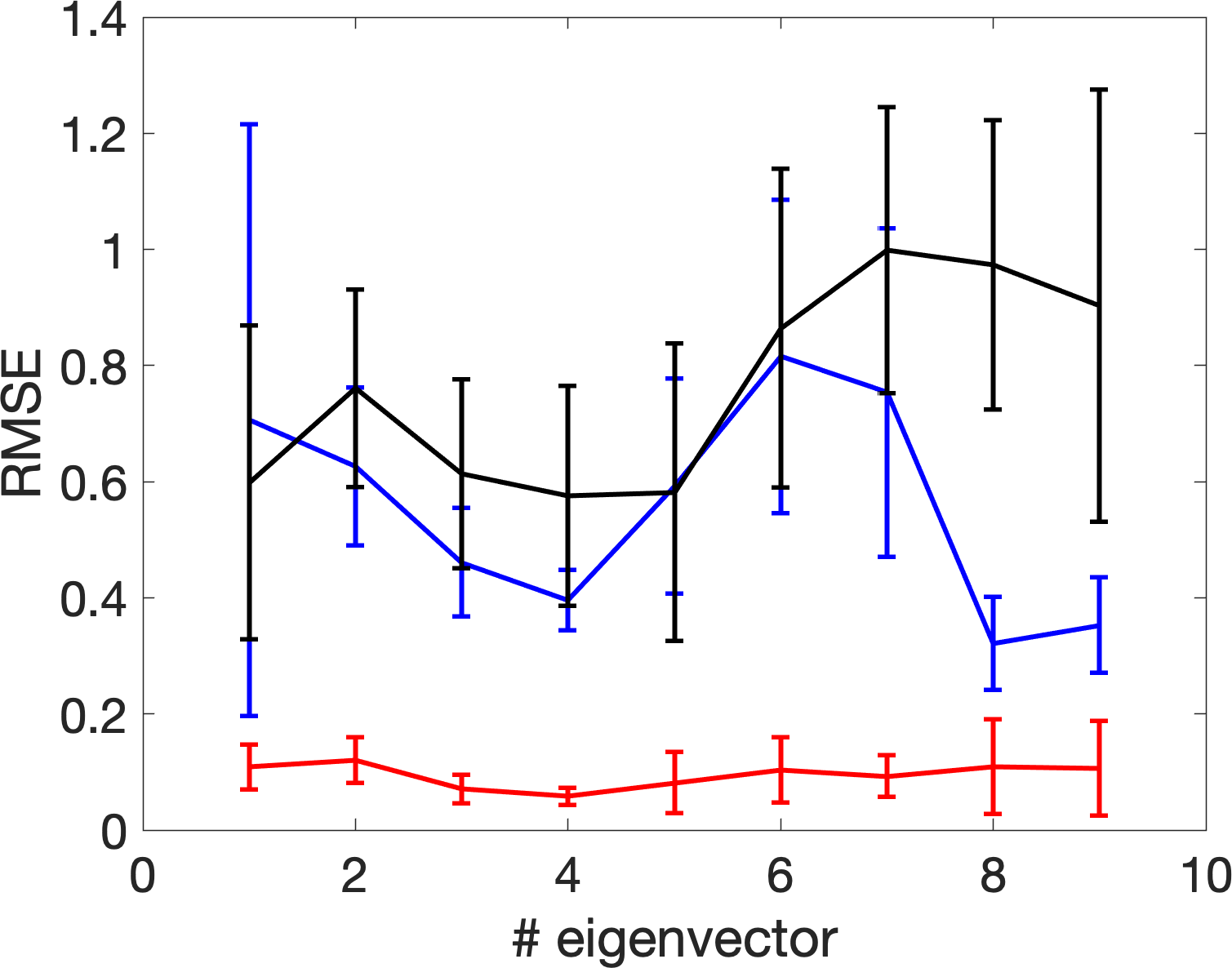}
\includegraphics[width=0.32\textwidth]{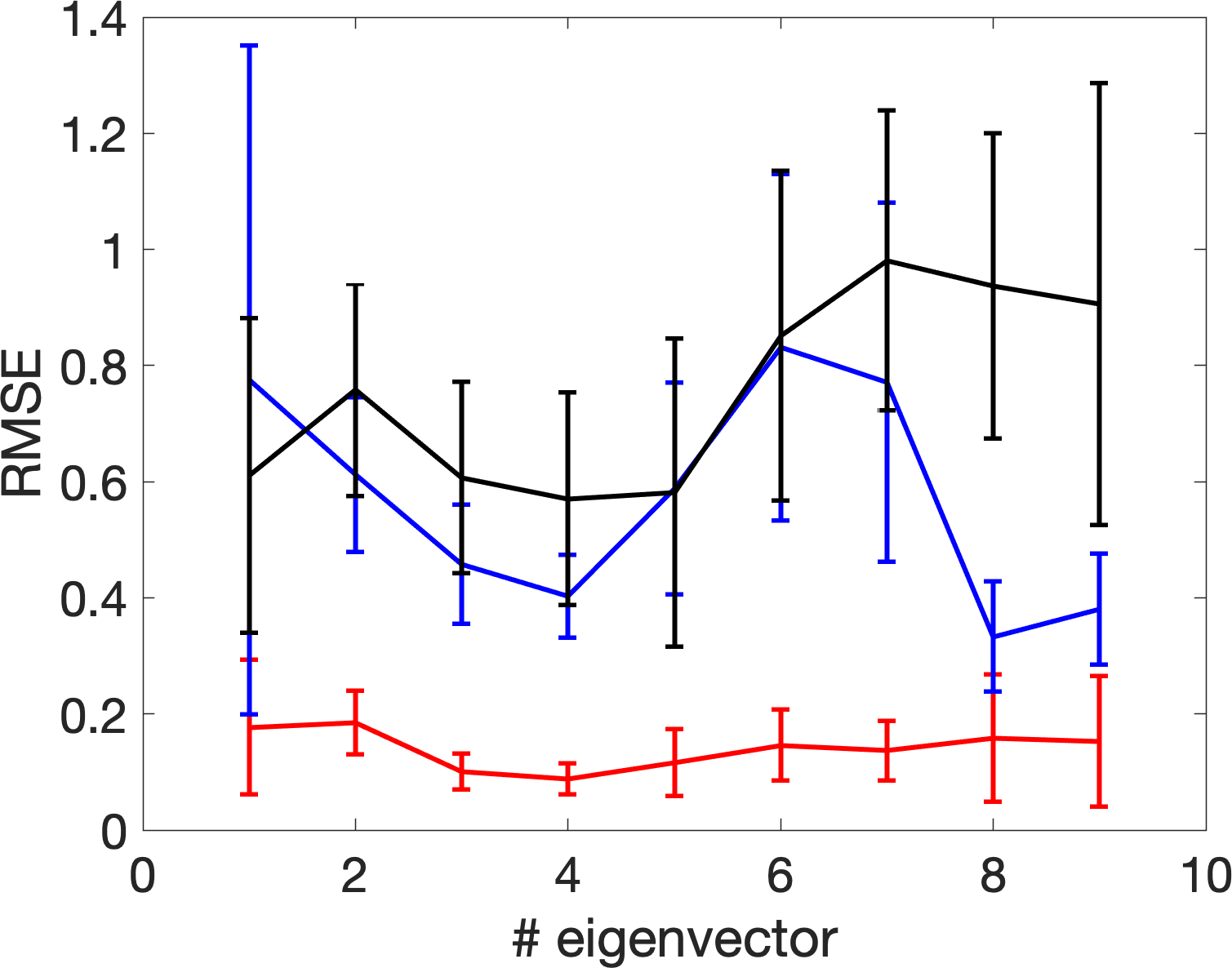}
\includegraphics[width=0.32\textwidth]{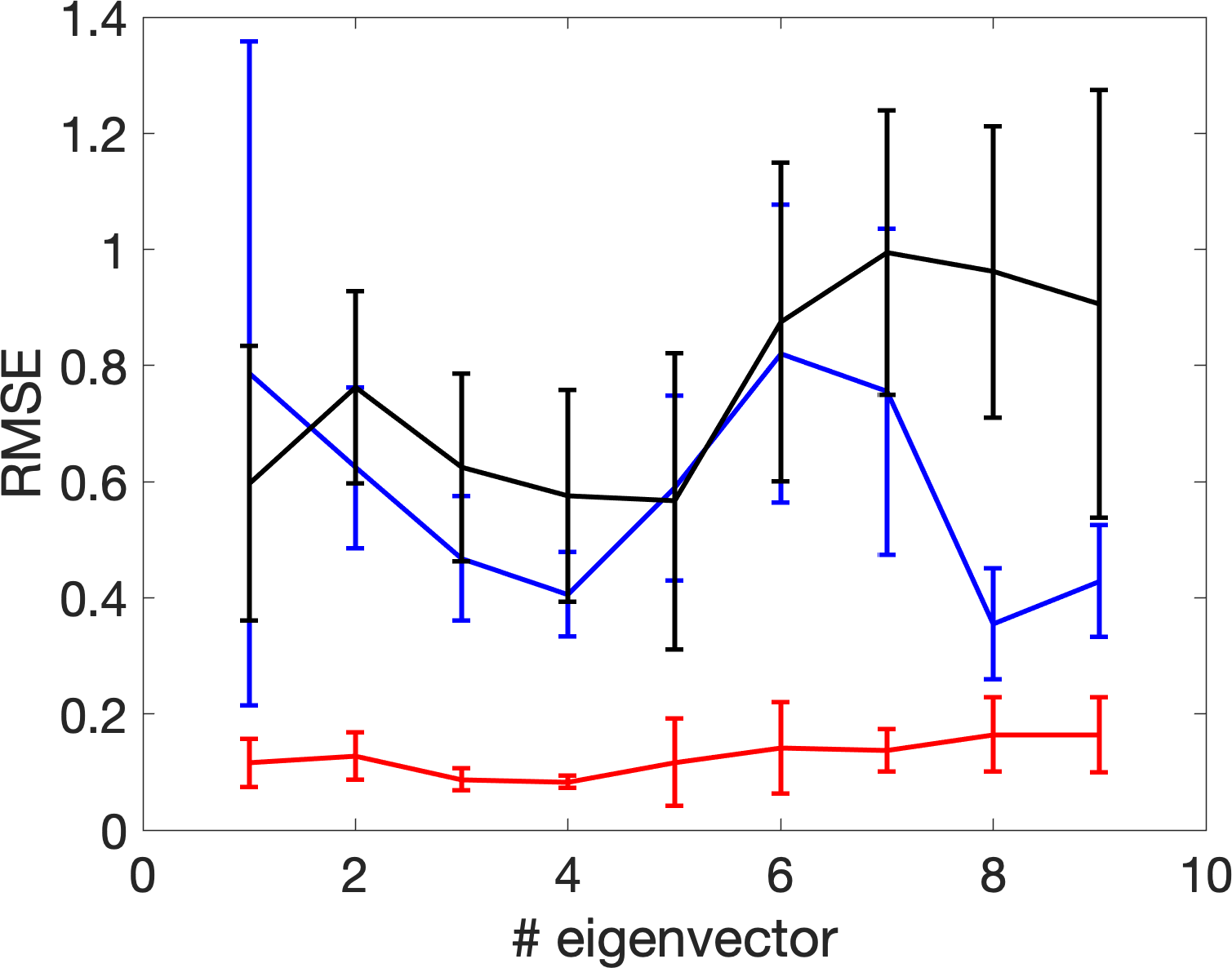}
	\caption{From left to right: comparison of eigenvectors determined by different bandwidth choice schemes for $c_n=0.5,\ 1, 2$. We consider the one-dim manifold $M_1$ with $a=20\sqrt{p}$ and take $n=400.$ The red, blue and black lines indicate the mean$\pm$standard deviation of RMSE when we compare the first nine eigenvectors evaluated from Clean and Adap, Clean and Medq, and Clean and $h=p$, where Clean means the matrix $\Wb_1$ with the bandwidth $p+\lambda$, Adap means using the bandwidth according to our proposed Algorithm \ref{alg:choice}, Medq means using the bandwidth via (\ref{eq_bandwithchoice}) with $\omega=0.5$ and $h=p$ means using the bandwidth $h=p.$} \label{fig_1dim_comparisonRMSE}
\end{figure*}

Next, we consider the Klein bottle, which is a $2$-dim compact and smooth manifold that cannot be embedded into a three dimensional Euclidean space. First, set  
\begin{align}\label{parametrization KB}
\mathbf{z}_i=aR\begin{bmatrix} 
(2\cos(\mathbf u_i(1))+1)\cos(\mathbf u_i(2))\\
(2\cos(\mathbf u_i(1))+1)\sin(\mathbf u_i(2))\\
2\sin(\mathbf u_i(1))\cos(\mathbf u_i(2)/2)\\
2\sin(\mathbf u_i(1))\sin(\mathbf u_i(2)/2)\\
0\\
\vdots\\ 0
\end{bmatrix}\in \mathbb{R}^p, 
\end{align}
where $\mathbf{u}_i$, $i=1,\ldots,n$ are randomly sampled uniformly from $[0, 2\pi]\times [0,2\pi]\subset \mathbb{R}^2$ and $R\in O(p)$ and $a>0$ is the signal strength. In other words, we sample nonuniformly from the Klein bottle isometrically embedded in a $4$-dimension subspace of $\mathbb{R}^p$.
Next, we add noise to $\mathbf{z}_i$ by setting
$\mathbf{x}_i=\mathbf{z}_i+\mathbf{y}_i \in \mathbb{R}^p,$ where $\mathbf{y}_i \sim \mathcal{N}(\mathbf{0}, \mathbf{I})$, $i=1,2,\cdots,n, $ are noise independent of $\mathbf{z}_i.$ The eigenvectors under the mentioned four setups when the signal strength is equivalent to $\alpha=1$ are shown in Figure \ref{fig_KB_comparisonone}, where $c_n=0.5$ and we plot the magnitude of different eigenvector over $\mathbf u_i$ with colors; that is, the color at $\mathbf u_i$ represents the $i$-th entry of the associated eigenvector. We apply the same RMSE evaluation used in the $M_1$ example above; that is, we show the RMSE of the eigenvectors of different $\Wb$ when compared with those from $\Wb_1$ as the truth. We repeat the random sample for 300 times, and plot the errobars with mean$\pm$standard deviation in Figure \ref{fig_KB_comparisonRMSE}. It is clear that with the adaptive bandwidth selection algorithm, the top several eigenvectors of $\mathbf{W}$ are close to those of $\mathbf{W}_1$.

\begin{figure*}[!ht]
\includegraphics[width=1\textwidth]{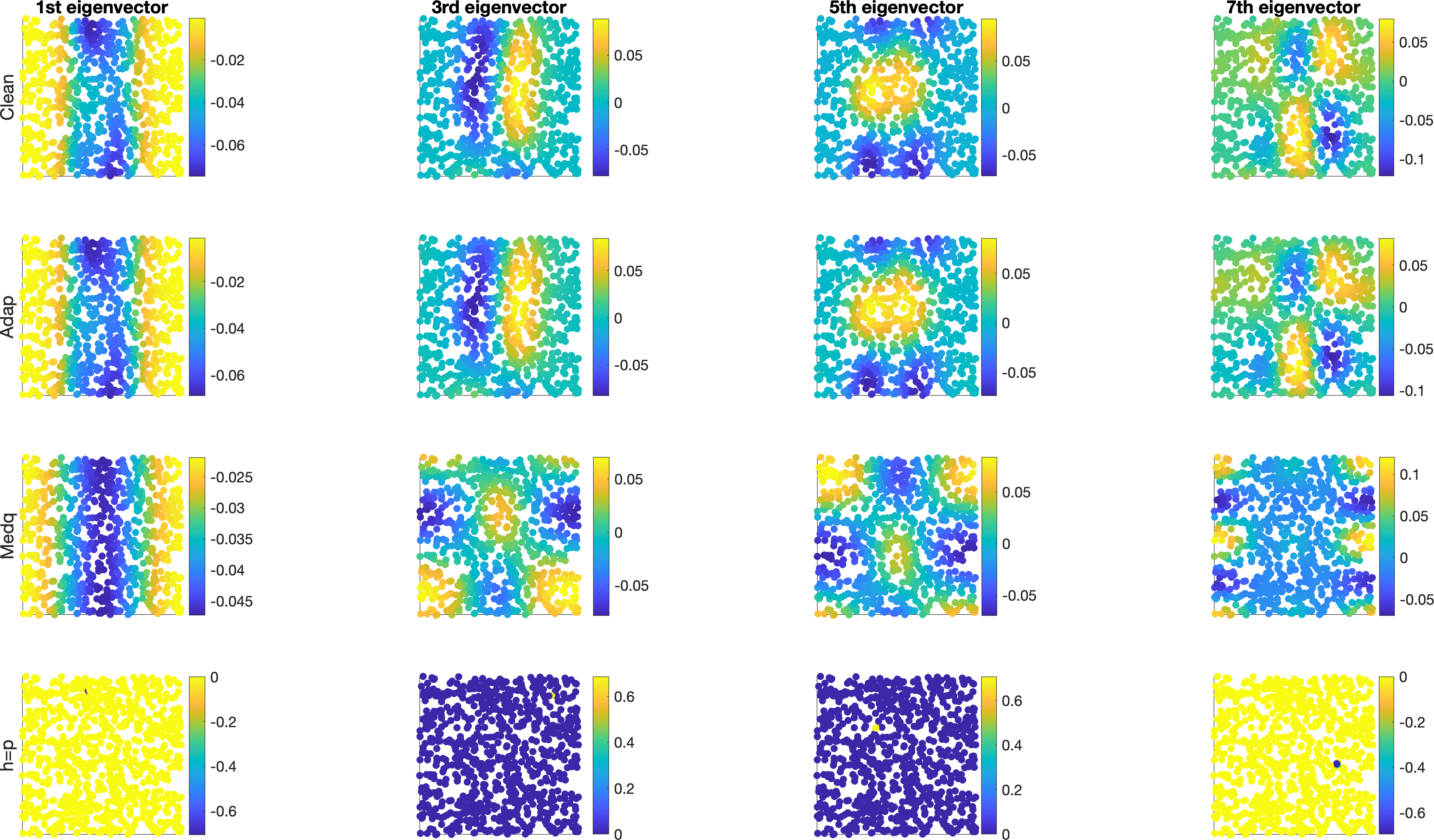}
	\caption{Comparison of different bandwidth choices for $c_n=0.5.$ We consider the Klein bottle parametrized in \eqref{parametrization KB} with $a=20\sqrt{p}.$ Here $n=800.$ For the ylabel on the first column, Clean means the matrix $\Wb_1$ with the bandwidth $p+\lambda$, Adap means using the bandwidth according to our proposed Algorithm \ref{alg:choice}, Medq means using the bandwidth via (\ref{eq_bandwithchoice}) with $\omega=0.5$ and $h=p$ means using the bandwidth $h=p.$} \label{fig_KB_comparisonone}
\end{figure*}

\begin{figure*}[!ht]
\includegraphics[width=0.32\textwidth]{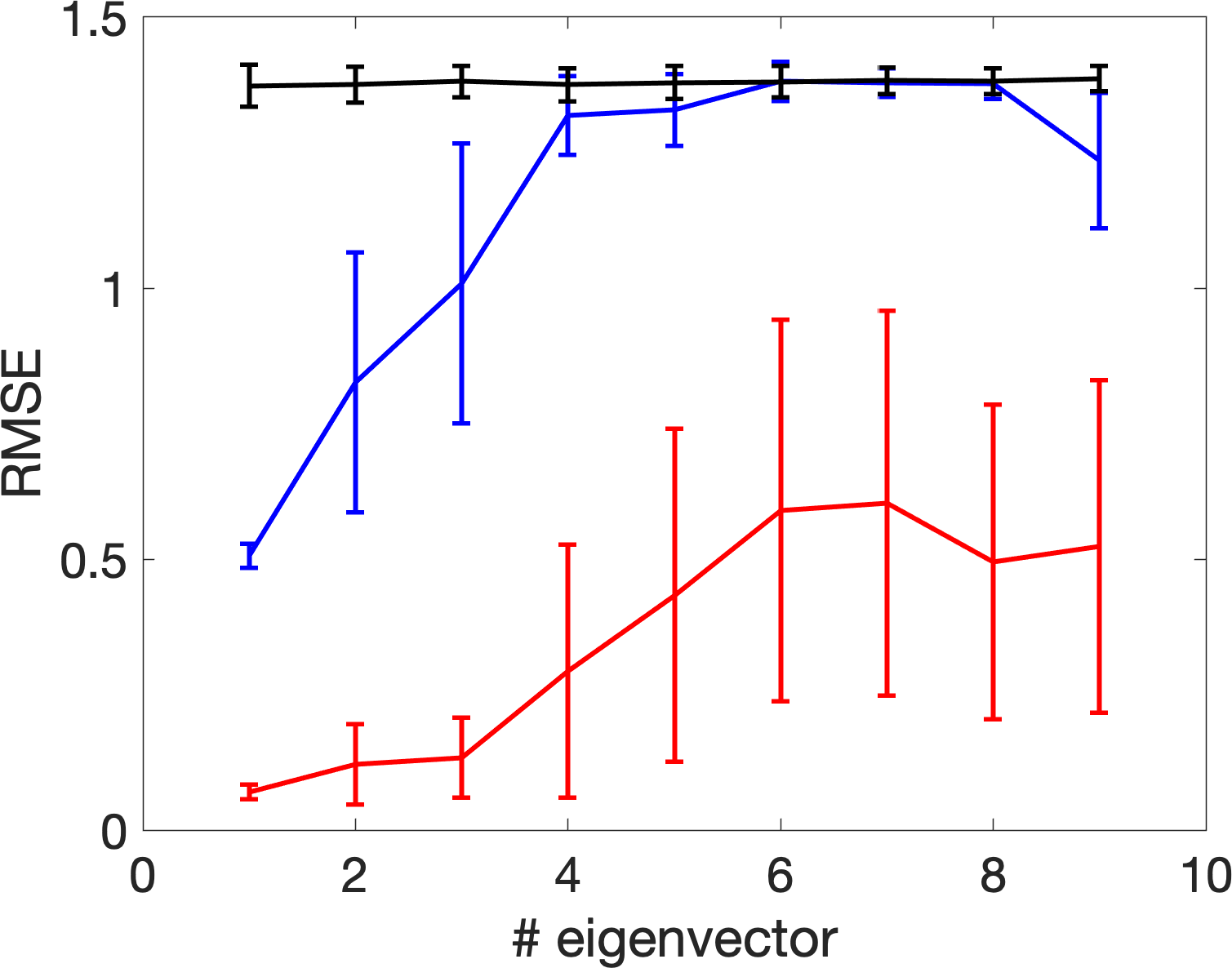}
\includegraphics[width=0.32\textwidth]{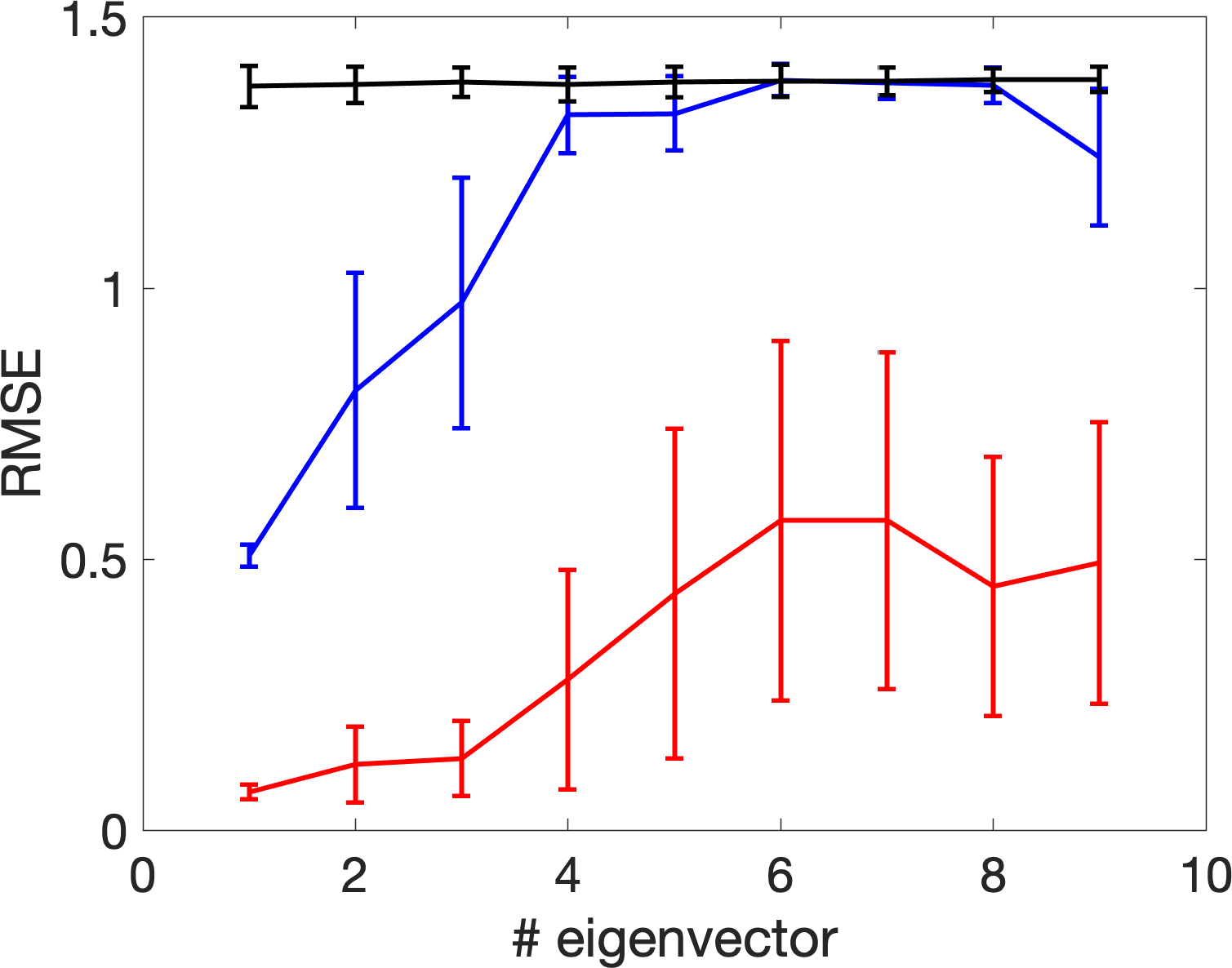}
\includegraphics[width=0.32\textwidth]{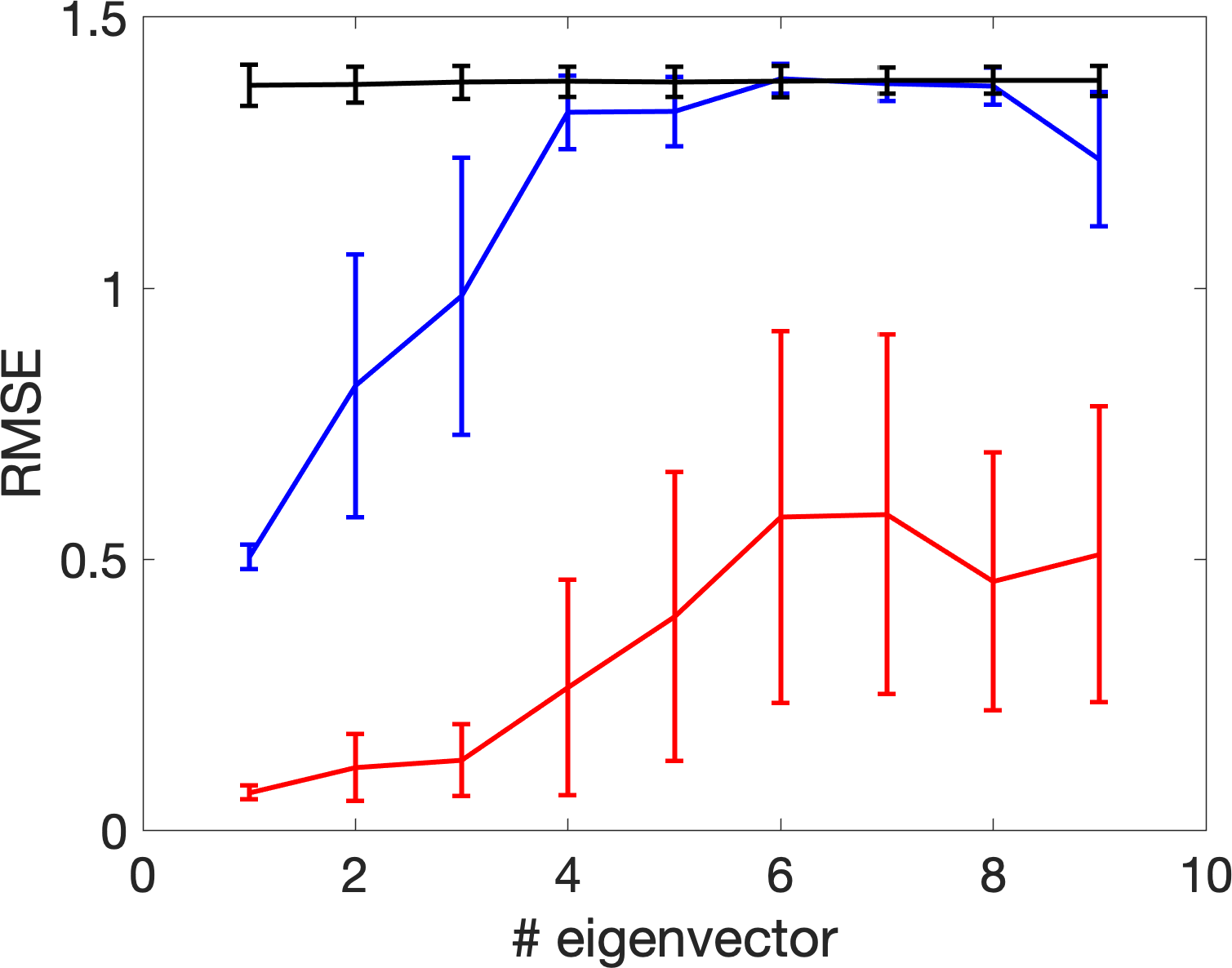}
	\caption{From left to right: Comparison of eigenvectors determined by different bandwidth choice schemes for $c_n=0.5,\ 1, 2$. We consider the Klein bottle parametrized in \eqref{parametrization KB} with $a=20\sqrt{p}.$ Here $n=800.$ The red, blue and black lines indicate the mean$\pm$standard deviation of RMSE when we compare the first nine eigenvectors evaluated from Clean and Adap, Clean and Medq, and Clean and $h=p$, where Clean means the matrix $\Wb_1$ with the bandwidth $p+\lambda$, Adap means using the bandwidth according to our proposed Algorithm \ref{alg:choice}, Medq means using the bandwidth via (\ref{eq_bandwithchoice}) with $\omega=0.5$ and $h=p$ means using the bandwidth $h=p.$} \label{fig_KB_comparisonRMSE}
\end{figure*}

The results of the above two examples support the potential of our proposed bandwidth selection algorithm, particularly when compared with two bandwidth selection methods commonly considered in the literature. Note that since only the eigenvalue information is used in Algorithm \ref{alg:choice}, only partial information in the kernel random matrix is utilized. We hypothesize that by taking the eigenvectors into account, we could achieve a better bandwidth selection algorithm. Since the study of eigenvector is out of the scope of this paper, we will explore this possibility in our future work. 
}

\section{Discussion and Conclusion}\label{sec: discussion and conclusion}
We provide a systematic study of the spectrum of GL under the {nonnull} setup with different SNR regions, particularly when $d=1$, and explore the impact of chosen bandwidths. Specifically, we show that under a proper SNR and bandwidth, the spectrum of $\Ab_1$ from the clean dataset can be extracted from that of $\Ab$ from the noisy dataset. We also provide a new algorithm to select the suitable bandwidth so that the number of ``outliers'' of the spectral bulk associated with the noise is maximized. 

Note that we need the assumption that the entries of $\mathbf{y}_i$ are independent because our arguments depend on established results in the literature, for example, \cite{principal}, which are proved using the assumption that the entries of $\mathbf{y}_i$ are independent. Nevertheless, we believe that this assumption can be removed with extra technical efforts. A natural strategy is to utilize the Gaussian comparison method in the literature of random matrix theory as in \cite{erdos2017dynamical}. This strategy contains two steps. In the first step, we establish the results for Gaussian random vectors where linear independence is equivalent to independence. In the second step, we prove the results hold for sub-Gaussian random vectors using the comparison arguments as in \cite{erdos2017dynamical} under certain moment matching conditions. Since this is not the main focus of the current paper and the extension needs to generalize many existing results in the literature, for example, \cite{principal}, we will pursue this direction in the future.

A comparison with principal component analysis (PCA) deserves a discussion. 
It is well known that when $d=1$ so that the signal is sampled from a linear subspace, we could easily recover this linear subspace from any linear methods like PCA. At the first glance, it seems that the problem is resolved. However, if the purpose is studying the geometric and/or topological structure of the dataset, we may need further analysis. For example, if we want to answer the question like ``is the $1$-dim signal supported on two disjoint subsets (or the $1$-dim linear manifold is disconnected)?'', then PCA cannot answer and we need other tools (for example, GL via the spectral clustering). The same fact holds when $d>1$, where the nonlinear manifold is supported in a $d$-dim subspace while its dimension is strictly smaller than $d$. In this case, while we could possibly recover the $d$-dim subspace by PCA, if we want to further study the nonlinear geometric and/or topological structure of the manifold, we need GL as the analysis tool. For readers who are interested in recovering the manifold structure via GL, see  \cite{gine2006empirical,MR4130541,dunson2019diffusion} and the literature therein.
Based on our results, we know that even for the 1-dimensional linear manifold (for example, $\zb_i$ in (\ref{eq_modelsteptwo})--(\ref{eq_sigdefn}) satisfies that $\eb_1^\top\zb_i$ is uniformly distributed on $[0,1]$), the linear and nonlinear methods are significantly different.

One significant difference that we shall emphasize is the phase transition phenomenon when the nonlinear method like GL is applied. While the discussion could be much more general, the essence is the same, so we continue the discussion with the 1-dimensional linear manifold below. 
For simplicity, we focus on the results in Section \ref{section main result} for the kernel affinity matrix $\Wb$ when the bandwidth is chosen so that $h \asymp p.$ 
From Theorem \ref{thm_affinity matrix} to Theorem \ref{thm_informativeregion}, we observe several phase transitions for eigenvalues depending on the signal strength $\lambda$. In the case when $\lambda$ is bounded, we observe three or four outlying eigenvalues according to the magnitude of $\lambda,$ where three of these outlying eigenvalues are from the kernel effect; see Lemma \ref{lem_karoui} for details. 
When $\lambda$ transits from the subcritical region $\lambda \leq \sqrt{c_n}$ to the supercritical region $\lambda>\sqrt{c_n}$, one extra outlier is generated due to the well-known {\em BBP transition} \cite{baik2005} phenomenon for the spiked covariance matrix model. 
Moreover, the rest of the non-outlying eigenvalues still obey a shifted MP law. As will be clear from the proof, in the bounded region, studying the affinity matrix is directly related to studying PCA of the dataset via the Gram matrix; see (\ref{eq_control}) for details. 
However, once the signal strength diverges, the spectrum of the affinity matrix behaves totally differently. In PCA, under the setting of (\ref{eq_sigdefn}), no matter how large $\lambda$ is, we can only observe a single outlier and its strength increases as $\lambda$ increases. Moreover, only the first eigenvalue is influenced by $\lambda$ and the rest eigenvalues satisfy the MP law, and this MP law is independent of $\lambda.$ Specifically, the second eigenvalue, i.e., the first non-outlying eigenvalue will stick to the right-most edge of the MP law. We refer the readers to Lemma \ref{lem_gramsummary} for more details and Figure \ref{fig_1} for an illustration.
In contrast, regarding the nonlinear kernel method, the values and amount of outlying eigenvalues vary according to the signal strength. That is, the magnitude of $\lambda$ has a possible impact on {\em all} eigenvalues of $\Wb$ through the kernel function. Heuristically, this is because PCA explores the point cloud in a linear fashion so that $\lambda$ will only have an impact on the direction which explains the most variance, i.e., $\lambda_1(\Cb)$, where $\Cb$ is the covariance matrix that is directed related to the Gram matrix $\Gb$. However, the kernel method deals with the data in a nonlinear way. As a consequence, all eigenvalues of $\Wb$ contain the signal information. In other words, unlike the bulk (non-outlier) eigenvalues of $\Gb$, all eigenvalues of $\Wb$ change when $\lambda$ change when $\lambda$ increases. Consider the following three cases.
First, as we will see in the proof below, the eigenvalue corresponding to the BBP transition in the supercritical region will increase when $\lambda$ increases, and this eigenvalue will eventually be close to $1$ when $\lambda$ diverges following (\ref{eq_conditionexponential}). Thus, we expect that the magnitude of this outlying eigenvalue follows an asymmetric bell curve as $\alpha$ increases. 
Second, the eigenvalues corresponding to the kernel effect (see (\ref{eq_sho})) will decrease when $\lambda$ increases since the kernel function $f$ is a decreasing function. 
Third, as we can see from Theorems \ref{lem_affinity_slowly} and \ref{thm_informativeregion}, the non-outlying eigenvalues become outlying eigenvalues when $\lambda$ increases.
In Figure \ref{fig_2}, we illustrate the above phenomena by investigating the behavior of some eigenvalues.

\begin{figure*}[!ht]
\centering
	\includegraphics[width=4cm]{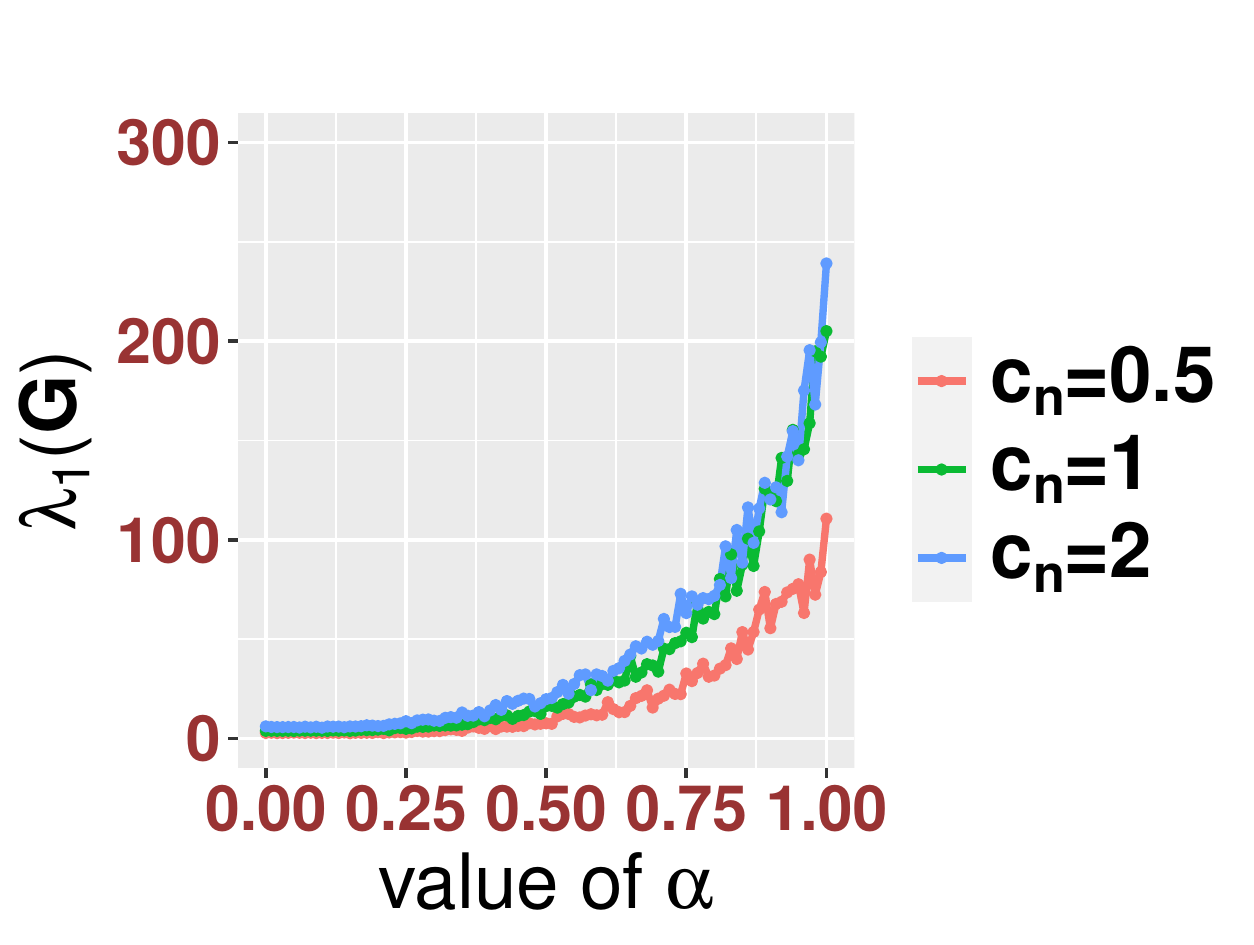}
		\includegraphics[width=4cm]{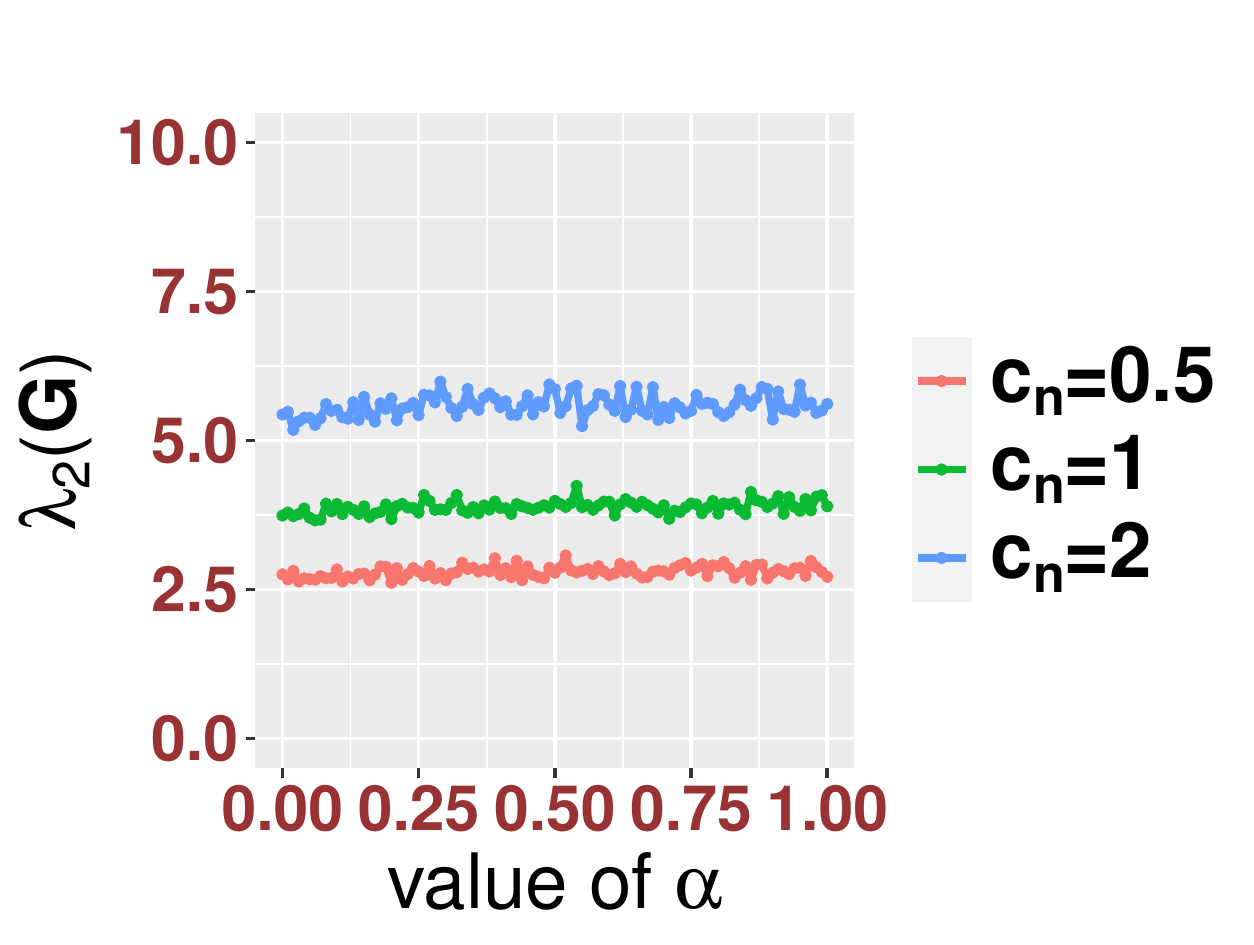}
	\caption{Outlier and the first non-outlying eigenvalues of $\Gb=\frac{1}{p} \Xb^\top \Xb.$  In this simulation,  we use the Gaussian random vectors with covariance matrix (\ref{eq_sigdefn}) with $\lambda=p^{\alpha}$ under the setting $n=200$ and $c_n=n/p.$ The left panel corresponds to the first eigenvalue, i.e., the outlying eigenvalue of $\Gb$, and the right panel corresponds to the second eigenvalue, i.e., the first non-outlying eigenvalue, of $\Gb.$} \label{fig_1}
\end{figure*}

\begin{figure*}[ht]
\centering
	\includegraphics[width=4cm]{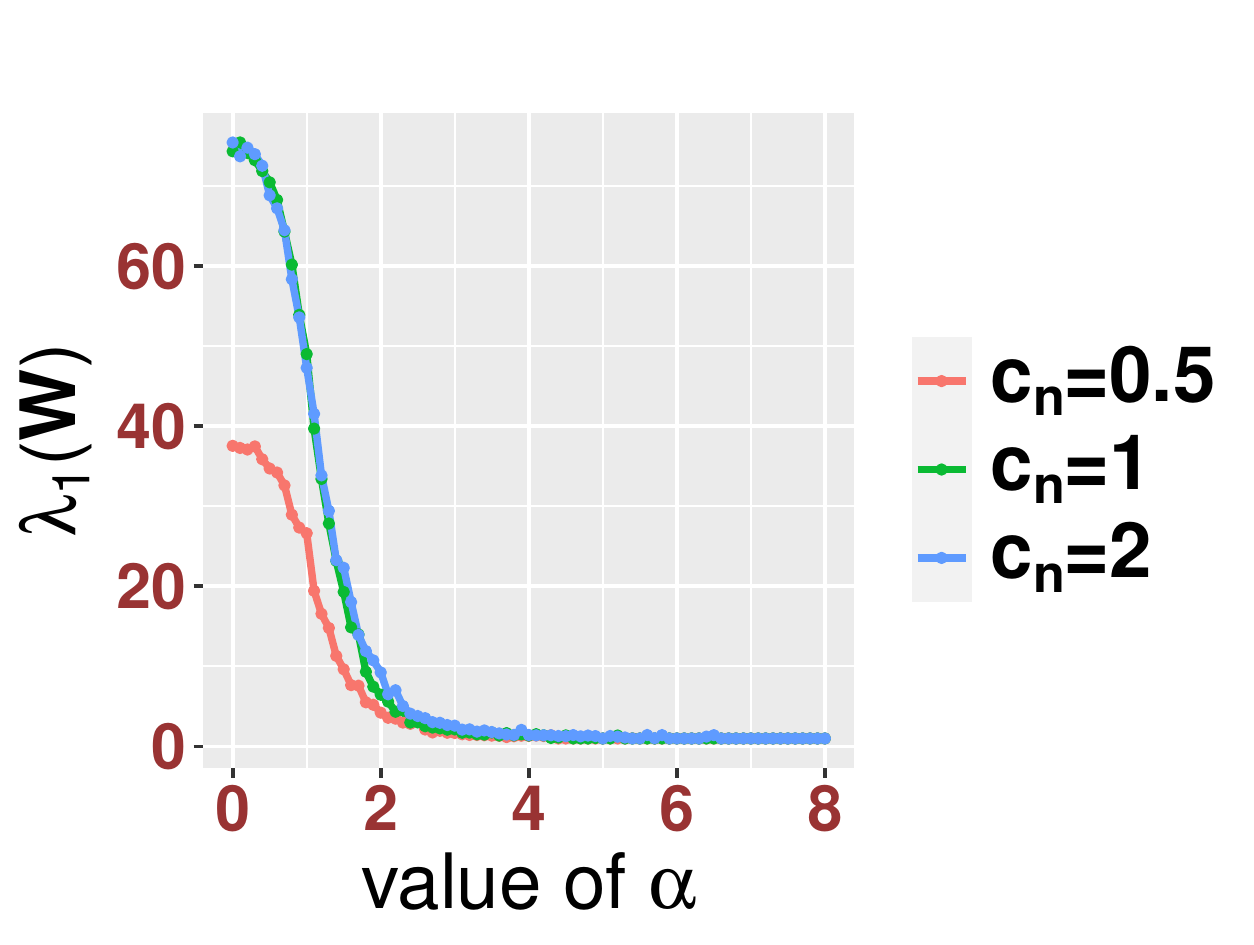}
		\includegraphics[width=4cm]{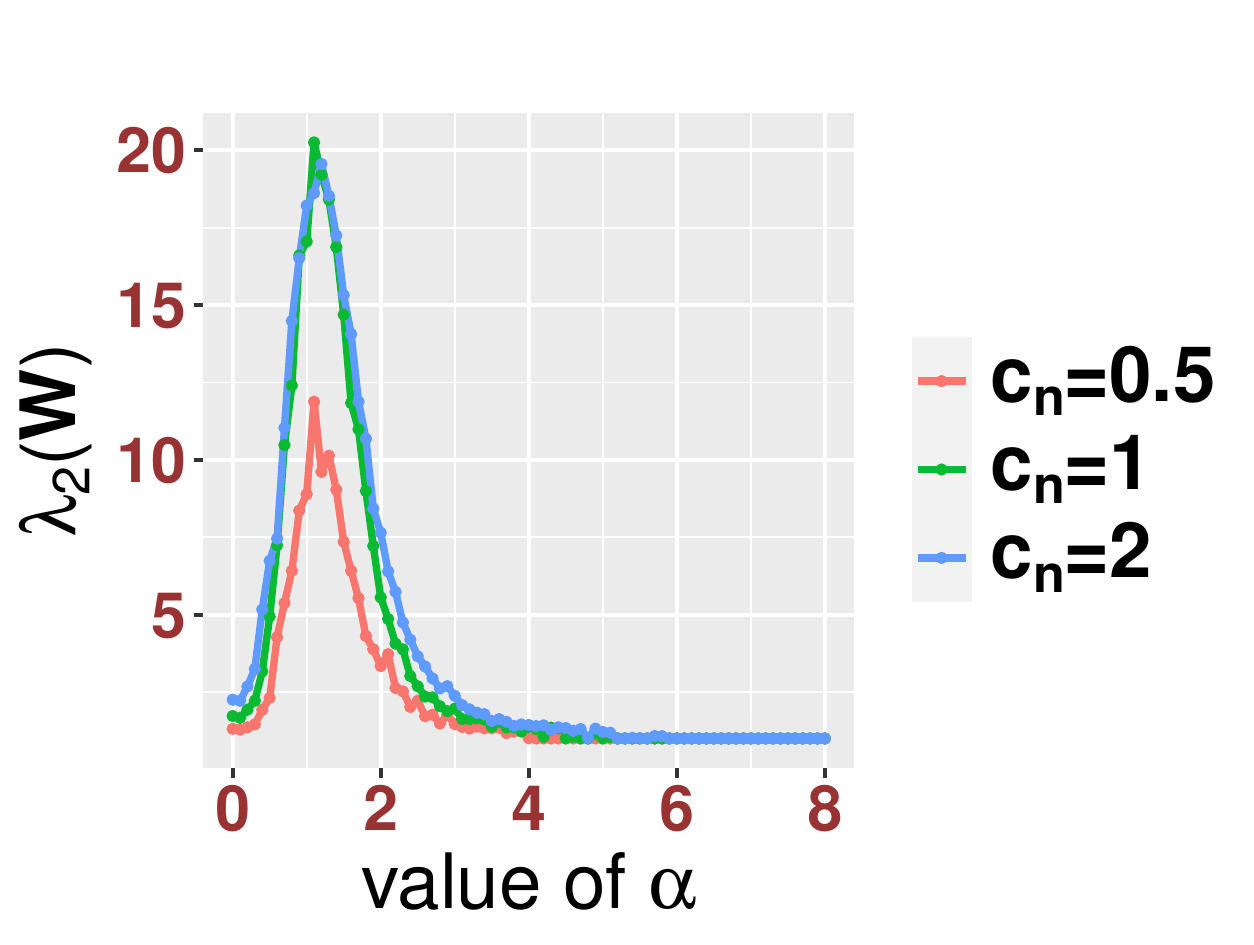}\\
			\includegraphics[width=4cm]{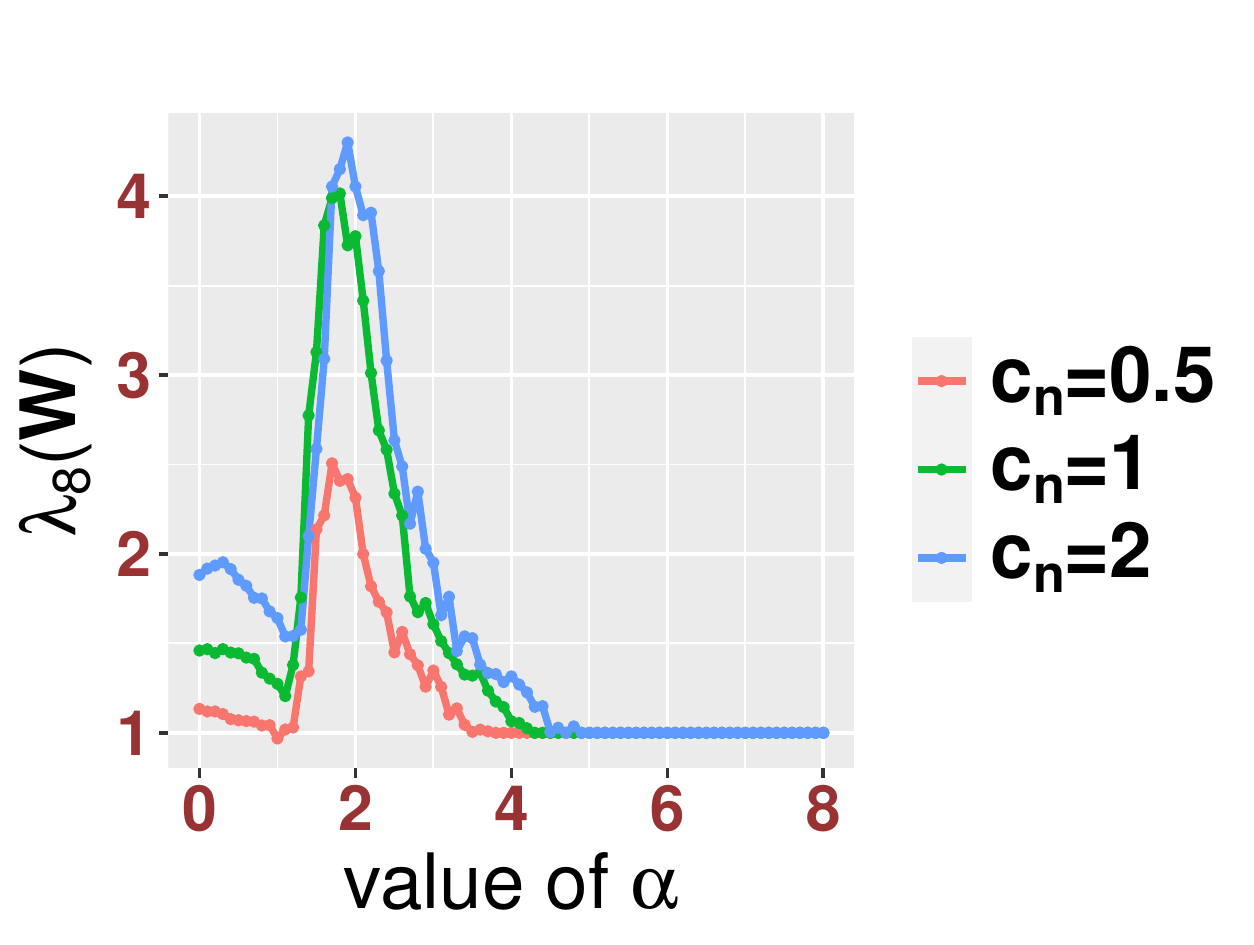}
		\includegraphics[width=4cm]{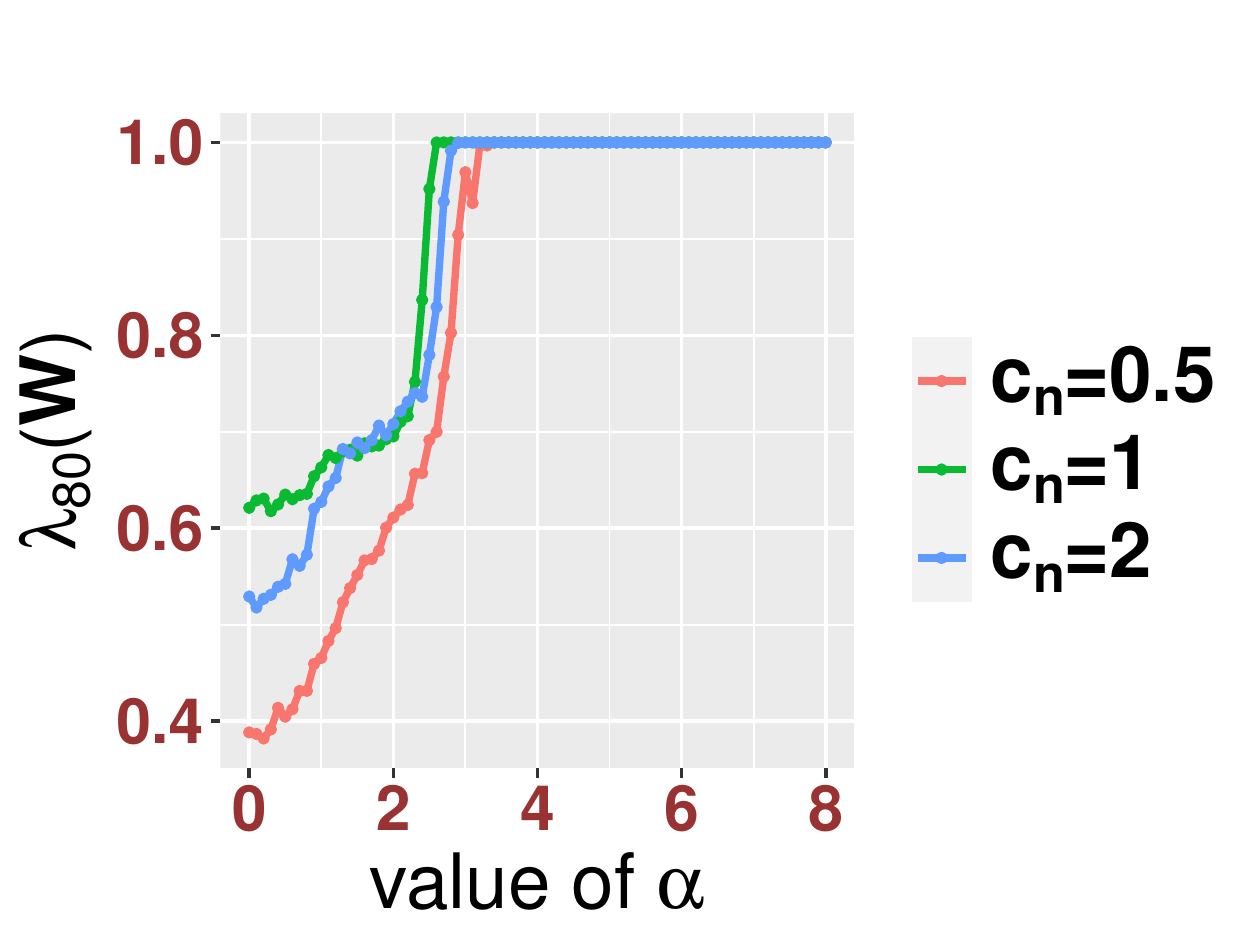}
	\caption{Eigenvalues of $\Wb$ for different $\alpha$ with the kernel $f(x)=\exp(-x/2)$ and bandwidth $h=p$. In this simulation,  we use the Gaussian random vectors with covariance matrix (\ref{eq_sigdefn}) with $\lambda=p^{\alpha}$ with different $c_n=n/p$ and  $n=200$. The top left panel corresponds to the outlying eigenvalue (i.e. $\lambda_1(\Wb)$) associated with $\mathrm{Sh}_0(\tau)$ in (\ref{eq_sho}), the top right panel stands for the outlying eigenvalue (i.e. $\lambda_2(\Wb)$) from the BBP transition effect, the bottom left panel is an eigenvalue (i.e. $\lambda_8(\Wb)$) which gradually detaches from the bulk spectrum and becomes an outlying eigenvalue when $\alpha$ increases. The bottom right panel is an eigenvalue sticking to the bulk spectrum  (i.e. $\lambda_{80}(\Wb)$).  In all cases, we see that the eigenvalues change when $\alpha$ increases.} \label{fig_2}
\end{figure*}

While our results pave the way towards a foundation for statistical inference of various kernel-based unsupervised learning algorithms for various data analysis purposes, like visualization, dimensional reduction, spectral clustering, etc, there are various problems we need to further study. First, as has been mentioned in Remark \ref{rmk_multipled}, there are some open problems; particularly, when $\alpha_i=1$ for all $i=1,\ldots,d$. Second, the behavior of eigenvectors of $\Ab$ from noisy dataset and its relationship with the eigenfunctions of the Laplace-Beltrami operator under the manifold model need to be explored so that the above-mentioned data analysis purposes can be justified under the manifold setup. Third, in practice, noise may have a fat tail, the kernel may not be Gaussian \cite{elkaroui20102,el2016graph} or the kernel might not be isotropic \cite{Wu_Wu:2017}, and the kernel function might be group-valued \cite{Singer_Wu:2012}. Fourth, the bandwidth selection algorithm is established under very nice assumptions, and its performance for real world databases needs further exploration. We will explore these problems in our future work.

\section*{Acknowledgment}
The authors would like to thank the associate editor and two anonymous reviewers for many insightful comments and
suggestions, which have resulted in a significant improvement of the paper.

\appendix

\section{Preliminary results}\label{section preliminary results}
In this section, we collect and prove some useful preliminary results, which will be used later in the technical proof.

\subsection{From manifold to spiked model}\label{sec_reducedproblem} 

In this subsection, we detail the claim in Section \ref{section:intro manifold model} and explain why the manifold model and (\ref{eq_modelsteptwo}) overlaps.  
Suppose $\{\zb^0_i\}$ are i.i.d. sampled from a $p$-dim random vector $Z$, and suppose the range of $Z$ is supported on a $m$-dimensional, connected, smooth and compact manifold $M$, where $m\geq 1$, isometrically embedded in $\mathbb{R}^p$ via $\iota$. Assume there exists $d\geq m$, where $d$ is independent of $n$ and $d\leq p$, so that the embedded $M$ is supported in a $d$-dim affine subspace of $\mathbb{R}^p$. Here we assume that $d$ is fixed. 
Since we consider the kernel distance matrix as in (\ref{eq_defnw}), without loss of generality, we can assume that $\mathbb{E}\zb^0_i=0$; that is, the embedded manifold is centered at $0$. Also, assume the density function on the manifold associated with the sampling scheme is smooth with a positive lower bound. See \cite{cheng2013local} for more detailed discussion of the manifold model and the relevant notion of density function. Suppose the noisy data is 
\begin{equation}\label{eq_reducedmodel0}
\xb_i=\sqrt{\lambda}\zb^0_i+\yb_i\,, 
\end{equation}
where $\lambda>0$ represents the signal strength, and $\yb_i$ is the independent noise that satisfies \eqref{eq_yassum1}. Denote $\zb_i:=\sqrt{\lambda}\zb^0_i$.
Thus, there exists a $p \times p$ orthogonal matrix $R_p$ such that 
\[
R_p \zb^0_i=(z^0_{i1}, z^0_{i2}, \cdots, z^0_{id},0, \cdots, 0)^\top\,.
\]
Since $M$ is compact, $z^0_{ij}$, $j=1,\ldots,d$, is bounded and hence sub-Gaussian with the variance controlled by $\mathcal K:=\max_{x,y\in M}\|\iota(x)-\iota(y)\|_{\mathbb{R}^p}$, which is independent of $p$ since the manifold is assumed to be fixed. On the other hand, since $M$ is connected, $z^0_{ij}$, $j=1,\ldots,d$, is a continuous random variable.
We can further choose another rotation $\bar{R}_p$ so that the first $d$ coordinates of $\zb^0_i$ are whitened; that is, $\bar{R}_p\zb^0_i$ has the covariance structure 
\[
\texttt{diag}(\lambda^0_1,\lambda^0_2,\ldots,\lambda^0_d,0,\ldots,0)\in \mathbb{R}^{p\times p}\,, 
\]
where by the assumption of the density function, we have $\lambda^0_i> 0$ for $i=1,\ldots, d$. Note that $\lambda^0_i$ is controlled by $\mathcal K$, and by the smoothness of the manifold, we could assume without loss of generality that $c\leq \lambda^0_i\leq 1/c$ for $c\in (0,1)$ for all $i=1,\ldots,d$. 
Then, multiply the noisy dataset in \eqref{eq_reducedmodel0} by $\bar R_p$ from the left and get 
\begin{equation}\label{eq_reducedmodel}
\bar{\xb}_i=\bar{\zb}_i+\bar{\yb}_i\,,
\end{equation}
where $\bar{\xb}_i:=\bar R_p \xb_i$, $\bar{\zb}_i:=\bar R_p \zb_i$ and $\bar{\yb}_i:=\bar R_p \yb_i$.
It is easy to see that since $\{\yb_i\}$ are isotropic (c.f. \eqref{eq_yassum1}), $\{\bar{\yb}_i\}$ are sub-Gaussian random vectors satisfying (\ref{eq_yassum1}). 
Thus, since $\bar{\zb}_i$ and $\bar{\yb}_i$ are still independent, the covariance of $\{\bar{\xb}_i\}$ becomes 
\[
\bar{\Sigma}_p:=\texttt{diag}(\lambda_1+1, \cdots, \lambda_d+1, 1, \cdots, 1)\in \mathbb{R}^{p\times p}\,,
\]
where $\lambda_l:=\lambda\lambda^0_l$ for $l=1,\ldots,d$. Note that in this case, $\lambda_i$ are of the same order.
By the above definitions and the invariance of the $\ell_2$ norm, we have
\begin{align*}
\| \bar{\xb}_i-\bar{\xb}_j  \| \,& =\| \xb_i-\xb_j \|\,, \\
\| \bar{\zb}_i-\bar{\zb}_j  \| \,& =\| \zb_i-\zb_j \|\,, \\
\| \bar{\yb}_i-\bar{\yb}_j  \| \,& =\| \yb_i-\yb_j \|\,,
\end{align*}
which means that the affinity matrices (\ref{eq_defnw}) and transition matrices (\ref{eq_singlematrix}) remain unchanged after applying the orthogonal matrix.
Thus, \eqref{eq_reducedmodel} is reduced to \eqref{eq_modelsteptwo} and it suffices to focus on model (\ref{eq_modelsteptwo}).

Under the above setting that a low dimensional manifold is embedded into an affine subspace with a  fixed dimension $d$, the nonlinear manifold model is thus closely related to the spiked covariance matrix model. Recall that according to Nash's isometric embedding theorem \cite{nash1956imbedding}, there exists an embedding so that $d$ is smaller than $m(3m+11)/2$, but there may exist embeddings so that the $d$ is higher than $m(3m+11)/2$. More complicated models might need $d$ to even diverge as $n\to \infty$. In these settings, the nonlinear manifold model will be reduced to other random matrix models, i.e., the divergent spiked model \cite{cai2020limiting} or divergent rank signal plus noise model \cite{dozier2007analysis,dozier2007empirical,ding2022edge,ding2022tracy}. We believe that the spectrum of the GL under these settings can also be investigated once the spectrum of these random matrix models can be well studied. Since this is not the focus of the current paper, we will pursue this direction in the future.

\subsection{Some linear algebra facts}

We record some linear algebraic results. The first one is for the Hadamard product from \cite{elkaroui2010}, Lemma A.5. 
\begin{lemma}\label{lem_hardamardproductbound}
Suppose $\Mb$ is a real symmetric matrix with nonnegative entries and $\Eb$ is a real symmetric matrix. Then we have that
\begin{equation*}
\sigma_1(\Mb \circ \Eb) \leq \max_{i,j}|\Eb(i,j)| \sigma_1(\Mb),
\end{equation*}
where $\sigma_1(\Mb)$ stands for the largest singular value of $\Mb.$ 
\end{lemma}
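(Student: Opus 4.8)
The plan is to reduce the operator-norm bound for the Hadamard product to a standard Schur-type argument. Write $\Nb := \Mb \circ \Eb$, which is again a real symmetric $n \times n$ matrix, so $\sigma_1(\Nb) = \max_{\|\vb\|_2 = 1} |\vb^\top \Nb \vb|$. Set $c := \max_{i,j} |\Eb(i,j)|$; the goal is to show $|\vb^\top \Nb \vb| \leq c\, \sigma_1(\Mb)$ for every unit vector $\vb$. The key observation is the classical fact that if $\Mb$ is symmetric with \emph{nonnegative} entries, then $\sigma_1(\Mb) = \lambda_1(\Mb)$ equals the largest eigenvalue, and by Perron--Frobenius (or by a direct argument) there is a nonnegative eigenvector $\ub \geq 0$, $\|\ub\|_2 = 1$, with $\Mb \ub = \sigma_1(\Mb) \ub$, so in particular $\ub^\top \Mb \ub = \sigma_1(\Mb)$.

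First I would expand the quadratic form entrywise:
\begin{equation*}
|\vb^\top \Nb \vb| = \left| \sum_{i,j} \Mb(i,j)\,\Eb(i,j)\, \vb_i \vb_j \right| \leq \sum_{i,j} \Mb(i,j)\, |\Eb(i,j)|\, |\vb_i|\, |\vb_j| \leq c \sum_{i,j} \Mb(i,j)\, |\vb_i|\, |\vb_j|,
\end{equation*}
where the first inequality uses $\Mb(i,j) \geq 0$ and the triangle inequality, and the second uses the bound on $|\Eb(i,j)|$. Now let $|\vb|$ denote the vector with entries $|\vb_i|$; then $\|\,|\vb|\,\|_2 = \|\vb\|_2 = 1$ and the last sum is exactly $|\vb|^\top \Mb\, |\vb|$. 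Hence
\begin{equation*}
|\vb^\top \Nb \vb| \leq c\, |\vb|^\top \Mb\, |\vb| \leq c\, \sigma_1(\Mb),
\end{equation*}
the final step because $|\vb|$ is a unit vector and $\Mb$ is symmetric, so $|\vb|^\top \Mb |\vb| \leq \lambda_1(\Mb) = \sigma_1(\Mb)$ (using nonnegativity of the entries to identify the top singular value with the top eigenvalue, or more simply $|\vb|^\top \Mb |\vb| \leq |\,|\vb|^\top \Mb |\vb|\,| \leq \sigma_1(\Mb)$). Taking the supremum over unit $\vb$ gives $\sigma_1(\Nb) \leq c\, \sigma_1(\Mb)$, which is the claim.

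The only subtle point — and the one I would be careful to state precisely — is the identification $\sigma_1(\Mb) = \lambda_1(\Mb)$ for a symmetric matrix with nonnegative entries, or more precisely the inequality $|\vb|^\top \Mb |\vb| \le \sigma_1(\Mb)$ for a unit vector; this is immediate since for symmetric $\Mb$ we have $\sigma_1(\Mb) = \max_{\|\wb\|_2=1}|\wb^\top \Mb \wb|$, so no real obstacle arises. Everything else is a routine entrywise estimate. In fact the argument does not even use symmetry of $\Mb$ in an essential way beyond this variational characterization; the nonnegativity of $\Mb$'s entries is what makes the triangle inequality inside the quadratic form tight enough to pull out the factor $c$. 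I would present it essentially as above, citing the variational formula for $\sigma_1$ of a symmetric matrix and invoking nonnegativity of $\Mb$ at the single place it is needed.
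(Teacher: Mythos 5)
Your argument is correct: since $\Mb\circ\Eb$ is symmetric, the variational identity $\sigma_1(\Mb\circ\Eb)=\max_{\|\vb\|_2=1}|\vb^\top(\Mb\circ\Eb)\vb|$ together with the entrywise estimate $|\vb^\top(\Mb\circ\Eb)\vb|\leq \max_{i,j}|\Eb(i,j)|\,|\vb|^\top\Mb|\vb|\leq \max_{i,j}|\Eb(i,j)|\,\sigma_1(\Mb)$ gives the claim, and this is exactly the standard proof of the result the paper simply cites (\cite{elkaroui2010}, Lemma A.5) without reproving. The only cosmetic point is that the Perron--Frobenius detour is unnecessary, as you note yourself: the bound $|\vb|^\top\Mb|\vb|\leq\sigma_1(\Mb)$ holds for any unit vector without identifying $\sigma_1(\Mb)$ with a Perron eigenvalue.
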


The following lemma is commonly referred to as the {\em Gershgorin circle theorem}, and its proof can be found in \cite[Section 6.1]{2012matrix}. 

\begin{lemma}\label{lem_circletheorem}

Let  $A=(a_{ij})$ be a real $ n\times n$ matrix. For  $1 \leq i \leq n,$ let  $R_{i}=\sum _{{j\neq {i}}}\left|a_{{ij}}\right| $ be the sum of the absolute values of the non-diagonal entries in the  $i$-th row. Let  $ D(a_{ii},R_{i})\subseteq \mathbb {R} $ be a closed disc with center $a_{ii}$ and radius  $R_{i}$ referred as the \emph{Gershgorin disc}. Every eigenvalue of $ A=(a_{ij})$ lies within at least one of the Gershgorin discs  $D(a_{ii},R_{i})$, where $R_i=\sum_{j\ne i}|a_{ij}|$.
\end{lemma}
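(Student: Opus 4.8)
\textbf{Proof proposal for the Gershgorin circle theorem (Lemma~\ref{lem_circletheorem}).}

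The plan is to give the standard one-line argument: assume $\lambda$ is an eigenvalue with eigenvector $v\neq 0$, locate the coordinate of $v$ of maximal modulus, and read off the inequality from that row of the equation $Av=\lambda v$. First I would fix an eigenpair $(\lambda,v)$ with $Av=\lambda v$ and $v\neq 0$, and pick an index $i$ with $|v_i|=\max_{1\le k\le n}|v_k|>0$; after rescaling we may assume $|v_i|=1$ and $|v_k|\le 1$ for all $k$. Writing out the $i$-th component of $Av=\lambda v$ gives $\sum_{j=1}^n a_{ij}v_j=\lambda v_i$, hence
\begin{equation*}
(\lambda-a_{ii})v_i=\sum_{j\neq i}a_{ij}v_j.
\end{equation*}

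Taking absolute values on both sides and using $|v_i|=1$ together with $|v_j|\le 1$ yields
\begin{equation*}
|\lambda-a_{ii}|=\Bigl|\sum_{j\neq i}a_{ij}v_j\Bigr|\le\sum_{j\neq i}|a_{ij}|\,|v_j|\le\sum_{j\neq i}|a_{ij}|=R_i,
\end{equation*}
which is exactly the statement that $\lambda\in D(a_{ii},R_i)$. Since $\lambda$ was an arbitrary eigenvalue, every eigenvalue lies in at least one Gershgorin disc, namely the one indexed by a coordinate where the associated eigenvector attains its maximal modulus.

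There is no real obstacle here: the only point that needs a moment's care is the choice of the maximizing index $i$ and the normalization $|v_i|=1$, which is what makes the bound $|v_j|\le 1$ available and allows the division by $v_i$ to be harmless. (For completeness one could note that this also shows $\lambda$ lies in the union of discs over the support of any such maximizing coordinate, but the stated form of the lemma only asks for membership in some single disc.) Since this is a textbook fact, I would simply cite \cite[Section 6.1]{2012matrix} as in the statement and, if desired, include the three displayed lines above as a self-contained reminder.
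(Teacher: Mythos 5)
Your argument is the standard maximal-coordinate proof of the Gershgorin circle theorem and it is correct; the paper itself offers no proof, simply citing \cite[Section 6.1]{2012matrix}, which contains exactly this argument. Nothing further is needed.
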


We also collect some important matrix inequalities. For details, we refer readers to \cite[Lemma SI.1.9]{DW1}
\begin{lemma}\label{lem_collectioninequality} For two $n \times n$ symmetric matrices  $\Ab$ and $\Bb,$ we have that
\begin{equation*}
\sum_{i=1}^n |\lambda_i(\Ab)-\lambda_i(\Bb)|^2 \leq \operatorname{tr} \{(\Ab-\Bb)^2\}. 
\end{equation*}
Moreover, let $m_{\Ab}(z)$ and $m_{\Bb}(z)$ be the Stieltjes transforms of the ESDs of $\Ab$ and $\Bb$ respectively, then we have 
\begin{equation*}
|m_{\Ab}(z)-m_{\Bb}(z)| \leq \frac{\operatorname{rank}\{\Ab-\Bb\}}{n} \min \left\{ \frac{2}{\eta}, \frac{\norm{\Ab-\Bb}}{\eta^2} \right\}.
\end{equation*}
\end{lemma}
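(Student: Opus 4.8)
The plan is to prove the two assertions separately: the first is the Hoffman--Wielandt inequality, and the second is a standard rank-perturbation estimate for Stieltjes transforms obtained from the resolvent identity.

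For the first inequality, I would begin by expanding both sides as traces of powers. Since $\Ab$ and $\Bb$ are symmetric, $\operatorname{tr}(\Ab^2)=\sum_i\lambda_i(\Ab)^2$ and $\operatorname{tr}(\Bb^2)=\sum_i\lambda_i(\Bb)^2$, so that
\[
\operatorname{tr}\{(\Ab-\Bb)^2\}=\sum_{i=1}^n\lambda_i(\Ab)^2+\sum_{i=1}^n\lambda_i(\Bb)^2-2\operatorname{tr}(\Ab\Bb),
\]
whereas the left-hand side, with the eigenvalues of each matrix sorted in decreasing order as in our convention, equals the same expression with $-2\operatorname{tr}(\Ab\Bb)$ replaced by $-2\sum_i\lambda_i(\Ab)\lambda_i(\Bb)$. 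Hence the claim reduces to the trace inequality $\operatorname{tr}(\Ab\Bb)\le\sum_i\lambda_i(\Ab)\lambda_i(\Bb)$. To prove this, diagonalize $\Ab=U\Lambda_A U^\top$ and $\Bb=V\Lambda_B V^\top$, set $O:=U^\top V$, and write $\operatorname{tr}(\Ab\Bb)=\sum_{i,j}\lambda_i(\Ab)\lambda_j(\Bb)\,O_{ij}^2$. The matrix $S$ with entries $S_{ij}=O_{ij}^2$ is doubly stochastic, so by the Birkhoff--von Neumann theorem the supremum of $\sum_{i,j}\lambda_i(\Ab)\lambda_j(\Bb)S_{ij}$ over doubly stochastic $S$ is attained at a permutation matrix, and the rearrangement inequality shows that it is maximized by the identity permutation because both eigenvalue sequences are sorted decreasingly. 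This gives the required bound, and the Hoffman--Wielandt inequality follows by substitution.

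For the second inequality, write $m_{\Ab}(z)=\frac1n\operatorname{tr}(\Ab-z)^{-1}$ and use the resolvent identity $(\Ab-z)^{-1}-(\Bb-z)^{-1}=(\Ab-z)^{-1}(\Bb-\Ab)(\Bb-z)^{-1}$, so that
\[
m_{\Ab}(z)-m_{\Bb}(z)=\frac1n\operatorname{tr}\big[(\Ab-z)^{-1}(\Bb-\Ab)(\Bb-z)^{-1}\big].
\]
The matrix inside the trace has rank at most $r:=\operatorname{rank}(\Ab-\Bb)$, and any rank-$r$ matrix $M$ satisfies $|\operatorname{tr} M|\le r\|M\|$ (its trace is bounded by the sum of its singular values, of which at most $r$ are nonzero). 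Bounding $\|M\|$ in two ways then yields the claim: using $\|(\Ab-z)^{-1}\|,\|(\Bb-z)^{-1}\|\le 1/\eta$ with $\eta=\operatorname{Im} z$ together with $\|\Bb-\Ab\|$ gives $\|M\|\le\|\Ab-\Bb\|/\eta^2$, while noting that $(\Ab-z)^{-1}-(\Bb-z)^{-1}$ itself has rank at most $r$ and operator norm at most $2/\eta$ gives the alternative bound $r/(n\eta)\cdot 2$. Taking the smaller of the two produces $|m_{\Ab}(z)-m_{\Bb}(z)|\le\frac{r}{n}\min\{2/\eta,\ \|\Ab-\Bb\|/\eta^2\}$.

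The main obstacle is the trace inequality $\operatorname{tr}(\Ab\Bb)\le\sum_i\lambda_i(\Ab)\lambda_i(\Bb)$ underlying the first part: although the Birkhoff--von Neumann plus rearrangement argument is routine, one must be careful that it is precisely maximization over the whole Birkhoff polytope (rather than merely over permutations) that applies to the doubly stochastic matrix $S_{ij}=O_{ij}^2$, and that the decreasing-order sorting convention is what makes the identity permutation optimal. The remaining steps are direct computations with resolvents.
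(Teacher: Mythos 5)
Your proposal is correct. The paper itself does not prove this lemma at all---it simply cites \cite[Lemma SI.1.9]{DW1}, which in turn collects two classical facts---so your self-contained argument is exactly the standard route one would find behind that citation: the Hoffman--Wielandt inequality reduced to the trace bound $\operatorname{tr}(\Ab\Bb)\le\sum_i\lambda_i(\Ab)\lambda_i(\Bb)$ via the doubly stochastic matrix $S_{ij}=O_{ij}^2$, the Birkhoff--von Neumann theorem and the rearrangement inequality; and the Stieltjes-transform bound via the resolvent identity, the observation that $(\Ab-z)^{-1}(\Bb-\Ab)(\Bb-z)^{-1}$ has rank at most $\operatorname{rank}(\Ab-\Bb)$, and the estimate $|\operatorname{tr} M|\le r\|M\|$ for a rank-$r$ matrix applied with the two choices of norm bound ($2/\eta$ for the resolvent difference itself, $\|\Ab-\Bb\|/\eta^2$ through the product). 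Both halves are carried out correctly, including the subtle points you flag (maximization over the full Birkhoff polytope, the sorting convention, and $\eta=\operatorname{Im}z$), so nothing is missing.
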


\subsection{Some concentration inequalities for sub-Gaussian random vectors}\label{sec_auxi}
We record some auxiliary lemmas for our technical proof. We start with the concentration inequalities for the sub-Gaussian random vector $\yb$ that satisfies 
\begin{equation*}
\mathbb{E}(\exp( \ab^\top \yb)) \leq \exp(\| \ab \|_2^2/2).
\end{equation*}
The first lemma establishes the concentration inequalities when $\lambda$ is bounded.
\begin{lemma}\label{lem_concentrationinequality}
Suppose (\ref{eq_modelsteptwo})-(\ref{eq_sigdefn}) hold. Assume $d\geq 1$ is fixed, $\lambda_l\asymp 1$ for $l=1,\ldots,d$ and write
\begin{equation}\label{notationalconvention}
\zb_i=[\sqrt{\lambda_1} z_{i1}, \ \sqrt{\lambda_2} z_{i2}, \ldots, \sqrt{\lambda_d} z_{id}, \ 0\ldots 0]
\end{equation}
for all $1 \leq i \leq n$, where $\var(z_{il})=1$ for all $l=1,\ldots,d$.
Then, for $i \neq j$ and $t>0,$ we have  
\begin{equation}\label{eq_offcontrol}
\mathbb{P}\left( \left|\frac{1}{p}\yb_i^\top \yb_j\right|> t \right) \leq \exp(-p t^2 /2)\,, 
\end{equation}
as well as 
\begin{equation}\label{eq_offcontrolx}
\mathbb{P}\left( \left|\frac{1}{p}\xb_i^\top \xb_j-\frac{1}{p} \zb_i^\top  \zb_j\right|> t \right) \leq \exp(-p t^2 /2)\,. 
\end{equation} 
For the diagonal terms, for $t>0,$ we have for some universal constants $C, C_1>0,$
\begin{align}\label{eq_diagonal}
 \mathbb{P} & \left( \left| \frac{1}{p} \| \yb_i \|_2^2-1 \right| >t\right) \leq 
\begin{cases}
2 \exp(-C_1 p t^2), & 0<t \leq C \\
2 \exp(-C_1 pt), & t >C\,,
\end{cases}
\end{align}
as well as 
\begin{align}\label{eq_diagona2}
\mathbb{P} & \left( \left| \frac{1}{p} \| \xb_i \|_2^2-\frac{1}{p} \| \zb_i \|_2^2 \right|>t \right) \leq 
\begin{cases}
2 \exp(-C_1 p t^2), & 0<t \leq C \\
2 \exp(-C_1 pt), & t >C. 
\end{cases}
\end{align} 
Especially, the above results imply that 
\begin{align}\label{eq_finalcontrolimplication1}
&\frac{1}{p}\left| \yb_i^\top \yb_j \right| \prec n^{-1/2}, \nonumber \\
& \frac{1}{p}\left| \xb_i^\top \xb_j\right| \prec n^{-1/2}\,, 
\end{align}
as well as 
\begin{align}\label{eq_finalcontrolimplication2}
\left| \frac{1}{p} \| \yb_i \|_2^2-1 \right| \,&\prec n^{-1/2} , \nonumber \\
 \left| \frac{1}{p} \| \xb_i \|_2^2- \left(1+\frac{\sum_{l=1}^d\lambda_l}{p}\right)\right| \,&\prec n^{-1/2}. 
\end{align} 
Note that since $p$ and $n$ are of the same order, the above results hold when $\yb_i$ is replaced by the vector $\bm{z}:=[z_1,\ldots,z_n]^\top\in \mathbb{R}^n$, which is a sub-Gaussian random vector.
\end{lemma}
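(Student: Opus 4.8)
The plan is to establish the concentration inequalities by combining standard sub-Gaussian/sub-exponential tail bounds with the Hanson–Wright inequality, then deduce the stochastic domination statements by unwinding the definition. First I would prove the off-diagonal bounds. For fixed $i \neq j$, condition on $\yb_j$; then $\frac{1}{p}\yb_i^\top \yb_j = \frac{1}{p}\sum_{k=1}^p \yb_i(k)\yb_j(k)$ is, conditionally, a sum of independent mean-zero sub-Gaussian random variables with sub-Gaussian norm controlled by $\frac{1}{p}\|\yb_j\|_2$, so the general Hoeffding bound gives $\mathbb{P}(|\frac1p \yb_i^\top\yb_j| > t \mid \yb_j) \le 2\exp(-cpt^2/\|\yb_j\|_2^2\cdot p)$; combining with the diagonal concentration $\frac1p\|\yb_j\|_2^2 \le 2$ on a high-probability event, or more cleanly using a direct Bernstein/sub-exponential argument on the product, yields \eqref{eq_offcontrol}. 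The bound \eqref{eq_offcontrolx} then follows by writing $\xb_i^\top\xb_j - \zb_i^\top\zb_j = \zb_i^\top\yb_j + \yb_i^\top\zb_j + \yb_i^\top\yb_j$ and noting each term is a sub-Gaussian quadratic/bilinear form in independent sub-Gaussian vectors with the correct scaling (the $\zb$-components are bounded coordinates, so $\zb_i^\top\yb_j/p$ is sub-Gaussian with variance proxy $O(1/p)$), then a union bound over the (constantly many) pieces.

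For the diagonal terms, \eqref{eq_diagonal} is exactly the two-sided Hanson–Wright / Bernstein inequality for $\frac1p(\|\yb_i\|_2^2 - \mathbb{E}\|\yb_i\|_2^2) = \frac1p\sum_k (\yb_i(k)^2 - 1)$, a sum of independent centered sub-exponential random variables, giving the Gaussian regime for small $t$ and the exponential regime for large $t$. Then \eqref{eq_diagona2} follows from $\|\xb_i\|_2^2 - \|\zb_i\|_2^2 = 2\zb_i^\top\yb_i + \|\yb_i\|_2^2$, where $\frac1p\zb_i^\top\yb_i$ is sub-Gaussian with variance proxy $O(\sum_l \lambda_l/p^2) = O(1/p)$ (using $\lambda_l \asymp 1$), so its contribution is dominated by the $\|\yb_i\|_2^2$ term and \eqref{eq_diagonal} transfers directly. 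Finally, the stochastic-domination consequences \eqref{eq_finalcontrolimplication1}–\eqref{eq_finalcontrolimplication2} are obtained by setting $t = n^{\upsilon}/\sqrt{n}$ in the above (using $p \asymp n$ from \eqref{eq_ratio}): for any fixed $\upsilon > 0$ and $D > 0$ the probabilities become $\le \exp(-c n^{2\upsilon}/2) \le n^{-D}$ for large $n$, which is precisely Definition \ref{defn_stochasdomi}; for the second line of \eqref{eq_finalcontrolimplication2} one notes $\mathbb{E}\frac1p\|\xb_i\|_2^2 = 1 + \frac{\sum_l \lambda_l}{p}$. The last remark about replacing $\yb_i$ by a generic sub-Gaussian vector $\bm z \in \mathbb{R}^n$ is immediate since none of the arguments used anything beyond independence of coordinates, sub-Gaussianity, and $n \asymp p$.

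I do not expect a genuine obstacle here; the statement is a packaging of classical concentration results. The only mild care needed is bookkeeping: keeping track of which scaling ($1/p$ versus $1/p^2$) applies to each bilinear term $\zb^\top\yb$, $\yb^\top\yb$, $\zb^\top\zb$ after the cross-term expansions, and making sure the $\lambda_l \asymp 1$ hypothesis is used to absorb the signal cross terms into the error. The closest thing to a subtle point is that for the product $\frac1p\yb_i^\top\yb_j$ with both vectors random, one should not simply invoke a scalar sub-Gaussian bound; the clean route is to treat $[\yb_i; \yb_j] \in \mathbb{R}^{2p}$ as one sub-Gaussian vector and apply Hanson–Wright to the quadratic form with the block off-diagonal matrix $\frac{1}{2p}\begin{bmatrix}0 & I_p\\ I_p & 0\end{bmatrix}$, whose Frobenius norm is $\asymp p^{-1/2}$ and operator norm is $\asymp p^{-1}$, reproducing \eqref{eq_offcontrol}.
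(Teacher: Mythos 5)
Your proposal is correct and follows essentially the same route as the paper: the paper simply cites \cite[Lemma A.2]{DW1} for \eqref{eq_offcontrol} and \cite[Corollary 2.8.3]{vershynin2018high} for \eqref{eq_diagonal}, and then uses exactly your cross-term decomposition $\xb_i^\top\xb_j-\zb_i^\top \zb_j=\yb_{i}^\top \yb_{j}+\sum_{l=1}^d\sqrt{\lambda_l} (z_{il}y_{jl}+z_{jl}y_{il})$ together with $\lambda_l\asymp 1$, $d$ fixed and $p\asymp n$ to obtain \eqref{eq_offcontrolx}, \eqref{eq_diagona2} and the stochastic-domination statements by taking $t\asymp n^{\upsilon-1/2}$. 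The only cosmetic difference is that your Hanson--Wright/Bernstein derivation gives the tail $\exp(-c\min(pt^2,pt))$ rather than the literal $\exp(-pt^2/2)$ in the large-$t$ regime, which is immaterial here since only the small-$t$ regime enters \eqref{eq_finalcontrolimplication1}--\eqref{eq_finalcontrolimplication2}.
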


\begin{proof}
We adapt (\ref{notationalconvention}) in the proof. When $i \neq j,$ (\ref{eq_offcontrol}) has been proved in \cite[Lemma A.2]{DW1}.  Observe that
\begin{equation}\label{eq_differenceexpansion}
 \xb_i^\top\xb_j-\zb_i^\top \zb_j=\yb_{i}^\top \yb_{j}+\sum_{l=1}^d\sqrt{\lambda_l} (z_{il}y_{jl}+z_{jl}y_{il}).
\end{equation}
Since $\lambda\asymp 1$ and $d$ is fixed, we find that $\yb_{i}^\top \yb_{j}$ is the leading order term. (\ref{eq_offcontrolx}) follows from (\ref{eq_offcontrol}) and (\ref{eq_differenceexpansion}).
When $i=j$,  (\ref{eq_diagonal}) has been proved in \cite[Corollary 2.8.3]{vershynin2018high}. (\ref{eq_diagona2}) follows from (\ref{eq_differenceexpansion}) and (\ref{eq_diagonal}).

(\ref{eq_finalcontrolimplication1}) ((\ref{eq_finalcontrolimplication2}) respectively) follows from (\ref{eq_offcontrol}) and (\ref{eq_offcontrolx}) ((\ref{eq_diagonal}) and (\ref{eq_diagona2}) respectively) for scalar random variables and the fact that $\lambda\asymp 1.$
\end{proof}

Then we provide the concentration inequalities when $\lambda$ is in the slowly divergent region. Indeed, in this region, the results of the diagonal parts of Lemma \ref{lem_concentrationinequality} still apply. 

\begin{lemma}\label{lem_concentrationslowlydivergent} 
Suppose (\ref{eq_modelsteptwo})-(\ref{eq_sigdefn}) 
hold. Assume $d \geq 1$ is fixed, $\lambda_l=n^{\alpha_l}$ with $0<\alpha_l<1$ for all $1 \leq l \leq d$ and recall (\ref{notationalconvention}).
Then when $i \neq j$, we have  
\begin{align}\label{eq_finalcontrolimplicationdivergent1}
\frac{1}{p}\left| \xb_i^\top \xb_j\right| &\,\prec \frac{\sum_{l=1}^d \lambda_l }{n}+\frac{1}{\sqrt{n}},  \nonumber \\
\left| \frac{1}{p} \| \xb_i \|_2^2- \left(1+\frac{\sum_{l=1}^d \lambda_l}{p}\right)\right| &\, \prec \frac{\sum_{l=1}^d \lambda_l}{n}+\frac{1}{\sqrt{n}}.  
\end{align} 
\end{lemma}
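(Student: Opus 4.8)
The plan is to mirror the expansion \eqref{eq_differenceexpansion} from the proof of Lemma \ref{lem_concentrationinequality}, but now track carefully the three groups of terms on the right-hand side, since $\lambda_l = n^{\alpha_l}$ diverges and no single term is automatically dominant. Write $\xb_i^\top \xb_j - \zb_i^\top \zb_j = \yb_i^\top \yb_j + \sum_{l=1}^d \sqrt{\lambda_l}\,(z_{il} y_{jl} + z_{jl} y_{il})$, and also note that for $i\neq j$ we have $\zb_i^\top \zb_j = \sum_{l=1}^d \lambda_l z_{il} z_{jl}$ while $\|\zb_i\|_2^2 = \sum_{l=1}^d \lambda_l z_{il}^2$. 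First I would bound $\frac1p|\yb_i^\top\yb_j|\prec n^{-1/2}$ by \eqref{eq_offcontrol} (equivalently \eqref{eq_finalcontrolimplication1}). Next, for each cross term $\frac1p\sqrt{\lambda_l} z_{il} y_{jl}$, conditionally on $z_{il}$ this is a sub-Gaussian scalar with parameter $O(\sqrt{\lambda_l}\,|z_{il}|/p)$; since $|z_{il}|\prec 1$ (the $z_{il}$ are sub-Gaussian with variance $1$) and $p\asymp n$, we get $\frac1p\sqrt{\lambda_l}\,|z_{il} y_{jl}| \prec \sqrt{\lambda_l}/n \le (\sum_l \lambda_l)^{1/2}/n \le \max\{\sum_l\lambda_l/n,\, n^{-1/2}\}$ after using $\sqrt{ab}\le \max\{a,b\}$ with $a=\sum_l\lambda_l/n$, $b=n^{-1/2}$, or more simply noting $\sqrt{\lambda_l}/n \le \sqrt{\lambda_l/n}\cdot n^{-1/2}$. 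Finally the term $\frac1p|\zb_i^\top\zb_j| = \frac1p|\sum_l \lambda_l z_{il}z_{jl}| \prec \sum_l \lambda_l / p \asymp \sum_l\lambda_l/n$ by a union bound over the $d=O(1)$ coordinates together with $|z_{il}z_{jl}|\prec 1$. Combining the three contributions via the triangle inequality and the elementary properties of $\prec$ gives the first bound in \eqref{eq_finalcontrolimplicationdivergent1}.

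For the diagonal statement, I would write $\frac1p\|\xb_i\|_2^2 - (1 + \frac1p\sum_l\lambda_l) = \big(\frac1p\|\yb_i\|_2^2 - 1\big) + \frac1p\sum_l\sqrt{\lambda_l}\,(2 z_{il} y_{il}) + \frac1p\sum_l\lambda_l(z_{il}^2 - 1)$. The first bracket is $\prec n^{-1/2}$ by \eqref{eq_diagonal}/\eqref{eq_finalcontrolimplication2}, which survives unchanged in the slowly divergent region as remarked in the text. Each cross term $\frac1p\sqrt{\lambda_l}\,z_{il}y_{il}$ is handled exactly as above, contributing $\prec \sqrt{\lambda_l}/n$. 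Each term $\frac1p\lambda_l(z_{il}^2 - 1)$ is $\prec \lambda_l/p \asymp \lambda_l/n$ because $z_{il}^2 - 1$ has zero mean and is sub-exponential (sub-Gaussian squared), so $|z_{il}^2-1|\prec 1$. Summing over $l=1,\dots,d$ (a fixed number of terms) and applying the triangle inequality for $\prec$ yields $\prec \sum_l\lambda_l/n + n^{-1/2}$, which is the second bound.

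The only mild subtlety — what I'd flag as the ``main obstacle,'' though it is quite minor here — is justifying the conditional sub-Gaussian bounds for the cross terms uniformly, i.e.\ getting a genuine $\prec$ statement rather than a bound that holds only on the event $\{|z_{il}|\le n^{\epsilon}\}$. This is routine: intersect the high-probability event $\{|z_{il}|\prec 1\}$ (valid for all $i,l$ simultaneously by a union bound over $nd$ sub-Gaussian scalars) with the conditional concentration event for $y_{jl}$ given $z_{il}$, and use that $\mathbb P(\cdot) \le \mathbb P(\cdot \mid \text{good event}) + n^{-D}$. Since $d$ is fixed and $p\asymp n$, no term blows up, and the stated rate $\frac{\sum_l\lambda_l}{n} + \frac{1}{\sqrt n}$ (which simply records ``largest of the three contribution scales'') follows. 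Note that the cross-term scale $\sqrt{\lambda_l}/n$ never dominates: $\sqrt{\lambda_l}/n \le \max\{\lambda_l/n,\, 1/\sqrt n\}$ since $\sqrt{\lambda_l}/n\le\lambda_l/n$ when $\lambda_l\ge 1$, hence it is absorbed into the claimed bound, and when $\lambda_l<1$ it is $\le 1/n\le 1/\sqrt n$.
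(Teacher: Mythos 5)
Your proposal is correct and follows essentially the same route as the paper: both rest on the decomposition of $\xb_i^\top\xb_j$ and $\|\xb_i\|_2^2$ into noise, cross, and pure-signal parts, bound each scalar sub-Gaussian (or sub-exponential) quantity by $O_\prec(1)$ times its natural scale, and observe that the cross-term scale $\sqrt{\lambda_l}/n$ is dominated by $\sum_l\lambda_l/n+n^{-1/2}$. The only cosmetic difference is that the paper packages the signal contribution via a sandwich with $\lfloor\lambda_l\rfloor$ before invoking the same concentration bounds, whereas you use a direct triangle inequality, which is equivalent (and arguably cleaner).
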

\begin{proof}
We only discuss the second term and the first item can be dealt with in a similar way. We define $\lambda_{fl}=\lfloor \lambda_l \rfloor$  as the floor of $\lambda$. We again adapt (\ref{notationalconvention}) in the proof. Note that using (\ref{eq_differenceexpansion}) we have
\begin{align}\label{eq_boundlambdadigerventone}
\sum_{l=1}^d \frac{\lambda_{fl}}{p} z_{il}^2 & +\frac{1}{p}\sum_{j=1}^p \yb_{ij}^2  \leq \frac{1}{p}\| \xb_i \|_2^2-\sum_{l=1}^d\frac{2\sqrt{\lambda_l}}{p} z_{il} \yb_{il}  \nonumber \\
& \leq \sum_{l=1}^d \frac{\lambda_{fl}+1}{p}z_{il}^2 +\frac{1}{p}\sum_{j=1}^p \yb_{ij}^2.
\end{align}
We study the upper bound of the sandwich inequality, and the lower bound follows by the same argument. On one hand, according to (\ref{eq_finalcontrolimplication2}), we have that
\begin{equation*}
\frac{1}{p} \sum_{j=1}^p \yb_{ij}^2=1+O_{\prec}(n^{-1/2}). 
\end{equation*}
Moreover, since $z_i$ is a sub-Gaussian random variable, we can apply (\ref{eq_finalcontrolimplication2}). This yields that
\begin{equation*}
\sum_{i=1}^d \frac{\lambda_{fl}+1}{p}z_{il}^2=\frac{\sum_{l=1}^d (\lambda_{fl}+1)}{p}+O_{\prec}(n^{{-1}+\alpha}),
\end{equation*}
where we used the fact that $\alpha_l<1$ for all $1 \leq l \leq d.$ Similarly, we have 
\begin{equation*}
\left|\frac{\sqrt{\lambda_l}}{p} z_{il} \yb_{il} \right| \prec \frac{\sqrt{\lambda_l}}{p}. 
\end{equation*} 
Using (\ref{eq_boundlambdadigerventone}), we readily obtain that
\begin{equation*}
\frac{1}{p}\| \xb_i \| \leq 1+\frac{\sum_{l=1}^d \lambda_l}{p}+O_\prec\left(\frac{\sum_{l=1}^d \lambda_l}{n}+\frac{1}{\sqrt{n}} \right).
\end{equation*}
\end{proof}

\subsection{Some results for Gram matrices}

Denote the Gram matrix of the point clouds $\{\xb_i\} \subset \mathbb{R}^p$ in the form of (\ref{eq_modelsteptwo}) as
\begin{equation}\label{eq_defngram}
\Gb_x=\frac{1}{p} \Xb^\top \Xb, \ \Xb=[\xb_1 \cdots \xb_n] \in \mathbb{R}^{p \times n}.
\end{equation}
The eigenvalues of $\Gb_x$ have been thoroughly studied in the literature; see \cite{principal,benaych2011eigenvalues,bdww,debashis}, among others. We summarize those results relevant to this paper in the following lemma. 
\begin{lemma}\label{lem_gramsummary} 
Suppose \eqref{eq_modelsteptwo}-\eqref{eq_ratio} hold true and $d\geq 1$ is fixed. Assume that there exists some $0\leq d' \leq d$ so that $\lambda_1 \geq \lambda_2 \geq \cdots \geq  \lambda_{d'}>\sqrt{c_n} \geq \lambda_{d'+1} \geq \cdots \geq  \lambda_d\geq0.$ Then, if $d'>0$, we have that for $1 \leq j \leq d'$, 
\begin{equation*}
|\lambda_j(\Gb_x)-(1+\lambda_j)(1+c_n \lambda_j^{-1})| \prec n^{-1/2} \sqrt{\lambda_j} (\lambda_j-\sqrt{c_n})^{1/2}.
\end{equation*}
If $d'<d$, we have that for $d'+1 \leq j \leq d$,
\begin{equation*}
|\lambda_j(\Gb_x)-\gamma_{\mu_{c_n,1}}(1)| \prec n^{-2/3}.
\end{equation*}
Moreover, for $1 \leq i \leq (1-\epsilon)n$, where $\epsilon>0$ is  a fixed small constant, we have
\begin{equation}\label{eq_rigidity}
\left| \lambda_{i+d}(\Gb_x)-\gamma_{\mu_{c_n,1}}(i) \right| \prec n^{-2/3} i^{-1/3}\,.
\end{equation}
\end{lemma}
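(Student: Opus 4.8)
\medskip
\noindent\textbf{Proof plan.}
This lemma is a compilation of known facts about spiked (deformed) sample covariance matrices, so the plan is to match the model \eqref{eq_modelsteptwo}--\eqref{eq_sigdefn} to the settings of \cite{baik2005,benaych2011eigenvalues,principal,bdww,debashis} and reconcile two harmless discrepancies. First I would split each column of $\Xb$ into its first $d$ and last $p-d$ entries; since $\yb_i$ has independent entries with $\cov(\yb_i)=\mathbf{I}_p$ and $\zb_i$ lives on the first $d$ coordinates, this yields $\Gb_x=\Gb_0+\mathsf{R}$, where $\Gb_0\in\mathbb{R}^{n\times n}$ is, up to a scalar factor $1+O(1/p)$ and an $O(1/p)$ change of the aspect ratio from $c_n$ to $c_n'=n/(p-d)$, the sample covariance matrix of the $(p-d)$-dimensional pure-noise block, and $\mathsf{R}\succeq0$ has rank at most $d$. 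Equivalently, $\Gb_x$ is the sample covariance of sub-Gaussian vectors with population covariance $\Sigma$ in \eqref{eq_sigdefn}, which has precisely $d$ population spikes $\ell_j=1+\lambda_j$, $j=1,\dots,d$, above the bulk of $\mu_{c_n,1}$.

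For the supercritical spikes, the classical criterion $\ell_j>1+\sqrt{c_n}$ is exactly $\lambda_j>\sqrt{c_n}$, i.e.\ $1\le j\le d'$. The BBP phase transition \cite{baik2005} identifies the limiting outlier as $\rho(\ell_j)=\ell_j\bigl(1+\tfrac{c_n}{\ell_j-1}\bigr)=(1+\lambda_j)(1+c_n\lambda_j^{-1})$; I would then quote the sharp outlier local laws and rate estimates of \cite{principal,bdww,debashis}, which remain valid for $\lambda_j\asymp1$ even as $\lambda_j$ approaches the threshold $\sqrt{c_n}$, to upgrade this to $|\lambda_j(\Gb_x)-(1+\lambda_j)(1+c_n\lambda_j^{-1})|\prec n^{-1/2}\sqrt{\lambda_j}(\lambda_j-\sqrt{c_n})^{1/2}$. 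Here $\sqrt{\lambda_j}(\lambda_j-\sqrt{c_n})^{1/2}$ is the usual fluctuation scale of a detached outlier and shrinks correctly as $\lambda_j\downarrow\sqrt{c_n}$. For the subcritical spikes $d'+1\le j\le d$, \cite{benaych2011eigenvalues} guarantees that no outlier is created and $\lambda_j(\Gb_x)\to\lambda_+=(1+\sqrt{c_n})^2$; combining the edge-universality / non-escape statement of \cite{principal} (all eigenvalues of the deformed matrix other than the $d'$ supercritical outliers stick to the soft edge within $O_\prec(n^{-2/3})$) with the elementary bound $|\gamma_{\mu_{c_n,1}}(1)-\lambda_+|=O(n^{-2/3})$ gives $|\lambda_j(\Gb_x)-\gamma_{\mu_{c_n,1}}(1)|\prec n^{-2/3}$ for every such $j$, since $d$ is fixed.

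For the bulk estimate \eqref{eq_rigidity}, I would use eigenvalue interlacing: as $\mathsf{R}\succeq0$ has rank at most $d$, Weyl's inequality gives $\lambda_{i+d}(\Gb_0)\le\lambda_{i+d}(\Gb_x)\le\lambda_i(\Gb_0)$ for every $i$. Applying the eigenvalue rigidity estimate for the sample covariance matrix $\Gb_0$, i.e.\ $|\lambda_i(\Gb_0)-\gamma_{\mu_{c_n',1}}(i)|\prec n^{-2/3}i^{-1/3}$ for $1\le i\le(1-\epsilon)n$ (from \cite{principal}, or Pillai--Yin-type rigidity), together with $|\gamma_{\mu_{c_n',1}}(i)-\gamma_{\mu_{c_n,1}}(i)|=O(1/n)$ and $|\gamma_{\mu_{c_n,1}}(i+d)-\gamma_{\mu_{c_n,1}}(i)|=O(n^{-2/3}i^{-1/3})$ (both from the square-root vanishing of the Marchenko--Pastur density at $\lambda_+$, its boundedness in the bulk, and $d$ being fixed), sandwiches $\lambda_{i+d}(\Gb_x)$ and yields \eqref{eq_rigidity}. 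The only genuinely delicate inputs are the near-transition outlier rate and the subcritical ``sticking'' bound; for both one must invoke the sharp local laws of \cite{principal,bdww,debashis} rather than the soft BBP and Benaych-Georges--Nadakuditi statements, while everything else is interlacing and Marchenko--Pastur quantile bookkeeping.
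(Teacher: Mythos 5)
Your route is essentially the paper's own: the paper proves this lemma by a one-line citation to \cite{principal} (its Theorems 2.3 and 2.7), and your plan is the same citation plus correct elementary scaffolding. The extra scaffolding is sound and, for the bulk estimate, more self-contained than the bare citation: the split of $\Gb_x$ into the pure-noise block Gram matrix plus a positive semidefinite perturbation of rank at most $d$, Weyl interlacing $\lambda_{i+d}(\Gb_0)\leq\lambda_{i+d}(\Gb_x)\leq\lambda_i(\Gb_0)$, rigidity for the noise block, and the two Marchenko--Pastur quantile comparisons all check out, as does the subcritical ``sticking'' argument.

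The one step you assert rather than verify is precisely the step that carries all the content of a proof-by-citation: the dictionary between the conventions of \cite{principal} (or the classical BBP statement) and the paper's. Note that $\Gb_x=\frac1p\Xb^\top\Xb$ is not the sample covariance $\frac1n\Xb\Xb^\top$ of the $\xb_i$, and the paper's $c_n=n/p$ is the reciprocal of the ratio $p/n$ entering the textbook formula $\rho(\ell)=\ell\bigl(1+\tfrac{p/n}{\ell-1}\bigr)$ with threshold $\ell>1+\sqrt{p/n}$. Since the nonzero eigenvalues of $\Gb_x$ are those of $\frac1p\Xb\Xb^\top=c_n\cdot\frac1n\Xb\Xb^\top$, carrying out the translation under the paper's stated conventions gives the supercritical criterion $\lambda_j>1/\sqrt{c_n}$ and outlier location $(1+\lambda_j)\bigl(c_n+\lambda_j^{-1}\bigr)$, which agrees with the displayed $(1+\lambda_j)\bigl(1+c_n\lambda_j^{-1}\bigr)$ and threshold $\sqrt{c_n}$ only when $c_n=1$; a quick sanity check is that for $d=1$ and $\lambda\to\infty$ the top eigenvalue of $\frac1p\Xb^\top\Xb$ is of order $c_n\lambda$, not $\lambda$. (The displayed formulas are the ones for the $\frac1n$-normalization with ratio $p/n$; the paper itself writes $c_n=p/n$ elsewhere, so this tension is inherited from the statement, while the bulk and edge parts are unaffected because $(1\pm\sqrt{c_n})^2$ is the correct edge for the $\frac1p$-normalized noise Gram matrix.) In a fleshed-out proof you cannot simply plug $c_n$ into the classical criterion as written; you must perform this rescaling explicitly and state which normalization of \cite{principal} you are importing, since that bookkeeping is the only nontrivial content of the argument.
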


\begin{proof}
We mention that the results under our setup have been originally proved in \cite{principal} (see Section 1.2 and Theorems 2.3 and 2.7 therein), and stated in the current form. 
\end{proof}

The following lemma provides a control for the  Hadamard product related to the Gram matrix. 
In our setup with the point cloud $\mathcal{X}$, as discussed around (\ref{eq_sigdefn}), $\xb_i$ itself is a sub-Gaussian random vector with a spiked $\Sigma$ as in (\ref{eq_sigdefn}). We thus can extract the probability and bounds by tracking the proof in \cite[Step (iv) on Page 21 of the proof of Theorem 2.1]{elkaroui2010}.

\begin{lemma}\label{lem_hardmard}Suppose \eqref{eq_yassum1}-\eqref{eq_ratio} hold true, $d\geq 1$, and $\lambda_l=n^{\alpha_l}$ for $l=1,\ldots,d$, where $0<\alpha_l<1$.  Let $\Gb_x$ be the Gram matrix associated with the point cloud $\mathcal{X}.$ Denote 
\begin{equation}\label{eq_widetildeGb}
\Pb_x:=\Gb_x-\text{diag}\{\Gb_x(1,1),\ldots, \Gb_x(n,n)\}\,.
\end{equation}
For some constant $C>0$, when $n$ is sufficiently large, with probability at least $1-O(n^{-1/2})$, we have
\begin{align}\label{Control:OxPx bound 2}
&\left\|\Pb_x \circ \Pb_x-\frac{ \sum_{l=1}^d (\lambda_l+1)^2+p-1}{p^2}(\mathbf{1} \mathbf{1}^\top-\mathbf{I}_n)  \right\| \nonumber  \\ 
\leq &\, C \max \left\{n^{-1/4}, \frac{ \sum_{l=1}^d \lambda_l}{p}\right\}. 
\end{align}
\end{lemma}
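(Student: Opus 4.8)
The plan is to prove \eqref{Control:OxPx bound 2} by comparing $\Pb_x\circ\Pb_x$ entrywise with the ``oracle'' matrix $\frac{\sum_l(\lambda_l+1)^2+p-1}{p^2}(\mathbf{1}\mathbf{1}^\top-\mathbf{I}_n)$ and bounding the operator norm of the difference. First I would note that the difference is a symmetric matrix with zero diagonal, so I can freely use either the Gershgorin disc bound (Lemma \ref{lem_circletheorem}) or a combination of a rank-one extraction plus an entrywise $\ell^\infty$-to-operator-norm estimate. The key object is, for $i\ne j$, the entry $\Pb_x(i,j)^2=\big(\tfrac1p\xb_i^\top\xb_j\big)^2$, and I want to show it concentrates around its mean. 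Writing $\xb_i=\zb_i+\yb_i$ and expanding $\xb_i^\top\xb_j=\yb_i^\top\yb_j+\sum_l\sqrt{\lambda_l}(z_{il}y_{jl}+z_{jl}y_{il})+\sum_l\sqrt{\lambda_l\lambda_l}z_{il}z_{jl}$ (using that $\zb$ lives in the first $d$ coordinates), one sees from Lemma \ref{lem_concentrationslowlydivergent} that $\tfrac1p\xb_i^\top\xb_j\prec \tfrac{\sum_l\lambda_l}{n}+\tfrac1{\sqrt n}$; hence $\Pb_x(i,j)^2\prec\big(\tfrac{\sum_l\lambda_l}{n}+\tfrac1{\sqrt n}\big)^2$. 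The subtlety is that $\mathbb{E}\,\Pb_x(i,j)^2=\tfrac1{p^2}\mathbb{E}(\xb_i^\top\xb_j)^2=\tfrac1{p^2}\mathrm{tr}(\Sigma^2)=\tfrac{\sum_l(\lambda_l+1)^2+p-1}{p^2}$, which is exactly the constant in the oracle matrix; this is the source of the normalization.

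Next, following the structure of \cite[Step (iv) of the proof of Theorem 2.1]{elkaroui2010} as the lemma statement suggests, I would decompose $\Pb_x\circ\Pb_x-\mathbb{E}[\Pb_x\circ\Pb_x]$ (where here $\mathbb{E}$ is taken entrywise, zeroing the diagonal) into a sum of matrices according to which cross-terms in the expansion of $(\xb_i^\top\xb_j)^2$ they come from. The dominant contribution is the pure-noise part $\tfrac1{p^2}\big[(\yb_i^\top\yb_j)^2-\mathbb{E}(\yb_i^\top\yb_j)^2\big]_{i\ne j}$, whose operator norm is $O_\prec(n^{-1/4})$; this is precisely the bound obtained for the null model in \cite{elkaroui2010} and can be quoted or re-derived via a trace/variance computation on the off-diagonal Hadamard square (the $n^{-1/4}$ arises because $\|\Pb_y\circ\Pb_y-\mathbb{E}\|_F^2$ is of order $n^2\cdot n^{-2}\cdot n \cdot(\text{per-entry variance}\asymp p^{-2})$, giving $\|\cdot\|\lesssim n^{1/2}\cdot(\text{entry fluctuation})$ after the right bookkeeping, or more cleanly from the known $n^{-1/4}$ rate in the null kernel analysis). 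The mixed terms involving one signal factor and one noise factor, e.g. $\tfrac1{p^2}\sqrt{\lambda_l}\,z_{il}y_{jl}\cdot\yb_i^\top\yb_j$, and the pure-signal terms $\tfrac1{p^2}\lambda_l\lambda_{l'}z_{il}z_{jl}z_{il'}z_{jl'}$, all have operator norm controlled by $\tfrac{\sum_l\lambda_l}{p}$ up to $n^\epsilon$ factors: the pure-signal piece is a Hadamard product of finitely many rank-$1$ (or rank-$d$) matrices scaled by $\lambda_l/p\asymp n^{\alpha_l-1}$, and by Lemma \ref{lem_hardamardproductbound} together with $\|\zb_i\|_\infty\prec 1$ its norm is $O_\prec(\sum_l\lambda_l/p)$; the mixed pieces are handled by Cauchy--Schwarz on the Hadamard factors, bounding one factor in operator norm ($\|\tfrac1p\Yb^\top\Yb\|\prec 1$) and the other in $\ell^\infty$ ($\max_{i\ne j}|\tfrac1p\sqrt{\lambda_l}z_{il}y_{jl}|\prec\sqrt{\lambda_l}/p$, then multiplying by $n$ from the all-ones factor). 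Summing the $O(d^2)$ pieces and using $d$ fixed yields the claimed $C\max\{n^{-1/4},\sum_l\lambda_l/p\}$, on an event of probability $1-O(n^{-1/2})$ coming from the concentration bounds of Lemma \ref{lem_concentrationslowlydivergent} (whose stated probability, in the bounded/slowly-divergent regime, is $1-O(n^{-1/2})$, matching).

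The main obstacle I expect is obtaining the sharp $n^{-1/4}$ rate for the pure-noise Hadamard-square fluctuation rather than a cruder bound. A naive Gershgorin bound on $\Pb_y\circ\Pb_y-\mathbb{E}$ gives only $\sum_{j\ne i}\big|\Pb_y(i,j)^2-\mathbb{E}\Pb_y(i,j)^2\big|\prec n\cdot n^{-1}=O(1)$ with no decay, which is too weak; one genuinely needs either the second-moment (Frobenius) bound on the centered off-diagonal matrix combined with a rank/variance argument, or to import the existing analysis from \cite{elkaroui2010,DW1} where this $n^{-1/4}$ phenomenon for the ``$f''$ term'' in the kernel expansion is established. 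My plan is to cite that analysis directly — the lemma's phrasing (``by tracking the proof in \cite[Step (iv)...]{elkaroui2010}'') signals this is the intended route — and to carefully check that the spiked structure of $\Sigma$ only perturbs the constant (via $\mathrm{tr}\Sigma^2$) and adds the $\sum_l\lambda_l/p$ corrections from the signal cross-terms, without degrading the $n^{-1/4}$ rate of the noise part. A secondary technical point is ensuring uniformity: all the $O_\prec$ bounds above must hold simultaneously over all $O(n^2)$ entries, which is automatic from the definition of stochastic domination (Definition \ref{defn_stochasdomi}) since the families are indexed by $(i,j)$, but the translation from an entrywise $\prec$ bound to a probability-$1-O(n^{-1/2})$ statement for the operator norm requires tracking that the $n^D$ tail in stochastic domination dominates the $n^2$ union bound and then downgrading to the weaker but cleaner $1-O(n^{-1/2})$ event matching the diagonal concentration estimates.
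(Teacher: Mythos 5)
Your overall route coincides with the paper's: the paper gives no self-contained argument for this lemma at all — it simply points to Step (iv) of the proof of Theorem 2.1 in \cite{elkaroui2010} and asserts that the probability and bounds can be extracted by tracking that proof with the spiked covariance — and your plan (identify the oracle constant as $\operatorname{tr}(\Sigma^2)/p^2$, import the $n^{-1/4}$ rate for the pure-noise Hadamard square from \cite{elkaroui2010,DW1}, and bound the signal corrections separately) is exactly that bookkeeping, so the approach is the intended one.

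There is, however, a genuine quantitative gap in your handling of the signal terms. The centered pure-signal piece is not $O_\prec(\sum_l\lambda_l/p)$. Take $d=1$ and set $v_i:=z_i^2-1$; the contribution of $\frac{\lambda^2}{p^2}z_i^2z_j^2$ to the centered off-diagonal entries is
\[
\frac{\lambda^2}{p^2}\bigl(z_i^2z_j^2-1\bigr)=\frac{\lambda^2}{p^2}\bigl(v_iv_j+v_i+v_j\bigr),
\]
and the $vv^\top$ part is rank one with $\|v\|^2\asymp n$, so this piece alone has operator norm of order $\lambda^2 n/p^2\asymp\lambda^2/p$; testing against $v/\|v\|$ (a function of the $z$'s only, while the noise and mixed pieces have essentially zero conditional mean given the $z$'s) shows no cancellation rescues this. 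Likewise the $\lambda^{3/2}$ cross terms $\frac{2\lambda^{3/2}}{p^2}z_iz_j(z_iy_{j1}+z_jy_{i1})$ are low rank with norm of order $\lambda^{3/2}/p$. Both exceed your claimed $\lambda/p$ once $\lambda$ diverges, and the $\lambda^2/p$ piece even exceeds $\max\{n^{-1/4},\lambda/p\}$ for $\alpha>3/8$. Moreover, the tool you invoke for this step, Lemma \ref{lem_hardamardproductbound}, requires one Hadamard factor to have nonnegative entries, which the signal Gram factor does not; the licit computation (via the identity \eqref{ki} or a direct rank-one norm bound, with $|z_{il}z_{il'}|\prec 1$) yields $\lambda_l\lambda_{l'}/p$ per pair $(l,l')$, not $\sum_l\lambda_l/p$. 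Note that the paper's own Lemma \ref{lem_hardmard for Gx} records precisely this $(\sum_l\lambda_l)^2/n$ rate when $\alpha_1\geq 0.5$, so your intermediate claim is too strong as written: keeping the signal fluctuation explicit, what your argument actually delivers is $C\max\{n^{-1/4},\,(\sum_l\lambda_l)^2/p,\,\sum_l\lambda_l/p\}$. That weaker bound is all that the downstream applications need (in Theorem \ref{lem_affinity_slowly}(1) the final error $n^\epsilon\lambda/\sqrt n$ dominates $\lambda^2/p$ when $\alpha<1/2$, and for $\alpha\geq1/2$ the paper switches to the higher-order expansion and Lemma \ref{lem_hardmard for Gx}), but your write-up as it stands asserts the stated bound for all $0<\alpha_l<1$ on the strength of steps that do not prove it. The identification of the oracle constant, the $n^{-1/4}$ noise rate, and the probability/union-bound bookkeeping are fine.
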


We also need the following lemma for the Gram matrix of noisy signals.
\begin{lemma}\label{lem_hardmard for Gx}
Suppose (\ref{eq_modelsteptwo})-(\ref{eq_lambdadefinition}) hold true, $d\geq 1$, $\lambda_l=n^{\alpha_l}$, where $0<\alpha_l \leq \alpha_{l-1}\leq \cdots \leq \alpha_1<1$, for $l=1,\ldots, d$.  
Let $\Pb_y$ be denoted as (\ref{eq_widetildeGb}) for the point cloud $\mathcal{Y}.$
Then we have
\begin{align}\label{eq_ppppyy}
& \left\|\Pb_x \circ \Pb_x-\Pb_y \circ \Pb_y  \right\| \prec  
\begin{cases}
  \frac{\sum_{l=1}^d \lambda_l}{\sqrt{n}},  & 0<\alpha_1<0.5 \\
  \frac{(\sum_{l=1}^d \lambda_l)^2}{n}, & \text{otherwise}. 
\end{cases}
\end{align}
\end{lemma}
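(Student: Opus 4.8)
\textbf{Proof plan for Lemma \ref{lem_hardmard for Gx}.} The plan is to reduce the difference $\Pb_x \circ \Pb_x - \Pb_y \circ \Pb_y$ to controllable pieces by expanding the off-diagonal entries of $\Gb_x$ around those of $\Gb_y$. Recall from \eqref{eq_differenceexpansion} that for $i \neq j$,
\begin{equation*}
\Gb_x(i,j) = \frac{1}{p}\xb_i^\top\xb_j = \frac{1}{p}\yb_i^\top\yb_j + \frac{1}{p}\sum_{l=1}^d \sqrt{\lambda_l}(z_{il}y_{jl}+z_{jl}y_{il}) + \frac{1}{p}\sum_{l=1}^d \lambda_l z_{il}z_{jl}\,,
\end{equation*}
so that $\Pb_x(i,j) = \Pb_y(i,j) + \Rb(i,j)$, where $\Rb(i,j)$ collects the cross term $\frac1p\sum_l \sqrt{\lambda_l}(z_{il}y_{jl}+z_{jl}y_{il})$ and the pure-signal term $\frac1p\sum_l \lambda_l z_{il}z_{jl}$ for $i\neq j$, and $\Rb(i,i)=0$. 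Then entrywise
\begin{equation*}
(\Pb_x\circ\Pb_x)(i,j) - (\Pb_y\circ\Pb_y)(i,j) = 2\Pb_y(i,j)\Rb(i,j) + \Rb(i,j)^2\,,
\end{equation*}
i.e. $\Pb_x\circ\Pb_x - \Pb_y\circ\Pb_y = 2\,\Pb_y\circ\Rb + \Rb\circ\Rb$. So it suffices to bound $\|\Pb_y\circ\Rb\|$ and $\|\Rb\circ\Rb\|$ in operator norm.

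First I would handle $\Rb$ itself by splitting it as $\Rb = \Rb^{(1)} + \Rb^{(2)}$, where $\Rb^{(1)}$ is the cross term and $\Rb^{(2)}$ the pure-signal term. For $\Rb^{(2)}$, writing $\bm{w}_l := (z_{1l},\dots,z_{nl})^\top\in\mathbb{R}^n$, the matrix $\sum_l \lambda_l \bm{w}_l\bm{w}_l^\top/p$ has rank at most $d$, each $\|\bm{w}_l\|^2 = O_\prec(n)$ by Lemma \ref{lem_concentrationinequality} applied to the sub-Gaussian vector $\bm{w}_l$, so $\|\Rb^{(2)}\| \prec \sum_l \lambda_l/p \asymp \sum_l\lambda_l/n$ (the diagonal removal costs only $O_\prec(\sum_l\lambda_l/p)$ as well). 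For $\Rb^{(1)} = \frac1p\sum_l\sqrt{\lambda_l}(\bm{w}_l\bm{v}_l^\top + \bm{v}_l\bm{w}_l^\top)$ with $\bm{v}_l := (y_{1l},\dots,y_{nl})^\top$, this is again finite rank (at most $2d$), $\|\bm{w}_l\|,\|\bm{v}_l\| = O_\prec(\sqrt n)$, so $\|\Rb^{(1)}\| \prec \sum_l\sqrt{\lambda_l}/\sqrt n$. Combining, $\|\Rb\| \prec \sum_l\sqrt{\lambda_l}/\sqrt n + \sum_l\lambda_l/n$; since $\lambda_l = n^{\alpha_l}$ with $\alpha_l<1$, the first term dominates, giving $\|\Rb\| \prec \sum_l\sqrt{\lambda_l}/\sqrt n$. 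For $\Rb\circ\Rb$, I use Lemma \ref{lem_hardamardproductbound}: writing $\Rb = \Rb_+ - \Rb_-$ where $\Rb_\pm$ have nonnegative entries with $\|\Rb_\pm\|\prec\|\Rb\|$ (or more directly, expanding $\Rb\circ\Rb$ as a sum of $O(d^2)$ terms of the form $(\bm{a}\circ\bm{b})(\bm{c}\circ\bm{d})^\top$ type rank-one-ish pieces times prefactors), one obtains $\|\Rb\circ\Rb\| \prec \max_{i,j}|\Rb(i,j)|\cdot\|\Rb\|$, and $\max_{i,j}|\Rb(i,j)| \prec \sum_l\sqrt{\lambda_l}/\sqrt n$ by the scalar concentration bounds. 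This yields $\|\Rb\circ\Rb\| \prec (\sum_l\lambda_l)/n$ in the worst case, and $\prec (\sum_l\sqrt{\lambda_l})^2/n$ after a Cauchy--Schwarz bookkeeping — which, when $\alpha_1<0.5$, is $o(\sum_l\lambda_l/\sqrt n)$, consistent with the stated bound.

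The main obstacle, as usual in this circle of estimates, is the cross term $\Pb_y\circ\Rb$, because $\Pb_y$ is a genuinely full-rank Wishart-type matrix (bulk of order $1/\sqrt n$ entrywise but operator norm $O(1)$), so one cannot simply pull out $\max_{i,j}|\Pb_y(i,j)|$ via Lemma \ref{lem_hardamardproductbound} — that would only give $\prec \frac1{\sqrt n}\cdot\|\Rb\|$-type bounds that are too weak. Instead I would exploit the low-rank structure of $\Rb$: since $\Rb = \sum_{k}\bm{a}_k\bm{b}_k^\top$ for $O(d)$ pairs of vectors (coming from $\bm{w}_l,\bm{v}_l$) with explicit prefactors, the Hadamard product $\Pb_y\circ(\bm{a}_k\bm{b}_k^\top) = \mathrm{diag}(\bm{a}_k)\,\Pb_y\,\mathrm{diag}(\bm{b}_k)$, whose operator norm is at most $\|\bm{a}_k\|_\infty\,\|\Pb_y\|\,\|\bm{b}_k\|_\infty$. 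Here $\|\Pb_y\| = \|\Pb_y\| \prec 1$ (from Lemma \ref{lem_karoui}/standard Wishart edge bounds, after removing the rank-one $\mathbf{1}\mathbf{1}^\top$-type piece in $\Gb_y$ — actually $\|\Pb_y\|\prec 1$ directly), and $\|\bm w_l\|_\infty, \|\bm v_l\|_\infty \prec 1$ by sub-Gaussian tail bounds and a union bound over $n$ coordinates. Thus $\|\Pb_y\circ\Rb\| \prec \frac1p\sum_l\sqrt{\lambda_l}\cdot 1\cdot 1\cdot 1 + \frac1p\sum_l\lambda_l \cdot 1 \prec \sum_l\sqrt{\lambda_l}/n + \sum_l\lambda_l/n$. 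Now for $0<\alpha_1<0.5$ the term $\sum_l\lambda_l/n$ is the largest among these and is $\leq \sum_l\lambda_l/\sqrt n$; for $\alpha_1\geq 0.5$ one keeps the cruder $(\sum_l\lambda_l)^2/n$ coming from $\Rb\circ\Rb$. Assembling $2\|\Pb_y\circ\Rb\| + \|\Rb\circ\Rb\|$ and taking the worse of the two regimes gives precisely \eqref{eq_ppppyy}; the final bookkeeping is to verify that in the regime $0<\alpha_1<0.5$ every contributing term is $O_\prec(\sum_l\lambda_l/\sqrt n)$ and in the complementary regime $O_\prec((\sum_l\lambda_l)^2/n)$, which follows by comparing the exponents $\alpha_l/2 + 1$, $\alpha_l$, $1/2+\alpha_l$ against $\alpha_l+1/2$ and $2\alpha_l$.
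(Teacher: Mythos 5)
Your decomposition $\Pb_x\circ\Pb_x-\Pb_y\circ\Pb_y=2\,\Pb_y\circ\Rb+\Rb\circ\Rb$ with $\Rb:=\Pb_x-\Pb_y$, together with the identity $\Pb_y\circ(\bm{a}\bm{b}^\top)=\operatorname{diag}(\bm{a})\,\Pb_y\,\operatorname{diag}(\bm{b})$ for the cross term, is a viable route but not the paper's: the paper simply factors the entrywise difference of squares, $(\xb_i^\top\xb_j)^2-(\yb_i^\top\yb_j)^2=(\xb_i^\top\xb_j-\yb_i^\top\yb_j)(\xb_i^\top\xb_j+\yb_i^\top\yb_j)$, bounds the first factor by $\sum_l\lambda_l/n$ and the second by $n^{-1/2}$ (resp.\ $\sum_l\lambda_l/n$ when $\alpha_1\geq 0.5$) via Lemma \ref{lem_concentrationslowlydivergent}, and converts the entrywise bound into an operator-norm bound by the Gershgorin circle theorem. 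The genuine problem with your write-up is that the key intermediate norms are mis-computed. You have $\bigl\|\tfrac{\lambda_l}{p}\bm{w}_l\bm{w}_l^\top\bigr\|=\tfrac{\lambda_l}{p}\|\bm{w}_l\|^2\prec\tfrac{\lambda_l n}{p}\asymp\lambda_l$, not $\lambda_l/n$, and similarly $\bigl\|\tfrac{\sqrt{\lambda_l}}{p}\bm{w}_l\bm{v}_l^\top\bigr\|\prec\sqrt{\lambda_l}$, not $\sqrt{\lambda_l}/\sqrt{n}$; hence $\|\Rb\|\prec\sum_l\lambda_l$ (this is exactly the bound $\|\Pb_x-\Pb_y\|\prec\lambda+1$ used in the proof of Theorem \ref{thm_affinity matrix}), larger by a factor of order $n$ than what you claim. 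Consequently your assertion $\|\Rb\circ\Rb\|\prec\sum_l\lambda_l/n$ is false: the single rank-one piece $\tfrac{\lambda_1^2}{p^2}(\bm{w}_1\circ\bm{w}_1)(\bm{w}_1\circ\bm{w}_1)^\top$ already has norm of order $\lambda_1^2/n$, so the correct order of $\|\Rb\circ\Rb\|$ is $(\sum_l\lambda_l)^2/n$. Your final bookkeeping is then internally inconsistent, since you attribute the $(\sum_l\lambda_l)^2/n$ bound in the regime $\alpha_1\geq0.5$ to $\Rb\circ\Rb$ while simultaneously claiming $\|\Rb\circ\Rb\|\prec\sum_l\lambda_l/n$. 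A second, smaller issue: Lemma \ref{lem_hardamardproductbound} cannot be applied to $\Rb\circ\Rb$ through an entrywise positive/negative split, because $\|\Rb_\pm\|$ need not be comparable to $\|\Rb\|$ in general; the correct device is the one you mention only in passing, namely $(\bm{a}\bm{b}^\top)\circ(\bm{c}\bm{d}^\top)=(\bm{a}\circ\bm{c})(\bm{b}\circ\bm{d})^\top$ applied to the $O(d^2)$ rank-one pieces of $\Rb$.

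The gap is repairable along your lines: since $\Pb_y$ has zero diagonal, the diagonal corrections in $\Rb$ drop out of $\Pb_y\circ\Rb$, and the diag-sandwich identity with $\|\Pb_y\|\prec1$ and $\|\bm{w}_l\|_\infty,\|\bm{v}_l\|_\infty\prec1$ gives $\|\Pb_y\circ\Rb\|\prec\sum_l\lambda_l/n$, while the rank-one Hadamard identity gives $\|\Rb\circ\Rb\|\prec(\sum_l\lambda_l)^2/n$; noting that $(\sum_l\lambda_l)^2/n\leq\sum_l\lambda_l/\sqrt{n}$ precisely when $\sum_l\lambda_l\lesssim\sqrt{n}$, i.e.\ when $\alpha_1<0.5$, recovers both regimes of \eqref{eq_ppppyy}. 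But as written, the estimates you actually state do not hold, and the dichotomy in the conclusion does not follow from them; the paper's factor-and-Gershgorin argument reaches the same bound in a few lines without any of this rank bookkeeping.
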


\begin{proof}

When $i \neq j$, 
\begin{align}\label{Hadamard prod difference between x and y}
&\frac{1}{p^2} ((\xb_i^\top \xb_j)^2-(\yb_i^\top \yb_j)^2)\nonumber \\
=&\,\frac{\sum_{l=1}^d\sqrt{\lambda_l} (z_{il}y_{jl}+z_{jl}y_{il})+\zb_i^\top \zb_j}{p}\frac{\xb_i^\top \xb_j+\yb_i^\top \yb_j}{p}\,.
\end{align}
By the assumption that the standard deviations of the entries of $z_{il}$ and $y_{il}$ are of order $\sqrt{\lambda_l}$ and $1$ respectively, we have
\[
\frac{\sum_{l=1}^d\sqrt{\lambda_l} (z_{il}y_{jl}+z_{jl}y_{il})+\zb_i^\top \zb_j}{p}\prec \frac{\sum_{l=1}^d \lambda_l}{n}\,, 
\] 
and by (\ref{eq_finalcontrolimplicationdivergent1}), we have 
\[
\frac{\xb_i^\top \xb_j+\yb_i^\top \yb_j}{p}\prec \frac{1}{\sqrt{n}}
\]
when $\alpha<0.5$ and 
\[
\frac{\xb_i^\top \xb_j+\yb_i^\top \yb_j}{p}\prec \frac{\sum_{l=1}^d \lambda_l}{n}
\] 
when $0.5\leq\alpha<1$.
Therefore, using the Gershgorin circle theorem, we conclude the claimed bound. 
\end{proof}

\subsection{Some results for affinity matrices}\label{sec_affinitymatrixpreliminaryresults}

\begin{lemma}\label{lem_karoui}
Suppose \eqref{eq_defnw} and (\ref{eq_modelsteptwo})-(\ref{eq_lambdadefinition}) hold true, $d\geq 1$, $\alpha_l=0$ for $l=1,\ldots,d$ in \eqref{eq_lambdadefinition} (i.e., $\lambda$ is bounded), and $h=p$ in (\ref{eq_defnw}). Let $\Phi=(\phi_1,\ldots, \phi_n)$ with $\phi_i=\frac{1}{p}\| \xb_i \|-(1+\sum_{l=1}^d \lambda_l/p)$, $i=1,2,\cdots, n$. Denote 
\begin{align}\label{eq_kdtau}
\mathbf{K}_d \equiv \Kb_d(\tau)=&\,-2f'(\tau) p^{-1} \Xb^\top \Xb+\varsigma \mathbf{I}_n+\mathrm{Sh}_0(\tau) +\mathrm{Sh}_1(\tau)+\mathrm{Sh}_2(\tau),
\end{align} 
where $f(x)$ is a general kernel function satisfying the conditions in  Remark \ref{rmk_generalkernel}, $\varsigma$ is defined in \eqref{eq_varsigmalambda},  
\begin{align}
& \mathrm{Sh}_0(\tau):= f(\tau) \mathbf{1} \mathbf{1}^\top,\label{eq_sho} \\
& \mathrm{Sh}_1(\tau):=f'(\tau)[\mathbf{1} \Phi^\top+\Phi \mathbf{1}^\top ],  \nonumber \\
& \mathrm{Sh}_2(\tau):= \frac{f^{(2)}(\tau)}{2}\Big[ \mathbf{1}( \Phi \circ \Phi)^\top+(\Phi \circ \Phi) \mathbf{1}^\top+2 \Phi \Phi^\top \nonumber \\ 
&\qquad\qquad\quad +\frac{4}{p^2}\Big(\sum_{l=1}^d(\lambda_l+1)^2+p\Big) \mathbf{1} \mathbf{1}^\top
\Big], \label{eq_sh2}
\end{align}
and $\circ$ is the Hadamard product. 
Then for some small constant $\xi>0$, when $n$ is sufficiently large,  we have that with probability at least $1-O(n^{-1/2})$
\begin{equation}\label{eq_control}
\norm{\mathbf{W}-\Kb_d} \leq n^{-\xi} \,. 
\end{equation}
\end{lemma}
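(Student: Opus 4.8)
\textbf{Proof proposal for Lemma \ref{lem_karoui}.}

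The plan is to follow the entrywise Taylor expansion strategy of \cite{elkaroui2010}, now carried out under the spiked model \eqref{eq_modelsteptwo} with bounded signal. First I would write, for $i\neq j$,
\[
\frac{\|\xb_i-\xb_j\|^2}{p}=\frac{\|\xb_i\|^2}{p}+\frac{\|\xb_j\|^2}{p}-\frac{2\xb_i^\top\xb_j}{p}=\tau+\phi_i+\phi_j-\frac{2\xb_i^\top\xb_j}{p}\,,
\]
so that the argument of $f$ is $\tau$ plus a fluctuation term $\Delta_{ij}:=\phi_i+\phi_j-2p^{-1}\xb_i^\top\xb_j$. By Lemmas \ref{lem_concentrationinequality} and \ref{lem_gramsummary} (the bounded-$\lambda$ case), $\phi_i\prec n^{-1/2}$ and $p^{-1}\xb_i^\top\xb_j\prec n^{-1/2}$, hence $\max_{i\neq j}|\Delta_{ij}|\prec n^{-1/2}$ with probability at least $1-O(n^{-1/2})$. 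Taylor-expanding $f(\tau+\Delta_{ij})$ to third order about $\tau$ gives
\[
f(\tau+\Delta_{ij})=f(\tau)+f'(\tau)\Delta_{ij}+\tfrac{1}{2}f^{(2)}(\tau)\Delta_{ij}^2+O_\prec(n^{-3/2})\,,
\]
and substituting $\Delta_{ij}=\phi_i+\phi_j-2p^{-1}\xb_i^\top\xb_j$ and expanding the square produces exactly the terms $-2f'(\tau)p^{-1}\Xb^\top\Xb$, $\mathrm{Sh}_0(\tau)$, $\mathrm{Sh}_1(\tau)$ and the pieces of $\mathrm{Sh}_2(\tau)$, up to (a) the diagonal correction turning $\Xb^\top\Xb$-style off-diagonal expressions into the intended matrices, which is absorbed into the $\varsigma\mathbf{I}_n$ term using $f(0)=\Wb(i,i)$, and (b) a Hadamard-square remainder $\tfrac12 f^{(2)}(\tau)\big[\Pb_x\circ\Pb_x-\tfrac{1}{p^2}(\sum_l(\lambda_l+1)^2+p-1)(\mathbf{1}\mathbf{1}^\top-\mathbf{I}_n)\big]$ plus $2\Phi\Phi^\top$ cross terms.

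Next I would bound the operator norm of each discarded piece. The cubic remainder is entrywise $O_\prec(n^{-3/2})$, so its operator norm over $n\times n$ entries is $O_\prec(n^{-1/2})$ by a crude $\|\cdot\|\le n\max_{i,j}|\cdot|$ bound, which suffices for any $\xi<1/2$. The genuinely delicate term is the Hadamard square of the off-diagonal Gram part: here I invoke Lemma \ref{lem_hardmard}, which says precisely that $\|\Pb_x\circ\Pb_x-\frac{\sum_l(\lambda_l+1)^2+p-1}{p^2}(\mathbf{1}\mathbf{1}^\top-\mathbf{I}_n)\|\le C\max\{n^{-1/4},\sum_l\lambda_l/p\}$ with probability $1-O(n^{-1/2})$; since $\lambda_l\asymp1$ and $p\asymp n$, the right side is $O(n^{-1/4})$, giving the exponent $\xi$ (one may take any $\xi<1/4$). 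The remaining linear-algebra bookkeeping — that $\mathrm{Sh}_1(\tau)$ and the $2\Phi\Phi^\top$ term are rank-controlled and/or already included in $\Kb_d$, and that the diagonal of $\Wb$ equals $f(0)$ while the diagonal of $\Kb_d$ matches it up to $n^{-\xi}$ — is routine via Lemma \ref{lem_hardamardproductbound} and the bounds on $\phi_i$.

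The main obstacle is controlling the Hadamard-square term $\Pb_x\circ\Pb_x$: a naive entrywise bound gives only $\|\cdot\|\lesssim n\cdot n^{-1}=O(1)$, which is useless, so one genuinely needs the concentration of the squared inner products around their mean $\frac{\sum_l(\lambda_l+1)^2+p-1}{p^2}$ in operator norm — this is exactly the content imported from \cite{elkaroui2010} and restated as Lemma \ref{lem_hardmard}, and it is what forces the probability $1-O(n^{-1/2})$ and the rate $n^{-1/4}$ rather than something faster. Everything else is a careful but mechanical matching of Taylor coefficients to the five summands defining $\Kb_d(\tau)$, together with $\|\Ab\|\le n\max_{ij}|\Ab(i,j)|$ for the higher-order remainders; no new idea beyond \cite{elkaroui2010} is needed once the spiked structure is tracked through \eqref{eq_differenceexpansion}.
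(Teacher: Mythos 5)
Your proposal is correct and is essentially the argument the paper relies on: the paper's proof of this lemma is just a citation to \cite[Lemma A.10]{DW1}, which is precisely this El Karoui-style entrywise Taylor expansion around $\tau$ with the cubic remainder killed by a crude $n\max_{i,j}|\cdot|$ (Gershgorin) bound and the Hadamard square $\Pb_x\circ\Pb_x$ replaced by its mean via Lemma \ref{lem_hardmard}, the same expansion the paper reproduces in Section \ref{sec_sub_subsuper}. The only caveat worth noting is that Lemma \ref{lem_hardmard} is stated for $0<\alpha_l<1$ while here $\alpha_l=0$, so you should remark that the bound persists (with right-hand side $O(n^{-1/4})$) for bounded $\lambda_l$ — this is exactly the minor modification the paper alludes to when it says the $d>1$ case follows by adapting \cite[Lemma A.10]{DW1} using Lemmas \ref{lem_concentrationinequality}--\ref{lem_hardmard for Gx}.
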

\begin{proof}
See \cite[Lemma A.10]{DW1} for the case $d=1$. For $d>1$, by modifying  the proof of  \cite[Lemma A.10]{DW1} using Lemmas \ref{lem_concentrationinequality}--\ref{lem_hardmard for Gx}  and the assumption that $d$ is fixed, we get the result.    
\end{proof}

We also need the following lemma, which is of independent interest.

\begin{lemma}\label{Lemma: W1 bound}
Suppose \eqref{eq_defnw} and (\ref{eq_modelsteptwo})-(\ref{eq_lambdadefinition})  hold true, $d\geq 1$ fixed
and $\alpha_1\geq \alpha_2\geq\ldots\geq\alpha_d \geq 1$. For the matrix $\Wb_1$ associated with the clean signal defined in \eqref{eq_singlematrix} and $\delta>0$, with high probability, we have for $C_{\delta}>0$ so that 
 \begin{equation}
 \| \Wb_1 \| \leq (n-C_{\delta} n^{\delta}) \exp(-\upsilon \gamma n^{\alpha_1-1-2(1-\delta)})+ C_{\delta} n^{\delta}\, , \label{proof bound W_1 moderate fast}
 \end{equation} 
where $\gamma$ is defined in (\ref{eq_ratio}). 
\end{lemma}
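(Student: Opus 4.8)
\textbf{Proof proposal for Lemma \ref{Lemma: W1 bound}.}

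The plan is to bound $\|\Wb_1\|$ by combining a crude bound on the bulk of the entries of $\Wb_1$ with a separate count of the ``diagonal-like'' contributions. The starting observation is that $\Wb_1(i,j)=\exp(-\upsilon\|\zb_i-\zb_j\|_2^2/p)$ and, since $\zb_i$ has covariance $\mathrm{diag}\{\lambda_1,\ldots,\lambda_d,0,\ldots,0\}$ with all $\lambda_l\asymp n^{\alpha_l}$ and $\alpha_1\geq\cdots\geq\alpha_d\geq 1$, the scalar $\|\zb_i-\zb_j\|_2^2$ concentrates around $\sum_{l=1}^d 2\lambda_l$, which is of order $n^{\alpha_1}$, hence $\|\zb_i-\zb_j\|_2^2/p \asymp n^{\alpha_1-1}$. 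First I would make this precise: using the sub-Gaussianity and continuity of the $\zb_{il}$ (together with a concentration inequality of the type in Lemma \ref{lem_concentrationinequality}, applied coordinatewise to the $z_{il}$ with variance $\lambda_l$), I would show that with high probability, for \emph{all} pairs $i\neq j$ simultaneously, $\|\zb_i-\zb_j\|_2^2 \geq c\, n^{\alpha_1}$ for some constant $c>0$ after a union bound over the $O(n^2)$ pairs; the small power $n^\delta$ enters because to get ``for all pairs'' with a polynomially small failure probability we can only guarantee a lower bound like $\|\zb_i-\zb_j\|_2^2 \geq \gamma n^{\alpha_1-2(1-\delta)}$ rather than the full $\asymp n^{\alpha_1}$, once we allow the worst case where two points are anomalously close (this is where the exponent $\alpha_1-1-2(1-\delta)$ in the statement comes from after dividing by $p\asymp n$ and using $n/p\geq\gamma$).

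Next I would split $\Wb_1 = \Wb_1^{(\mathrm{good})} + \Wb_1^{(\mathrm{bad})}$ according to which pairs $(i,j)$ have $\|\zb_i-\zb_j\|_2^2$ above or below the threshold $\gamma n^{\alpha_1-2(1-\delta)}$. For the ``good'' part, every off-diagonal entry is at most $\exp(-\upsilon\gamma n^{\alpha_1-1-2(1-\delta)})$ and the diagonal is $1$, so $\|\Wb_1^{(\mathrm{good})}\| \leq 1 + (n-1)\exp(-\upsilon\gamma n^{\alpha_1-1-2(1-\delta)})$ by the trivial bound $\|M\|\leq \max_i\sum_j|M_{ij}|$ (Lemma \ref{lem_circletheorem}). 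For the ``bad'' part, the key combinatorial input is that with high probability no point has too many other points within the threshold distance: the number of ``close'' neighbours of any fixed point is at most $C_\delta n^\delta$ with high probability (again by a union bound, using that the probability of a single pair being close is at most $n^{-(\text{large})}$ by the concentration estimate, so the expected number of close neighbours is small and a Chernoff/Markov bound plus union bound over the $n$ points controls the maximum). Then $\Wb_1^{(\mathrm{bad})}$ has at most $C_\delta n^\delta$ nonzero entries per row, each bounded by $1$, so again by Lemma \ref{lem_circletheorem}, $\|\Wb_1^{(\mathrm{bad})}\| \leq C_\delta n^\delta$. Adding the two bounds and absorbing the $+1$ into $C_\delta n^\delta$ gives \eqref{proof bound W_1 moderate fast}, after noting that the count $(n-C_\delta n^\delta)$ in front of the exponential just reflects removing the close pairs from the row sums of the good part.

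The main obstacle I expect is getting the exponents exactly right in the tradeoff between the union bound over pairs and the lower bound on pairwise distances — that is, quantifying precisely how close the closest pair of signal points can be. This requires a careful small-ball estimate: since the $z_{il}$ are continuous sub-Gaussian random variables, $\mathbb{P}(\|\zb_i-\zb_j\|_2^2 \leq t)$ must be shown to decay fast enough in $t$ (for $t$ of order $n^{\alpha_1-2(1-\delta)}$, which is much smaller than the typical scale $n^{\alpha_1}$) that the union bound over $O(n^2)$ pairs still leaves a high-probability event. One has to use anti-concentration of the $d$-dimensional vector $(\sqrt{\lambda_1}(z_{i1}-z_{j1}),\ldots,\sqrt{\lambda_d}(z_{id}-z_{jd}))$ — its density is bounded, so the probability it lands in a ball of radius $r$ is $O((r/\sqrt{\lambda_d})^d)$ — combined with the fact that even a single coordinate being non-small already forces $\|\zb_i-\zb_j\|_2^2$ to be non-small. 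Balancing $n^2 \cdot (\text{small-ball probability}) = O(n^{-D})$ is what pins down the exponent $2(1-\delta)$; the parameter $\delta$ is exactly the slack one keeps to make this union bound work, and letting $\delta\to 0$ recovers the heuristic threshold but loses the uniform-over-all-pairs guarantee. The rest of the argument is routine application of the Gershgorin-type norm bound and Markov/Chernoff counting.
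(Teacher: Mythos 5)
Your core argument (the good/bad split in your second paragraph) is essentially the paper's proof: both rest on a small-ball estimate at scale $n^{-(1-\delta)}$ coming from the bounded density of the continuous signal coordinate, a count of order $n^{\delta}$ problematic points, the reduction to a single coordinate via $\Wb_1(i,j)\leq \exp(-\upsilon\lambda_1(z_{i1}-z_{j1})^2/p)$, and a Gershgorin row-sum bound yielding $(n-C_{\delta}n^{\delta})\exp(-\upsilon\gamma n^{\alpha_1-1-2(1-\delta)})+C_{\delta}n^{\delta}$. The only difference is bookkeeping: the paper computes, via a binomial probability and Stirling's formula, the event that roughly $C_{\delta}n^{\delta}$ signal values fall within $n^{-(1-\delta)}$ of a point and then takes the worst-case row arrangement, whereas you count close neighbours per row (Chernoff plus a union bound over the $n$ rows) after splitting $\Wb_1$ into a below/above-threshold part; that is an equivalent, if anything cleaner, way to organize the same count.

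However, the justification in your first and last paragraphs is incorrect and should be removed, as it does not support the exponent you claim it explains. You cannot obtain the uniform bound $\|\zb_i-\zb_j\|_2^2\geq \gamma n^{\alpha_1-2(1-\delta)}$ for \emph{all} pairs by a union bound: at that scale the per-pair small-ball probability is $\asymp n^{-(1-\delta)}$ (not $n^{-(\text{large})}$), so $n^2$ pairs times it is $\asymp n^{1+\delta}\gg 1$, and in fact with high probability about $n^{1+\delta}$ pairs violate the bound; the balancing ``$n^2\times(\text{small-ball probability})=O(n^{-D})$'' you invoke at the end would force a much smaller threshold and a different exponent than the one in the lemma. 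Similarly, the expected number of close neighbours of a fixed point is $\asymp n\cdot n^{-(1-\delta)}\asymp n^{\delta}$, which is not small; the correct statement is that, conditionally on $\zb_i$, the neighbour indicators are independent Bernoullis with parameter $O(n^{-(1-\delta)})$, so a Chernoff bound gives at most $C_{\delta}n^{\delta}$ close neighbours in each row up to probability $e^{-cn^{\delta}}$, and a union bound over the $n$ rows finishes the count — this is exactly the balance that produces $n^{\delta}$ and, through the threshold $n^{-(1-\delta)}$, the exponent $\alpha_1-1-2(1-\delta)$. Since your decomposition never actually uses the (false) uniform separation — the good block is small entrywise by definition of the threshold and the bad block is handled by the per-row count — the proof goes through once these parenthetical justifications are corrected.
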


\begin{proof}
Assume $d=1$ and denote $\alpha=\alpha_1$. Throughout the proof, we adapt the notation \eqref{notationalconvention}.
For an arbitrarily small constant $\epsilon>0$ and a given $\delta>0,$ let $C _{\delta}>0$ be some constant depending on $\delta$, and denote the event $\mathcal{A}(\delta)$ as
\begin{equation*}
\mathcal{A}(\delta):=\left\{\text{There exist} \ C_{\delta} n^{\delta} \ z_{i}'s \ \text{such that} \ |z_{i}| \leq n^{- \epsilon} \right\}\,.
\end{equation*}
Note that $z_{i}$ is the signal part.
Denote $\ell:=\mathbb{P}(|z_{i}| \leq n^{-\epsilon}).$ 
Due to the independence, we find that 
\begin{equation}\label{eq_probabilitysetA}
\mathbb{P}(\mathcal{A}(\delta))={n \choose C_{\delta} n^{\delta}}  \ell^{C_{\delta} n^{\delta}}(1-\ell)^{n-C_{\delta} n^{\delta}}.
\end{equation}
Using Stirling's formula, when $n$ is sufficiently large, we find that 
\begin{align}\label{eq_bionomialcoefficient}
{n \choose C_{\delta} n^{\delta}}
=&\,\frac{n!}{ (n-C_{\delta} n^{\delta})! (C_{\delta} n^{\delta})!}  \\ 
\asymp &\,\sqrt{2\pi} n^{n+1/2} \exp(-n) (C_{\delta} n^{\delta})^{-C_{\delta}n^{\delta}-1/2} \exp(C_{\delta}n^{\delta})\nonumber\\
&\qquad\times (n-C_{\delta}n^{\delta})^{-n+C_{\delta}n^{\delta}-1/2} \exp(n-C_{\delta} n^{\delta}) \nonumber \\
\asymp &\,\sqrt{\frac{2\pi}{C_\delta}} \sqrt{\frac{n^{1-\delta}}{n-C_{\delta}n^{\delta}}} \left(1+\frac{C_{\delta} n^{\delta}}{n-n^{\delta}} \right)^{n-C_{\delta}n^{\delta}} \left( \frac{n}{C_{\delta}n^{\delta}} \right)^{C_{\delta} n^{\delta}} \nonumber \\
\asymp &\,\sqrt{\frac{2\pi}{C_\delta}}  n^{(1-\delta)C_{\delta} n^{\delta}-\delta/2} \exp( C_{\delta} n^\delta) \,.\nonumber
\end{align}
We next provide an estimate for $\ell.$ 
 Denote the probability density function (PDF) of $z_{i}^2$ as $\varrho$ and the PDF of $z_{i}$ is $\widetilde{\varrho}.$  
Recall the assumption that $\{z_{i}\}_{i=1}^n$ are continuous random variables with $\widetilde{\varrho}(0)\neq 0$.
 By using the fact $\varrho(y)=(\widetilde{\varrho}(\sqrt{y})+\widetilde{\varrho}(-\sqrt{y}))/(2\sqrt{y})$, we find that
$$
\varrho(y) \asymp O\left(\frac{1}{\sqrt{y}}\right)\,.
$$
Consequently, we have that for any {small} $y>0,$
\begin{equation} \label{eq_finallbound}
\mathbb{P}(z_{i}^2 \leq y) \asymp O( \sqrt{y})\,.  
\end{equation}
By (\ref{eq_finallbound}), we immediately have that \begin{equation} \label{eq_lcontrolbound}
\ell \asymp n^{-\epsilon}\,.  
\end{equation} 
Since $\ell \rightarrow 0$  as $n \rightarrow \infty$, we have
\begin{align}\label{eq_othercoefficientestimate}
\ell^{C_{\delta} n^{\delta}}(1-\ell)^{n-C_{\delta}n^{\delta}} \asymp \exp(-\ell n) \ell^{C_{\delta}n^{\delta}}, \ n \rightarrow \infty\,.
\end{align}
By plugging (\ref{eq_bionomialcoefficient}) and (\ref{eq_othercoefficientestimate}) into  (\ref{eq_probabilitysetA}), we obtain
\begin{align}\label{eq_adeltabound}
 \mathbb{P}(\mathcal{A}(\delta)) 
 \asymp &\, \exp(-\ell n+C_{\delta}(1-\delta)n^{\delta} \log n+C_{\delta}n^{\delta} \log \ell ) \nonumber  \\
 \asymp &\,\exp\left(-n^{1-\epsilon}+C_{\delta}(1-\delta)n^{\delta} \log n-C_{\delta}\epsilon n^{\delta} \log n +C_{\delta} n^{\delta}\right) \nonumber\\
 = &\, \exp\left(- n^{1-\epsilon}+C_{\delta}\left[1-\delta-\epsilon+1/\log(n) \right]n^{\delta} \log n\right),
\end{align}
where the second asymptotic comes from plugging (\ref{eq_lcontrolbound}). In light of (\ref{eq_adeltabound}), to make $\mathcal{A}(\delta)$ a high probability event, we  
may take $\epsilon+\delta=1$, which leads to  
\begin{equation*}
\mathbb{P}(\mathcal{A}(\delta))  \asymp \exp \left((C_{\delta}-1) n^{\delta} \right).
\end{equation*}
We can choose $1<C_{\delta} \leq 2$ such that for any large constant $D>0,$  when $n$ is large enough, we have that 
 \begin{equation}\label{eq_highprobabilityevent}
 \mathbb{P}(\mathcal{A}(\delta)) \geq 1-n^{-D}\,.
 \end{equation}
Note that $\Wb_1(i,j)=\exp\left(-\upsilon \lambda(z_i-z_j)^2/p\right)$. On the event $\mathcal{A}(\delta)$, by the Gershgorin circle theorem and the definition of $\Wb_1$, we find that 
 \begin{equation}
 \| \Wb_1 \| \leq (n-C_{\delta} n^{\delta}) \exp(-\upsilon \gamma n^{\alpha-1-2(1-\delta)})+C_{\delta} n^{\delta}\,,  
 \end{equation} 
where in the inequality we consider the worst scenario such that all the elements in $\mathcal{A}(\delta)$ are either on the same row or column. This finishes the claim with high probability.

To show the case when $d>1$, we need a slight modification of (\ref{eq_probabilitysetA}). Suppose $\alpha_1\geq\alpha_2\geq\ldots\geq\alpha_d\geq 1$. Since $z_{ik}$, $k=1,\ldots,d$ are in general dependent, (\ref{eq_probabilitysetA}) has to be modified to accommodate this additional dependence. In fact, our results hold true by replacing $\alpha$ in (\ref{proof bound W_1 moderate fast})  with $\alpha_1$ since we have
$\Wb_1(i,j)=\prod_{k=1}^d\exp\left(-\upsilon \lambda_1(z_{ik}-z_{jk})^2/p\right)  \leq \exp\left(-\upsilon \lambda_1(z_{i1}-z_{j1})^2/p\right)$ which reduces our discussion to the case $d=1.$
\end{proof}

\subsection{Orthogonal polynomials and kernel expansion}\label{sec_mehler}

We first recall the celebrated Mehler's formula (for instance, see equation (5) in \cite{hermite} or \cite{FOATA1978367})
\begin{align}\label{eq_melherformula}
\frac{1}{\sqrt{1-t^2}} & \exp\left(\frac{2txy-t^2(x^2+y^2)}{2(1-t^2)}\right) =\sum_{n=0}^{\infty} \frac{t^n}{n!} \widetilde{H}_n(x) \widetilde{H}_n(y),
\end{align}
where $\widetilde{H}_m(x)$ is the {\em scaled Hermite polynomial} defined as 
\begin{equation*}
\widetilde{H}_m(x)=\frac{H_m(x/\sqrt{2})}{\sqrt{2^n}}
\end{equation*} 
and $H_m(x)$ is the {\em standard Hermite polynomial} defined as 
\begin{equation*}
H_m(x)=(-1)^m \exp(x^2)  \frac{\mathrm{d}^m \exp(-x^2) }{\mathrm{d} x^m}. 
\end{equation*}
We mention that $\widetilde{H}_m(x)$ is referred to as the probabilistic version of the Hermite polynomial. We will see later that in our proof, the above Mehler's formula provides a convenient way to handle the interaction term when we write the affinity matrix as a summation of rank one matrices.

\subsection{More remarks}\label{sec_generalizationrevision}
\subsubsection{Zeroing-out technique}\label{sec_generalizationrevision1} Here we discuss the trick of zeroing-out diagonal terms proposed in \cite{el2016graph}. First of all, we summarize the idea and restate the results in \cite{el2016graph}. Then we modify it to our setting (\ref{eq_modelsteptwo})--(\ref{eq_sigdefn}). To simplify the discussion, we focus our discussion on the setting $d=1$ with the signal strength 
$\lambda \asymp n^{\alpha}$, where $\alpha \geq 0$. 
The zeroed out affinity matrix is defined as 
\begin{equation*}
\mathring{\mathbf{W}}=\mathbf{W} \circ \left( \mathbf{1} \mathbf{1}^\top-\mathbf{I}_n \right),
\end{equation*}
where $\mathbf{1} \in \mathbb{R}^n$ is the vector with $1$ in all entries. Denote the associated degree matrix as $\mathring{\mathbf{D}}.$ Consequently, the modified transition matrix is
\begin{equation}\label{alequal}
\mathring{\mathbf{A}}=\mathring{\mathbf{D}}^{-1} \mathring{\mathbf{W}}. 
\end{equation}
Recall that the transition matrix for the signal part is defined as 
$\mathbf{A}_1=\mathbf{D}_1^{-1} \mathbf{W}_1$.
It can be concluded from \cite{el2016graph} that with high probability the spectrum of $\mathring{\mathbf{A}}$ is close to that of $\mathbf{A}_1$ in that 
\begin{equation}\label{eq_closecloseclose}
\|\mathring{\mathbf{A}}-\mathbf{A}_1  \|=o_{\mathbb{P}}(1), 
\end{equation}
provided the following two conditions are satisfied: \\

\noindent{\bf (1).} The signal strength satisfies
\begin{equation*}
\alpha> \frac{1}{2}. 
\end{equation*}  
\noindent{\bf (2).} The off-diagonal entries of the signal kernel affinity matrix should satisfy that 
\begin{equation}\label{eq_keyassumption}
\inf_i \sum_{j \neq i} \frac{\mathbf{W}_1(i,j)}{n} \geq \gamma>0,
\end{equation}
for some universal constant $\gamma>0.$ Even though \cite{el2016graph} did not provide a detailed discussion on the bandwidth selection, the assumption (\ref{eq_keyassumption}) imposes a natural condition. 

We now explain how the zeroing-out trick is related to our approach in the large signal-to-noise ratio region.  Recall that the transition matrix for  the observation is defined as 
$\mathbf{A}=\mathbf{D}^{-1} \mathbf{W}$.
As shown in part 2) of Theorem \ref{thm_adaptivechoiceofc}, when the signal strength is $\alpha>1$ and the bandwidth is either $h \asymp \lambda$ or selected adaptively according to the method proposed in Section \ref{sec_bandwidthselectionalgo},  the spectrum of $\mathbf{A}$ will be close to that of $\mathbf{A}_1$ with high probability, i.e., 
\begin{equation*}
\|\mathbf{A}-\mathbf{A}_1  \|=o_{\mathbb{P}}(1). 
\end{equation*}
We emphasize that when $\alpha>1$ and $h \asymp \lambda$ or selected adaptively according to the method proposed in Section \ref{sec_bandwidthselectionalgo}, it can be concluded from the proof of Corollary \ref{coro_adaptivechoiceofc} that (\ref{eq_keyassumption}) holds with high probability. Consequently, together with (\ref{eq_closecloseclose}), we can conclude that when the signal-to-noise ratio is large in the sense $\alpha>1$ and the bandwidth is selected properly as in Section \ref{sec_bandwidthselectionalgo}, the spectrum of $\mathbf{A}$ is asymptotically the same as the zeroing-out matrix $\mathring{\mathbf{A}}.$
We mention that although in this setting our approach and results are asymptotically equivalent to the zeroing-out trick, our method provides an adaptive and theoretically justified method to select the bandwidth instead of choosing a fixed bandwidth according to (\ref{eq_keyassumption}). In fact, in the simulation of \cite{el2016graph}, the authors used a similar approach empirically.  

When the signal-to-noise ratio is ``smaller'' so that $1/2<\alpha \leq 1,$ the zeroing-out trick has a significant impact on the spectrum. In this region, the proposed bandwidth selection algorithm will choose a bandwidth satisfying $h \asymp p$, and the spectral behavior of noisy GL is recorded in Corollary \ref{thm_normailizedaffinitymatrix}, part 1) of Theorem \ref{thm_adaptivechoiceofc} and Corollary \ref{coro_adaptivechoiceofc}. We see that $\mathbf{A}$ is no longer close to the spectrum of $\mathbf{A}_1$ under our setup. However, the zeroing-out trick works provided (\ref{eq_keyassumption}) holds. Therefore, we can see that when $1/2<\alpha \leq 1$ and the bandwidth is properly selected, the result can be improved using the zeroing-out trick in the sense of (\ref{eq_closecloseclose}).

Finally, when the SNR is ``very small'' so that $\alpha \leq 1/2,$ both $\mathring{\mathbf{A}}$ and $\mathbf{A}$ are dominated by the noise. In this case, we are not able to extract useful information of the signal, even with the zeroing-out trick. For an illustration, in Figure \ref{fig_zero}, we show the performance of $\mathring{\mathbf{A}}$ and $\mathbf{A}$ in different SNR regions by comparing some of their eigenfunctions (eigenvectors) with those of the clean signal matrix $\mathbf{A}_1.$ We can conclude that the zeroing-out trick can be beneficial when $0.5< \alpha \leq 1$ provided the bandwidth is carefully selected.

\begin{figure*}[!ht]
\includegraphics[width=4cm]{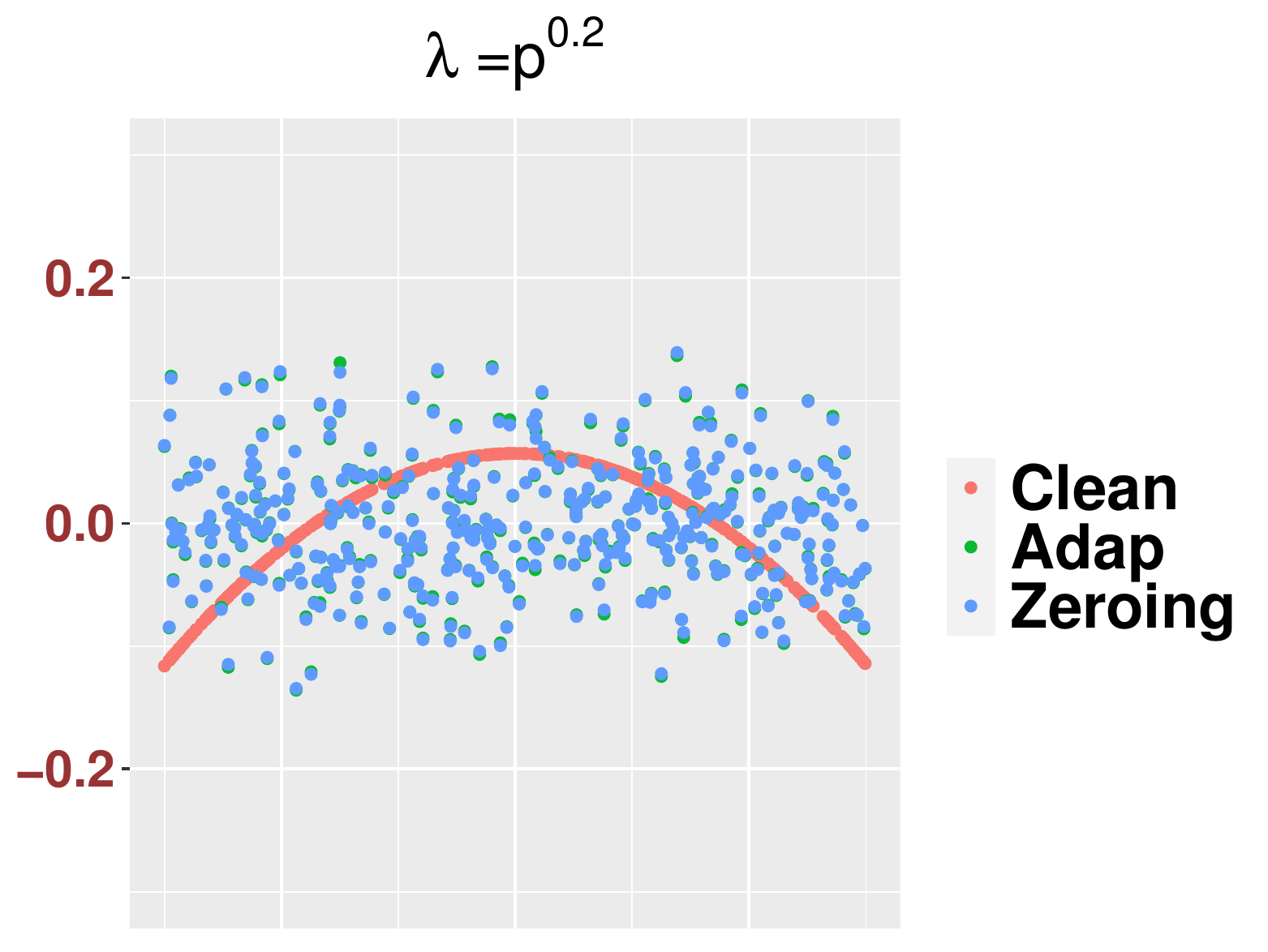}
\includegraphics[width=4cm]{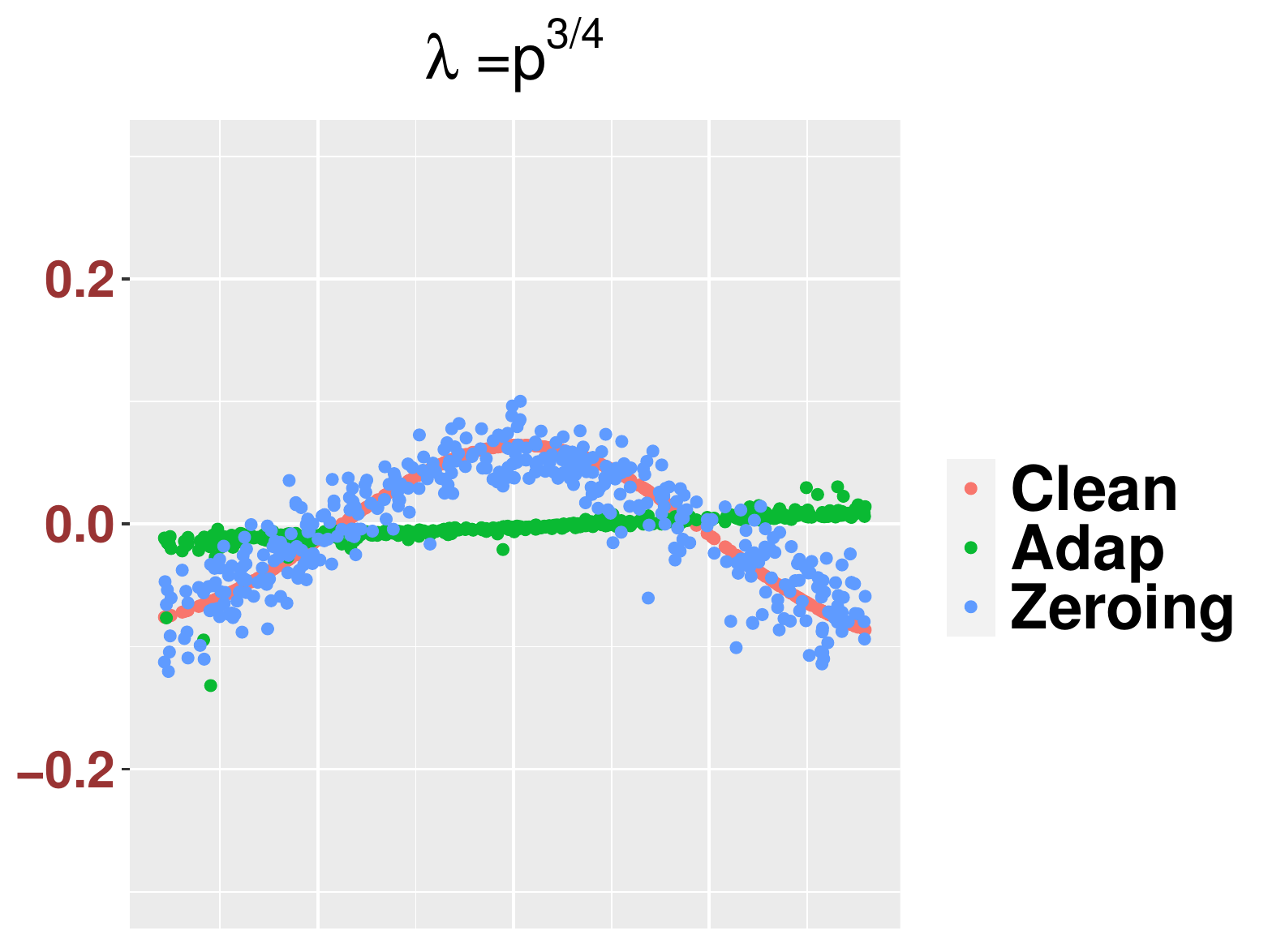}
	\includegraphics[width=4cm]{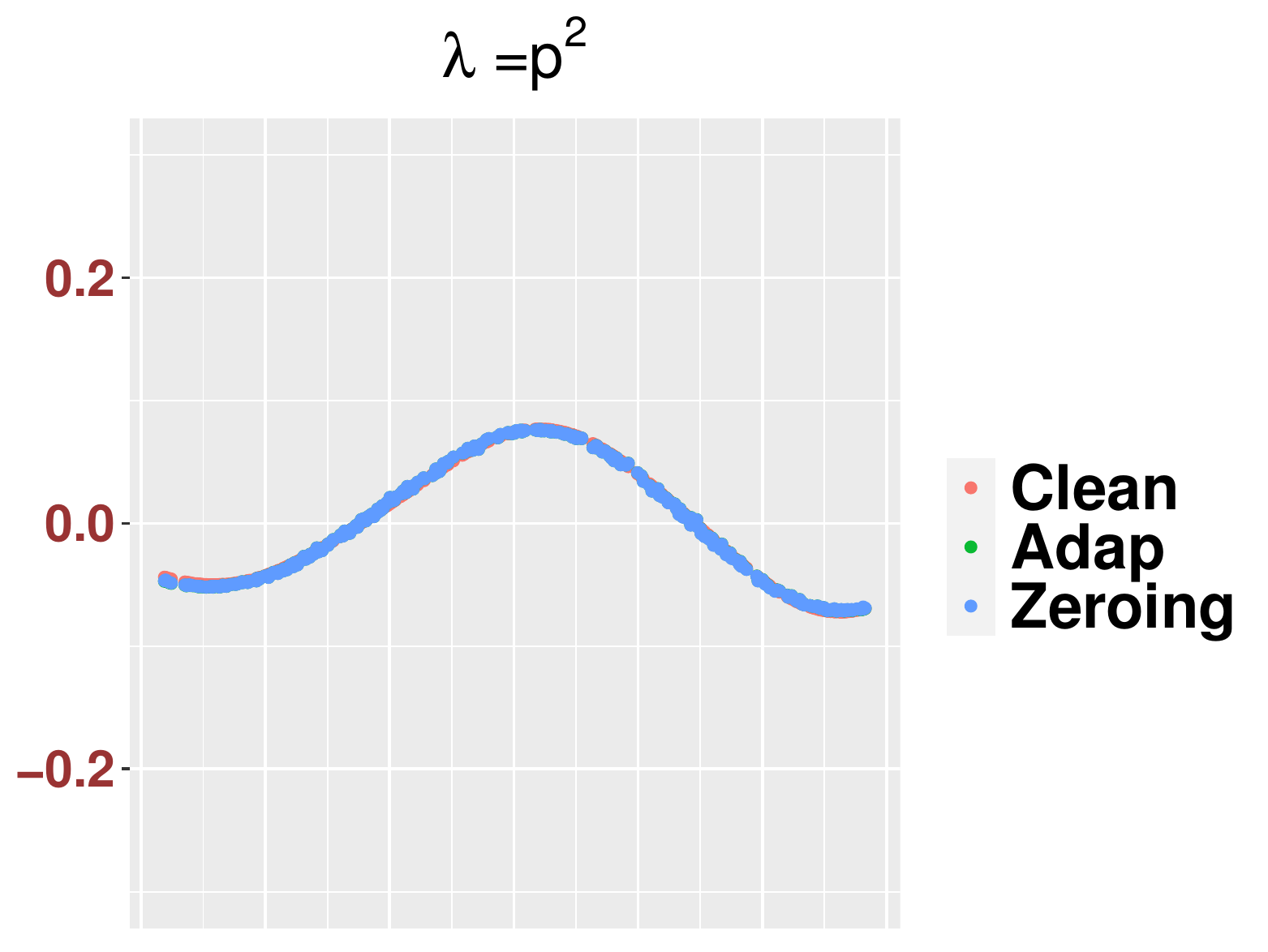}
	\caption{Comparison of different GLs. We consider the comparison of the 3rd eigenfunctions (eigenvectors) of $\Ab_1,$ $\Ab$ with bandwidth constructed using Algorithm \ref{alg:choice} and  $\mathring{\mathbf{A}}$ with $h=35.$ Here $p=200, n=400$ and the random variables are Gaussian. For the legend, Clean means $\Ab_1,$ Adap means $\Ab$ constructed using Algorithm \ref{alg:choice} and Zeoring means $\mathring{\mathbf{A}}$ with $h=35.$ We can see that the zeroing-out trick outperforms our method when $0.5<\alpha \leq 1$ when the bandwidth is chosen as $h=35.$} \label{fig_zero}
\end{figure*}

\subsubsection{Mor remark on $d>1$}\label{sec_generalizationrevision2} 
We continue the discussion in Remark \ref{rmk_multipled} and provide some simulations with $d=2$. We assume that $\alpha_1 \geq \alpha_2 \geq 0$, and discuss two cases with different $\alpha_1$ and $\alpha_2.$ 

First, we discuss the setting when the bulk eigenvalues are governed by the MP law, i.e., in the region $0 \leq \alpha_2 \leq \alpha_1<1.$  Recall that the shifted MP law $\nu_{\lambda}$ defined in (\ref{eq_nudefinition}) depends on the signal level via $\tau \equiv \tau(\lambda)$ in (\ref{eq_defntau}), and
when $\alpha<1,$ $\tau \rightarrow 2$ as $n \rightarrow \infty,$ which is independent of $\lambda$ asymptotically. We can thus always set $\lambda=0$ in (\ref{eq_defntau}) and use $\nu_0$ for the MP law as in (\ref{eq_formrigidity}) and (\ref{eq_formrigidity2}). Therefore, when $0 \leq \alpha_2 \leq \alpha_1<1,$ the bulk distribution is the same as that in (\ref{eq_formrigidity}) and (\ref{eq_formrigidity2}), which is characterized by the shifted MP law, 
\begin{equation*}
\nu_0=T_{\zeta(0)} \mu_{c_n, -2f'(2)},
\end{equation*} 
where $T$ is the shift operator defined in (\ref{eq_shiftoperator}), $\mu_{c_n, -2f'(2)}$ is the MP law defined in (\ref{eq_mp}) with  $\sigma^2$ replaced by $ -2f'(2)$, and $\zeta(0)$ is defined by inserting $\lambda=0$ (or equivalently $\tau(0)=2$) in (\ref{eq_varsigmalambda}); that is,
\begin{equation*}
\zeta(0)=f(0)+2f'(2)-f(2)\,. 
\end{equation*} 
In general, although the bulk distribution is the same for different finite $d,$ the number of outliers can vary according to $\lambda_i$, $1 \leq i \leq d.$ On the technical level, the proofs in Appendices \ref{sec_sub_subsuper} and \ref{sec_sub_slowly} follow after some minor modifications, especially in the Taylor expansion. For example, in (\ref{eq_taylorwx}), the key parameter $\tau$ should be defined as $\tau=2((\lambda_1+\lambda_2)/p+1)$ when $d=2$.  For an illustration, in Figure \ref{fig_d1d2icasei}, we show how the bulk eigenvalues are asymptotically identical for the setting $d=1$ and $d=2$ when the exponents are less than one for various settings of $c_n=0.5,1,2$.  

\begin{figure*}[!ht]
\includegraphics[width=4cm]{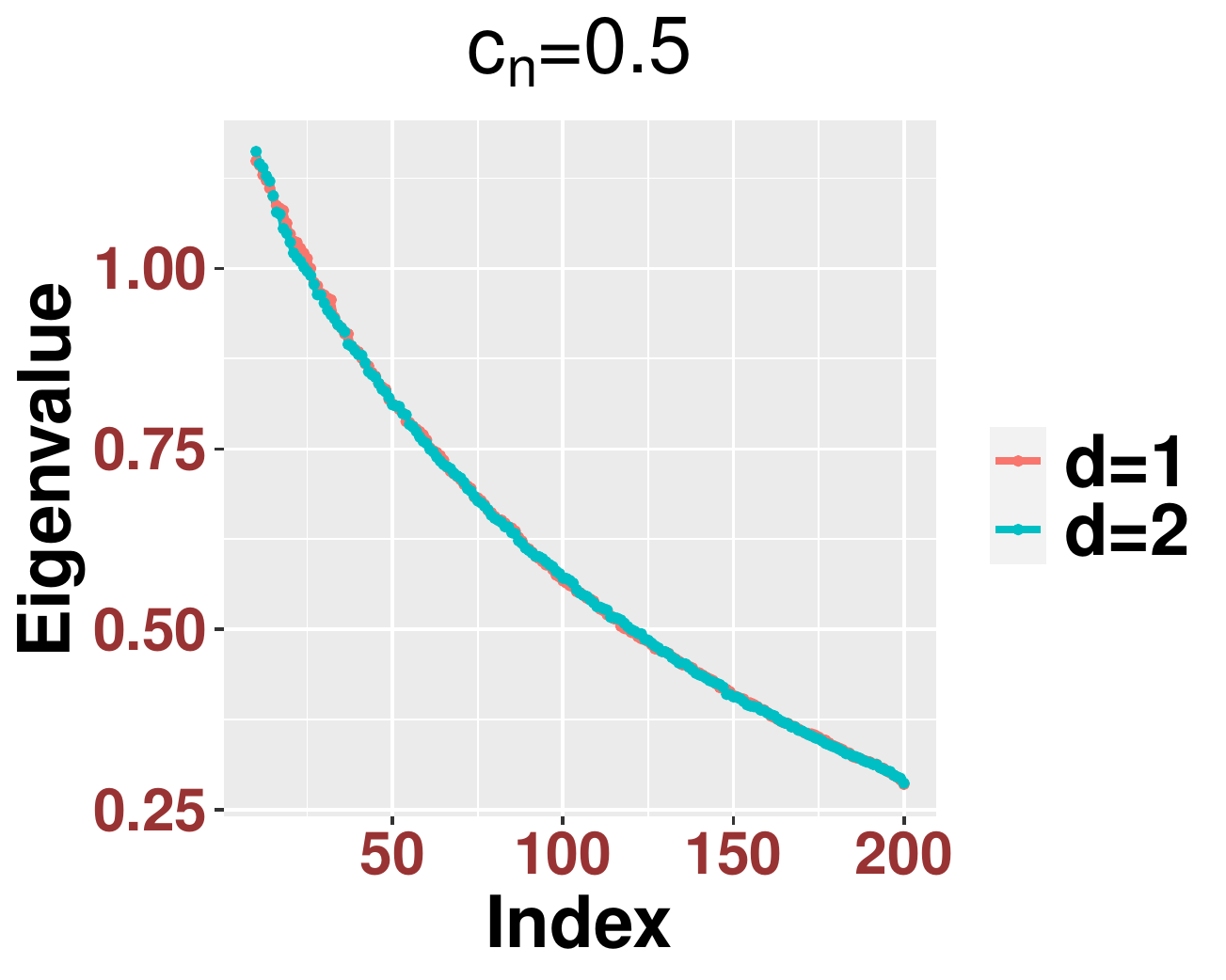}
\includegraphics[width=4cm]{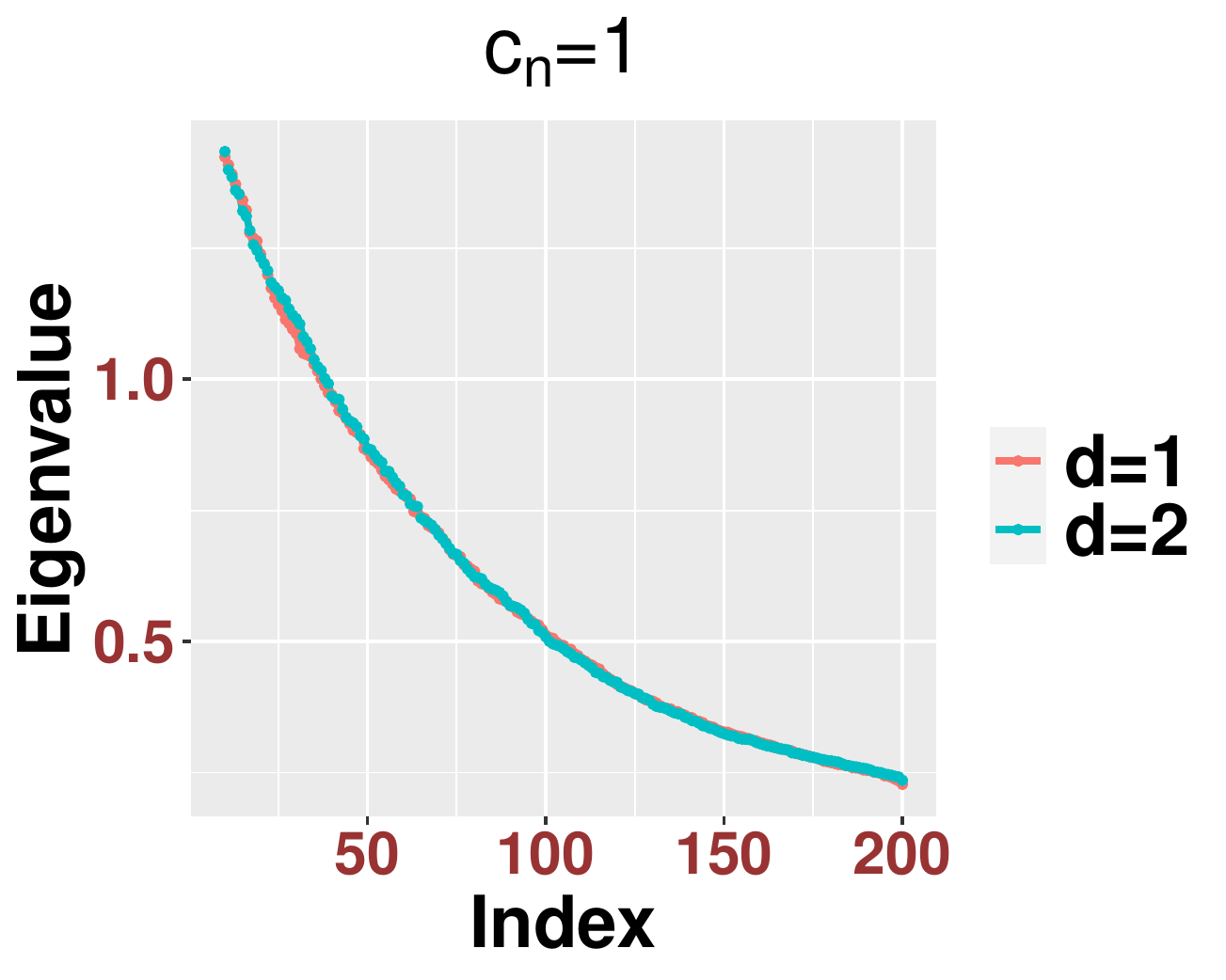}
	\includegraphics[width=4cm]{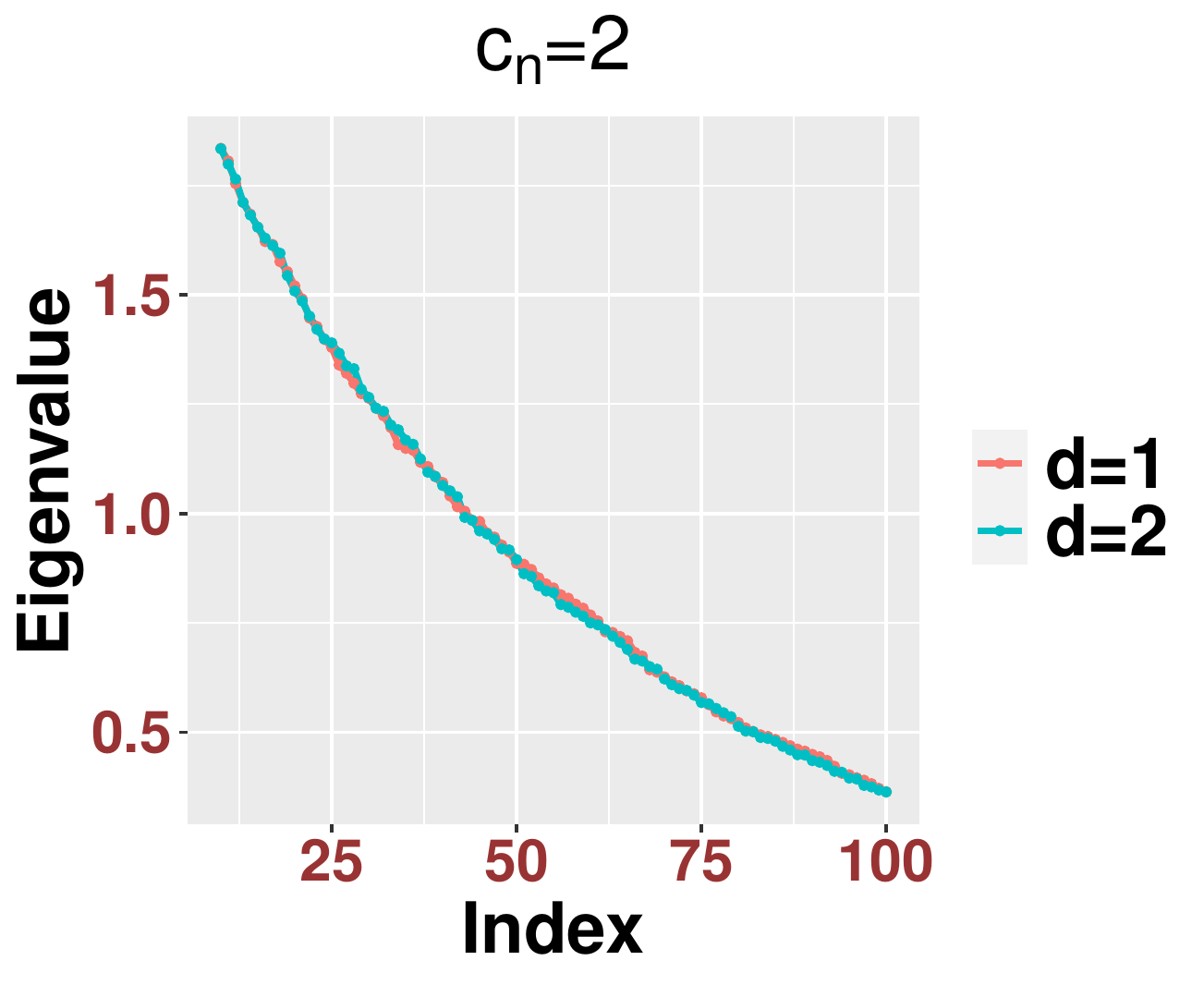}
	\caption{Bulk eigenvalues of $\Wb$ for different intrinsic dimension $d\in \mathbb{N}$ in the low SNR region. For the construction of $\Wb,$ we consider $h=p.$ We compare the bulk eigenvalues for two kernel affinity matrices constructed from two point clouds with different values of $d$. In the simulations, we use the Gaussian random vectors with covariance matrix (\ref{eq_sigdefn}). Specifically, for the setting $d=1,$ we choose $\lambda_1=p^{0.4}, \lambda_2=\cdots=\lambda_p=1,$ and for the setting $d=2,$ we choose $\lambda_1=p^{0.4}, \lambda_2=p^{0.1}, \lambda_3=\cdots\lambda_p=1.$ We also consider the comparison for three different settings of $c_n=n/p$ with $n=200.$ We can see that the bulk eigenvalues for these two settings are fairly close to each other. For the bulk eigenvalues, for definiteness and simplicity, we start from $\lambda_{10}(\Wb).$} \label{fig_d1d2icasei}
\end{figure*}

Second, we discuss the region when $\alpha_2 \geq 1$. On the one hand, as in Theorem \ref{thm_informativeregion}, we can show that the spectrum of $\mathbf{W}$ is close to those of matrices defined in (\ref{eq_wba1}) and (\ref{eq_defntildewa1}) that depend on the clean signal part $\mathbf{W}_1$. As in the case when $d=1$, the spectrum of $\mathbf{W}_1$ may not follow the MP law and depends on both $\lambda_1$ and $\lambda_2$ and the chosen bandwidth. This dependence suggests that the spectrum of $\mathbf{W}$ might be different from that when $d=1$. In Figure \ref{fig_d1d2caseii}, we show numerically how the bulk under the setup $d=2$ and $h=p$ is different from that when $d=1$ and $h=p$. 

Third, when $\alpha_1 \geq 1 >\alpha_2 \geq 0,$ the spectrum of $\mathbf{W}$ will be close to some matrices that depend only on $\lambda_1.$ In this case, the spectrum of $\mathbf{W}$ is close to that of $\mathbf{W}_1$ when $d=1$ and the signal strength is $\lambda_1.$ See Figure \ref{fig_d1d2caseiii} for an illustration, where we see that the bulks are fairly close to each other, while they may not necessarily follow the MP law.

\begin{figure*}[!ht]
\includegraphics[width=4cm]{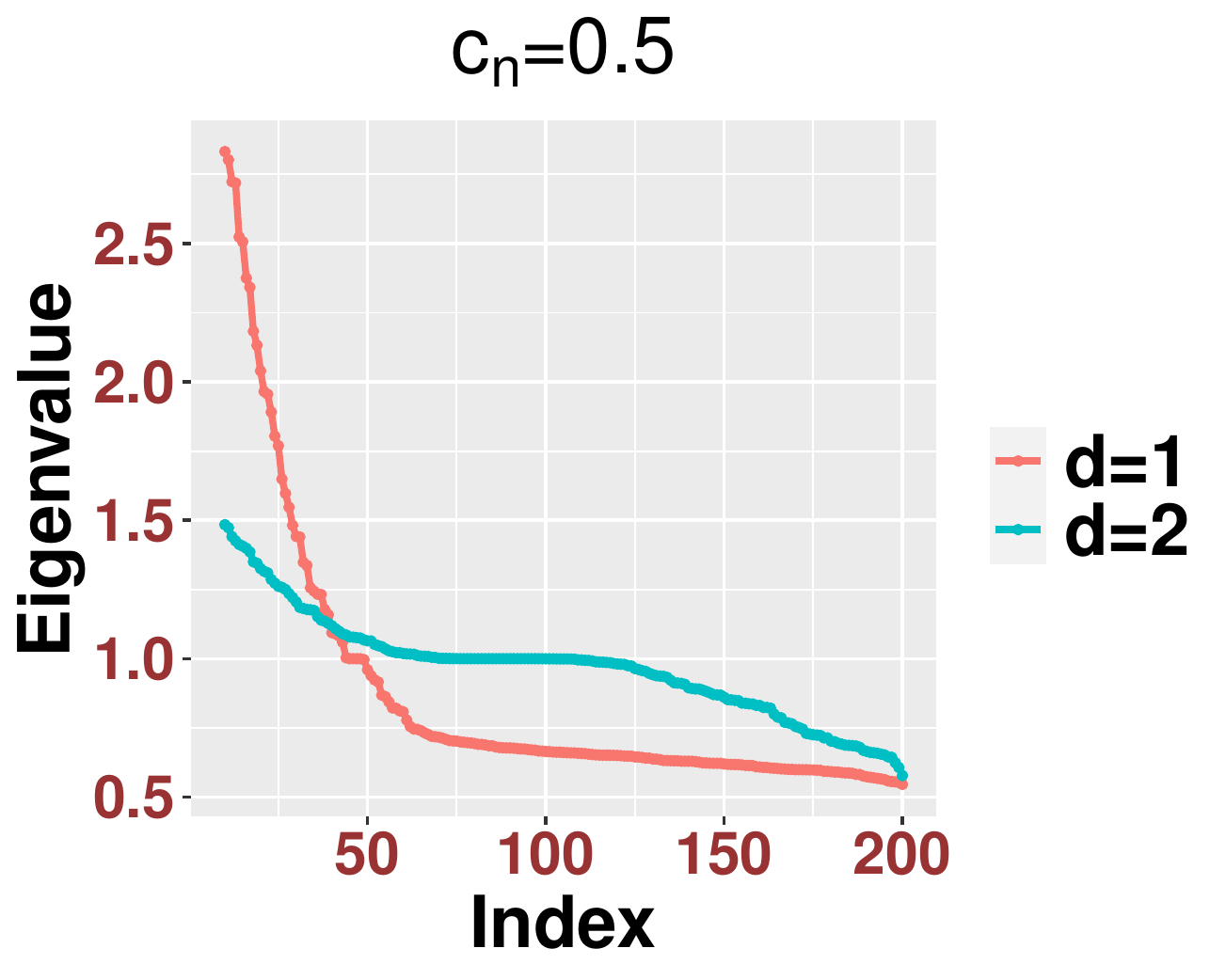}
\includegraphics[width=4cm]{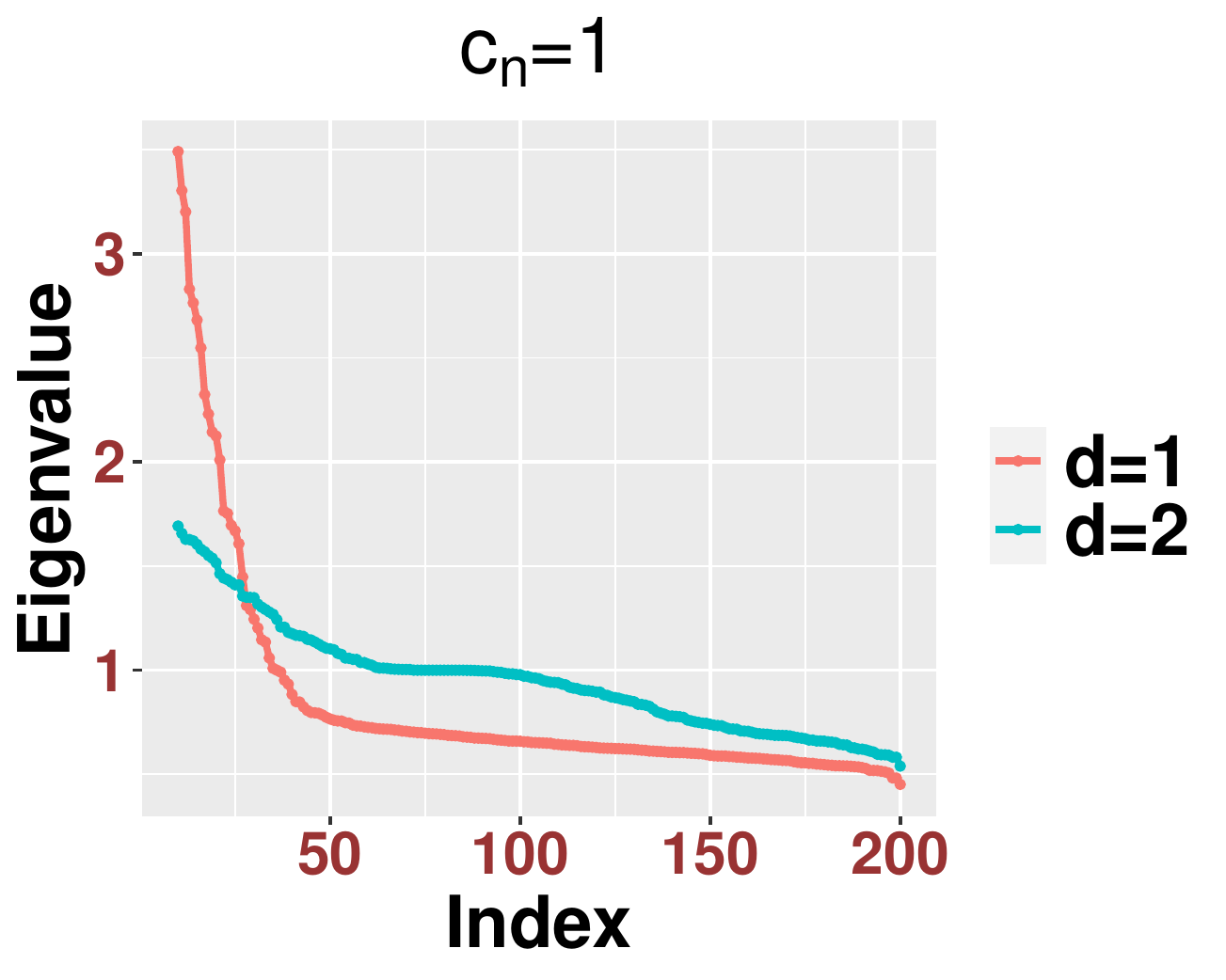}
	\includegraphics[width=4cm]{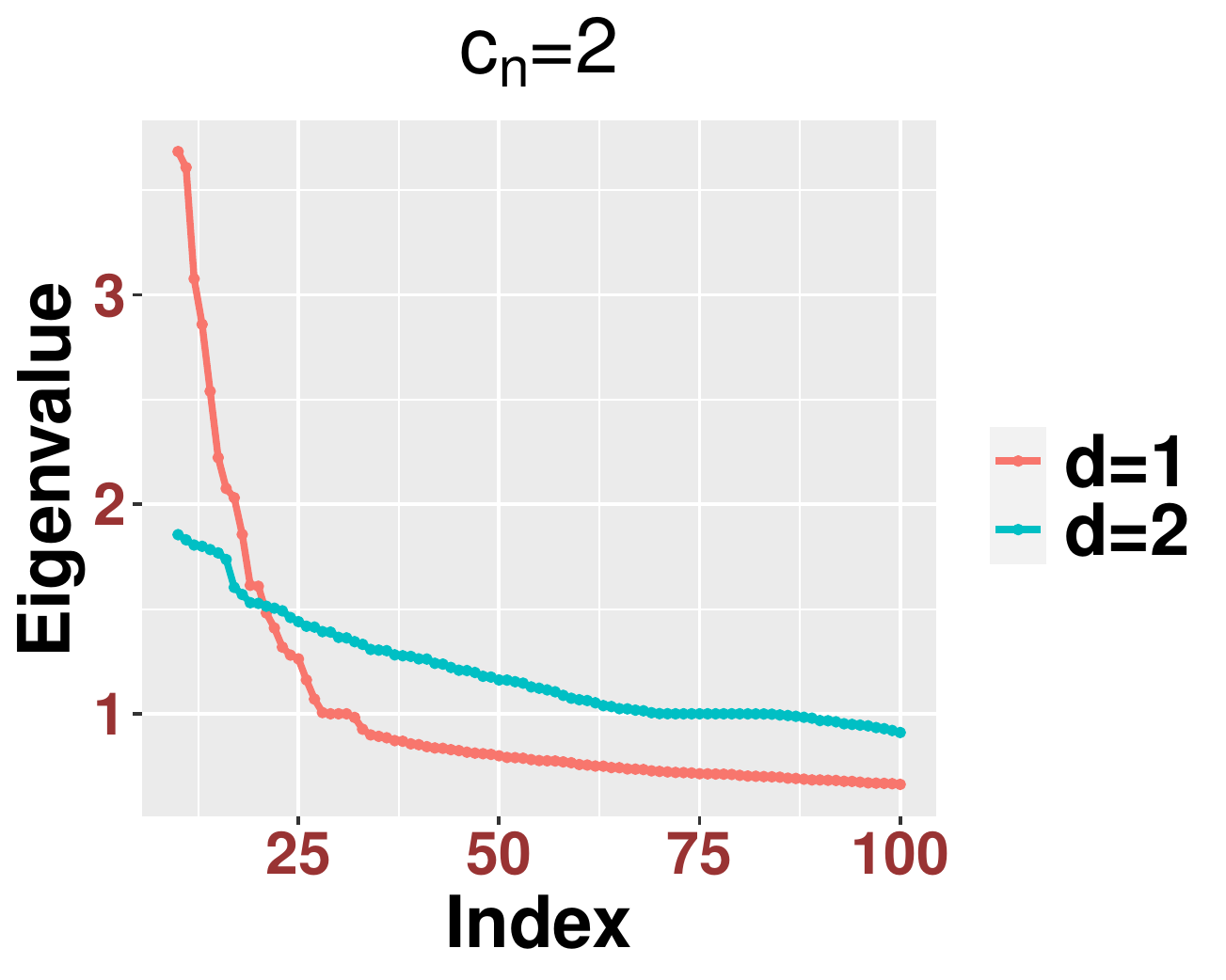}
	\caption{Comparison of bulk eigenvalues of $\Wb$ for different intrinsic dimensions, $d=1$ and $d=2$. When $d=2$, we have two large signals; that is, $\alpha_1\geq \alpha_2\geq 1$.  For the construction of $\Wb,$ we consider $h=p.$ In this simulation, we use the Gaussian random vectors with the covariance matrix (\ref{eq_sigdefn}). When $d=1,$ we choose $\lambda_1=p^{2}, \lambda_2=\cdots=\lambda_p=1$; when $d=2,$ we choose $\lambda_1=p^{2}, \lambda_2=p^{1.6}, \lambda_3=\cdots\lambda_p=1.$ We consider the comparison for three different settings of $c_n=n/p$ with $n=200$. For a better visualization of the bulk eigenvalues, we start from $\lambda_{10}(\Wb).$ We can see that the bulk eigenvalues for these two settings are different and not necessarily follow the MP law. } \label{fig_d1d2caseii}
\end{figure*}

  \begin{figure*}[!ht]
\includegraphics[width=4cm]{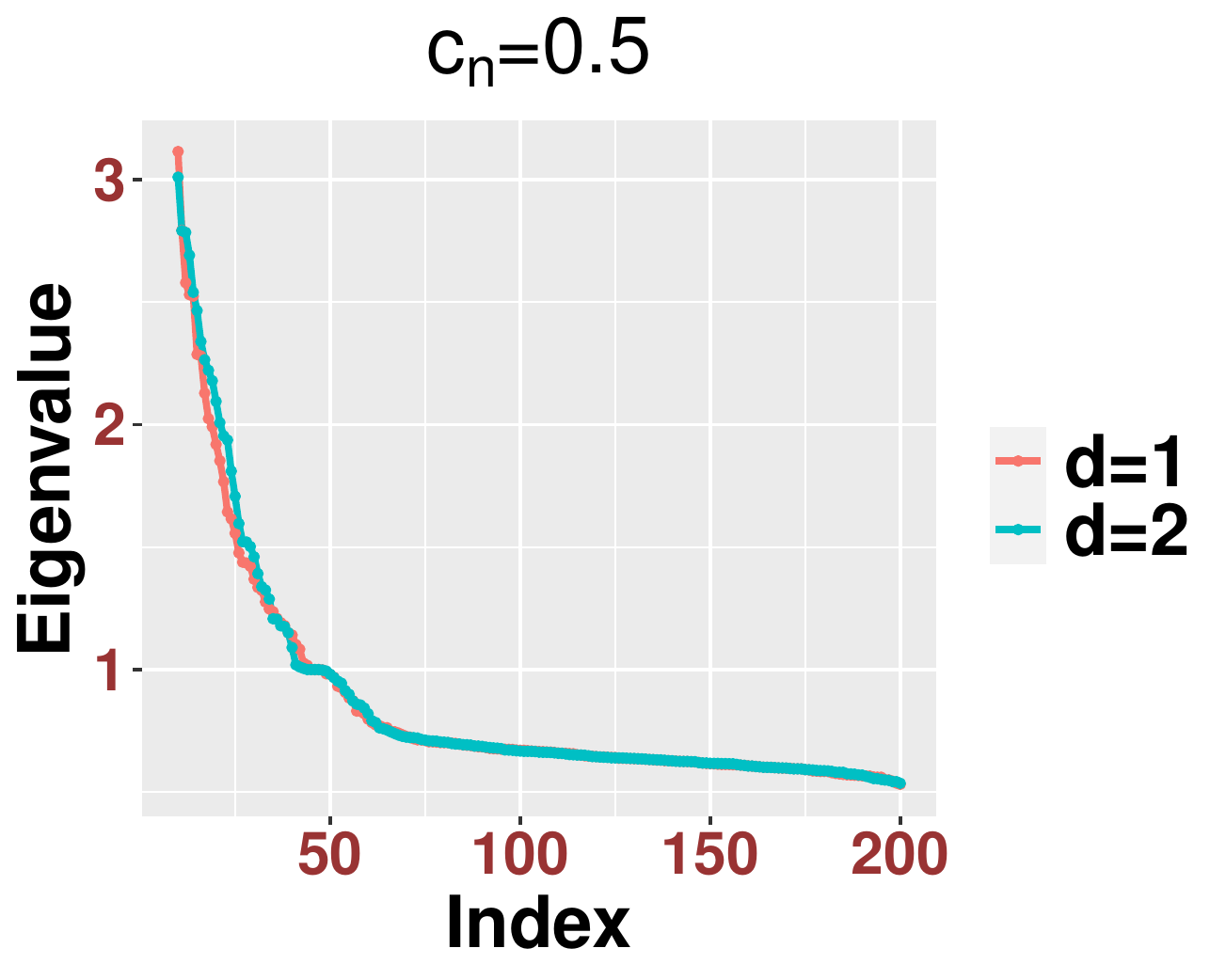}
\includegraphics[width=4cm]{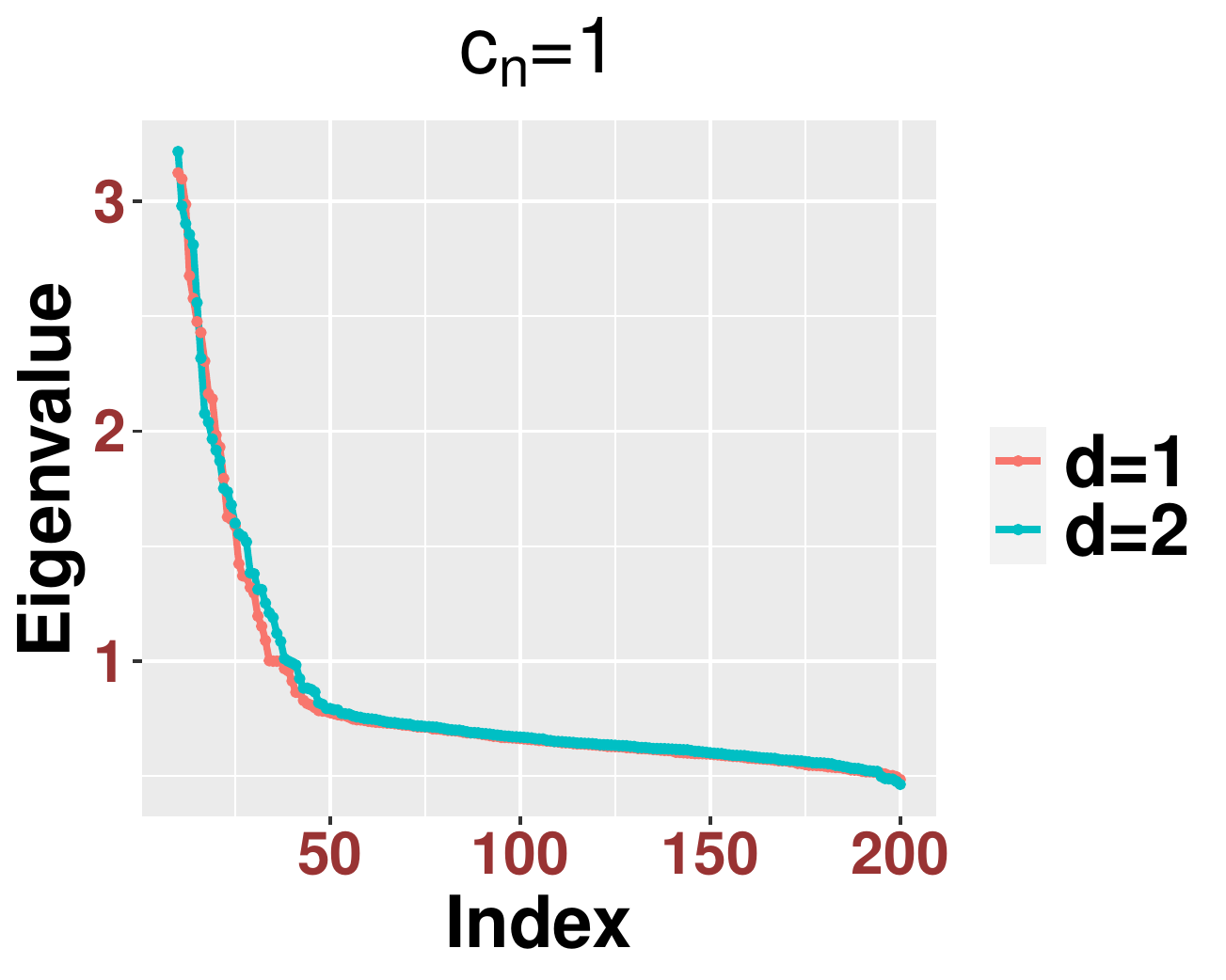}
	\includegraphics[width=4cm]{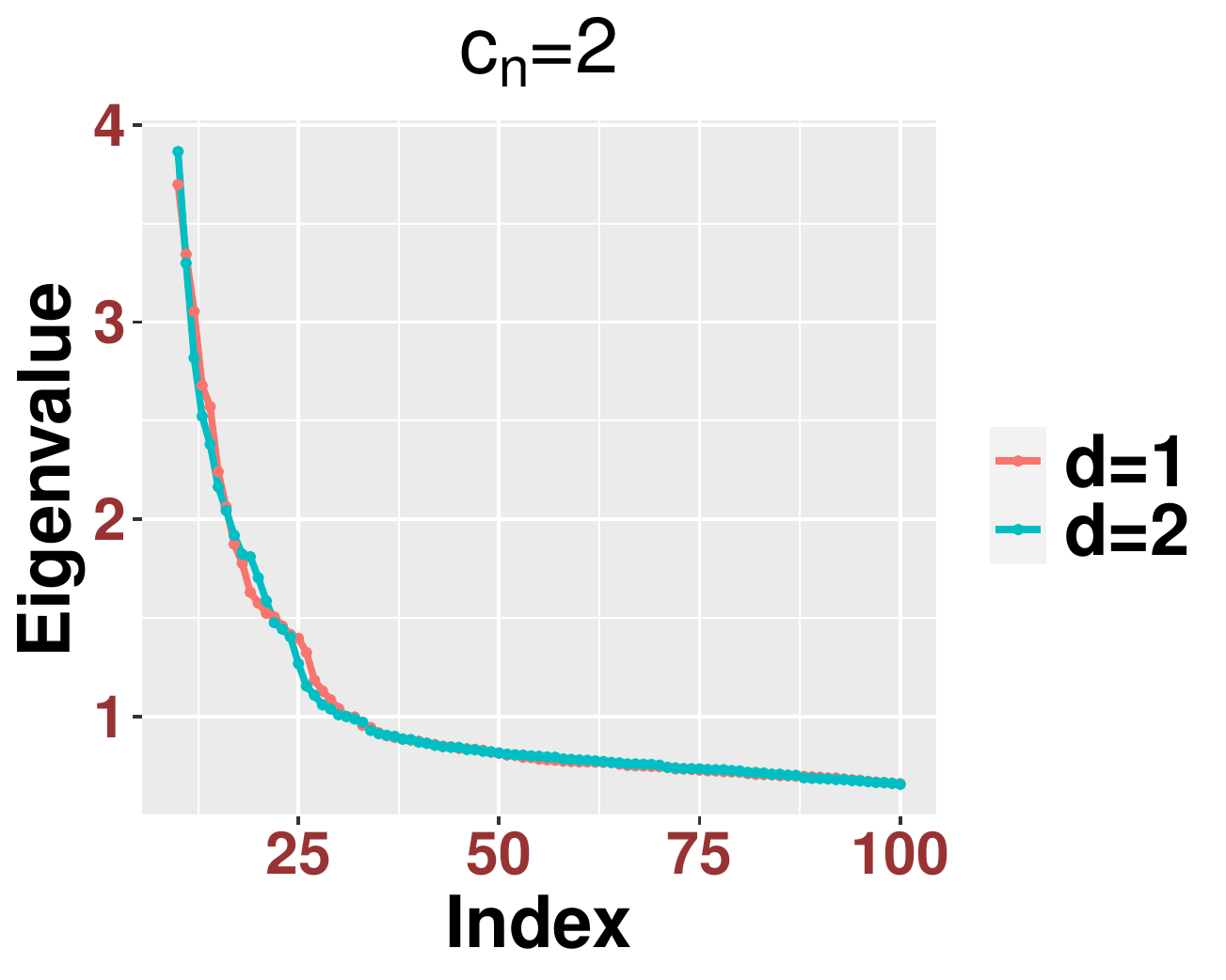}
	\caption{Comparison of bulk eigenvalues of $\Wb$ for different intrinsic dimensions, $d=1$ and $d=2$. When $d=2$, we have one large signal and one small signal; that is, $\alpha_1 \geq 1 >\alpha_2 \geq 0$. For the construction of $\Wb,$ we consider $h=p.$  In the simulations, we use the Gaussian random vectors with the covariance matrix (\ref{eq_sigdefn}). When $d=1,$ we choose $\lambda_1=p^{2}, \lambda_2=\cdots=\lambda_p=1$; when $d=2,$ we choose $\lambda_1=p^{2}, \lambda_2=p^{0.4}, \lambda_3=\cdots\lambda_p=1.$ We consider three different settings of $c_n=n/p$ with $n=200.$ For a better visualization of the bulk eigenvalues, we start from $\lambda_{10}(\Wb).$  We see that the bulk eigenvalues are fairly close to each other.} \label{fig_d1d2caseiii}
\end{figure*}

\section{Proof of main results in Section \ref{section main result}}\label{sec_proofs2} 

In this section, we prove the main results in Section \ref{section main result}.

\subsection{Proof of Theorem \ref{thm_affinity matrix}}\label{sec_sub_subsuper}

Recall $\tau$ defined in (\ref{eq_defntau}).  Since the proof holds for general kernel function described in Remark \ref{rmk_generalkernel}, we will carry out our analysis with such general kernel function $f(x)$. 

\begin{proof}

We start from simplifying $\Wb$. 
Denote $\delta_{ij}$ to be the Kronecker delta. By the Taylor expansion, when $i \neq j$, we have that
\begin{align} \label{eq_taylorwx}
\Wb(i,j)=&f(\tau)+f'(\tau)\left[\Ob_x(i,j)-2 \Pb_x(i,j)\right] \nonumber \\
&+\frac{f^{(2)}(\tau)}{2}\left[\Ob_x(i,j)-2\Pb_x(i,j)\right]^2 \nonumber \\
&+\frac{f^{(3)}(\xi_x(i,j))}{6}\left[\Ob_x(i,j)-2\Pb_x(i,j) \right]^3,
\end{align} 
where $\Pb_x(i,j)$ is defined in \eqref{eq_widetildeGb}, $\Ob_x(i,j)$ is defined as 
\begin{equation}\label{eq_defnop}
\Ob_x(i,j)=(1-\delta_{ij})\left(\frac{\| \xb_i \|_2^2+\| \xb_j\|_2^2}{p}-\tau \right), 
\end{equation}
and $\xi_x(i,j)$ is some value between $(\| \xb_i \|_2^2+\| \xb_j \|_2^2)/p$ and $\tau=2(\lambda/p+1)$ is defined in \eqref{eq_defntau}. 
Consequently, we find that $\Wb$ can be rewritten as 
\begin{align}\label{eq_matrixapproximation}
\Wb=&\, f(\tau) \mathbf{1} \mathbf{1}^\top-\frac{2f'(\tau)}{p} \Xb^\top \Xb  + \varsigma(\lambda) \mathbf{I}\nonumber \\ 
& +\Big[f'(\tau) \Ob_x+\frac{f^{(2)}(\tau)}{2} \Hb_x 
+\frac{f^{(3)}(\xi_x(i,j))}{6} \Qb_x \Big] \nonumber \\
& +2f'(\tau)\left(\frac{1}{p}\text{diag}(\|\xb_1\|^2,\ldots,\|\xb_n\|^2)-1\right),
\end{align}
where we used the shorthand notations 
\begin{align*}
&\Hb_x(i,j)= \left[\Ob_x(i,j)-2\Pb_x(i,j)\right]^2 \\  & \Qb_x(i,j)=\left[\Ob_x(i,j)-2\Pb_x(i,j) \right]^3.
\end{align*}
With this expansion, we immediately obtain  
\begin{align}\label{eq_wx005}
\Wb=& \,f(\tau) \mathbf{1} \mathbf{1}^\top-\frac{2f'(\tau)}{p} \mathbf{X}^\top \Xb+\varsigma(\lambda) \mathbf{I}+f'(\tau) \Ob_x \nonumber \\
& +\frac{f^{(2)}(\tau)}{2} \Hb_x+O_{\prec}(n^{-1/2})\,,  
\end{align}
where the error quantified by $O_{\prec}$ is in the operator norm, the term $\frac{1}{p}\text{diag}(\|\xb_1\|^2,\ldots,\|\xb_n\|^2)-1$ is controlled by Lemma \ref{lem_concentrationslowlydivergent}, and the term $\Qb_x$ is controlled by the facts that $\Ob_x(i,j) \prec (1-\delta_{ij}) n^{-1/2}$, $\Pb_x(i,j) \prec (1-\delta_{ij}) n^{-1/2}$ and the Gershgorin circle theorem. 

Next, we control $\Ob_x$ and $\Hb_x$. Since $\Ob_x =\mathbf{1} \Phi^\top+\Phi \mathbf{1}^\top- 2\text{diag}\{\phi_1, \cdots, \phi_n\}$, we could approximate $\Ob_x$ by $\mathbf{1} \Phi^\top+\Phi \mathbf{1}^\top$ via
\begin{align}\label{eq_oxapproximation}
\| \Ob_x -(\mathbf{1} \Phi^\top+\Phi \mathbf{1}^\top)\|&=\| 2\text{diag}\{\phi_1, \cdots, \phi_n\} \|  \prec n^{-1/2}\,,
\end{align} 
where $\Phi=(\phi_1,\ldots, \phi_n)$ with $\phi_i=\frac{1}{p}\norm{\xb_i}_2^2-(1+\lambda/p)$, $i=1,2,\cdots, n$, is defined in \eqref{eq_kdtau}, and the last bound comes from Lemma \ref{lem_concentrationslowlydivergent}. 

For $\Hb_x$, we write $\Hb_x(i,j)=[\Ob_x(i,j)-2\Pb_x(i,j)]^2=\Ob_x(i,j)^2+4\Pb_x(i,j)^2-4\Ob_x(i,j)\Pb_x(i,j)$ and focus on the term $\Ob_x(i,j) \Pb_x(i,j).$  
Since $\mathbf{1} \Phi^\top\circ \Pb_x=\text{diag}\{\phi_1, \cdots, \phi_n\}\Pb_x$, we have (also see the proof of \cite[Theorem 2.2]{elkaroui2010})
\begin{align}\label{eq_defncx}
\Ob_x\circ \Pb_x  =\,&\Pb_x\text{diag}\{\phi_1, \cdots, \phi_n\} +\text{diag}\{\phi_1, \cdots, \phi_n\}\Pb_x\,.
\end{align} 
Moreover, construct $\Pb_y$ from $\mathcal{Y}$ in the same way as \eqref{eq_widetildeGb} and write 
\begin{equation}\label{eq_fgdefinition}
\Pb_x-\Pb_y=\frac{1}{p} \left(\bm{z} \bm{z}^\top+\bm{z} \bm{y}^\top+\bm{y} \bm{z}^\top\right)\circ(\boldsymbol{1}\boldsymbol{1}^\top-\Ib_n)\,, 
\end{equation}
where $\bm{y}:=(\yb_{11},\cdots, \yb_{n1})^\top, \bm{z}:=(\zb_{11},\cdots, \zb_{n1})^\top.$   We find that 
\begin{equation*}
\|\Pb_x-\Pb_y \| \leq \frac{1}{p}(\bm{z}^\top \bm{z}+2 \bm{z}^\top \bm{y}+\|(\bm{z} \bm{z}^\top+\bm{z} \bm{y}^\top+\bm{y} \bm{z}^\top)\circ \Ib_n\|). 
\end{equation*}
Note that $\|\bm{z} \bm{z}^\top+\bm{z} \bm{y}^\top+\bm{y} \bm{z}^\top\|/p\leq (\|\bm z+\bm y\|^2+\|\bm y\|^2)/p\leq (2\|\bm z\|^2+3\|\bm y\|^2)/p\prec \lambda+1$ by (\ref{eq_finalcontrolimplication1}) and (\ref{eq_finalcontrolimplicationdivergent1}), and $p^{-1} \|(\bm{z} \bm{z}^\top+\bm{z} \bm{y}^\top+\bm{y} \bm{z}^\top)\circ \Ib_n\|\prec (\lambda+1)/p$ by the fact that $|\bz_{i1}|^2\prec \lambda$ and $|\bz_{i1}\by_{i1}|\prec \sqrt{\lambda}$, so we have $\| \Fb_g \| \prec \lambda+1. $ By Lemma \ref{lem_gramsummary} and (\ref{eq_finalcontrolimplication2}), we find that $\| \Pb_y \| \prec 1$. On the other hand, by the fact that $\|\Gb_x\|=\|(\Zb^\top\Zb+\Zb^\top \Yb+\Yb^\top\Zb +\Yb^\top\Yb)/p\|\leq (\|\bm z\|^2+2\|\bm z\|\|\bm y\|)/p+\|\Yb^\top\Yb\|/p\prec \lambda+1$ and \eqref{eq_finalcontrolimplicationdivergent1}, we have
\begin{equation}\label{norm bound of Px alpha<1}
\|\Pb_x\| \prec \lambda+1\,. 
\end{equation}
By (\ref{eq_oxapproximation}), we conclude that 
\begin{equation*}
\| \Ob_x\circ \Pb_x \| \prec \frac{\lambda+1}{\sqrt{n}}. 
\end{equation*}
With the above preparation, 
$\Wb$ is reduced from \eqref{eq_wx005} to  
\begin{align}\label{eq_wx005expansionfinal}
\Wb=\,&f(\tau) \mathbf{1} \mathbf{1}^\top-\frac{2f'(\tau)}{p} \Xb^\top \Xb+\varsigma(\lambda) \mathbf{I}_n+f'(\tau) \Ob_x \nonumber \\
&+\frac{f^{(2)}(\tau)}{2} \Ob_x \circ \Ob_x 
+2f^{(2)}(\tau) \Pb_x \circ \Pb_x \nonumber \\
&+O_{\prec}\left(\frac{\lambda+1}{\sqrt{n}} \right).
\end{align} 
We further simplify $\Wb$. Let $\Pb_y$ be constructed in the same way as $\Pb_x$ in (\ref{eq_defnop}) using the point cloud $\mathcal{Y}.$ By Lemma \ref{lem_hardmard for Gx}, $\Pb_x \circ \Pb_x$ can be replaced by $\Pb_y \circ \Pb_y$.
Moreover, by a discussion similar to (\ref{eq_oxapproximation}), we can control $\Ob_x \circ \Ob_x$ by 
\begin{align*}
\|& (\mathbf{1} \Phi^\top+\Phi \mathbf{1}^\top) \circ (\mathbf{1} \Phi^\top+\Phi \mathbf{1}^\top)-\Ob_x \circ \Ob_x \|  =\|2 \text{diag}\{\phi_1^2,\cdots, \phi_n^2\} \| \prec n^{-1}.
\end{align*}
 Combining all the above results,  and applying Lemma \ref{lem_hardmard}, we have simplified $\Wb$ as
 \begin{align}\label{eq_wxdecompositionslowlydivergent}
\Wb= \mathsf{W}
 +\varsigma(\lambda) \mathbf{I}_n+O\left(n^{\epsilon}\frac{\lambda+1}{\sqrt{n}}+n^{-1/4}\right)\,,
\end{align}
where
\begin{align}\label{eq_reddddd}
\mathsf{W}:=\,&(f(\tau)+2f^{(2)}(2)p^{-1}) \mathbf{1} \mathbf{1}^\top-\frac{2f'(\tau)}{p} \Xb^\top \Xb \nonumber \\
&+f'(\tau)(\mathbf{1} \Phi^\top+\Phi \mathbf{1}^\top) +\frac{f^{(2)}(\tau)}{2} (\mathbf{1} \Phi^\top+\Phi \mathbf{1}^\top) \circ (\mathbf{1} \Phi^\top+\Phi \mathbf{1}^\top)\,,
\end{align}
with probability at least $1-O(n^{-1/2})$ for some small $\epsilon>0$,

With the above simplification, we discuss the outlying eigenvalues. 
 Invoking (\ref{eq_wxdecompositionslowlydivergent}), since $\varsigma(\lambda) \mathbf{I}_n$ is simply an isotropic shift,  the outlying eigenvalues of $\Wb$ can only come from $ \mathsf{W}$. 
Notice that by the identity  $\ab \mathbf{b}^\top \circ \ub \vb^\top=(\ab \circ \ub)(\mathbf{b} \circ \vb)^\top$, we find that  
\begin{align}\label{eq_hardmardexpansion}
(\mathbf{1} \Phi^\top &+\Phi \mathbf{1}^\top) \circ (\mathbf{1} \Phi^\top+\Phi \mathbf{1}^\top) \nonumber \\
& =\mathbf{1}(\Phi \circ \Phi)^\top+(\Phi \circ \Phi) \mathbf{1}^\top+2 \Phi \Phi^\top\,,
\end{align}
which leads to a rearrangement of $\mathsf{W}$ to
\begin{align}
\mathsf{W}=\,&\mathbf{1}\Big[\frac{1}{2}(f(\tau)+2f^{(2)}(2)p^{-1})\mathbf{1}^\top+f'(\tau)\Phi^\top\nonumber\\
&\qquad+\frac{f^{(2)}(\tau)}{2}(\Phi \circ \Phi)^\top \Big]+\,\Big[\frac{1}{2}(f(\tau)+2f^{(2)}(2)p^{-1})\mathbf{1}+f'(\tau)\Phi\nonumber\\
&\qquad+\frac{f^{(2)}(\tau)}{2}(\Phi \circ \Phi) \Big]\mathbf{1}^\top +f^{(2)}(\tau) \Phi \Phi^\top-\frac{2f'(\tau)}{p} \Xb^\top \Xb \nonumber \\
 :=\,&\mathsf{O}-\frac{2f'(\tau)}{p} \Xb^\top \Xb.
\label{eq_spikedpartofmatrix}
\end{align}
Note that $\mathsf{O}$ is of rank at most three since the first two terms of (\ref{eq_spikedpartofmatrix}) form a matrix of rank at most $2$ and $\Phi \Phi^\top$ is a rank-one matrix with the spectral norm of order $O(\sqrt{n})$. With (\ref{eq_wxdecompositionslowlydivergent}), we can therefore conclude our proof using Lemma \ref{lem_gramsummary}.

\end{proof}

\subsection{Proof of Theorem \ref{lem_affinity_slowly}}  \label{sec_sub_slowly}

Since the proof in this subsection hold for general kernel function described in Remark \ref{rmk_generalkernel}, we will carry out our analysis with such general kernel function $f(x)$. 
Note that in this case, $\tau$ defined in (\ref{eq_defntau}) is still bounded from above. So for a fixed $K\in \mathbb{N}$, the first $K$ coefficients in the Taylor expansion can be well controlled under the smoothness assumption, i.e., $f^{(k)}(\tau) \asymp 1$, for $k=1,2,\ldots,K$ for $K\in \mathbb{N}$. However, Lemma \ref{lem_karoui} is invalid since $\alpha>0$. 
On the other hand, in this region, although the concentration inequality (c.f. Lemma \ref{lem_concentrationslowlydivergent}) still works, its rate becomes worse as $\lambda$ becomes larger. In \cite{elkaroui20102}, the author only needs to conduct the Taylor expansion up to the third order since $\lambda$ is fixed. In our setup, to handle the divergent $\lambda$, we need a high order expansion that is adaptive to $\lambda.$ 
Thus, due to the nature of convergence rate in Lemma \ref{lem_concentrationslowlydivergent}, we will employ different proof strategies for the cases $0<\alpha<0.5$ and $0.5 \leq \alpha<1.$ When $\alpha$ satisfies $0<\alpha<0.5$,  the proof of Theorem \ref{thm_affinity matrix} still holds. When $\alpha$ satisfies $0.5 \leq \alpha<1$, we need a higher order Taylor expansion to control the convergence.  
This comes from the second term of (\ref{eq_finalcontrolimplicationdivergent1}), where the concentration inequalities regarding $\xb_i^\top \xb_j$, where $i \neq j$, have different upper bounds with different $\alpha$. 

\begin{proof}[\bf Proof of case (1), $0<\alpha < 0.5$]

 By (\ref{eq_taylorwx}) and Lemma \ref{lem_concentrationslowlydivergent}, we find that when $i \neq j$, 
\begin{align*}
\Wb(i,j)=&\,f(\tau)+f'(\tau)\left[\Ob_x(i,j)-2 \Pb_x(i,j)\right]\\
&+\frac{f^{(2)}(\tau)}{2}\left[\Ob_x(i,j)-2\Pb_x(i,j)\right]^2+O_{\prec}(n^{-3/2}),
\end{align*}
{where we used the fact that $f^{(3)}(\xi_x(i,j))$ is bounded.} By a discussion similar to (\ref{eq_matrixapproximation}) and the Gershgorin circle theorem, we find that (\ref{eq_wx005}) also holds true. The rest of the proof follows lines of the proof of Theorem \ref{thm_affinity matrix} using Lemmas \ref{lem_concentrationslowlydivergent} and \ref{lem_gramsummary}. We omit the details here. 

\end{proof}

\begin{proof}[\bf Proof of Case (2), $0.5 \leq \alpha < 1$] For simplicity, we introduce 
\begin{equation}\label{Definition:Lx}
\Lb_x:=\Ob_x-\Pb_x.
\end{equation}
By Lemma \ref{lem_concentrationslowlydivergent} and notations defined in \eqref{eq_defnop}, we have
\begin{equation}\label{Control:OxPx bound}
|\Pb_x(i,j)|=O_\prec(\lambda/n)\ \mbox{ and }\ |\Ob_x(i,j)|=O_\prec(\lambda/n)\,.
\end{equation}
By the Taylor expansion, when $i \neq j$, we have 
\begin{align}\label{eq_wbx051expansion}
\Wb(i,j)=&\,\sum_{k=0}^{\fd-1} \frac{f^{(k)}(\tau)}{k!}\Lb_x(i,j)^k +\frac{f^{(\fd)}(\xi_x(i,j))}{\fd!}\Lb_x(i,j)^\fd\,,
\end{align}
where $\fd$ is defined in (\ref{eq_defnd}) and $\xi_x(i,j)$ is some value between $(\| \xb_i \|_2^2+\| \xb_j \|_2^2)/p$ and $\tau.$  
Consider $\widetilde{\Wb},\,\Rb_\fd\in \mathbb{R}^{n\times n}$ defined as
\begin{align*}
\widetilde{\Wb}(i,j):=\sum_{k=3}^{\mathfrak{d}-1} \frac{f^{(k)}(\tau)\Lb_x(i,j)^k}{k!}\,,\\
\Rb_\fd(i,j)=\frac{f^{(\fd)}(\xi_x(i,j))}{\fd!}\Lb_x(i,j)^\fd, 
\end{align*}
so that $\Wb=\sum_{k=0}^{2} \frac{f^{(k)}(\tau)}{k!}\Lb_x(i,j)^k+\widetilde{\Wb}+\Rb_\fd$. We start from claiming that 
\begin{equation}\label{eq_errorbound051}
\left| \Rb_\fd(i,j) \right| \prec p^{\mathcal{B}(\alpha)-1}\,, 
\end{equation}
where $\mathcal{B}(\alpha)={(\alpha-1)\left\lceil \frac{1}{1-\alpha} \right\rceil +\alpha<0}$ is defined in (\ref{eq_defnmathcalbalpha}).
To see \eqref{eq_errorbound051}, we use (\ref{eq_finalcontrolimplicationdivergent1}) and the fact that $d$ is finite to get
\begin{equation*}
\Lb_x(i,j)^\fd \prec n^{\fd(-1+\alpha)}=n^{(\alpha-1)\left\lceil \frac{1}{1-\alpha} \right\rceil+\alpha-1}=n^{\mathcal{B}(\alpha)-1}\,.
\end{equation*}
Together with (\ref{eq_wbx051expansion}), by the Gershgorin circle theorem, we have 
\begin{equation*}
\|\Rb_\fd\|=\left\| \Wb-\sum_{k=0}^{\fd-1} \frac{f^{(k)}(\tau)}{k!}\Lb_x^{\circ k} \right\| \prec n^{\mathcal{B}(\alpha)},
\end{equation*}
where we set
\begin{equation*}
\Lb_x^{\circ k}:= \underbrace{\Lb_x \circ \cdots\circ  \Lb_x}_{k \ \text{times}} .
\end{equation*}
Similar definition applies to $\Ob_x^{\circ k}$ and $\Pb_x^{\circ k}.$  

Next, we study $\widetilde{\Wb}.$ Recall the definition of $\Phi$ in (\ref{eq_sho}). To simplify the notation, we denote $\mathsf{F}_1:=\mathbf{1} \Phi^\top+\Phi \mathbf{1}^\top$ and $\mathsf{F}_2:=-2\operatorname{diag}\{\phi_1, \cdots, \phi_n\}$, and obtain 
\begin{equation}\label{eq_oxexpansion}
\Ob_x=\mathsf{F}_1+\mathsf{F}_2.
\end{equation} 
Clearly, $\operatorname{rank}(\mathsf{F}_1)\leq 2$, and by Lemma \ref{lem_concentrationslowlydivergent}, we have
\begin{equation}\label{eq_boundbound}
\| \Ob_x-\mathsf{F}_1 \| \prec \frac{\lambda}{p}. 
\end{equation}
For any $3 \leq k \leq \fd-1$, in view of the expansion 
\[
\Lb_x^{\circ k}=\sum_{l=0}^k {k \choose l} \Ob_x^{\circ l} \circ (-\Pb_x)^{\circ (k-l)}\,,
\] 
below we examine $\Ob_x^{\circ l} \circ (-\Pb_x)^{\circ(k-l)}$ term by term.

First, when $l=0$, we only have the term $\Pb^{\circ k}_x$.
We focus on the discussion when $k=3$, and the same argument holds when $k>3$. We need the following identity. For any $n \times n$ matrix $\Eb$ and vectors $\ub, \vb \in \mathbb{R}^n$, 
\begin{equation}\label{ki}
\Eb \circ \ub \vb^\top=\text{diag}(\ub)\Eb \text{diag}(\vb). 
\end{equation} 
Note that the expansion in (\ref{eq_fgdefinition}) still holds, and to further simplify the notation, we denote  
\begin{align}\label{eq_keyexpansion}
\Pb_x-\Pb_y=\mathsf{T} \circ \mathsf{Q}\,,
\end{align}
where $\mathsf{T}:=\frac{1}{p}(\bm{z} \bm{y}^\top+\bm{y} \bm{z}^\top+\bm{z} \bm{z}^\top)$ and $\mathsf{Q}:=\mathbf{1} \mathbf{1}^\top -\Ib_n$. 
We thus have that
\begin{align}\label{eq_expansionorder}
\Pb_x^{\circ 3}-\Pb_{y}^{\circ 3}  &=(\Pb_x-\Pb_y) \circ (\Pb_x^{\circ 2}+\Pb_x \circ \Pb_y+\Pb_y^{\circ 2}) \nonumber \\
&=(\Pb_x-\Pb_y) \circ \Pb_x^{\circ 2}+(\Pb_x-\Pb_y) \circ \Pb_x \circ \Pb_y +(\Pb_x-\Pb_y) \circ \Pb_y^{\circ 2}. 
\end{align}
We control the first term, and the other terms can be controlled by the same way.
Since $\Pb_y=\Gb_y\circ \mathsf{Q}$ and $\Gb_y=p^{-1} \Yb^\top \Yb$, we have that
\begin{equation*}
\Pb_x^{\circ 2}=\left( \mathsf{T}+\Gb_y \right) \circ \left( \mathsf{T}+\Gb_y \right) \circ \mathsf{Q}. 
\end{equation*}
Together with (\ref{eq_keyexpansion}) and the fact that $\mathsf{Q}^{\circ 2}=\mathsf{Q}$, we obtain 
\begin{align*}
(\Pb_x-\Pb_y) \circ \Pb_x^{\circ 2} & =\left( \mathsf{T}+\Gb_y \right) \circ \left( \mathsf{T}+\Gb_y \right) \circ \mathsf{T} \circ \mathsf{Q} \\
&=\left( \mathsf{T} \circ \mathsf{T}+2 \mathsf{T} \circ \Gb_y+\Gb_y \circ \Gb_y \right) \circ \mathsf{T} \circ \mathsf{Q} \\
& =\mathsf{T}^{\circ 3} \circ \mathsf{Q}+\mathsf{R}_1+\mathsf{R}_2\,,
\end{align*}
where $\mathsf{R}_1:=2 \mathsf{T}^{\circ 2} \circ \Gb_y \circ \mathsf{Q}$ and $\mathsf{R}_2:=\Gb_y^{\circ 2}\circ \mathsf{T} \circ \mathsf{Q}$.
Now we discuss the above three terms one by one. First, using (\ref{ki}), we have 
\begin{align*}
\mathsf{T}^{\circ 3} \circ \mathsf{Q}=\mathsf{T}^{\circ 3}-\left[\mathsf{T}\circ \Ib_n\right]^3=\mathsf{T}^{\circ 3}+O_{\prec}\left( \frac{\lambda^3}{p^3} \right),
\end{align*}
where in the second equality we used the fact that $\mathsf{T}(i,i) \prec \lambda /p. $ Second, we have
\begin{align*}
\mathsf{R}_1& =2 \mathsf{T}^{\circ 2} \circ \Gb_y-2\left[\mathsf{T}\circ \Ib_n\right]^2 [\Gb_y\circ \Ib_n] \\
& =2 \mathsf{T}^{\circ 2} \circ \Gb_y+O_{\prec}\left( (\lambda/p)^2 \right).
\end{align*}
On one hand, we can use (\ref{ki}) to write
\begin{align*}
\mathsf{T}^{\circ 2} \circ \Gb_y 
 =&\,\frac{1}{p^2} \left[\operatorname{diag}(\bm{z})\right]^2 \Gb_y \left[\operatorname{diag}(\bm{z})\right]^2\\
 &+\frac{2}{p^2} \left[\operatorname{diag}(\bm{z})\right]^2 \Gb_y \left[\operatorname{diag}(\bm{z})\right]\left[\operatorname{diag}(\bm{y})\right] \\
&+\frac{2}{p^2}\left[\operatorname{diag}(\bm{z})\right]\left[\operatorname{diag}(\bm{y})\right]  \Gb_y \left[\operatorname{diag}(\bm{z})\right]^2\\
&+\frac{1}{p^2} \left[\operatorname{diag}(\bm{z})\right]^2 \Gb_y \left[\operatorname{diag}(\bm{y})\right]^2 \\
&+\frac{2}{p^2}\left[\operatorname{diag}(\bm{z})\right]\left[\operatorname{diag}(\bm{y})\right]  \Gb_y \left[\operatorname{diag}(\bm{z})\right]\left[\operatorname{diag}(\bm{y})\right] \\
& +\frac{1}{p^2} \left[\operatorname{diag}(\bm{y})\right]^2 \Gb_y \left[\operatorname{diag}(\bm{z})\right]^2. 
\end{align*}
The first term of the above equation is the leading order term which can be bounded as follows
\begin{equation*}
\left\|\frac{1}{p^2} \left[\operatorname{diag}(\bm{z})\right]^2 \Gb_y\left[\operatorname{diag}(\bm{z})\right]^2 \right\| \prec (\lambda/p)^2,
\end{equation*}
where we used the fact that $\| \Gb_y\|=O_{\prec}(1).$ The other terms can be bounded similarly so that
\begin{equation*}
\mathsf{R}_1=O_{\prec}\left( (\lambda/p)^2 \right). 
\end{equation*}
Similarly, we can control $\mathsf{R}_2.$ This shows that 
\begin{equation*}
(\Pb_x-\Pb_y) \circ \Pb_x^{\circ 2}=\mathsf{T}^{\circ 3}+O_{\prec}((\lambda/p)^2). 
\end{equation*}
Analogously, we can analyze  the other two terms in (\ref{eq_expansionorder}) and obtain that
\begin{equation}\label{eq_eee}
\Pb_x^{\circ 3}-\Pb_{y}^{\circ 3} =\mathsf{T}^{\circ 3}+\mathsf{T}^{\circ 2}+O_{\prec}(\lambda/p). 
\end{equation}
Moreover, note that $\Pb_{y}^{\circ 3}=\Pb_{y}\circ(\Pb_{y}^{\circ 2}-\mathsf{Q}/p)+\Pb_{y}\circ \mathsf{Q}/p )$, we have by Lemma \ref{lem_hardamardproductbound} that
\begin{equation*}
\left\|\Pb_{y}\circ(\Pb_{y}^{\circ 2}-\mathsf{Q}/p)\right\| \leq \max_{i,j} |\Pb_{y}(i,j)|  \left\|\Pb_{y}^{\circ 2}-\mathsf{Q}/p\right\| \prec \lambda/p\,.
\end{equation*}
On the other hand, since $\Pb_{y}\circ \mathsf{Q}/p=\frac{1}{p}\Pb_{y}=O_{\prec}(1/p)$, we have $\|\Pb_{y}^{\circ 3}\|\prec  \lambda/p$.
Consequently, we have that 
\begin{equation*}
\Pb_x^{\circ 3}=\mathsf{T}^{\circ 3}+\mathsf{T}^{\circ 2}+O_{\prec}(\lambda/p). 
\end{equation*}
We mention that since $\mathsf{T}$ is at most rank two so that $\mathsf{T}^{\circ 3}+\mathsf{T}^{\circ 2}$ is at most $2^3+2^2 \leq 2^4. $ 
Similarly, using the above discussion for general $k>3,$ we can show that 
\begin{equation*}
\Pb_x^{\circ k}=\sum_{j=2}^k \mathsf{T}^{\circ j}+O_{\prec}(\lambda/p). 
\end{equation*}

Second, when $l=k$, we only have the term $\Ob_x^{\circ k}$. 
When $k=2,$ using (\ref{eq_oxexpansion}) and the fact that $\mathsf{F}_2$ is diagonal, we have that 
\begin{equation*}
\Ob_x^{\circ 2}=\mathsf{F}_1^{\circ 2}+ (\mathsf{F}_2 )^2+2 \mathsf{F}_2 \operatorname{diag}(\mathsf{F}_1) . 
\end{equation*}
By Lemma \ref{lem_concentrationslowlydivergent} (aka (\ref{eq_boundbound})), we have that 
\begin{equation*}
\Ob_x^{\circ 2}=\mathsf{F}_1^{\circ 2}+ O_{\prec}((\lambda/p)^2). 
\end{equation*}
Similarly, for general $k \geq 2,$ we have that 
\begin{equation*}
\Ob_x^{\circ k}=\mathsf{F}_1^{\circ k}+ O_{\prec}((\lambda/p)^k). 
\end{equation*}

Third, when $k \neq l $ and $l \neq 0$, we discuss a typical case when $k=4$ and $l=2$, i.e., $\Ob_x^{\circ 2} \circ \Pb_x^{\circ 2}.$ We prepare some bounds. Recall that
\begin{equation*}
\Pb_x=\Gb_x-\frac{1}{p}\operatorname{diag}\{\| \xb_1 \|_2^2, \cdots, \| \xb_n \|_2^2\}\,,
\end{equation*}
where we have
\[
\Gb_x=\Gb_y+\frac{1}{p}(\bm{z} \bm{y}^\top+\bm{y} \bm{z}^\top+\bm{z} \bm{z}^\top)=:\Gb_y+\mathsf{T}\,. 
\]
By the definition of $\mathsf{T}$, we immediately have 
\begin{equation}\label{Properties of T operator}
\operatorname{rank}(\mathsf{T}) \leq 2\ \ \mbox{ and }\ \ \max_{i,j}|\mathsf{T}(i,j)| \prec \lambda/p\,,
\end{equation} 
where the bound for $\max_{i,j}|\mathsf{T}(i,j)|$ holds by the tail bound of the maximum of a finite set of sub-Gaussian random variables. Similarly, we have the bound 
 \begin{equation}\label{Properties of O operator alpha<1}
 \max_{i,j}|\Ob_x(i,j)|\prec \lambda/p
 \end{equation}
 by \eqref{lem_concentrationslowlydivergent}. By an expansion, we have
\begin{align}
\Pb_x^{\circ 2}-\mathsf{T}^{\circ 2}
=\,&\left(\frac{1}{p}\operatorname{diag}\{\| \xb_1 \|_2^2, \cdots, \| \xb_n \|_2^2\} \right)^2+\Gb_y \circ \Gb_y \nonumber\\
&-2\Gb_y\circ \frac{1}{p}\operatorname{diag}\{\| \xb_1 \|_2^2, \cdots, \| \xb_n \|_2^2\} +2\Gb_y \circ \mathsf{T} \nonumber\\
&-2\mathsf{T}\circ \frac{1}{p}\operatorname{diag}\{\| \xb_1 \|_2^2, \cdots, \| \xb_n \|_2^2\} \,,\label{Pbx2-Tbx2 expansion alpha<1}
\end{align}
which leads to 
\begin{equation}\label{difference Pbx Tbx norm alpha <1}
\|\Pb_x^{\circ 2}-\mathsf{T}^{\circ 2}\|\prec 1
\end{equation}
by Lemma \ref{lem_hardamardproductbound} with the fact $\|\Gb_y\|\prec 1$, $\|\frac{1}{p}\operatorname{diag}\{\| \xb_1 \|_2^2, \cdots, \| \xb_n \|_2^2\} \|\prec 1+\lambda/p$ by Lemma \ref{lem_concentrationslowlydivergent}, \eqref{Properties of T operator} and $\max_{i,j}|\frac{1}{p} \yb_i^\top \yb_j| \prec 1$.
So we have the bound 
\begin{align*}
\|\Ob_x^{\circ 2} \circ \Pb_x^{\circ 2}-\Ob_x^{\circ 2} \circ \mathsf{T}^{\circ 2}\|& \leq \max_{ij}|\Ob_x^{\circ 2}(i,j)|\|\Pb_x^{\circ 2}- \mathsf{T}^{\circ 2}\| \\
&=O_{\prec}((\lambda/p)^2)\,,
\end{align*}
where the first bound comes from Lemma \ref{lem_hardamardproductbound} and the second bound comes from \eqref{Properties of O operator alpha<1}.
Together with $\Ob_x-\mathsf{F}_1=\mathsf{F}_2=-2\operatorname{diag}\{\phi_1, \cdots, \phi_n\}$, {by the same argument} we have that  
\begin{equation*}
\|\Ob_x^{\circ 2} \circ \Pb_x^{\circ 2}- \mathsf{F}_1^{\circ 2} \circ \mathsf{T}^{\circ 2}\|=O_{\prec}(\lambda/p).
\end{equation*}
Using the simple estimate that $\operatorname{rank}(A \circ B) \leq \operatorname{rank}(A) \operatorname{rank}(B)$, we find that 
\begin{equation*}
\operatorname{rank}\left( \mathsf{F}_1^{\circ 2} \circ \mathsf{T}^{\circ 2} \right) \leq 2^4. 
\end{equation*}
The other $k$ and $l$ can be handled in the same way. 
{Precisely, when $l>0$, $\Ob_x^{\circ l} \circ \Pb_x^{\circ (k-l)}$ can be approximated by $\mathsf{F}_1^{\circ l} \circ \mathsf{T}^{\circ (k-l)}$ with the norm difference of order $O_\prec(\lambda/p)$, where $\operatorname{rank}\left(  \mathsf{F}_1^{\circ l} \circ \mathsf{T}^{\circ (k-l)} \right) \leq 2^k$.}
Therefore, up to an error of $O_{\prec}(\lambda/p)$,  all the terms in $\Lb_x^{\circ k}$, except $\Pb_x^{\circ k}$ that will be absorbed into the first order expansion, can be well approximated using a matrix at most of rank $C'k2^{2k}$, where $0<C'\leq 1$.

Consequently, we can find a matrix {$\Mb_f$} of rank at most $C2^{2\fd}$, where $C>0$, to approximate $\widetilde{\Wb}$ so that 
\[
\|\widetilde{\Wb}-\Mb_f \|\prec \lambda/p\,,
\] 
This indicates that $\widetilde{\Wb}$ will generate at most $C2^{2\fd}$ outlying eigenvalues.  Therefore, by replacing $\sum_{k=0}^{2}\frac{f^{(k)}(\tau)}{k!}\Lb_x^{\circ k}$ using \eqref{eq_boundbound} and the facts that $\Ob_x^{\circ 2}$ can be replaced by $\mathsf{F}_1$ and $\Ob_x\circ \Pb_x$ can be approximated by  $\mathsf{F}_1 \circ \mathsf{T}$ with rank bounded than $4$ by the same argument above, 
 we have 
\begin{align*}
&\bigg\|\Wb+f'(\tau)\Pb_x-\frac{f^{(2)}(\tau)}{2}\Pb_x \circ \Pb_x-\tilde\Mb_f\bigg\|
\prec \max\{n^{\mathcal{B}(\alpha)} \,,\lambda/p\}\,,
\end{align*} 
where 
\[
\tilde\Mb_f=f(\tau)\mathsf{Q}+\Big(f'(\tau)+\frac{f^{(2)}(\tau)}{2}\Big)\mathsf{F}_1-f^{(2)}(\tau)\mathsf{F}_1 \circ \mathsf{T}+\Mb_f
\] 
is a low rank matrix of rank at most $C2^{2\fd}+6$.

To finish the spectral analysis of $\Wb$ in \eqref{eq_wbx051expansion}, it remains to deal with the second order Taylor approximation, {$\Pb_x\circ \Pb_x$}, to control the non-outlying eigenvalues follow MP law (i.e., the first order expansion $\Pb_x$ involves $\Xb^\top \Xb.$). The discussion is similar to (\ref{eq_expansionorder}). First, we extend the Hadamard product result in Lemma \ref{lem_hardmard for Gx}. We have the following expansion:{
\begin{align*}
&\Pb_{x} \circ \Pb_{x} - \Pb_{y}  \circ \Pb_{y}\\
=\,&(\Pb_{x}+ \Pb_{y} ) \circ (\Pb_{x} - \Pb_{y}) \\
=\,&[\Pb_x+\Pb_y] \circ \mathsf{T}\circ\mathsf{Q}  \\
= \,&\mathsf{T}^{\circ 2} \circ\mathsf{Q} +2\Pb_y\circ \mathsf{T}\circ \mathsf{Q} \\
=\,&\frac{2}{p} \text{diag}(\bm{z})\Pb_y\text{diag}(\bm{z})+\frac{2}{p} \text{diag}(\bm{z})\Pb_y \text{diag}(\bm{y})  \\
&\qquad +\frac{2}{p} \text{diag}(\bm{y})\Pb_y\text{diag}(\bm{z})+\mathsf{T}^{\circ 2} \circ\mathsf{Q}\,,
\end{align*} 
where the first three terms in the right hand side could be controlled by $O_\prec(\lambda/p)$ by a discussion similar to (\ref{eq_eee}). As a result, we have
As a result, we have
\begin{align}\label{eq_hardmardgetoutbound}
 \Pb_x \circ \Pb_x-\Pb_y \circ \Pb_y=\mathsf{T}^{\circ 2}\circ\mathsf{Q}  +O_\prec(\lambda/p). 
\end{align} 
By the bound $\operatorname{rank}(\mathsf{T}^{\circ 2}\circ\mathsf{Q}) \leq \operatorname{rank}(\mathsf{T}^{\circ 2}) \operatorname{rank}(\mathsf{Q})$ and $\operatorname{rank}(\mathsf{Q})=1$, we  have}
 that  when $0.5 \leq \alpha <1$, $\Pb_x \circ \Pb_x$ {differs from $\Pb_y \circ \Pb_y$ with} at most four extra outlying eigenvalue. 
The rest four outlying eigenvalues will be from the first and second order terms in the Taylor expansion as in (\ref{eq_spikedpartofmatrix}). The discussion is similar to the equations from (\ref{eq_wx005expansionfinal}) to (\ref{eq_spikedpartofmatrix}). We omit the details here.  Finally, we emphasize that the discussion of (\ref{eq_hardmardgetoutbound}) is nearly optimal in light of the obtained bound. 
 
\end{proof}

\subsection{Proof of Theorem \ref{thm_informativeregion}}  \label{sec_sub_fast}
 
While the kernel is $f(x)=\exp(-\upsilon x)$, for notational simplicity, we keep using the symbol $f(x)$ as the kernel function and use the notation \eqref{notationalconvention}.

\begin{proof}[\bf Proof of (1) when $1\leq \alpha<2$]

Let 
\begin{equation*}
\Cb_0=f(2)\mathbf{1} \mathbf{1}^\top+(1-f(2))\mathbf{I}. 
\end{equation*}
In light of (\ref{eq_wba1}) and $\Wb_1(i,i)=1$, we have
\begin{equation*}
\Wb_{a_1}=\Cb_0 \circ \Wb_1.
\end{equation*}
Recall (\ref{eq_defncrossingterm}).  By the definition of $\Wb$, we have that
\begin{equation}\label{eq_generaltermkronecker}
\Wb=\Wb_c \circ \Wb_y \circ \Wb_1.
\end{equation}
Consequently, we have that 
\begin{align}\label{eq_exactseparationexponential}
\Wb-\Wb_{a_1}& =\left[ (\Wb_c-\mathbf{1} \mathbf{1}^\top) \circ \Wb_y \circ \Wb_1  \right]  +\left[ (\Wb_y-\Cb_0) \circ \Wb_1 \right] \nonumber \\
& :=\mathcal{E}_1+\mathcal{E}_2,
\end{align}
where we used the relation $\Wb_{a_1}=(\mathbf{1} \mathbf{1}^\top) \circ \Wb_{a_1}.$ For $\mathcal{E}_2$, 
by Lemma \ref{lem_hardamardproductbound} and the first order Taylor approximation of $\Wb_y$, we see that 
\begin{equation}\label{eq_harmardfirstexpansion}
\| \mathcal{E}_2 \| \prec \frac{1}{\sqrt{n}} \lambda_1(\Wb_1)\,,
\end{equation}
where we used (\ref{eq_finalcontrolimplication1}) and \eqref{eq_finalcontrolimplication2}. To control $\lambda_1(\Wb_1)$, we apply Lemma \ref{Lemma: W1 bound}, where
in order to make the first term of the right-hand side of (\ref{proof bound W_1 moderate fast}) negligible, we take 
\begin{equation}\label{eq_deltachoice}
\delta>\max\left\{0,\,\frac{3-\alpha}{2}\right\}, 
\end{equation} 
and hence with high probability,
\begin{equation}
 \| \Wb_1 \| =O(n^{\max\left\{0,\,\frac{3-\alpha}{2}\right\}})\,. \label{proof bound W_1 moderate fast2}
 \end{equation}
By (\ref{eq_harmardfirstexpansion}), when $\alpha \geq 1$, we find that 
 \begin{equation}\label{eq_finalresult}
 \left\| \frac{1}{n}\mathcal{E}_2 \right\| \prec n^{-3/2}+n^{-\alpha/2}\,.
 \end{equation}
Next, we discuss the first term $\mathcal{E}_1.$ By using Lemma \ref{lem_hardamardproductbound} twice and (\ref{proof bound W_1 moderate fast2}), we see that
\begin{equation}\label{eq_mathcale1decomposition}
\| \mathcal{E}_1 \| \prec  \max_{i,j} \left| \Wb_c(i,j)-1\right| \max_{i,j} \Wb_y(i,j)  \| \Wb_1\|.
\end{equation}
By definition, we have  
\begin{equation*}
\max_{i,j} \Wb_y(i,j)\leq 1. 
\end{equation*}
Moreover, using the definition (\ref{eq_defncrossingterm}) and $h=p$, we have
\begin{equation}\label{eq_crosstermbound}
\max_{i,j} \left| \Wb_c(i,j)-1\right| \prec n^{\alpha/2-1}
\end{equation} 
by the bound $|\bz_i\top \by_j|=|z_i\by_{j1}|\prec \sqrt{\lambda}$ and the Taylor expansion of $f(x)$ around $0$.
Together with (\ref{proof bound W_1 moderate fast2}) and (\ref{eq_mathcale1decomposition}), we readily obtain that
\begin{equation*}
\left\| \frac{1}{n} \mathcal{E}_1 \right\| \prec n^{-1/2}.
\end{equation*}
This completes the proof of (\ref{eq_informativeequationone}).

The proof of (\ref{eq_informativeequationtwo}) is more involved. By the definition of $\Wb_{a_1}$, it suffices to study $\Wb_1.$ 
By Mehler's formula (\ref{eq_melherformula}), we find that when $i \neq j$, 
\begin{align}\label{eq_mehlerexpansion}
\Wb_1(i,j)=\sqrt{1-t_0^2} & \exp\left(\frac{3t_0^2-2}{2(1-t_0^2)}(z_{i}^2+z_{j}^2)\right) \sum_{m=0}^{\infty} \frac{t_0^m}{m!} \widetilde{H}_m(z_{i}) \widetilde{H}_m(z_{j}),
\end{align}
where $t_0$ is defined 
\begin{equation*}
0<t_0:=\frac{-\upsilon+\sqrt{\upsilon^2+16 \beta^2/\upsilon^2}}{4(\beta/\upsilon)}<1\quad\text{and}\quad \beta=\lambda/p\,.
\end{equation*}
By a direct calculation, we have 
\begin{align*}
1-t^2_0\,&=\frac{\upsilon^3 \sqrt{\upsilon^2+16 (\beta^2/\upsilon^2)}-\upsilon^4}{8 \beta^2}, \\
1-t_0\,&=\frac{2\upsilon}{\sqrt{\upsilon^2+16 \beta^2/\upsilon^2}+\upsilon+4(\beta/\upsilon)}.
\end{align*}
Note that when $\alpha>1$, $1-t^2_0\asymp \frac{1}{\beta}$ as $p \to \infty$.
Therefore, with (\ref{eq_mehlerexpansion}), we find that $\Wb_1$ can be written as an infinite summation of rank-one matrices such that
\begin{equation}\label{eq_wbdecompositionexponential}
\Wb_1=\sqrt{1-t_0^2} \sum_{m=0}^{\infty} \frac{t_0^m}{m!} {\Hb}_m {\Hb}_m^\top, 
\end{equation} 
where ${\Hb}_m\in \mathbb{R}^n$ is defined as
\begin{equation*}
{\Hb}_m=\wb \circ \widetilde{\Hb}_m\,, \quad \widetilde{\Hb}_m(i)=\widetilde{H}_m(z_i)\,,
\end{equation*}
and $\wb=(\wb_1,\cdots, \wb_n)^\top \in \mathbb{R}^n$ is defined as
\begin{equation*}
\wb_i=\exp\left(\frac{3t_0^2-2}{2(1-t_0^2)}z_{i}^2\right), \ 1 \leq i \leq n.  
\end{equation*} 
Since both $z_{i}$ and $z_{j}$ are sub-Gaussian random variables, for some large constants $C, D>0$,
\begin{equation}\label{eq_subgaussianbound}
\mathbb{P}(|z_i|^2 \leq C \log n) \geq 1-O(n^{-D}). 
\end{equation}
Therefore, we conclude that with high probability, for some constant $C_1>0$,
\begin{align} \label{eq_caseibelow}
\exp\left(\frac{3t_0^2-2}{2(1-t_0^2)}(z_i^2+z_j^2)\right)&\leq (\exp(C \log n) )^{\frac{3t_0^2-2}{2(1-t_0^2)}} \nonumber \\
& \leq n^{C_1 \beta}.
\end{align}
To finish the proof, we control $\| \Hb_m \Hb_m^\top \|=\| \Hb_m \|_2^2$ case by case.
\newline
\vspace{-5pt}
\newline
\noindent{\bf Case I:} $\alpha=1.$ In this case, {since $\lambda \asymp p$,} we have $\beta \asymp 1$ and $t_0\in (0,1)$ is a constant away from $1$. Since the degree of $\widetilde{H}_m(x)$ is $m\in \mathbb{N}$, we have $|\widetilde{H}_m(x)| \asymp x^m$ when $x \rightarrow \infty$.
Together with (\ref{eq_subgaussianbound}), we find that with high probability, for some constant $C>0$
\begin{equation}\label{eq_controlvectorone}
\|{\Hb}_m\|_2^2 \leq n^{C_1 \beta+1} (C \log n)^{m}.
\end{equation}
Consequently, we have that for some constants $C_2,C_3>0$,
\begin{align}\label{eq_boundeachterm}
\frac{t_0^m}{m!} \|{\Hb}_m \|_2^2 &\leq 
C_3n^{C_2} m^{-1/2} \left( \frac{e t_0C \log n}{m} \right)^m,
\end{align}
where we use the Stirling's formula. 
For notational convenience, we set 
\begin{equation}\label{eq_w1decomposition}
\Wb_1=\Wb_{11}+\Wb_{12},
\end{equation} 
where
\begin{equation}\label{Definition W11 expansion}
\Wb_{11}=\sqrt{1-t_0^2} \sum_{m=0}^{C_0 \log n} \frac{t_0^m}{m!} {\Hb}_m {\Hb}_m^\top
\end{equation}
and $C_0$ will be chosen later. 
Now we control $\Wb_{11}$ and $\Wb_{12}$. Choose a fixed large constant $C_0>0$ so that $C_0>et_0C$, we set $m_0=C_0 \log n.$ When $n$ is large enough, by (\ref{eq_boundeachterm}), we have that for some constants $C_4>0$ and $0<\mathfrak{a}<1$,
\begin{align}\label{eq_allsummation}
\sum_{m=C_0 \log n}^{\infty}\frac{t_0^m}{m!} \|{\Hb}_m\|_2^2 
\leq&\,  C_3 n^{C_2} \sum_{m=C_0 \log n}^{\infty} \frac{1}{\sqrt{m}} \left( \frac{et_0  \log n}{m} \right)^m \nonumber \\
\leq& \, C_4 n^{C_2} \int_{C_0 \log n}^{\infty} \mathfrak{a}^x \mathrm{d} x \nonumber \\
=& \,C_4 n^{C_2} \frac{1}{\log \mathfrak{a}} \mathfrak{a}^{C_0 \log n}\,.
\end{align}
This yields that for some constants $C_5>0$, with high probability
\begin{align}\label{eq_controloneerror}
\sum_{m=C_0 \log n}^{\infty} \frac{t_0^m}{m!} \|{\Hb}_m\|_2^2 & \leq C_4 \mathfrak{a}^{C_0 \log n} n^{C_2}  \asymp n^{-C_5\log n+C_2}.
\end{align} 
Thus, for a big constant $D>2$, when $n$ is sufficiently large, with high probability we have
\begin{align}\label{eq_w12bound}
\|\Wb_{12}\|&=\left\|\Wb_1-\sqrt{1-t_0^2} \sum_{m=0}^{C_0 \log n} \frac{t_0^m}{m!} {\Hb}_m {\Hb}_m^\top\right\|\leq n^{-D}\,.
\end{align}
This completes the proof for the case $\alpha=1$ using (\ref{eq_wba1}) and (\ref{eq_w1decomposition}) since the rank of $\Wb_{11}$ is bounded by $C_0 \log n$. 
\newline
\vspace{-5pt}
\newline
\noindent{\bf Case II: $1 < \alpha <2$. } It is easy to see that in this case, (\ref{eq_controlvectorone}) still holds true with high probability. Since $\beta $ diverges in this case, we find that 
\begin{align*}
\frac{t_0^m}{m!} \|\widetilde{\Hb}_m(\yb)\|_2^2 & \leq C n^{C_1 \beta+1} (m)^{-m-1/2} e^m (C \log n)^{m} \\
&=Cn^{C_1 \beta+1} m^{-1/2} \left( \frac{e C \log n}{m} \right)^m.
\end{align*}
Now for some fixed large constant $C_0>0$, we set $m=C_0 n^{\alpha-1}.$ {By a discussion similar to (\ref{eq_controloneerror}),} we have that for some constants $C, C_1>0$, with high probability 
\begin{equation*}
\sum_{m=m_0}^\infty \frac{t_0^m}{m!} \|\widetilde{\Hb}_m(\zb)\|_2^2 \leq  C n^{C_1(1-\alpha)n^{\alpha-1}}. 
\end{equation*} 
The rest of the proof is similar to the case $\alpha=1$ except that we can follow (\ref{proof bound W_1 moderate fast}) to show that $\| \Wb_1 \|\prec n^{\delta}$ for $\delta>(3-\alpha)/2.$   This concludes the proof for the case $1<\alpha<2$ and hence completes the proof of (\ref{eq_informativeequationtwo}).

\end{proof}

\begin{proof}[\bf Proof of (2) when $\alpha\geq 2$] We first prove (\ref{eq_informativeequationoneone}). By (\ref{eq_generaltermkronecker}), using the definition (\ref{eq_defntildewa1}), we find that 
\begin{equation*}
\Wb-\widetilde{\Wb}_{a_1}=(\Cb_0-\Wb_y) \circ \Wb_c \circ \Wb_1. 
\end{equation*}
Then the proof follows from a discussion similar to (\ref{eq_finalresult}). For the rest of the proof, due to similarity, we only prove (\ref{eq_c1exp}). 
Note that since $z_i$'s are continuous random variables by assumption, it assures that $z_i's$ are not identical with high probability. Rewrite 
\begin{equation*}
\Wb(i,j)=\exp\left(-\upsilon \frac{\lambda}{p} \frac{\| \xb_i-\xb_j \|_2^2}{\lambda}\right).
\end{equation*}
Clearly, $\frac{\lambda}{p}\asymp n^{\alpha-1}$. 
We then show that  for the given constant $t\in (0,1)$, with probability at least $1-O(n^{-\delta})$, where $\delta>1$, we have 
\begin{equation*}
\frac{\| \xb_i-\xb_j \|_2^2}{\lambda} \geq \left(\frac{p}{\lambda}\right)^t,
\end{equation*}
when $i \neq j$.
By a direct expansion, 
\begin{align}\label{eq_expansionlambdadown}
\frac{\| \xb_i-\xb_j \|_2^2}{\lambda}= & \left( z_{i}-z_{j} \right)^2+\frac{\| \yb_i-\yb_j \|_2^2}{\lambda} +\frac{2(z_i-z_j)(\yb_{i1}-\yb_{j1})}{\lambda} \,.  
\end{align}
By Lemma \ref{lem_concentrationinequality}, we find that with high probability,
\begin{equation*}
\frac{\| \yb_i-\yb_j \|_2^2}{\lambda}  \prec  \frac{p}{\lambda}.
\end{equation*}
Similarly, we have that
\begin{equation*}
\left| \frac{2(z_i-z_j)(\yb_{i1}-\yb_{j1})}{\lambda} \right| \prec \frac{1}{\lambda}. 
\end{equation*}
Using the above result, we find that for some constant $C>0$, for $i \neq j$, 
\begin{align*}
\mathbb{P} \left(\frac{\| \xb_i-\xb_j \|_2^2}{\lambda}  \leq \left(\frac{p}{\lambda}\right)^t \right)& \leq \mathbb{P} \left( (z_i-z_{j})^2 \leq \left(\frac{p}{\lambda}\right)^t-C \frac{p}{\lambda} \right)  \leq C \left( \frac{p}{\lambda} \right)^{t/2},
\end{align*}
where the final inequality comes from a discussion similar to \eqref{eq_finallbound} since $\alpha \geq 2$. 
This leads to 
\begin{equation*}
\mathbb{P} \left(\frac{\| \xb_i-\xb_j \|_2^2}{\lambda}  \geq \left(\frac{p}{\lambda}\right)^t \right) \geq 1-C \left( \frac{p}{\lambda} \right)^{t/2}.
\end{equation*}
Note that under the condition \eqref{eq_conditionexponential}, we have $t(\alpha-1)/2>1$, and hence $\left( \frac{p}{\lambda} \right)^{t/2}=o(n^{-1})$.
Therefore, by a direct union bound, for each fixed $i$, we have that 
 \begin{align}\label{eq_boolefrechet}
 \max_{j} \mathbb{P} &\left( \frac{\| \xb_i-\xb_j \|_2^2}{\lambda} \geq \left( \frac{p}{\lambda} \right)^t \middle \vert j \neq i \right)  \geq 1-Cn \left( \frac{p}{\lambda} \right)^{t/2}.
 \end{align}
This implies that with probability at least $1-O(n^{1-t(\alpha-1)/2})$, for some constant $C>0$, we have that 
\begin{align}\label{eq_offdiagnalsummation}
\left| \sum_{j \neq i} \Wb(i,j)\right| &\leq Cn\exp \left( -\upsilon \frac{\lambda}{p} \left( \frac{p}{\lambda} \right)^t    \right) =C n\exp\left(-\upsilon \left( \frac{\lambda}{p} \right)^{1-t} \right).
\end{align} 
Consequently, by the Gershgorin circle theorem, we conclude that with probability $1-O(n^{1-t(\alpha-1)/2})$
\begin{equation}\label{eq_exponentialcircle}
\|\lambda_i(\Wb)-1\| \leq Cn \exp\left(-\upsilon \left( \frac{\lambda}{p} \right)^{1-t} \right). 
\end{equation}
This completes our proof.

\end{proof}

\subsection{Proof of Corollary \ref{thm_normailizedaffinitymatrix}}\label{sec_sub_normalized}

In this subsection, we prove the results for the transition matrix $\Ab.$

\begin{proof}[Proof of Corollary \ref{thm_normailizedaffinitymatrix}]
When $\alpha=0$, it follows from the fact that
\begin{equation}\label{eq_dapproximate}
\|(n^{-1} \Db)^{-1}-f(2)^{-1} \mathbf{I}_n \| \prec n^{-1/2}. 
\end{equation}
The proof can be found in \cite[Lemma 4.5]{DW1}, and we omit it. Together with (\ref{eq_wxdecompositionslowlydivergent}) and (\ref{eq_spikedpartofmatrix}), we have that with probability $1-O(n^{-1/2})$,
\begin{align*}
n\Db^{-1/2}&\Wb\Db^{-1/2}  =n\Db^{-1/2} \mathsf{O} \Db^{-1/2} \\
&+ n \Db^{-1/2} \left(-\frac{2f'(\tau)}{p} \Xb^\top \Xb+ \varsigma(\lambda) \mathbf{I}_n \right) \Db^{-1/2} \\
&+O(n^{-1/4}+n^{\epsilon-1/2})\,. 
\end{align*}
By definition, the first term is a matrix with rank at most three since $\mathsf{O}$ is of rank at most three. For the second term, from (\ref{eq_dapproximate}) and Lemma \ref{lem_gramsummary}, we have 
\begin{align*}
&\Big\|  n \Db^{-1/2} \left( -\frac{2f'(\tau)}{p} \Xb^\top \Xb+\varsigma(\lambda) \Ib_n  \right)\Db^{-1/2} \\
&\qquad-\frac{1}{f(2)}\left( -\frac{2f'(\tau)}{p} \Xb^\top \Xb+\varsigma(\lambda) \Ib_n  \right) \Big\| \prec n^{-1/2}.
\end{align*}
{As a result, since the spectra of $n\Ab$ and $n\Db^{-1/2}\Wb\Db^{-1/2}$ are the same, and with probability $1-O(n^{-1/2})$, $n\Db^{-1/2}\Wb\Db^{-1/2}$ can be approximated by $\frac{1}{f(2)} \left( -\frac{2f'(\tau)}{p} \Xb^\top \Xb+\varsigma(\lambda) \Ib_n  \right)$ and a perturbation of rank at most $3$ with an error bound $O(n^{-1/4}$, we obtain the claim by the Weyl's lemma.}

When $0<\alpha<1$, by a discussion similar to (\ref{eq_dapproximate}) using Lemma \ref{lem_concentrationslowlydivergent}, we find that 
 \begin{equation*}
\|(n^{-1} \Db)^{-1}-f(\tau)^{-1} \mathbf{I}_n \| \prec n^{-1/2}+\frac{\lambda}{p}\, .
\end{equation*}
and by the same argument as that for $\alpha=0$, we conclude the proof.

{Next, we} handle the case $1 \leq \alpha<2.$ Note that 
\begin{align}\label{eq_keydecomposition}
\| \Ab-\Ab_{a_1} \| 
 \leq\,& \| \Db^{-1}(\Wb-\Wb_{a_1}) \|+\| \Db^{-1} (\Db_{a_1}-\Db)\Db_{a_1}^{-1}\Wb_{a_1} \| \nonumber \\
\leq \,& \| \Db^{-1} \| \| \Wb-\Wb_{a_1} \| +\| \Db^{-1} ( \Db-\Db_{a_1})\|\|\Db_{a_1}^{-1}\Wb_{a_1} \|\,. 
\end{align}
By a direct expansion, we have 
\begin{align}
&\Db(i,i) -\Db_{a_1}(i,i)\label{eq_ddcontrol} \\
=\,&\sum_{j \neq i} \exp \left(-\upsilon \frac{\|\xb_i-\xb_j \|_2^2}{p} \right)-\sum_{j \neq i} \exp(-2\upsilon) \exp\left(-\upsilon \frac{\|\zb_{i}-\zb_{j}\|_2^2}{p} \right) \nonumber \\ 
=\,&\sum_{j \neq i} \exp\left( -\upsilon \frac{\|\zb_{i}-\zb_{j}\|_2^2}{p} \right)\nonumber\\
&\times \Big[ \exp \left(-\upsilon \frac{\|\yb_i-\yb_j \|_2^2}{p} \right) \exp\left(-2\upsilon \frac{(\zb_i-\zb_j)^\top (\yb_i-\yb_j)}{p} \right)-\exp(-2\upsilon) \Big].\nonumber
\end{align}
To control $\Db(i,i) -\Db_{a_1}(i,i)$, we bound terms in the right hand side. First, since $\exp(-2\upsilon)$ is the zero-th order Taylor expansion for $\exp(-2\|\yb_i-\yb_j \|/p)$ at $2$, together with (\ref{eq_crosstermbound}) and the fact that $\max_{ij}\Big|\exp \left(-\upsilon \frac{\|\yb_i-\yb_j \|_2^2}{p} \right) - \exp(-2\upsilon)\Big|\prec n^{-1/2}$, we have 
\begin{align}\label{eq_concentrationfirstorder}
& \max_{i,j} \Big| \exp \left(-\upsilon \frac{\|\yb_i-\yb_j \|_2^2}{p} \right) \exp\left(-2\upsilon \frac{(\zb_i-\zb_j)^\top (\yb_i-\yb_j)}{p} \right) \nonumber \\
&\qquad-\exp(-2\upsilon) \Big| \prec n^{\alpha/2-1}\,. 
\end{align}
Since $\exp\left( -\upsilon \frac{\|\zb_{i}-\zb_{j}\|_2^2}{p} \right)>0$, this implies that
\begin{align*}
&\Db(i,i)-\Db_{a_1}(i,i)
=O_{\prec}\left(n^{\alpha/2-1} \sum_{j\neq i}\exp\left( -\upsilon \frac{\|\zb_{i}-\zb_{j}\|_2^2}{p} \right)\right)\,.
\end{align*}
Denote $\Delta:=\Db^{-1} ( \Db-\Db_{a_1}).$ Since $\Db(i,i)$ and $\Db_{a_1}(i,i)$ are positive  and $\Db_{a_1}(i,i)=\sum_{j\neq i}\exp\left( -\upsilon \frac{\|\zb_{i}-\zb_{j}\|_2^2}{p} \right)+1$, we have that
\begin{align*}
\max_i |\Delta(i,i)|& =\frac{|\Db(i,i)-\Db_{a_1}(i,i)|}{\Db(i,i)} \\
& \prec \frac{n^{\alpha/2-1} \sum_{j\neq i}\exp\left( -\upsilon \frac{\|\zb_{i}-\zb_{j}\|_2^2}{p} \right) }{\sum_{j\neq i}\exp\left( -\upsilon \frac{\|\zb_{i}-\zb_{j}\|_2^2}{p} \right)+1} \prec n^{\alpha/2-1}.  
\end{align*}
As a result, we have $\| \Db^{-1} ( \Db-\Db_{a_1})\|\|\Db_{a_1}^{-1}\Wb_{a_1} \|\prec n^{\alpha/2-1}$ since $\|\Db_{a_1}^{-1}\Wb_{a_1} \|\leq 1$.

Next, we control $\| \Db^{-1} \| \| \Wb-\Wb_{a_1} \| $. Using the same argument leading to (\ref{eq_highprobabilityevent}), we find that for $0<\delta<1$, with high probability, there exists $O(n^{\delta})$ amount of $z_{i}$'s such that $|z_{i}|\leq n^{-(1-\delta)}.$ Note that  (\ref{proof bound W_1 moderate fast}) requires that $\delta \geq \frac{3-\alpha}{2}.$ Choosing $\delta=\frac{3-\alpha}{2},$
we obtain that with high probability, for some constant $C>0$, 
\begin{equation*}
\max_{i,i} \Db(i,i) \geq C \sum_{j \neq i} \exp\left(-\upsilon \frac{\|\zb_{i}-\zb_{j}\|_2^2}{p} \right)\geq Cn^{\frac{3-\alpha}{2}}. 
\end{equation*} 
This implies that $\| \Db^{-1} \| \prec n^{(\alpha-3)/2}.$
Combining \eqref{eq_informativeequationone}, we find that $\| \Db^{-1} \| \| \Wb-\Wb_{a_1} \| \prec n^{\alpha/2-1}$, and hence
\begin{equation*}
\| \Ab-\Ab_{a_1} \| \prec n^{\alpha/2-1}\,.
\end{equation*} 
This concludes our proof.

Then we prove the case when $\alpha \geq 2.$ The counterpart of (\ref{eq_informativeequationoneone}) follows from a discussion similar to the case $1\leq \alpha<2$ except that (\ref{eq_ddcontrol}) should be replaced by
\begin{align*}
\Db(i,i) -\widetilde{\Db}_{a_1}(i,i) =\,&\sum_{j \neq i} \exp\left( -\upsilon \frac{\|\zb_{i}-\zb_{j}\|_2^2}{p} \right) \exp\left(-2\upsilon \frac{(\zb_i-\zb_j)^\top (\yb_i-\yb_j)}{p} \right)  \\
&\times \left[ \exp \left(-\upsilon \frac{\|\yb_i-\yb_j \|_2^2}{p} \right) -\exp(-2\upsilon) \right],\nonumber
\end{align*}
so that 
\begin{equation*}
\max_i|\Db(i,i) -\widetilde{\Db}_{a_1}(i,i)| \prec n^{-1/2}. 
\end{equation*} 
We omit the details due to similarity. Finally, we prove the results when $\alpha$ is larger in the sense of (\ref{eq_conditionexponential}).
By (\ref{eq_offdiagnalsummation}) and the definition of $\Wb$ (i.e., $\Wb(i,i)=1$), we find that  with probability at least $1-O(n^{1-t(\alpha-1)/2})$, for some constant $C>0$,
\begin{equation*}
\| \Db-\mathbf{I}_n \| \leq Cn \exp \left( -\upsilon \left( \frac{\lambda}{p} \right)^{1-t} \right).
\end{equation*}
This bound together with \eqref{eq_c1exp} lead to
\begin{align*}
\|\Wb-\Db^{-1/2}\Wb\Db^{-1/2}\| 
 \leq &\, (\|\Db^{-1/2}\|+1)\|\Wb\|\| \Db^{-1/2}-\mathbf{I}_n \| \\
\leq &\, 2Cn \exp \left( -\upsilon \left( \frac{\lambda}{p} \right)^{1-t} \right).
\end{align*}
Since $\Ab$ is similar to $\Db^{-1/2}\Wb\Db^{-1/2}$, by Weyl's lemma, we conclude the claim. This concludes the proof for the third part when (\ref{eq_conditionexponential}) holds.

\end{proof}

\section{Proof of the results in Section \ref{section_result_2ndchoice}}\label{sec_proofs3}
In this section, we prove the main results in Section \ref{section_result_2ndchoice}.

\subsection{Proof of Theorem \ref{thm_adaptivechoiceofc}} In this subsection, we prove Theorem \ref{thm_adaptivechoiceofc} for $h=\lambda+p$. We only prove $\Wb$ and omit the details of $\Ab$ since the proof is similar to that of Corollary \ref{thm_normailizedaffinitymatrix}. Throughout the proof, we write $f(x)=\exp(-\upsilon x)$ for the ease of statements.  
\begin{proof}
For part (1), denote
 \begin{equation*}
f\left(\frac{\|\xb_i-\xb_j \|_2^2}{h} \right)=g\left(\frac{\|\xb_i-\xb_j \|_2^2}{p} \right), \ h=p+\lambda,
\end{equation*}
where $g(x):=f(px/h ).$ Since $0 \leq \alpha<1$, we have that 
\begin{equation*}
\frac{p}{p+\lambda} \asymp 1. 
\end{equation*}
Then we can apply all proofs of Theorems \ref{thm_affinity matrix} and \ref{lem_affinity_slowly} to the kernel function $g(x)$ to conclude the proof. The only difference is that we will get an extra factor $p/h$ for the derivative and we omit the details. 

For part (2), since $\beta=\lambda/(\lambda+p) \asymp 1$, (\ref{eq_closesecondbandwidth}) and (\ref{eq_closebandwidthtwo}) follow from the proof of (\ref{eq_informativeequationone}) and  Case (I) (below equation (\ref{eq_caseibelow})) of the proof of (\ref{eq_informativeequationtwo}), respectively.  For (\ref{eq_largealpharesultadap}), the proof follows from (\ref{eq_closesecondbandwidth}) and the bound 
\begin{align*}
\left\| \Wb_{a_2}-\Wb_1 \right\|
=&\, \|(\exp(-2p\upsilon/(p+\lambda))-1)\Wb_1+(1-\exp(-2p\upsilon/(p+\lambda)))\Ib_n\|\\
\leq &\, n^{1-\alpha}\|\Wb_1\|+n^{1-\alpha} \prec  n^{1-\alpha}+ n^{2-\alpha}\,,
\end{align*} 
where the first inequality comes from $1-\exp(-\frac{2p \upsilon}{p+\lambda}) \asymp n^{1-\alpha}$ when $\alpha>1$ and the last bound follows from the facts that $\| \Wb_1 \| \prec n$.
\end{proof}

\subsection{Proof of Corollary \ref{coro_adaptivechoiceofc}}
In this subsection, we prove Corollary \ref{coro_adaptivechoiceofc}. 
\begin{proof}
First, consider the first statement when $0\leq \alpha<1$. 
Recall that the square of sub-Gaussian is sub-exponential and the summation of sub-exponential random variables is also sub-exponential. Since $\xb_i, \xb_j, i \neq j$, are independent, the random variable $\|\xb_i-\xb_j\|_2^2$ follows a sub-exponential distribution, and by 
Lemma \ref{lem_concentrationslowlydivergent}, we have 
\[
\|\xb_i-\xb_j\|_2^2=2(p+\lambda)+O_\prec(p^\alpha+\sqrt{p})\,.
\]
Since $\alpha<1$, with high probability, when $p$ is large enough, $\|\xb_i-\xb_j\|_2^2$ is concentrated around $2p$. Thus, for any $\omega\in (0,1)$, 
\begin{equation}\label{eq_claimhp}
h \asymp p
\end{equation}
holds with high probability. We can thus rewrite
\begin{equation*}
f\left(\frac{\|\xb_i-\xb_j \|_2^2}{h} \right)=g\left(\frac{\|\xb_i-\xb_j \|_2^2}{p} \right),
\end{equation*}
where $g(x):=f(px/h ).$ Then we can apply all the proof of Theorems \ref{thm_affinity matrix} and \ref{lem_affinity_slowly} to the kernel function $g(x)$ to conclude the proof. The only difference is that we will get an extra factor $p/h$ for the derivative and we omit the details.

We next prove the second statement when $\alpha\geq 1$. We first show the counterpart of (\ref{eq_closesecondbandwidth}).  
In this case, we claim that for some given sufficiently small $\epsilon>0$, we have that with high probability, for some constants $C_1, C_2>0$,
\begin{equation}\label{eq_claimlambadasecond}
C_1(\lambda \log^{-1}n+p) \leq h \leq C_2 \lambda \log^2 n.
\end{equation}
To see this claim, we follow the notation \eqref{notationalconvention}. Note that 
\begin{align*}
\| \xb_i-\xb_j \|_2^2= \,& \|\yb_{i}-\yb_{j}\|^2+\lambda(z_{i}-z_{j})^2+2\sqrt{\lambda}(\yb_{i1}-\yb_{j1})(z_{i}-z_{j})\,.
\end{align*}
By Lemma \ref{lem_concentrationinequality}, $\|\yb_{i}-\yb_{j}\|^2=2p+O_{\prec}(\sqrt{p})$. Also, since $\yb_{i1}-\yb_{j1}$ and $z_i-z_j$ are sub-Gaussian, $|(\yb_{i1}-\yb_{j1})(z_{i}-z_{j})|=O_\prec(\log(n))$. It thus remains to handle $(z_{i}-z_{j})^2$.
On one hand, since $z_{i}-z_{j}$ is sub-Gaussian, we have that for some constant $C>0$, $(z_{i}-z_{j})^2 \leq C \log^2 n$ with high probability; on the other hand, using a discussion similar to (\ref{eq_highprobabilityevent}) by setting $\epsilon=0.5 (\log \log n/\log n)$, with high probability, we have that there are at least $C n/\sqrt{\log n}$ $(z_{i}-z_{j})^2$'s such that $(z_{i}-z_{j})^2 \geq  \log^{-1}n$. Since $\omega\in (0,1)$, when $n$ is large enough, we have $\omega>C/\sqrt{\log n}$. This proves (\ref{eq_claimlambadasecond}).  
Using the bandwidth $h,$ we now have $$\Wb_y(i,j)=f\left(\frac{\| \yb_i-\yb_j\|_2^2}{h}\right)=f\left(\frac{p}{h} \frac{\| \yb_i-\yb_j \|_2^2}{p}\right).$$ Denote $\widetilde{\Cb}_0=f(2p/h) \mathbf{1} \mathbf{1}^\top+(1-f(2p/h))\mathbf{I}.$ Consider the same expansion like \eqref{eq_exactseparationexponential} using the bandwidth $h$; that is,
$\Wb-\Wb_{a_2}=[ (\Wb_c-\boldsymbol{1}\boldsymbol{1}^\top) \circ \Wb_y \circ \Wb_1  ]+[ (\Wb_y- \widetilde{\Cb}_0) \circ \Wb_1 ]$.
By Lemma \ref{lem_hardamardproductbound}, we have 
\begin{align*}
\| \Wb-\Wb_{a_2} \| \leq \,&  (\max_{i,j} \left| \Wb_c(i,j)-1\right| \max_{i,j} \Wb_y(i,j) \\
& +\max_{i,j}|\Wb_y(i,j)- \widetilde{\Cb}_0(i,j)|) \| \Wb_1\|\,.
\end{align*}
Due to \eqref{eq_claimlambadasecond}, we have $|(z_i-z_j)(\yb_{i1}-\yb_{j1})|/h=O_\prec(\lambda^{-1/2}) $ and hence $\max_{i,j}|\Wb_c(i,j)-1| \prec \sqrt{\lambda}/h=O(\lambda^{-1/2})=O(n^{-\alpha/2})$.
On the other hand, we have $\max_{i,j} \Wb_y(i,j)\leq 1$ by definition and $\max_{i,j}|\Wb_y(i,j)- \widetilde{\Cb}_0(i,j)|\prec n^{-1/2}$ by Lemma \ref{lem_concentrationinequality}. 
As a result, we obtain
\begin{equation}\label{eq_bbbbb}
\| \Wb-\Wb_{a_2} \| \prec n^{-1/2} \| \Wb_1 \|.
\end{equation}

To finish the proof of the counterpart of (\ref{eq_closesecondbandwidth}), we need to control $\|\Wb_1\|$ with $h$ satisfying (\ref{eq_claimlambadasecond}). We use a trivial bound $\|\Wb_1\|=O(n)$ by Gershgorin circle theorem. As a result, we have
\[
\left\| \frac{1}{n}\Wb-\frac{1}{n}\Wb_{a_2} \right\| \prec n^{-1/2} \,.
\]

The argument for the couterpart of \eqref{eq_closebandwidthtwo} is analogous to that in Case ($\mathbf{I}$) in the proof of (\ref{eq_informativeequationtwo}) with the following necessary modifications. For $\beta:=\lambda/h$, we have 
$\frac{1}{C_2 \log^2 n} \leq \beta \leq \frac{1}{C_1(\log^{-1} n+p/\lambda)}$ by (\ref{eq_claimlambadasecond}). 
First, when $\beta\asymp 1$, the discussion reduces to Case ($\mathbf{I}$) (i.e., the arguments below (\ref{eq_caseibelow})) of the proof of (\ref{eq_informativeequationtwo}).  Second, when $\beta$ diverges and $\beta \leq C \min\{\log n, n^{\alpha-1}\}$ for some constant $C>0$, the discussion reduces to Case ($\mathbf{I}$) again. Finally, we discuss $\beta=o(1)$ satisfying $\beta \geq 1/(C_2 \log^2 n).$ In this case, we still have $t_0<1.$ Recall (\ref{eq_wbdecompositionexponential}), where we have $1-t_0^2 \asymp \frac{1}{\beta} \leq C \log^2 n$ for some constant $C>0.$ However, compared to the error bound in (\ref{eq_controloneerror}), the extra factor $\log^2 n$ is negligible. Then the case still reduces to Case ($\mathbf{I}$). This completes the proof.

\end{proof}

\section{Verification of Remark \ref{rmk_partone}}\label{sec_rmkjust} In this section, we justify (\ref{eq_stieltjestransformlimit}) of Remark \ref{rmk_partone}.

\begin{proof}[\bf Justification of Remark \ref{rmk_partone}] 
We focus on the case $\alpha=1$, and the same claim holds for $1<\alpha<2$. Recall (\ref{eq_generaltermkronecker}). Using a discussion similar to (\ref{eq_exactseparationexponential}), we have that
\begin{align}\label{eq_expansioncontrol}
\Wb-\Wb_{b_1} =\,& (\Wb_c-\mathbf{1} \mathbf{1}^\top) \circ \Wb_y \circ \Wb_1 +\Wb_1 \circ \Rb_1+\Wb_1 \circ \left(\exp(-2 \upsilon) \mathbf{1} \mathbf{1}^\top \right) \nonumber \\
:=\,&\Eb_0+\Eb_1+\Eb_2,
\end{align}
where $\Rb_1$ is the error of the first order entrywise expansion of $\Wb_y$ defined as
\begin{align}\label{eq_R1R1R1DFN}
\Rb_1 &:=\Wb_y- \Big[\exp(-2 \upsilon) \mathbf{1} \mathbf{1}^\top+\frac{2\upsilon\exp(-2\upsilon)}{p} \Yb^\top \Yb +2 \upsilon \exp(-4 \upsilon)\mathbf{I} \Big] \nonumber \\
& :=\Wb_y-\Ub_y \,.
\end{align}
Take the same decomposition in \eqref{Definition W11 expansion} with some fixed large constant $C_0>0$ and $m=C_0 \log n$. 
By (\ref{eq_controloneerror}), we have that with high probability
\begin{equation}\label{eq_wdecomposeerror}
\operatorname{rank}(\Wb_{1,1}) \leq m, \quad  \max_{i,j} |\Wb_{1,2}(i,j)| \leq n^{-D},
\end{equation}
for some large constant $D>2.$ We start with the discussion of $\Eb_2$ in (\ref{eq_expansioncontrol}).
Using (\ref{eq_w1decomposition}) and the fact that $\Wb_1 \circ \left(\exp(-2 \upsilon) \mathbf{1} \mathbf{1}^\top \right)=\exp(-2 \upsilon)\Wb_1$, we obtain that
\begin{align}\label{eq_e2decomposition}
\Eb_2&=\exp(-2\upsilon)\Wb_{1,1} +\exp(-2\upsilon)\Wb_{1,2} =:\Eb_{2,1}+\Eb_{2,2} \,. 
\end{align}
By (\ref{eq_wdecomposeerror}) and the Gershgorin circle theorem,  with high probability, for some constant $C>0,$ we have  
\begin{equation}\label{eq_e2boundone}
\operatorname{rank}\left( \Eb_{2,1} \right) \leq m. 
\ \ \mbox{ and }\ \ 
\left \|\Eb_{2,2}  \right\| \leq C n^{-D+1} 
\end{equation}
We control $m_{\Wb_{b_1}+\Eb_1+\Eb_2}(z)-m_{\Wb_{b_1}+\Eb_1}(z)$ by the triangle inequality,
\begin{align*}
& |m_{\Wb_{b_1}+\Eb_1+\Eb_2}(z)-m_{\Wb_{b_1}+\Eb_1}(z)| \\
 \leq\,& |m_{\Wb_{b_1}+\Eb_1+\Eb_2}(z)-m_{\Wb_{b_1}+\Eb_1+\Eb_{21}}(z) |+|m_{\Wb_{b_1}+\Eb_1+\Eb_{21}}(z)-m_{\Wb_{b_1}+\Eb_1}(z) |\,.
\end{align*}
By Lemma \ref{lem_collectioninequality}, we have
\begin{align}
&|m_{\Wb_{b_1}+\Eb_1+\Eb_2}(z)-m_{\Wb_{b_1}+\Eb_1+\Eb_{2,1}}(z)| \nonumber\\
\leq \,& \frac{\operatorname{rank}(\Eb_{2,2})}{n}\min\left\{\frac{2}{\eta},\,\frac{\|\Eb_{2,2}\|}{\eta^2}\right\}\leq \frac{C}{n^{D-1}\eta^2}
\label{eq_finalcontrole2}\,,
\end{align}
where we use the fact that the rank of $\Eb_{2,2}$ may be full and $D$ is large.
Similarly, we have
\begin{align}
&|m_{\Wb_{b_1}+\Eb_{2,1}+\Eb_1}(z)-m_{\Wb_{b_1}+\Eb_1}(z)|\nonumber\\
 \leq \,& \frac{\operatorname{rank}(\Eb_{2,1})}{n}\min\left\{\frac{2}{\eta},\,\frac{\|\Eb_{2,1}\|}{\eta^2}\right\}\leq \frac{2C_0\log n}{n\eta}
\label{eq_finalcontrole2q}\,,
\end{align}
where we use the fact that $\|\Eb_{2,1}\|\leq \|\exp(-2\upsilon)\Wb_1\|\leq \exp(-2\upsilon) n^{-\delta}$ for some $0<\delta<1/2$ in the argument leading to
\eqref{proof bound W_1 moderate fast}.
In conclusion, since $D$ is a big constant, we control the $\Eb_2$ term in the Stieltjes transform, and obtain 
\begin{align}
|m_{\Wb_{b_1}+\Eb_1+\Eb_2}(z)-m_{\Wb_{b_1}+\Eb_1}(z)|\leq \frac{3C_0\log n}{n\eta}\,
\end{align}
for $z \in \mathcal{D}$ defined in (\ref{defn_domaind}). 
It is easy to see that the above control is negligible compared to the rate $n^{-1/2+\epsilon} \eta^{-2}$ for any arbitrarily small $\epsilon>0.$ Consequently, it suffices to focus on $\Eb_1$ in (\ref{eq_expansioncontrol}). 
To control $\Eb_1$ in (\ref{eq_expansioncontrol}), note that 
\begin{equation}\label{eq_e1decomposition}
\Eb_1=\Wb_{1,1} \circ \Rb_1+\Wb_{1,2} \circ \Rb_1
\end{equation}
and we control
\begin{align*}
& |m_{\Wb_{b_1}+\Eb_1}(z)-m_{\Wb_{b_1}}(z)| \\
 \leq&\, |m_{\Wb_{b_1}+\Wb_{1,1} \circ \Rb_1+\Wb_{1,2} \circ \Rb_1}(z)-m_{\Wb_{b_1}+\Wb_{1,1} \circ \Rb_1}(z) |\\
&+|m_{\Wb_{b_1}+\Wb_{1,1} \circ \Rb_1}(z)-m_{\Wb_{b_1}}(z) |\,.
\end{align*}
By a discussion similar to (\ref{eq_wx005}) with for the cloud points $\{\yb_i\}$ (or see the proof of Lemma 4.2 of \cite{DW1}), we have
 \begin{equation}\label{eq_r1bound}
 \max_{i,j} |\Rb_1(i,j)| \prec n^{-1/2}.
 \end{equation}
Together with (\ref{eq_wdecomposeerror}), by the Gershgorin circle theorem, we conclude that 
\begin{equation}\label{eq_rb1w12control}
\norm{  \Wb_{1,2}\circ \Rb_1} \prec n^{-D+1/2}.
\end{equation}
This error is negligible since $D>2$ by the same argument as \eqref{eq_finalcontrole2}, thus we have the control for $|m_{\Wb_{b_1}+\Wb_{1,1} \circ \Rb_1+\Wb_{1,2} \circ \Rb_1}(z)-m_{\Wb_{b_1}+\Wb_{1,1} \circ \Rb_1}(z) |$. The rest of the proof is controling $|m_{\Wb_{b_1}+\Wb_{1,1} \circ \Rb_1}(z)-m_{\Wb_{b_1}}(z) |$. Set $\mathcal{E}_1:=\Wb_{1,1} \circ \Rb_1$ for convenience. Note that with high probability, for some constant $C>0,$ we have that 
\begin{equation}\label{eq_boundedw11}
\max_{i,j} |\Wb_{1,1} (i,j)| \leq C. 
\end{equation}
By the definition of Stieltjes transform, for any even integer $q,$ we have 
\begin{align*} 
&|m_{\Wb_{b_1}+\mathcal{E}_1}(z)-m_{\Wb_{b_1}+\Rb_1}(z)|^q  \\
& \leq  \left( \frac{1}{n} \sum_{i=1}^n \frac{|\lambda_i(\Wb_{b_1}+\mathcal{E}_1)-\lambda_i(\Wb_{b_1}+\Rb_1)|}{|\lambda_i(\Wb_{b_1}+\mathcal{E}_1)-z||\lambda_i(\Wb_{b_1}+\Rb_1)-z|} \right)^{q}\,, \nonumber 
\end{align*}
which, by the Cauchy-Schwarz inequality, is bounded by 
\begin{align}
&\, \frac{1}{n^q} \bigg[ \left(\sum_{i=1}^n |\lambda_i(\Wb_{b_1}+\mathcal{E}_1)-\lambda_i(\Wb_{b_1}+\Rb_1)|^2 \right)\nonumber\\
&\qquad\qquad\times \left( \sum_{i=1}^n \frac{1}{|\lambda_i(\Wb_{b_1}+\mathcal{E}_1)-z|^2|\lambda_i(\Wb_{b_1}+\Rb_1)-z|^2} \right) \bigg]^{q/2} \nonumber \\
\leq &\, \frac{1}{n^{q/2} \eta^{2q}} ( \text{tr} \{(\mathcal{E}_1-\Rb_1)^2\} )^{q/2}, \label{eq_finalcontrolbound}
\end{align}
where the last inequality comes from the Hoffman-Wielandt inequality (Lemma \ref{lem_collectioninequality}) and the trivial bound $|\lambda-z|^{-1}\leq \eta^{-1}$ for any $\lambda\in \mathbb{R}$. The rest of the proof leaves to control the right-hand side of (\ref{eq_finalcontrolbound}). By (\ref{eq_boundedw11}) and the fact that $\mathcal{E}_1-\Rb_1=(\Wb_{1,1}-\mathbf{1}\mathbf{1}^\top)\circ \Rb_1$.
Hence, since $\mathcal{E}$ is symmetric, for some constant $C>0,$ we can replace (\ref{eq_finalcontrolbound}) with
\begin{equation}
\frac{C}{n^{q/2} \eta^{2q}} [ \text{tr} (\Rb_1^2)]^{q/2}\,.\label{eq_finalcontrolboundqq}
\end{equation} 
Thus, we only need to consider the $\Rb_1$ part,
and we claim that \eqref{eq_finalcontrolboundqq} can be controlled by a term of order $(\log n)^q n^{-q/2} \eta^{-2q}$, where the proof can be found in \cite[eq. (C.8)]{DW1}. 
To finish the proof, by the same argument of controlling $|m_{\Wb_{b_1}+\mathcal{E}_1}(z)-m_{\Wb_{b_1}+\Rb_1}(z)|^q$, we get $|m_{\Wb_{b_1}+\Rb_1}(z)-m_{\Wb_{b_1}}(z)|^q$ controlled by a term of order $(\log n)^q n^{-q/2} \eta^{-2q}$. By collecting the above controls, including $|m_{\Wb_{b_1}+\Eb_1+\Eb_2}(z)-m_{\Wb_{b_1}+\Eb_1}(z)|$ and $|m_{\Wb_{b_1}+\Eb_1}(z)-m_{\Wb_{b_1}}(z)|$, we conclude the bound for $|m_{\Wb_{b_1}+\Eb_1+\Eb_2}(z)-m_{\Wb_{b_1}}(z)|$. To finish the proof, we control $|m_{\Wb}(z)-m_{\Wb_{b_1}+\Eb_1+\Eb_2}(z)|$, which depends on controlling $\Eb_0$. We can use an analogous argument to control $\Eb_0$ by decomposing $\Wb_1$ as in (\ref{eq_w1decomposition}) and using (\ref{eq_crosstermbound}) and Lemma \ref{lem_collectioninequality} and a discussion similar to
(\ref{eq_finalcontrolbound}). We only sketch the proof. 
Recall (\ref{eq_R1R1R1DFN}). Note that we have 
\begin{align*}
\Eb_0 =&\,(\Wb_c-\mathbf{1}\mathbf{1}^\top) \circ \Ub_y \circ \Wb_{1,1}\\
&+(\Wb_c-\mathbf{1}\mathbf{1}^\top) \circ \Ub_y \circ \Wb_{1,2} \\
&+(\Wb_c-\mathbf{1}\mathbf{1}^\top) \circ \Rb_1 \circ \Wb_{1,1}\\
&+(\Wb_c-\mathbf{1}\mathbf{1}^\top) \circ \Rb_1 \circ \Wb_{1,2}. 
\end{align*}
We explain how to control the first term, which is the leading order term.  Observe that 
\begin{align*}
&(\Wb_c-\mathbf{1}\mathbf{1}^\top) \circ \Ub_y \circ \Wb_{1,1}\\
=\,&\exp(-2 \upsilon)(\Wb_c-\mathbf{1}\mathbf{1}^\top) \circ \Wb_{1,1} \\
& +2 \upsilon \exp(-4 \upsilon) [(\Wb_c-\mathbf{1}\mathbf{1}^\top)\circ \Ib_n] [\Wb_{1,1}\circ \Ib_n]  \\
&+(\Wb_c-\mathbf{1}\mathbf{1}^\top) \circ \frac{2 \upsilon \exp(-2\upsilon)}{p} \Yb^\top \Yb \circ \Wb_{1,1}.
\end{align*}
First, by (\ref{eq_boundedw11}) and (\ref{eq_crosstermbound}), the operator norm of the second term can be bounded by $n^{\alpha/2-1}$ and hence the differences of the Stieltjes transform can be bounded similarly as in (\ref{eq_finalcontrole2q}). Second, the first and third terms can be bounded using a discussion similar to (\ref{eq_finalcontrolbound}) with the counterpart of (\ref{eq_r1bound}), which read as 
\begin{align*}
\max_{i,j} \left| \left[\exp(-2 \upsilon)(\Wb_c-\mathbf{1}\mathbf{1}^\top) \circ \Wb_{1,1} \right](i,j) \right| \prec n^{\alpha/2-1}
\end{align*}
and
\begin{align*}
&  \max_{i,j} \left| \left[(\Wb_c-\mathbf{1}\mathbf{1}^\top) \circ \frac{2 \upsilon \exp(-2\upsilon)}{p} \Yb^\top \Yb \circ \Wb_{1,1} \right](i,j) \right| \prec n^{\alpha/2-1}.
\end{align*}
This completes our proof. 

\end{proof}

\bibliographystyle{abbrv}
\bibliography{sensornonnull}

\end{document}